\documentclass[12pt]{article}                
\usepackage{graphicx, xcolor}
\usepackage{psfrag}
\usepackage{amsmath, amssymb, amsthm}
\usepackage{mathptmx}
\usepackage{esvect, bm, bbm}
\usepackage{hyperref}
\usepackage{cleveref}

\hypersetup{
	colorlinks=true,
	linkcolor={blue!80!black},
	citecolor={green!50!black},
	urlcolor={red!50!black}
}

\textwidth=6.5in
\textheight=8.9in
\topmargin=-0.6in
\oddsidemargin=0.1in
\evensidemargin=0.1in
\parindent=8mm
\parskip= 3pt
\frenchspacing

\newcommand{\N}{\mathbb{N}}

\newcommand{\Q}{\mathbb{Q}}
\newcommand{\R}{\mathbb{R}}
\newcommand{\C}{\mathbb{C}}

\newcommand{\1}{\mathbbm{1}}
\newcommand{\A}{\mathcal{A}}
\newcommand{\CC}{\mathcal{C}}
\newcommand{\EE}{\mathcal{E}}
\newcommand{\F}{\mathcal{F}}
\newcommand{\M}{\mathcal{M}}
\newcommand{\p}{\varphi}
\renewcommand{\P}{\mathbb{P}}
\renewcommand{\O}{\Omega}

\newcommand{\asigma}{\vv{\sigma}}
\newcommand{\bsigma}{\bm{\sigma}}
\newcommand{\basigma}{\vv{\bm{\sigma}}}
\newcommand{\arho}{\vv{\rho}}
\newcommand{\brho}{\bm{\rho}}
\newcommand{\barho}{\vv{\bm{\rho}}}
\newcommand{\atau}{\vv{\tau}}
\newcommand{\btau}{\bm{\tau}}
\newcommand{\batau}{\vv{\bm{\tau}}}

\newcommand{\x}{\vv{x}}
\newcommand{\bB}{\bm{B}}
\newcommand{\bL}{\boldsymbol{L}}
\newcommand{\bv}{\boldsymbol{v}}
\newcommand{\bW}{\bm{W}}
\newcommand{\bX}{\boldsymbol{X}}
\newcommand{\bY}{\boldsymbol{Y}}

\newcommand{\GSE}{\mathrm{\mathbf{GSE}}}
\newcommand{\GP}{\mathrm{GP}}
\newcommand{\VGP}{\mathrm{\mathbf{GP}}}

\newcommand{\abs}[1]{\lvert#1\rvert}
\newcommand{\norm}[1]{\lVert#1\rVert}
\newcommand{\Norm}[1]{{\vert\kern-0.1ex\vert\kern-0.1ex\vert#1\vert\kern-0.1ex\vert\kern-0.1ex\vert}}

\newcommand{\inc}{\nearrow}
\newcommand{\dec}{\searrow}

\DeclareMathOperator{\ud}{\mathrm{d}}

\DeclareMathOperator{\E}{\mathbb{E}}

\DeclareMathOperator{\Cov}{Cov}
\DeclareMathOperator{\tr}{tr}
\DeclareMathOperator{\diag}{diag}
\DeclareMathOperator{\Sum}{\operatorname{Sum}}

\DeclareMathOperator{\Par}{\mathcal{P}}

\DeclareMathOperator{\sgn}{\operatorname{sgn}}

\newcommand{\step}[1]{{\underline{\textit {Step #1}}}}

\newtheorem{theorem}{Theorem}[section]
\newtheorem{lemma}[theorem]{Lemma}
\newtheorem{proposition}[theorem]{Proposition}
\newtheorem{corollary}[theorem]{Corollary}

\begin{document}

\title{\vspace{-1cm}The $\ell^p$-Gaussian-Grothendieck problem\\ with vector spins
}
\author{Tomas Dominguez\thanks{\textsc{\tiny Department of Mathematics, University of Toronto, tomas.dominguezchiozza@mail.utoronto.ca}}\\
}

\date{}
\maketitle
\vspace*{-0.7cm}
\begin{abstract}
We study the vector spin generalization of the $\ell^p$-Gaussian-Grothendieck problem. In other words, given integer $\kappa\geq 1$, we investigate the asymptotic behaviour of the ground state energy associated with the Sherrington-Kirkpatrick Hamiltonian indexed by vector spin configurations in the unit $\ell^p$-ball. The ranges $1\leq p\leq 2$ and $2<p<\infty$ exhibit significantly different behaviours. When $1\leq p\leq 2$, the vector spin generalization of the $\ell^p$-Gaussian-Grothendieck problem agrees with its scalar counterpart. In particular, its re-scaled limit is proportional to some norm of a standard Gaussian random variable. On the other hand, for $2<p<\infty$ the re-scaled limit of the $\ell^p$-Gaussian-Grothendieck problem with vector spins is given by a Parisi-type variational formula.
\end{abstract}

\section{Introduction and main results}

Given an $N\times N$ matrix $A=(a_{ij})$ and some $1\leq p<\infty$, the $\ell^p$-Grothendieck problem consists in maximizing the quadratic form $\smash{\sum_{i,j=1}^N a_{ij}\sigma_i\sigma_j}$ over all vectors $\bsigma=(\sigma_1,\ldots,\sigma_N)\in \R^N$ with unit $\ell^p$-norm, $\smash{\norm{\bsigma}_p^p=\sum_{i=1}^N \abs{\sigma_i}^p=1}$.
In other words, it involves computing the quantity
\begin{equation}\label{eqn: GP deterministic}
\GP_{N,p}(A)=\max\Big\{\sum_{i,j=1}^N a_{ij}\sigma_i\sigma_j\mid \sum_{i=1}^N \abs{\sigma_i}^p=1\Big\}.
\end{equation}
In the case $p=2$, this is the maximum eigenvalue of the symmetric matrix $(A + A^T )/2$. On the other hand, the limiting case $p=\infty$ has been extensively studied in the mathematics and computer science literature for its applications to combinatorial optimization, graph theory and correlation clustering \cite{AlonNaor, Charikar, Fiedler, Guruswami, Motzkin}. The range $2<p<\infty$ can be thought of as an interpolation between the spectral and the correlation clustering problems \cite{Kindler}, while the range $1<p<2$ seems to be unexplored in the literature. Finding an efficient algorithm to solve the $\ell^p$-Grothendieck problem whenever $p\neq 2$ is generally difficult \cite{Kashin, KhotSafra, Megretski, Montanari, Nemirovski}, so it is natural to study the $\ell^p$-Grothendieck problem for random input matrices first. In fact, it should help understand the typical behaviour of \eqref{eqn: GP deterministic}. This leads to the $\ell^p$-Gaussian-Grothendieck problem,
\begin{equation}\label{eqn: GP scalar}
\GP_{N,p}=\max\Big\{\sum_{i,j=1}^N g_{ij}\sigma_i\sigma_j\mid \sum_{i=1}^N \abs{\sigma_i}^p=1\Big\},
\end{equation}
where $(g_{ij})$ are independent standard Gaussian random variables.

The asymptotic behaviour of \eqref{eqn: GP scalar} was studied in great detail in \cite{WeiKuo}. It was discovered that the re-scaled limit of \eqref{eqn: GP scalar} exhibits significantly different behaviour in the ranges $1\leq p\leq 2$ and $2<p<\infty$; in the former, it is proportional to some norm of a Gaussian random variable, and in the latter, it is given by a Parisi-type variational formula. In this paper, we will show that this behaviour persists in the vector spin generalization of \eqref{eqn: GP scalar}. Our work is motivated and greatly influenced by \cite{WeiKuo}; however, new ideas are needed to treat the range $2<p<\infty$. These will be detailed at a later stage, and they will allow us to avoid the key truncation step in \cite{WeiKuo} as well as its associated technicalities. Therefore, specializing our arguments to the scalar setting, $\kappa=1$, yields a simpler proof of the main result in \cite{WeiKuo}.

Before we describe the vector spin analogue of \eqref{eqn: GP scalar}, let us mention that another motivation for investigating this optimization problem comes from the study of spin glass models. In the language of statistical physics, the quadratic form $\sum_{i,j=1}^N g_{ij}\sigma_i\sigma_j$ is known as the Hamiltonian of the Sherrington-Kirkpatrick (SK) mean-field spin glass model, and the quantity \eqref{eqn: GP scalar} corresponds to the ground state energy of the SK model on the unit $\ell^p$-sphere. From this perspective, the vector spin generalization of \eqref{eqn: GP scalar} which we will study in this paper is very natural; it also appears in the computer-science literature \cite{AlonNaor, Charikar, Guruswami, Khot, Kindler} when studying the convex relaxation of \eqref{eqn: GP scalar}. 

Let us now describe the vector spin generalization of \eqref{eqn: GP scalar} using the notation and terminology of statistical physics. Fix an integer $\kappa\geq 1$ throughout the remainder of this paper. The Hamiltonian of the vector spin SK model is the random function of the $N\geq 1$ vector spins taking values in $\R^\kappa$,
\begin{equation}
\basigma=\big(\asigma_1,\ldots,\asigma_N\big)\in (\R^\kappa)^N,
\end{equation}
given by the quadratic form
\begin{equation}\label{eqn: GP pre-Hamiltonian}
H_N^\circ(\basigma)=\sum_{i,j=1}^N g_{ij}\big(\asigma_i,\asigma_j\big),
\end{equation}
where the interaction parameters $(g_{ij})$ are independent standard Gaussian random variables and $(\cdot,\cdot)$ is the Euclidean inner product on $\R^\kappa$. Denote the coordinates of each spin $\asigma_i$ by
\begin{equation}
\asigma_i=(\sigma_i(1),\ldots,\sigma_i(\kappa))\in \R^\kappa,
\end{equation}
write the configuration of the $k$'th coordinates as
\begin{equation}
\bsigma(k)=(\sigma_1(k),\ldots,\sigma_N(k))\in \R^N,
\end{equation}
and introduce the $\ell^{p,2}$-norm on the Euclidean space $(\R^\kappa)^N$,
\begin{equation}\label{eqn: GP lp norm}
\norm{\basigma}_{p,2}^p=\sum_{i=1}^N \norm{\asigma_i}_2^p.
\end{equation}
The $\ell^p$-Gaussian-Grothendieck problem with vector spins consists in maximizing the Hamiltonian \eqref{eqn: GP pre-Hamiltonian} over the unit $\ell^{p,2}$-sphere. In other words, it involves computing the quantity
\begin{equation}\label{eqn: GP vector}
\VGP_{N,p}=\max\big\{H_N^\circ (\basigma)\mid \norm{\basigma}_{p,2}=1\big\}.
\end{equation}

To handle the range $1\leq p\leq 2$, we will use the Gaussian Hilbert space approach to the Grothendieck inequality \cite{AlonNaor,Charikar, Guruswami, Kindler} in order to show that for any $N\times N$ matrix $A=(a_{ij})$,
\begin{equation}\label{eqn: GP key for 1p2}
\GP_{N,p}(A)=\max\Big\{\sum_{i,j=1}^N a_{ij}\big(\asigma_i,\asigma_j\big)\mid \norm{\basigma}_{p,2}=1\Big\}.
\end{equation}
This identity was mentioned in \cite{Khot}, but no proof was given. Combining \eqref{eqn: GP key for 1p2} with theorem 1.1 and theorem 1.2 in \cite{WeiKuo} will immediately give our main result for $1\leq p\leq 2$.

\begin{theorem}\label{GP main result 1p2}
If $1<p<2$, then almost surely,
\begin{equation}
\lim_{N\to\infty}\frac{1}{N^{1/p^*}}\VGP_{N,p}=2^{\frac{1}{2}-\frac{2}{p}}\big(\E \abs{g}^{p^*}\big)^{1/p^*},
\end{equation}
where $p^*$ is the Hölder conjugate of $p$ and $g$ is a standard Gaussian random variable. On the other hand, if $p=1$ or $p=2$, then almost surely,
\begin{equation}
\lim_{N\to \infty}\frac{1}{\sqrt{\log N}}\GP_{N,1}=\sqrt{2}=\lim_{N\to \infty}\frac{1}{\sqrt{N}}\GP_{N,2}.
\end{equation}
\end{theorem}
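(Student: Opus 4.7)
The entire theorem reduces to establishing the identity \eqref{eqn: GP key for 1p2}. Once it is available, specializing $A=(g_{ij})$ gives $\VGP_{N,p}=\GP_{N,p}$ almost surely, and the claimed scaling limits follow from theorems 1.1 and 1.2 of \cite{WeiKuo}: for $1<p<2$ this transfers their limit for the scalar $\GP_{N,p}$ to $\VGP_{N,p}$, and for $p=1,2$ the statement in the theorem is just their result for the scalar problem.

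\textbf{Proof of the identity.} The inequality between the scalar and vector maxima in the easy direction is immediate: any $\bsigma\in\R^N$ with $\norm{\bsigma}_p=1$ embeds as $\asigma_i=(\sigma_i,0,\ldots,0)$, which satisfies $\norm{\basigma}_{p,2}=1$ and $(\asigma_i,\asigma_j)=\sigma_i\sigma_j$. For the reverse, fix $\basigma$ with $\norm{\basigma}_{p,2}=1$, introduce an independent standard Gaussian vector $g=(g_1,\ldots,g_\kappa)\sim N(0,I_\kappa)$, and define $Y_i=(g,\asigma_i)$, so that $Y_i\sim N(0,\norm{\asigma_i}_2^2)$ and $\E[Y_iY_j]=(\asigma_i,\asigma_j)$. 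Then
\begin{equation*}
\sum_{i,j=1}^N a_{ij}(\asigma_i,\asigma_j)=\E\Big[\sum_{i,j=1}^N a_{ij}Y_iY_j\Big],
\end{equation*}
while for every realization with $\bY\neq 0$ the vector $\bY/\norm{\bY}_p$ lies on the unit $\ell^p$-sphere in $\R^N$, so by definition of $\GP_{N,p}(A)$,
\begin{equation*}
\sum_{i,j=1}^N a_{ij}Y_iY_j\le\norm{\bY}_p^2\,\GP_{N,p}(A).
\end{equation*}
Taking expectations, the identity reduces to proving $\E[\norm{\bY}_p^2]\le 1$.

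\textbf{Main estimate and obstacle.} This expectation bound is exactly the Minkowski integral inequality applied, with exponent $2/p\ge 1$, to the function $(i,\omega)\mapsto Y_i(\omega)$ on the product of $\{1,\ldots,N\}$ with the probability space $\Omega$:
\begin{equation*}
\big(\E[\norm{\bY}_p^2]\big)^{1/2}\le\Big(\sum_{i=1}^N\big(\E[Y_i^2]\big)^{p/2}\Big)^{1/p}=\Big(\sum_{i=1}^N\norm{\asigma_i}_2^p\Big)^{1/p}=\norm{\basigma}_{p,2}=1.
\end{equation*}
The crux of the argument — and the point where the restriction $p\le 2$ is essential — is the direction of this Minkowski swap. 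A tempting deterministic alternative is the coordinate-wise decomposition $\sum a_{ij}(\asigma_i,\asigma_j)=\sum_{k=1}^\kappa\bsigma(k)^TA\bsigma(k)\le\GP_{N,p}(A)\sum_k\norm{\bsigma(k)}_p^2$, but this fails because $\sum_k\norm{\bsigma(k)}_p^2\ge\norm{\basigma}_{p,2}^2$ (again by Minkowski, now in the unfavorable direction). The Gaussian Hilbert space linearization above replaces this doomed deterministic decomposition by a probabilistic averaging in a mixed $L^2(\Omega)$--$\ell^p(\{1,\ldots,N\})$ norm, and this is precisely the arrangement in which Minkowski works in our favor.
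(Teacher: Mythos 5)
Your proposal is correct and follows essentially the same route as the paper: the embedding $\asigma_i=(\sigma_i,0,\ldots,0)$ for the easy inequality, the Gaussian linearization $Y_i=(g,\asigma_i)$ for the reverse, and Minkowski's integral inequality with exponent $2/p\geq 1$ to get $\E\norm{\bY}_p^2\leq\norm{\basigma}_{p,2}^2=1$, after which the theorem follows from theorems 1.1 and 1.2 of \cite{WeiKuo}. Your added remark on why the deterministic coordinate-wise decomposition fails is accurate but not needed for the argument.
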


The range $2<p<\infty$ will require substantially more work, and will occupy the majority of this paper. It will be convenient to introduce a re-scaled version of the Hamiltonian \eqref{eqn: GP pre-Hamiltonian},
\begin{equation}\label{eqn: GP Hamiltonian}
H_N(\basigma)=\frac{1}{\sqrt{N}}\sum_{i,j=1}^N g_{ij}\big(\asigma_i,\asigma_j\big),
\end{equation}
as well as a normalized version of the $\ell^{p,2}$-norm \eqref{eqn: GP lp norm},
\begin{equation}
\Norm{\basigma}_{p,2}^p=\frac{1}{N}\norm{\basigma}_{p,2}^p=\frac{1}{N}\sum_{i=1}^N\norm{\asigma_i}_2^p.
\end{equation}
If we denote the classical SK Hamiltonian on $\R^N$ by
\begin{equation}
H_N^k(\bsigma(k))=\frac{1}{\sqrt{N}}\sum_{i,j=1}^N g_{ij}\sigma_i(k)\sigma_j(k),
\end{equation}
we may express the vector spin Hamiltonian \eqref{eqn: GP Hamiltonian} as
\begin{equation}
H_N(\basigma)=\sum_{k=1}^\kappa H_N^k(\bsigma(k)).
\end{equation}
It is readily verified that for two spin configurations $\basigma^l,\basigma^{l'}\in (\R^\kappa)^N$ and two integers $1\leq k,k'\leq \kappa$, 
\begin{equation}
\E H_N^k(\bsigma^l(k))H_N^{k'}(\bsigma^{l'}(k'))=N\big(R_{l,l'}^{k,k'}\big)^2,
\end{equation}
where
\begin{equation}
R_{l,l'}^{k,k'}=\frac{1}{N}\sum_{i=1}^N\sigma_i^l(k)\sigma_i^{l'}(k')
\end{equation}
is the overlap between $\bsigma^l(k)$ and $\bsigma^{l'}(k')$. We will denote the matrix of all such overlaps by
\begin{equation}\label{eqn: GP self-overlap}
R(\basigma^l,\basigma^{l'})=\big(R_{l,l'}^{k,k'}\big)_{k,k'\leq \kappa}=\frac{1}{N}\sum_{i=1}^N \asigma_i^l{\asigma_i^{l'}}^T.
\end{equation}
The covariance structure of the vector spin Hamiltonian \eqref{eqn: GP Hamiltonian} may therefore be expressed in terms of this matrix-valued overlap as
\begin{equation}\label{eqn: GP covariance}
\E H_N(\basigma^l) H_N(\basigma^{l'})=N\sum_{k,k'=1}^\kappa\big(R_{l,l'}^{k,k'}\big)^2=N\norm{R_{l,l'}}_{\text{HS}}^2,
\end{equation}
where $\norm{\gamma}_{\mathrm{HS}}^2=\sum_{k,k'}\abs{\gamma_{k,k'}}^2$ denotes the Hilbert-Schmidt norm on the space of $\kappa\times \kappa$ matrices. Writing 
\begin{equation}
S_p^N=\{\basigma\in (\R^\kappa)^N\mid \Norm{\basigma}_{p,2}=1\}
\end{equation}
for the unit normalized-$\ell^{p,2}$-sphere, the $\ell^p$-Gaussian-Grothendieck problem with vector spins may be recast as the task of computing the ground state energy
\begin{equation}\label{eqn: GP reformulated}
\GSE_{N,p}=N^{\frac{2}{p}-\frac{3}{2}}\VGP_{N,p}=\frac{1}{N} \max_{\basigma\in S_p^N}H_N(\basigma).
\end{equation}
Using Chevet's inequality as in section 3 of \cite{WeiKuo}, it is easy to see that this is the correct scaling of \eqref{eqn: GP vector} when $2< p<\infty$. To study the constrained optimization problem \eqref{eqn: GP reformulated}, it is natural to remove the normalized-$\ell^{p,2}$-norm constraint by considering the model with an $\ell^{p,2}$-norm potential. For each $t>0$ define the Hamiltonian
\begin{equation}\label{eqn: GP unconstrained Hamiltonian}
H_{N,p,t}(\basigma)=H_N(\basigma)-t\norm{\basigma}_{p,2}^p,
\end{equation}
and introduce the unconstrained Lagrangian
\begin{equation}\label{eqn: GP unconstrained Lagrangian}
L_{N,p}(t)=\frac{1}{N}\max_{\basigma \in (\R^\kappa)^N}H_{N,p,t}(\basigma).
\end{equation}
Our first noteworthy result in the range $2<p<\infty$, which we now describe, will relate the asymptotic behaviour of the unconstrained Lagrangian \eqref{eqn: GP unconstrained Lagrangian} to the limit of the ground state energy \eqref{eqn: GP reformulated}.

Consider the space of $\kappa\times \kappa$ Gram matrices,
\begin{equation}
\Gamma_\kappa=\big\{\gamma\in \R^{\kappa\times \kappa}\mid  \gamma \text{ is symmetric and non-negative definite}\big\},
\end{equation}
endowed with the Loewner order $\gamma_1\leq \gamma_2$ if and only if $\gamma_2-\gamma_1\in \Gamma_\kappa$, and denote by $\Gamma_\kappa^+$ the subspace of positive definite matrices in $\Gamma_\kappa$,
\begin{equation}
\Gamma_\kappa^+=\{\gamma\in \Gamma_\kappa\mid \gamma \text{ is positive definite}\}.
\end{equation}
For each $D\in \Gamma_\kappa$ write
\begin{equation}\label{eqn: GP configurations with self-overlap D}
\Sigma(D)=\big\{\basigma\in (\R^\kappa)^N\mid R(\basigma,\basigma)=D\big\}
\end{equation}
for the set of spin configurations $\basigma\in (\R^\kappa)^N$ with self-overlap $D$, and introduce the constrained Lagrangian
\begin{equation}\label{eqn: GP Lagrangian with self-overlap D}
L_{N,p,D}(t)=\frac{1}{N}\max_{\basigma\in \Sigma(D)}H_{N,p,t}(\basigma).
\end{equation}
In \cref{GP sec2}, we will show that the constrained Lagrangian \eqref{eqn: GP Lagrangian with self-overlap D} admits a deterministic limit $L_{p,D}(t)$ with probability one, and in \cref{GP sec4}, we will establish the following asymptotic formula for the unconstrained Lagrangian \eqref{eqn: GP unconstrained Lagrangian}.

\begin{theorem}\label{GP limit of unconstrained Lagrangian}
If $2<p<\infty$, then almost surely the limit $L_p(t)=\lim_{N\to \infty}L_{N,p}(t)$ exists for every $t>0$. Moreover, with probability one,
\begin{equation}\label{eqn: GP limit of unconstrained Lagrangian}
L_p(t)=\sup_{D\in \Gamma_\kappa}L_{p,D}(t)=\sup_{D\in \Gamma_\kappa^+}L_{p,D}(t).
\end{equation}
\end{theorem}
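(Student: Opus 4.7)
The plan is to exploit the tautological identity $L_{N,p}(t) = \sup_{D \in \Gamma_\kappa} L_{N,p,D}(t)$, which holds at each $N \geq 1$ by decomposing the unconstrained maximum over $\basigma \in (\R^\kappa)^N$ according to the self-overlap $D = R(\basigma, \basigma)$, and then pass this identity to the limit $N \to \infty$. The easy direction is the liminf: for any fixed $D \in \Gamma_\kappa^+$, one has $L_{N,p}(t) \geq L_{N,p,D}(t)$, and \cref{GP sec2} gives $L_{N,p,D}(t) \to L_{p,D}(t)$ almost surely. Applying this along a countable dense subset of $\Gamma_\kappa^+$, together with continuity of $D \mapsto L_{p,D}(t)$ (established below), yields $\liminf_N L_{N,p}(t) \geq \sup_{D \in \Gamma_\kappa^+} L_{p,D}(t)$. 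The bulk of the work is the matching upper bound, from which existence of $L_p(t)$ and the equality $\sup_{\Gamma_\kappa} = \sup_{\Gamma_\kappa^+}$ will also follow.

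The first step is coercivity of $D \mapsto L_{N,p,D}(t)$. For $\basigma \in \Sigma(D)$ one has $\sum_i \norm{\asigma_i}_2^2 = N\tr D$, so Jensen's inequality (valid because $p \geq 2$) gives $\norm{\basigma}_{p,2}^p \geq N(\tr D)^{p/2}$, while an operator-norm bound yields $H_N(\basigma) \leq \sqrt{N}\norm{G}_{\mathrm{op}}\tr D$. Combining, $L_{N,p,D}(t) \leq (\norm{G}_{\mathrm{op}}/\sqrt{N})\tr D - t(\tr D)^{p/2}$, which is negative for $\tr D > K(t)$ once $\norm{G}_{\mathrm{op}}/\sqrt{N} \leq 3$ (eventually almost surely). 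Since $L_{N,p}(t) \geq L_{N,p,0}(t) = 0$, the supremum in the tautology is effectively realized on the compact set $\{D \in \Gamma_\kappa : \tr D \leq K\}$.

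Next I would prove a uniform Lipschitz estimate $|L_{N,p,D}(t) - L_{N,p,D'}(t)| \leq C(K,\lambda,t,p)\norm{D-D'}$ on $\{D \in \Gamma_\kappa^+ : \lambda I \preceq D \preceq KI\}$, with an $N$-independent constant, eventually almost surely. Given $\basigma \in \Sigma(D)$, the linear map $\asigma_i' := A^T \asigma_i$ with $A = D^{-1/2}(D')^{1/2}$ sends $\basigma$ to $\basigma' \in \Sigma(D')$, and a direct computation controls both $(H_N(\basigma') - H_N(\basigma))/N$ and $(\norm{\basigma'}_{p,2}^p - \norm{\basigma}_{p,2}^p)/N$ in terms of $\norm{AA^T - I}_{\mathrm{op}} \leq \lambda^{-1}\norm{D-D'}$. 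Here one uses the a priori bound $\norm{\basigma}_{p,2}^p = O(N)$ for near-maximizers, obtained by inserting a standard test configuration in $\Sigma(D)$ with all $\norm{\asigma_i}_2$ of order one to bound $L_{N,p,D}(t)$ from below. An $\epsilon$-net in $\{\lambda I \preceq D \preceq KI\}$, combined with the pointwise convergence from \cref{GP sec2}, then gives $\limsup_N L_{N,p}(t) \leq \sup_{D \succeq \lambda I} L_{p,D}(t) + O(\epsilon)$.

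Finally I would remove the artificial lower bound $\lambda I$ by a padding argument: given $\basigma \in \Sigma(D)$, set $\asigma_i' = \asigma_i + \sqrt{\lambda}\,\vec u_i$ where the matrix $U \in \R^{N \times \kappa}$ with rows $\vec u_i^T$ satisfies $U^T U = NI$ and $X^T U = 0$ (possible for $N \geq 2\kappa$), producing $\basigma' \in \Sigma(D + \lambda I)$ with $D + \lambda I \in \Gamma_\kappa^+$. The changes $(H_N(\basigma') - H_N(\basigma))/N$ and $(\norm{\basigma'}_{p,2}^p - \norm{\basigma}_{p,2}^p)/N$ are both $O(\sqrt{\lambda})$, via Gaussian concentration of the fluctuation $\langle G, XU^T\rangle$ (which has variance $N^2\tr D$) and a Taylor expansion of $x \mapsto x^{p/2}$. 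Sending successively $N \to \infty$, $\epsilon \to 0$, $\lambda \to 0$ yields $\limsup_N L_{N,p}(t) \leq \sup_{D \in \Gamma_\kappa^+} L_{p,D}(t)$ and also the reverse of $\sup_{\Gamma_\kappa^+} \leq \sup_{\Gamma_\kappa}$, completing both equalities of \eqref{eqn: GP limit of unconstrained Lagrangian}. The main obstacle will be this last padding step: because $U$ must be chosen as a function of $\basigma$, the fluctuation $\sqrt\lambda\,\langle G, XU^T\rangle$ has to be controlled uniformly in $\basigma$, which requires either a chaining argument or a preliminary net at the level of spin configurations, supplemented by Gaussian concentration of $L_{N,p,D+\lambda I}(t)$ around its mean.
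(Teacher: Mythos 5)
Your proposal is correct in outline and shares the paper's overall architecture --- the tautological identity $L_{N,p}(t)=\sup_{D}L_{N,p,D}(t)$, a coercivity bound confining the supremum to $\{\tr D\leq K\}$ (the paper's \cref{GP Lagrangian optimizers bounded}), a net/compactness argument over the constraint set, a continuity estimate for $D\mapsto L_{N,p,D}(t)$, and a padding map $\Sigma(D)\to\Sigma(D+\lambda I)$ (the paper's \cref{GP replacing D by D+}) --- but your continuity step is genuinely different. The paper covers $\Gamma_{\kappa,u}$ by $\epsilon$-balls, works with the relaxed Lagrangian $L_{N,p,D,\epsilon}$, and sends a configuration whose overlap is merely \emph{near} $D$ into $\Sigma(D_\epsilon)$ via a configuration-dependent matrix $A_{\basigma}$ (\cref{GP modified coordinates}, after Panchenko), with Dudley's entropy inequality controlling the resulting change of the Hamiltonian uniformly; since $D_\epsilon$ projects out small eigendirections, this handles degenerate $D$ directly. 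Your configuration-independent map $A=D^{-1/2}(D')^{1/2}$ between exact constraint sets is cleaner and yields a genuine Lipschitz bound, but only on $\{D\succeq\lambda I\}$, so the padding must be performed \emph{before} the net is applied: as written your order of quantifiers is backwards, since the maximizer's self-overlap satisfies only $\tr D\leq K$ and need not dominate $\lambda I$. Two further remarks. The uniformity-in-$\basigma$ obstacle you single out at the end is less severe than you fear: $H_N(\basigma')-H_N(\basigma)$ for the padded configuration is again a bilinear form in $G_N$, so the bound $\norm{G_N}_2=O(\sqrt N)$ that you already invoke for coercivity controls it deterministically on a high-probability event, uniformly over $\basigma$ (this is exactly how the paper argues in the proof of \cref{GP Lagrangian by free energy}); no chaining or configuration-level net is needed. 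Finally, the theorem asserts a single null set off which the limit exists for \emph{every} $t>0$; your argument delivers this for each fixed $t$, and the upgrade requires a Lipschitz-in-$t$ estimate as in \cref{GP localized Lagrangian Lipschitz} together with countable density --- standard, but it should be said.
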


\noindent Subsequently, in \cref{GP sec5}, we will use the basic properties of convex functions to derive the following key relationship between the limits of \eqref{eqn: GP unconstrained Lagrangian} and \eqref{eqn: GP reformulated}.

\begin{theorem}\label{GP in terms of limiting Lagrangian}
If $2<p<\infty$, then almost surely the limit $\GSE_p=\lim_{N\to\infty}\GSE_{N,p}$ exists and is given by
\begin{equation}\label{eqn: GP GSE in terms of limiting Lagrangian}
\GSE_p=\frac{p}{2}\Big(\frac{p}{2}-1\Big)^{\frac{2}{p}-1}t^{\frac{2}{p}}L_p(t)^{1-\frac{2}{p}}
\end{equation}
for every $t>0$.
\end{theorem}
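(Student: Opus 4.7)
My plan is to derive an exact one-variable Legendre-type identity relating $L_{N,p}(t)$ to $\GSE_{N,p}$ for every $N$, invert it, and pass to the limit using Theorem~\ref{GP limit of unconstrained Lagrangian}; the only nontrivial point is to verify that the positive-part truncation forced by the admissible configuration $\basigma=0$ does not spoil convergence.

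The scalings $H_N(c\basigma)=c^2H_N(\basigma)$ and $\Norm{c\basigma}_{p,2}=\abs{c}\Norm{\basigma}_{p,2}$ allow me to decompose any nonzero $\basigma\in(\R^\kappa)^N$ as $\basigma=r\batau$ with $r=\Norm{\basigma}_{p,2}>0$ and $\batau\in S_p^N$, which transforms \eqref{eqn: GP unconstrained Lagrangian} into $\tfrac{1}{N}H_{N,p,t}(\basigma)=r^2\cdot\tfrac{1}{N}H_N(\batau)-tr^p$. Optimizing over $\batau\in S_p^N$ and then including the degenerate competitor $\basigma=0$, whose contribution is $0$, yields the exact finite-$N$ identity
\[
L_{N,p}(t)=\max_{r\geq 0}\bigl[r^2\,(\GSE_{N,p}\vee 0)-tr^p\bigr].
\]
For $p>2$ this one-variable maximum is explicit: with $\alpha:=\GSE_{N,p}\vee 0$, the unique critical point $r_*=(2\alpha/(tp))^{1/(p-2)}$ gives $L_{N,p}(t)=\frac{p-2}{p}\bigl(\tfrac{2}{tp}\bigr)^{2/(p-2)}\alpha^{p/(p-2)}$, and solving for $\alpha$ (the degenerate case $\alpha=0\Leftrightarrow L_{N,p}(t)=0$ is automatic since $1-\tfrac{2}{p}>0$) yields after elementary rearrangement
\[
\GSE_{N,p}\vee 0=\frac{p}{2}\Bigl(\frac{p}{2}-1\Bigr)^{\frac{2}{p}-1}t^{\frac{2}{p}}L_{N,p}(t)^{1-\frac{2}{p}},
\]
valid for every $N$ and every $t>0$.

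Passing to the limit is now immediate. Fixing any $t>0$, Theorem~\ref{GP limit of unconstrained Lagrangian} together with continuity of $x\mapsto x^{1-2/p}$ on $[0,\infty)$ forces $\GSE_{N,p}\vee 0$ to converge almost surely to $\frac{p}{2}(\frac{p}{2}-1)^{2/p-1}t^{2/p}L_p(t)^{1-2/p}$. Since the left-hand side is independent of $t$, this right-hand side must reduce to a single deterministic constant $\GSE_p\geq 0$ for every $t>0$, which already reproduces \eqref{eqn: GP GSE in terms of limiting Lagrangian}.

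The main obstacle is to upgrade convergence of $\GSE_{N,p}\vee 0$ to convergence of $\GSE_{N,p}$ itself, which is equivalent to ruling out $\GSE_p=0$. For this I would invoke \eqref{eqn: GP limit of unconstrained Lagrangian} to bound $L_p(t)\geq L_{p,I_\kappa}(t)$ and estimate the latter by restricting $\Sigma(I_\kappa)$ to configurations with constant spin norms $\norm{\asigma_i}_2=\sqrt{\kappa}$, the equality case of Jensen's inequality applied to $x\mapsto x^{p/2}$ under the trace identity $\sum_i\norm{\asigma_i}_2^2=\kappa N$ forced by $R(\basigma,\basigma)=I_\kappa$. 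On this subset the penalty $\norm{\basigma}_{p,2}^p=\kappa^{p/2}N$ is deterministic and minimal, while $\tfrac{1}{N}\max H_N$ over the resulting product of $\kappa$-spheres with self-overlap $I_\kappa$ is almost surely bounded below by a strictly positive constant via the standard spherical vector-spin SK estimate. Choosing $t$ sufficiently small thus makes $L_p(t)>0$, hence $\GSE_p>0$ by the inversion formula, hence $\GSE_{N,p}=\GSE_{N,p}\vee 0$ for all large $N$ almost surely, which simultaneously gives the existence of the limit $\lim_{N\to\infty}\GSE_{N,p}=\GSE_p$ and the identity \eqref{eqn: GP GSE in terms of limiting Lagrangian}.
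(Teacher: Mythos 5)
Your proposal is correct in its core mechanism and takes a genuinely different route from the paper. You exploit the exact homogeneity $H_N(r\batau)=r^2H_N(\batau)$, $\norm{r\batau}_{p,2}=r\norm{\batau}_{p,2}$ to obtain the finite-$N$ Legendre identity $L_{N,p}(t)=\max_{r\geq 0}\bigl[r^2(\GSE_{N,p}\vee 0)-tr^p\bigr]$, invert it explicitly (your algebra checks out: $r_*=(2\alpha/(tp))^{1/(p-2)}$ does yield $\GSE_{N,p}\vee 0=\frac{p}{2}(\frac{p}{2}-1)^{2/p-1}t^{2/p}L_{N,p}(t)^{1-2/p}$), and pass to the limit using \cref{GP limit of unconstrained Lagrangian} and continuity of $x\mapsto x^{1-2/p}$. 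The paper instead differentiates: it shows via the Euler identity for the Hamiltonian (\cref{GP gradient of Hamiltonian}) that any optimizer satisfies $\Norm{\barho(t)}_{p,2}^p=-L_{N,p}'(t)$ at points of differentiability, derives the ODE-type relation $L_{N,p}(t)=-t(\frac{p}{2}-1)L_{N,p}'(t)$, upgrades it to differentiability of $L_p$ everywhere via sub-differential/convexity arguments (\cref{GP derivative of Lagrangian}), and only then extracts the formula. Your route is more elementary and cleaner: the exact algebraic identity at every finite $N$ bypasses all questions about differentiability of $t\mapsto\barho(t)$ and of $L_{N,p}$, which are precisely the technical issues the paper spends \cref{GP derivative of Lagrangian in terms of Lagrangian}--\cref{GP derivative of Lagrangian} resolving. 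What the paper's approach buys in exchange is the additional information that $L_p$ is differentiable with $L_p'(t)=-\Norm{\barho(t)}_{p,2}^p$ in the limit, which your argument does not produce (but the theorem does not require).

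The one soft spot is the positivity step needed to replace $\GSE_{N,p}\vee 0$ by $\GSE_{N,p}$. Your sketch (lower-bounding $L_{p,I_\kappa}(t)$ over constant-norm configurations in $\Sigma(I_\kappa)$ and invoking a ``standard spherical vector-spin SK estimate'') is plausible but under-justified: constructing configurations with $R(\basigma,\basigma)=I_\kappa$ and $\norm{\asigma_i}_2=\sqrt{\kappa}$ for all $i$, and then proving the constrained maximum of $H_N/N$ over that set is bounded below by a positive constant, both require real work and no such estimate is stated in the paper. This is easily repaired: the first assertion of \cref{GP Lagrangian derivative strictly positive} -- that $L_p(t)>0$ for all $t>0$ -- is proved there by a self-contained argument (plugging in the scalar configuration built from the top eigenvector of the GOE matrix $\bar G_N$), and citing it closes the gap without touching the rest of your argument.
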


\noindent This result reduces the study of the ground state energy \eqref{eqn: GP reformulated} to that of the Lagrangian \eqref{eqn: GP Lagrangian with self-overlap D} with positive definite self-overlaps $D\in \Gamma_\kappa^+$. The main result of this paper will be a Parisi-type variational formula for the limit $L_{p,D}(t)$ of \eqref{eqn: GP Lagrangian with self-overlap D} when $D\in \Gamma_\kappa^+$. Together with \eqref{eqn: GP GSE in terms of limiting Lagrangian}, \eqref{eqn: GP limit of unconstrained Lagrangian} and \eqref{eqn: GP reformulated}, this will give a Parisi-type variational formula for the $\ell^p$-Gaussian-Grothendieck problem with vector spins when $2<p<\infty$.

Given $D\in \Gamma_\kappa^+$, we now describe the Parisi-type variational formula for the limit $L_{p,D}(t)$ of \eqref{eqn: GP Lagrangian with self-overlap D}. Let us call a path $\pi:[0,1]\to \Gamma_\kappa$ piecewise linear if there exists a partition $0=q_{-1}\leq q_0\leq \ldots\leq q_r=1$ of $[0,1]$ and matrices $(\gamma_j)_{-1\leq j\leq r}\subset \Gamma_\kappa$ with
\begin{equation}
\pi(s)=\gamma_{j-1}+\frac{s-q_{j-1}}{q_j-q_{j-1}}(\gamma_j-\gamma_{j-1})
\end{equation}
when $s\in [q_{j-1},q_j]$ for some $0\leq j\leq r$. Denote by $\Pi$ the space of piecewise linear and non-decreasing functions on $[0,1]$ with values in $\Gamma_\kappa$,
\begin{equation}
\Pi=\big\{ \pi : [0,1]\to\Gamma_\kappa \mid \pi \text{ is piecewise linear, } \pi(x)\leq \pi(x') \text{ for } x\leq x'\big\},
\end{equation}
and write $\Pi_D$ for the set of paths in $\Pi$ that start at $0$ and end at $D$,
\begin{equation}
\Pi_D=\big\{\pi \in \Pi\mid \pi(0)=0 \text{ and }\pi(1)=D\big\}.
\end{equation}
Notice that any path $\pi \in \Pi_D$ can be identified with two sequences of parameters,
\begin{align}
0&=q_{-1}\leq q_0\leq \ldots\leq q_{r-1}\leq q_r=1,\label{eqn: GP zero temperature qs}\\
0&=\gamma_{-1}=\gamma_0\leq \gamma_1\leq \ldots \leq \gamma_{r-1}\leq \gamma_r=D,\label{eqn: GP zero temperature gammas}
\end{align}
satisfying $\pi(q_j)=\gamma_j$ for $0\leq j\leq r$. Explicitly, the path $\pi$ is given by
\begin{equation}
\pi(s)=\gamma_{j-1}+\frac{s-q_{j-1}}{q_j-q_{j-1}}(\gamma_j-\gamma_{j-1})
\end{equation}
when $s\in [q_{j-1},q_j]$ for some $0\leq j\leq r$. Denote by $\mathcal{N}^d$ the set of finite measures on $[0,1]$ with finitely many atoms, and given $t>0$ and $\lambda\in \R^{\kappa(\kappa+1)/2}$ consider the function $f_{\lambda}^\infty:\R^\kappa\to \R$ defined by
\begin{equation}\label{eqn: GP zero-temperature terminal condition}
f_{\lambda}^\infty(\x)=\sup_{\asigma\in \R^\kappa}\Big(\big(\asigma, \x\big)+\sum_{k\leq k'}\lambda_{k,k'}\sigma(k)\sigma(k')-t\norm{\asigma}_2^p\Big).
\end{equation}
Notice that any discrete measure $\zeta\in \mathcal{N}^d$ may be identified with two sequences of parameters
\begin{align}
0&=q_{-1}\leq q_0\leq \ldots\leq q_{r-1}\leq q_r=1, \label{eqn: GP zero temperature measure qs}\\
0&=\zeta_{-1}\leq \zeta_0\leq\ldots\leq \zeta_{r-1}\leq \zeta_r<\infty, \label{eqn: GP zero temperature zetas}
\end{align}
satisfying $\zeta(\{q_j\})=\zeta_j-\zeta_{j-1}$ for $0\leq j\leq r$. Moreover, the sequences \eqref{eqn: GP zero temperature qs} and \eqref{eqn: GP zero temperature measure qs} can be taken to be the same by duplicating values in \eqref{eqn: GP zero temperature gammas} and \eqref{eqn: GP zero temperature zetas} if necessary. We will often abuse notation and write $\zeta$ both for the measure and its cumulative distribution function. Given independent Gaussian vectors $z_j=(z_j(k))_{k\leq \kappa}$ for $0\leq j\leq r$ with covariance structure
\begin{equation}\label{eqn: GP covariance of z}
\Cov z_j=\gamma_j-\gamma_{j-1},
\end{equation}
recursively define the sequence $(Y_l^{\lambda,\zeta,\pi})_{0\leq l\leq r}$ as follows. Let
\begin{equation}
Y_r^{\lambda,\zeta,\pi}((z_j)_{0\leq j\leq r})=f_\lambda^\infty\Big(\sqrt{2}\sum_{j=1}^rz_j\Big),
\end{equation}
and for $0\leq l\leq r-1$ let
\begin{equation}\label{eqn: GP Parisi sequence at zero temperature}
Y_l^{\lambda,\zeta,\pi}((z_j)_{0\leq j\leq l})=\frac{1}{\zeta_l}\log \E_{z_{l+1}}\exp \zeta_l Y_{l+1}^{\lambda,\zeta,\pi}((z_j)_{0\leq j\leq l+1}).
\end{equation}
This inductive procedure is well-defined by the growth bounds established in \cref{GP zero temperature terminal growth bound}. Introduce the Parisi functional,
\begin{equation}\label{eqn: GP Parisi functional at zero temperature}
\Par_\infty(\lambda,\zeta,\pi)=Y_0^{\lambda,\zeta,\pi}-\sum_{k\leq k'}\lambda_{k,k'}D_{k,k'}-\int_0^1 \zeta(s)\Sum\big(\pi(s)\odot \pi'(s)\big)\ud s,
\end{equation}
where $\Sum(\gamma)=\sum_{k,k'}\gamma_{k,k'}$ is the sum of all elements in a $\kappa\times \kappa$ matrix and $\odot$ denotes the Hadamard product on the space of $\kappa\times \kappa$ matrices.
We have made all dependencies on $\kappa,p,t$ and $D$ implicit for clarity of notation, but we will make them explicit whenever necessary. The following is our main result.

\begin{theorem}\label{GP main result}
If $2<p<\infty$, then for any $D\in \Gamma_\kappa^+$ and every $t>0$,
\begin{equation}\label{eqn: GP main result}
L_{p,D}(t)=\inf_{\lambda,\zeta,\pi}\Par_\infty(\lambda,\zeta,\pi),
\end{equation}
where the infimum is taken over all $(\lambda,\zeta,\pi)\in \R^{\kappa(\kappa+1)/2}\times \mathcal{N}^d\times \Pi_D$.
\end{theorem}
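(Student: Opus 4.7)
The plan is to prove matching upper and lower bounds for $L_{N,p,D}(t)$, following the Guerra--Talagrand / Aizenman--Sims--Starr framework adapted to vector-spin glasses with Gram-matrix-valued overlaps. The first move is to trade the hard self-overlap constraint $R(\basigma,\basigma)=D$ for Lagrange multipliers: for any $\lambda\in \R^{\kappa(\kappa+1)/2}$ the quantity $\sum_{k\leq k'}\lambda_{k,k'}(NR(\basigma,\basigma)_{k,k'}-ND_{k,k'})$ vanishes on $\Sigma(D)$, so
\begin{equation*}
L_{N,p,D}(t)\leq \frac{1}{N}\max_{\basigma\in(\R^\kappa)^N}\Big[H_N(\basigma)-t\norm{\basigma}_{p,2}^p+\sum_{k\leq k'}\lambda_{k,k'}\sum_{i=1}^N\sigma_i(k)\sigma_i(k')\Big]-\sum_{k\leq k'}\lambda_{k,k'}D_{k,k'},
\end{equation*}
and infimizing over $\lambda$ should yield equality in the limit. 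This decouples sites and reduces the problem to an unconstrained optimization with a site-wise quadratic perturbation.

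For the upper bound, fix $(\lambda,\zeta,\pi)$ and construct a Guerra-type interpolation between $H_N$ and a hierarchical comparison field built on the Ruelle probability cascade encoded by $\zeta$, whose Gaussian increments in the spin coordinates have covariances $\pi(q_j)-\pi(q_{j-1})$. It is convenient to work at finite inverse temperature $\beta$, where the maximum is softened to a log-partition function, apply Gaussian integration by parts to differentiate the interpolation in $s$, and observe that the derivative is driven by a quadratic form in the difference $R(\basigma^l,\basigma^{l'})-\pi(q)$ whose sign is controlled by monotonicity of $\pi$ in the Loewner order. Taking $\beta\to\infty$ and evaluating the endpoint through the Parisi recursion \eqref{eqn: GP Parisi sequence at zero temperature} with terminal condition \eqref{eqn: GP zero-temperature terminal condition} yields $L_{N,p,D}(t)\leq \Par_\infty(\lambda,\zeta,\pi)+o(1)$, hence the upper bound by infimizing.

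For the lower bound, apply the Aizenman--Sims--Starr scheme: estimate
\begin{equation*}
\lim_N L_{N,p,D}(t)\geq \limsup_N \big[(N+1)L_{N+1,p,D}(t)-NL_{N,p,D}(t)\big],
\end{equation*}
and compare the cavity contribution of the $(N+1)$-st spin against the same cavity computation performed on a Ruelle cascade whose overlap kernel is the path $\pi$ and whose weights are governed by $\zeta$. The key structural input is the \emph{synchronization} of the vector-spin overlaps: a matrix-valued version of the Ghirlanda--Guerra identities, forced by a small random perturbation of the Hamiltonian, shows that the components $R_{l,l'}^{k,k'}$ are deterministic non-decreasing functions of a single master scalar, which is exactly what allows a one-parameter path $\pi:[0,1]\to \Gamma_\kappa$ to encode the full cascade structure. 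Optimizing $(\lambda,\zeta,\pi)$ against the limiting Gibbs measure then realizes the infimum.

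The principal obstacle is the lower bound, and within it the matrix synchronization together with the handling of unbounded spins. The $\ell^p$ penalty $t\norm{\asigma}_2^p$ sitting inside the terminal function $f_\lambda^\infty$ confines the supremum in \eqref{eqn: GP zero-temperature terminal condition} and underlies the growth bounds of \cref{GP zero temperature terminal growth bound}, which make the recursion \eqref{eqn: GP Parisi sequence at zero temperature} well-defined; however, verifying uniform integrability of exponential moments at every cascade level, commuting the $\beta\to\infty$ limit with the infimum over $(\lambda,\zeta,\pi)$, and ruling out boundary minimizers that escape to infinity, all require delicate a priori bounds that exploit the hypothesis $D\in\Gamma_\kappa^+$ to keep the constrained configurations non-degenerate.
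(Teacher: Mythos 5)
Your sketch reproduces the generic Guerra/Aizenman--Sims--Starr template for a Parisi formula, but it does not engage with the actual difficulty of this theorem, which is the zero-temperature limit. The paper does not re-derive the cavity and synchronization machinery: it takes the positive-temperature vector-spin Parisi formula of \cite{PanVec} as given (\cref{GP limit of free energy}), sandwiches $L_{p,D}(t)$ between $\beta\to\infty$ limits of free energies (\cref{GP Lagrangian by free energy}), and then proves the interchange $\lim_{\beta\to\infty}\inf\Par_\beta=\inf\Par_\infty$. That interchange is the new content of \cref{GP sec9}--\cref{GP sec11}, and your proposal is silent on it. Concretely, the terminal conditions $f_\lambda^\beta$ and $f_\lambda^\infty$ and the recursion weights $\beta\alpha_l$ versus $\zeta_l$ do not match, so one must (i) bound $f_\lambda^\beta$ by $f_\lambda^\infty$ and conversely up to errors vanishing as $\beta\to\infty$ (\cref{GP positive temperature terminal condition bounded by zero temperature} and \cref{GP zero temperature terminal condition bounded by positive temperature}; the latter needs a convexification/mean-value argument and only holds for $\norm{\x}_2\leq L$), and (ii) propagate these pointwise bounds through the nonlinear recursions defining $X_0$ and $Y_0$, which the paper does via the Auffinger--Chen representation together with the moment bound of \cref{GP moment bound on zero-temperature process} and the a priori bound $\norm{\lambda^\beta}_\infty\leq C\beta$ on almost-minimizers (\cref{GP bound on almost maximizer}). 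That last lemma is the only place where $D\in\Gamma_\kappa^+$ is genuinely used; your proposal invokes positive definiteness only vaguely.

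Beyond the missing interchange, your lower bound as written would not run. The Aizenman--Sims--Starr scheme and the Ghirlanda--Guerra/synchronization mechanism are statements about Gibbs measures at finite $\beta$; at zero temperature the overlap-distribution theory degenerates, so you would in any case have to work at finite $\beta$ and then confront exactly the $\beta\to\infty$ problem above. Moreover your Ces\`aro inequality points the wrong way: if $a_N/N\to L$ then $L\geq \liminf_N(a_{N+1}-a_N)$, not $\limsup$, so the displayed ASS inequality is false as stated. Finally, replacing the hard constraint $R(\basigma,\basigma)=D$ by a Lagrange multiplier with claimed equality in the limit is unjustified; the paper instead keeps the constraint (relaxed to an $\epsilon$-neighbourhood, \cref{GP relaxing the constrained Lagrangian}) and lets $\lambda$ enter only through the variational formula inherited from \cite{PanVec}.
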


We close this section with a brief outline of the paper. \Cref{GP sec2} will be devoted to the range $1\leq p\leq 2$ and will include a proof of \cref{GP main result 1p2}. The rest of the paper will focus on the range $2<p<\infty$. In \cref{GP sec3}, we will use the Guerra-Toninelli interpolation \cite{GuerraToninelli, PanSKB} and the Gaussian concentration inequality \cite{PanL,PanSKB} to show that the constrained Lagrangian \eqref{eqn: GP Lagrangian with self-overlap D} admits a deterministic limit. In \cref{GP sec4}, we show that, in a certain sense, the limit of the constrained Lagrangian depends continuously on the constraint. This continuity result is inspired by lemma 7.1 in \cite{WeiKuo}. Unfortunately, lemma 7.1 in \cite{WeiKuo} does not extend to the vector spin setting since we can no longer modify overlaps by simply re-scaling spin configurations. To overcome this issue, we will revisit lemma 4 in \cite{PanVec}, originally designed to prove a vector spin version of the Ghirlanda-Guerra identities \cite{Ghirlanda}, and we will leverage Dudley's entropy inequality \cite{DudleyInequality, TalagrandC}. With this continuity result at hand, we will closely follow section 7 and section 8 in \cite{WeiKuo} to prove \cref{GP limit of unconstrained Lagrangian} and \cref{GP in terms of limiting Lagrangian}. This will be the content of \cref{GP sec5} and \cref{GP sec6}. In \cref{GP sec7}, we will introduce a free energy functional that depends on an inverse temperature parameter $\beta>0$ and is asymptotically equivalent to the constrained Lagrangian \eqref{eqn: GP Lagrangian with self-overlap D} after letting $\beta\to \infty$. For each finite $\beta>0$, a simple modification of the arguments in \cite{PanVec}, which we will not detail, gives a Parisi-type variational formula for the limit of the free energy functional. This is reviewed in \cref{GP sec8}. The rest of the paper is devoted to finding a similar Parisi-type variational formula after letting $\beta\to \infty$. This is where our approach differs substantially from that in \cite{WeiKuo}. In our attempt to generalize the truncation argument in sections 10-12 of \cite{WeiKuo} to the vector spin setting, we discovered that by a careful analysis of the terminal condition \eqref{eqn: GP zero-temperature terminal condition} and its positive temperature analogue, the proof for the scalar, $\kappa=1$, case could be considerably simplified. This simplified proof extended with minor modifications to the vector spin setting and is what we present between \cref{GP sec9} and \cref{GP sec11} of this paper. In particular, our arguments can be used to simplify the proof of the main result in \cite{WeiKuo}. The careful analysis of the terminal conditions is undertaken in \cref{GP sec9}. The resulting bounds are combined with the Auffinger-Chen representation \cite{AuffingerChenREP, Jagannath} in \cref{GP sec10} to compare the Parisi functional \eqref{eqn: GP Parisi functional at zero temperature} and its positive temperature counterpart. The specific form of the Auffinger-Chen representation that we use is a higher dimensional generalization of that in \cite{WeiKuo, WeiKuo2DPar}. The proof of \cref{GP main result} is finally completed in \cref{GP sec11}. For the reader's convenience, we have postponed a number of technical estimates to \cref{GP app1}, and we have included a review of elementary results in linear algebra in \cref{GP app2}.

\section{The range \texorpdfstring{$1\leq p\leq 2$}{1 ≤ p ≤ 2}}\label{GP sec2}

In this section we show that the $\ell^p$-Gaussian-Grothendieck problem with vector spins agrees with its scalar counterpart in the range $1\leq p\leq 2$ by proving \eqref{eqn: GP key for 1p2}. Recall the definition \eqref{eqn: GP deterministic} of the $\ell^p$-Grothendieck problem $\GP_{N,p}(A)$ for an arbitrary $N\times N$ matrix $A=(a_{ij})$.

\begin{lemma}\label{GP key for 1p2 proof}
For any $N\times N$ matrix $A=(a_{ij})$ and every $1\leq p\leq 2$,
\begin{equation}\label{eqn: GP key for 1p2 proof}
\GP_{N,p}(A)=\max\Big\{\sum_{i,j=1}^N a_{ij}\big(\asigma_i,\asigma_j\big)\mid \norm{\basigma}_{p,2}=1\Big\}.
\end{equation}
\end{lemma}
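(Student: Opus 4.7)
The plan is to establish the two inequalities separately. The direction $\GP_{N,p}(A) \leq $ RHS is immediate by an embedding: given a scalar optimizer $\bsigma \in \R^N$ with $\sum_i \abs{\sigma_i}^p = 1$, form a vector-spin configuration $\basigma$ by setting $\asigma_i = (\sigma_i, 0, \ldots, 0) \in \R^\kappa$, so that $\norm{\asigma_i}_2 = \abs{\sigma_i}$, $\norm{\basigma}_{p,2} = 1$, and $(\asigma_i, \asigma_j) = \sigma_i\sigma_j$. This exhibits a feasible point for the RHS with the same objective value.

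The substantive direction is RHS $\leq \GP_{N,p}(A)$, which I would prove via the Gaussian Hilbert space trick mentioned in the introduction. Given any feasible $\basigma$ with $\norm{\basigma}_{p,2} = 1$, let $g$ be a standard Gaussian vector in $\R^\kappa$ and define the scalar coordinates $\tau_i = (\asigma_i, g)$. The key identity $\E[\tau_i\tau_j] = (\asigma_i, \asigma_j)$ converts the vector-spin quadratic form into an expectation,
\[
\sum_{i,j=1}^N a_{ij}(\asigma_i, \asigma_j) = \E \sum_{i,j=1}^N a_{ij}\, \tau_i \tau_j.
\]
For each realization of $g$, the rescaled vector $\btau/\norm{\btau}_p$ is feasible for the scalar problem, so $\sum_{i,j}a_{ij}\tau_i\tau_j \leq \norm{\btau}_p^2\, \GP_{N,p}(A)$. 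It therefore suffices to show $\E \norm{\btau}_p^2 \leq 1$.

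This last bound is where the hypothesis $1 \leq p \leq 2$ enters, and it reduces to a single application of Jensen's inequality. Writing $w_i = \norm{\asigma_i}_2^p$, one has $\sum_i w_i = \norm{\basigma}_{p,2}^p = 1$, and $\tilde g_i = (\asigma_i, g)/\norm{\asigma_i}_2$ is a standard Gaussian whenever $\asigma_i \neq 0$, so that $\norm{\btau}_p^p = \sum_i w_i \abs{\tilde g_i}^p$. Because $2/p \geq 1$, the function $x \mapsto x^{2/p}$ is convex on $[0,\infty)$, and Jensen's inequality applied to the probability weights $(w_i)$ gives
\[
\norm{\btau}_p^2 = \Big(\sum_{i=1}^N w_i \abs{\tilde g_i}^p\Big)^{2/p} \leq \sum_{i=1}^N w_i \abs{\tilde g_i}^2.
\]
Taking expectations and using $\E \abs{\tilde g_i}^2 = 1$ yields $\E\norm{\btau}_p^2 \leq 1$, completing the reverse inequality. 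Both maxima are attained by compactness of the relevant spheres, so combining the two directions proves the lemma. The Jensen step is the only non-trivial ingredient in the entire argument, and it is precisely where the reasoning would collapse for $p > 2$, which is consistent with the dichotomy announced in the paper.
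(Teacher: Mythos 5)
Your proof is correct and follows essentially the same route as the paper: the embedding $\asigma_i=(\sigma_i,0,\dots,0)$ for the easy direction, and the Gaussian rounding $X_i=(g,\asigma_i)$ with the normalization bound $\E\norm{X}_p^2\leq 1$ for the reverse one. The only difference is in that last bound, where the paper invokes Minkowski's integral inequality ($\norm{\norm{X}_p}_{L^2}\leq\norm{\norm{X}_{L^2}}_p$ for $p\leq 2$) while you derive the same estimate directly via Jensen's inequality with the probability weights $w_i=\norm{\asigma_i}_2^p$ — an equivalent and slightly more elementary verification of the same fact.
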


\begin{proof}
Given $\bsigma\in \R^N$ in the unit $\ell^p$-sphere, consider the vector spin configuration $\basigma\in (\R^\kappa)^N$ defined by
$$\basigma(k)=
\begin{cases}
\bsigma& \text{if } k=1,\\
0& \text{otherwise}.
\end{cases}$$
Notice that $\norm{\basigma}_{p,2}^p=\norm{\bsigma}_p^p=1$ and $\sum_{i,j=1}^N a_{ij}\sigma_i\sigma_j=\sum_{i,j=1}^N a_{ij}\big(\asigma_i,\asigma_j\big)$. It follows that
$$\sum_{i,j=1}^N a_{ij}\sigma_i\sigma_j\leq \max\Big\{\sum_{i,j=1}^N a_{ij}\big(\asigma_i,\asigma_j\big)\mid \norm{\basigma}_{p,2}=1\Big\},$$
and taking the maximum over all such $\bsigma\in \R^N$ gives the upper bound in \eqref{eqn: GP key for 1p2 proof}. To prove the matching lower bound, fix a vector spin configuration $\basigma\in (\R^\kappa)^N$ in the unit $\ell^{p,2}$-sphere. Let $g$ be a standard Gaussian random vector in $\R^\kappa$ and for each $1\leq i\leq N$ consider the random variable
$$X_i=\big(g,\asigma_i\big).$$ 
Observe that $\E X_iX_j=\sum_{k=1}^\kappa \sigma_i(k)\sigma_j(k)=\big(\asigma_i,\asigma_j\big)$. Normalizing the random vector $X=(X_i)_{i\leq N}$, it is easy to see that
\begin{equation}\label{eqn: GP key for 1p2 lower bound}
\sum_{i,j=1}^N a_{ij}\big(\asigma_i,\asigma_j\big)=\E \sum_{i,j=1}^N a_{ij}X_iX_j\leq \GP_{N,p}(A)\E \Big(\sum_{i=1}^N \abs{X_i}^p\Big)^{2/p}.
\end{equation}
To bound this further, denote by $\norm{\cdot}_{L^2}$ the $L^2$-norm defined by the law of $g$. Minkowski's integral inequality and the assumption $1\leq p\leq 2$ imply that
$$\E \Big(\sum_{i=1}^N \abs{X_i}^p\Big)^{2/p}=\norm{\norm{X}_p}_{L^2}^2\leq \norm{\norm{X}_{L^2}}_p^2=\Big(\sum_{i=1}^N \big(\E \abs{X_i}^2\big)^{p/2}\Big)^{2/p}=\norm{\basigma}_{p,2}^2=1.$$
Substituting this into \eqref{eqn: GP key for 1p2 lower bound} gives the lower bound in \eqref{eqn: GP key for 1p2 proof} and completes the proof.
\end{proof}

Applying this result to the random matrix $G_N=(g_{ij})_{i,j\leq N}$ conditionally on the disorder chaos shows that $\VGP_{N,p}=\GP_{N,p}$ for $1\leq p\leq 2$. \Cref{GP main result 1p2} is therefore an immediate consequence of theorem 1.1 and theorem 1.2 in \cite{WeiKuo}. This concludes our discussion of the $\ell^p$-Gaussian-Grothendieck problem for $1\leq p\leq 2$.

\section{The limit of the constrained Lagrangian}\label{GP sec3}

In this section we begin the proof of \cref{GP limit of unconstrained Lagrangian} by combining the Gaussian concentration inequality with the Guerra-Toninelli interpolation to show that the random quantity \eqref{eqn: GP Lagrangian with self-overlap D} almost surely admits a deterministic limit for every constraint $D\in \Gamma_\kappa$. As usual \cite{GuerraToninelli, PanSKB}, the proof will come down to proving super-additivity of an appropriate sequence and appealing to the classical Fekete lemma.

\begin{theorem}\label{GP Guerra Toninelli}
If $2<p<\infty$ and $t>0$, then for every $D\in \Gamma_\kappa$ the limit
\begin{equation}
L_{p,D}(t)=\lim_{N\to\infty}\E L_{N,p,D}(t)
\end{equation}
exists. Moreover, with probability one, $L_{p,D}(t)=\lim_{N\to\infty}L_{N,p,D}(t)$.
\end{theorem}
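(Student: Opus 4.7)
The plan is to establish super-additivity of the sequence $N \mapsto N\E L_{N,p,D}(t)$ via a Guerra--Toninelli / Slepian-style Gaussian comparison of covariance structures, invoke Fekete's lemma for the existence of the limit in expectation, and then upgrade to almost sure convergence using Gaussian Lipschitz concentration together with the Borel--Cantelli lemma.

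For the super-additivity, I would fix $N = N_1 + N_2$ and restrict the maximum defining $L_{N,p,D}(t)$ to configurations of the form $\basigma = (\basigma^{(1)}, \basigma^{(2)})$ with $\basigma^{(i)} \in \Sigma_{N_i}(D)$. Any such concatenation lies in $\Sigma_N(D)$ because the definition \eqref{eqn: GP self-overlap} gives
\[
R(\basigma,\basigma) = \tfrac{N_1}{N} R(\basigma^{(1)},\basigma^{(1)}) + \tfrac{N_2}{N} R(\basigma^{(2)},\basigma^{(2)}) = D,
\]
and the $\ell^{p,2}$-penalty splits additively as $\norm{\basigma}_{p,2}^p = \norm{\basigma^{(1)}}_{p,2}^p + \norm{\basigma^{(2)}}_{p,2}^p$. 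On this restricted sub-class I would compare the two Gaussian processes
\[
X(\basigma) := H_N(\basigma) \qquad\text{and}\qquad Y(\basigma) := H_{N_1}(\basigma^{(1)}) + H_{N_2}(\basigma^{(2)}),
\]
where the latter is built from independent disorders for each block. Both share diagonal variance $N\norm{D}_{\mathrm{HS}}^2$, and by \eqref{eqn: GP covariance} together with the convexity of $\norm{\cdot}_{\mathrm{HS}}^2$, for any two configurations with block overlaps $R_i = R(\basigma^{(i)}, \basigma'^{(i)})$,
\[
\E X(\basigma) X(\basigma') = N \norm{\tfrac{N_1}{N} R_1 + \tfrac{N_2}{N} R_2}_{\mathrm{HS}}^2 \leq N_1 \norm{R_1}_{\mathrm{HS}}^2 + N_2 \norm{R_2}_{\mathrm{HS}}^2 = \E Y(\basigma) Y(\basigma').
\]
The Guerra--Toninelli interpolation, applied to the softened maximum $\tfrac{1}{\beta}\log\sum_\basigma e^{\beta H_{N,p,t}(\basigma)}$ and then letting $\beta \to \infty$, transfers this covariance inequality through the deterministic shift $-t\norm{\basigma}_{p,2}^p$ to the expected maxima, yielding
\[
N\E L_{N,p,D}(t) \geq N_1\E L_{N_1,p,D}(t) + N_2\E L_{N_2,p,D}(t).
\]
A crude spectral bound $\E L_{N,p,D}(t) \leq \tr(D)\cdot \E \norm{G/\sqrt{N}}_{\mathrm{op}} = O(1)$ (obtained by dropping the non-positive penalty and estimating each of the $\kappa$ scalar SK Hamiltonians by the operator norm of $G = (g_{ij})$) keeps the super-additive sequence bounded, so Fekete's lemma produces a finite limit $L_{p,D}(t) := \lim_N \E L_{N,p,D}(t)$.

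For the almost-sure statement, the identity $\sum_{i,j}(\asigma_i,\asigma_j)^2 = N^2\norm{D}_{\mathrm{HS}}^2$ valid on $\Sigma(D)$ shows that for each fixed $\basigma \in \Sigma(D)$ the linear map $(g_{ij}) \mapsto H_N(\basigma)$ has $\ell^2$-norm of coefficients equal to $\sqrt{N}\norm{D}_{\mathrm{HS}}$. Since $N L_{N,p,D}(t)$ is the supremum over $\Sigma(D)$ of such linear functionals shifted by the deterministic penalty, the map $(g_{ij}) \mapsto L_{N,p,D}(t)$ is Lipschitz with constant $\norm{D}_{\mathrm{HS}}/\sqrt{N}$. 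Borell--TIS then gives
\[
\P\bigl(\abs{L_{N,p,D}(t) - \E L_{N,p,D}(t)} > \varepsilon\bigr) \leq 2\exp\bigl(-N\varepsilon^2/(2\norm{D}_{\mathrm{HS}}^2)\bigr),
\]
which is summable in $N$; Borel--Cantelli closes the argument.

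The main obstacle I anticipate is implementing the Gaussian comparison at the level of the maximum rather than at positive temperature: Slepian/Sudakov--Fernique are most cleanly stated for smooth functionals of the disorder, so one must either work with the softened maximum at inverse temperature $\beta$ and pass to $\beta \to \infty$, or appeal to a continuous version of Slepian on the compact index set $\Sigma(D)$ (which is bounded thanks to $\sum_i \norm{\asigma_i}_2^2 = N\tr D$). Everything else reduces to the convexity of $\norm{\cdot}_{\mathrm{HS}}^2$, the block-additivity of both the penalty and of self-overlaps of concatenations, and standard Gaussian estimates.
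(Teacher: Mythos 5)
Your proposal is correct and follows essentially the same route as the paper: super-additivity of $N\E L_{N,p,D}(t)$ via the Guerra--Toninelli interpolation restricted to concatenations in $\Sigma_{N_1}(D)\times\Sigma_{N_2}(D)$ (where all self-overlaps equal $D$, so the diagonal variances match and convexity of $\norm{\cdot}_{\mathrm{HS}}^2$ gives the off-diagonal comparison), followed by Fekete, Gaussian concentration of the maximum, and Borel--Cantelli. Your explicit covariance-comparison framing and the paper's interpolating-Hamiltonian computation are the same argument, and your added remark that a crude operator-norm bound keeps the super-additive sequence bounded (so Fekete yields a finite limit) is a point the paper leaves implicit.
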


\begin{proof}
We will be working with systems of different sizes, so let us make the dependence of \eqref{eqn: GP configurations with self-overlap D} on $N$ explicit by writing $\Sigma_N(D)$. Given $\basigma\in \Sigma_N(D)$, the covariance structure of the vector spin Hamiltonian \eqref{eqn: GP Hamiltonian} established in \eqref{eqn: GP covariance} together with \cref{trace dominates HS norm} reveal that
$$\E H_N(\basigma)^2=N\norm{R(\basigma,\basigma)}_{\text{HS}}^2\leq N\tr(R(\basigma,\basigma))^2=N\tr(D)^2.$$
It follows by the Gaussian concentration of the maximum that for any $s>0$,
$$\P\big\{\abs{L_{N,p,D}(t)-\E L_{N,p,D}(t)}\geq s\big\}\leq 2\exp\Big(-\frac{Ns^2}{4\tr(D)^2}\Big).$$
Since the right-hand side of this expression is summable in $N$, the Borel-Cantelli lemma implies that
$$\limsup_{N\to\infty}\abs{ L_{N, p,D}(t)-\E L_{N,p,D}(t)}=0$$
with probability one. It is therefore sufficient to prove that the sequence $(\E L_{N,p,D}(t))_N$ admits a limit. We will do this through the Fekete lemma by showing that the sequence $(N\E L_{N,p,D}(t))_N$ is super-additive. This is equivalent to proving that for all integers $N,M\geq 1$,
\begin{equation}\label{eqn: GP Guerra-Toninelli super-additivity}
\E \max_{\basigma\in \Sigma_N(D)}H_{N,p,t}(\basigma)+\E \max_{\basigma\in \Sigma_M(D)}H_{M,p,t}(\basigma)\leq \E \max_{\basigma\in \Sigma_{N+M}(D)}H_{N+M,p,t}(\basigma).
\end{equation}
Given a spin configuration $\barho\in (\R^\kappa)^{N+M}$, write $\barho=(\basigma,\batau)$ for $\basigma\in (\R^\kappa)^N$ and $\batau\in (\R^\kappa)^M$. Consider three independent vector spin SK Hamiltonians $H_{N+M}(\barho)$, $H_N(\basigma)$ and $H_M(\batau)$ defined on $\Sigma_N(D)\times \Sigma_M(D)$, $\Sigma_N(D)$ and $\Sigma_M(D)$ respectively. For each $s\in [0,1]$ introduce the interpolating Hamiltonian on $\Sigma_N(D)\times \Sigma_M(D)$,
$$H_{N+M,s}(\barho)=\sqrt{s}H_{N+M}(\barho)+\sqrt{1-s}\big(H_N(\basigma)+H_M(\batau)\big)-t\norm{\basigma}_{p,2}^p-t\norm{\batau}_{p,2}^p.$$
Given an inverse temperature parameter $\beta>0$ and two probability measures $\mu_N$ and $\mu_M$ supported on $\Sigma_N(D)$ and $\Sigma_M(D)$ respectively, denote by
$$\p(s)=\frac{1}{\beta(N+M)}\E\log \int_{\Sigma_N(D)\times \Sigma_M(D)}\exp\beta H_{N+M,s}(\barho)\ud \mu_N(\basigma)\ud \mu_M(\batau)$$
the interpolating free energy and write $\langle \cdot\rangle_s$ for the Gibbs average with respect to the interpolating Gibbs measure
$$\ud G_{N+M}(\basigma,\batau)=\frac{\exp \beta H_{N+M,s}(\barho)\ud \mu_N(\basigma)\ud \mu_M(\batau)}{\int_{\Sigma_N(D)\times \Sigma_M(D)}\exp \beta H_{N+M,s}(\barho)\ud\mu_N(\basigma)\ud \mu_M(\batau)}.$$
The Gaussian integration by parts formula (see for instance lemma 1.1 in \cite{PanSKB}) yields
$$\p'(s)=\frac{1}{N+M}\E\Big\langle \frac{\partial H_{N+M,s}(\basigma)}{\partial s}\Big\rangle_s=\frac{1}{N+M}\E\big\langle C(\barho^1,\barho^1)-C(\barho^1,\barho^2)\big\rangle_s,$$
where
\begin{align*}
C(\barho^1,\barho^2)&=\frac{\beta (N+M)}{2}\Big(\norm{R(\barho^1,\barho^2)}_{\text{HS}}^2-\frac{N}{N+M}\norm{R(\basigma^1,\basigma^2)}_{\text{HS}}^2\\
&\qquad \qquad\qquad\quad-\frac{M}{N+M}\norm{R(\batau^1,\batau^2)}_{\text{HS}}^2\Big).
\end{align*}
Since
$$R(\barho^1,\barho^2)=\frac{N}{N+M}R(\basigma^1,\basigma^2)+\frac{M}{N+M}R(\batau^1,\batau^2),$$
the convexity of the square of a norm implies that $C(\barho^1,\barho^2)\leq 0$. Combined with the fact that $R(\barho^1,\barho^1)=R(\basigma^1,\basigma^1)=R(\batau^1,\batau^1)=D$, this shows that $\p'(s)\geq 0$ and therefore $\p(0)\leq \p(1)$. Letting $\beta\to\infty$ in this inequality and remembering that the $L^q$-norm tends to the $L^\infty$-norm as $q\to\infty$ yields
$$\E \max_{\basigma\in \Sigma_N(D)}H_{N,p,t}(\basigma)+\E \max_{\batau\in \Sigma_M(D)}H_{M,p,t}(\batau)\leq \E \max_{\barho\in \Sigma_N(D)\times \Sigma_M(D)}H_{N+M,p,t}(\barho).$$
Since $\Sigma_N(D)\times \Sigma_M(D)\subset \Sigma_{N+M}(D)$, this gives \eqref{eqn: GP Guerra-Toninelli super-additivity} and completes the proof.
\end{proof}

The heuristic validity of \cref{GP limit of unconstrained Lagrangian} should now be clear. From \eqref{eqn: GP self-overlap}, the self-overlap of any vector spin configuration is a Gram matrix in $\Gamma_\kappa$. This means that for every integer $N\geq 1$, the relationship between the unconstrained Lagrangian \eqref{eqn: GP unconstrained Lagrangian} and the constrained Lagrangian \eqref{eqn: GP Lagrangian with self-overlap D} is
\begin{equation}
L_{N,p}(t)=\sup_{D\in \Gamma_\kappa}L_{N,p,D}(t).
\end{equation}
Formally bringing the limit into the supremum and using the density of positive definite matrices in the space of non-negative definite matrices yields \eqref{eqn: GP limit of unconstrained Lagrangian}. To turn this heuristic into a rigorous argument, we will use a compactness argument. This will be done in \cref{GP sec5} and will require the continuity properties of the constrained Lagrangian \eqref{eqn: GP Lagrangian with self-overlap D} that we explore in the next section.

\section{Continuity of the constrained Lagrangian}\label{GP sec4}

In this section we prove that, in a certain sense, the limit of the constrained Lagrangian \eqref{eqn: GP Lagrangian with self-overlap D} is continuous with respect to the constraint $D\in \Gamma_\kappa$ by combining lemma 4 in \cite{PanVec} with the classical Dudley inequality as it is stated in equation (A.23) of \cite{TalagrandC}. 

Lemma 4 in \cite{PanVec} was originally designed to modify the vector spin coordinates in the mixed-$p$-spin model in order to prove the matrix-overlap Ghirlanda-Guerra identities. Using these identities, it is then possible to access the synchronization mechanism \cite{PanMS, PanPotts} and find a tight lower bound for the limit of the free energy through the Aizenman-Sims-Starr scheme \cite{JustinP, PanVec}. We will apply this lemma for a different purpose, and, as it turns out, we will need a more explicit expression for the constant $L>0$ appearing in the upper bound. For our purposes, it will be important that this constant is uniformly bounded for all $D\in \Gamma_\kappa$ with uniformly bounded trace. We will therefore repeat the proof of this result and carefully track the dependence of constants.

For each $\epsilon>0$ and $D\in \Gamma_\kappa$ denote by $B_\epsilon(D)$ the open $\epsilon$-neighbourhood of $D$,
\begin{equation}
B_\epsilon(D)=\{\gamma\in \Gamma_\kappa\mid \norm{\gamma-D}_\infty<\epsilon\},
\end{equation}
with respect to the sup-norm $\norm{\gamma}_\infty=\max_{k,k'}\abs{\gamma_{k,k'}}$ on the space of $\kappa \times \kappa$ matrices, and consider the set of spin configurations
\begin{equation}
\Sigma_\epsilon(D)=\big\{\basigma\in (\R^\kappa)^N\mid R(\basigma,\basigma)\in B_\epsilon(D)\big\}
\end{equation}
with self-overlap in the $\epsilon$-neighbourhood of $D$. Denote by $\lambda_1\geq \cdots\geq  \lambda_\kappa$ the real and non-negative eigenvalues of $D$ and let
\begin{equation}
D=Q\Lambda Q^T
\end{equation}
be the eigendecomposition of $D$ with diagonal matrix $\Lambda=\diag(\lambda_1,\ldots,\lambda_\kappa)$. Given $\epsilon>0$, let $0\leq m\leq \kappa$ be such that $\lambda_m\geq \sqrt{\epsilon}$ and $\lambda_{m+1}<\sqrt{\epsilon}$. Introduce the matrix
\begin{equation}
D_\epsilon=Q\Lambda_\epsilon Q^T,
\end{equation}
where $\Lambda_\epsilon=\diag(\lambda_1,\ldots,\lambda_m,0,\ldots,0)$. Given any $\basigma\in \Sigma_\epsilon(D)$, we will construct a $\kappa\times \kappa$ matrix $A_{\basigma}$ such that the self-overlap of the configuration $A_{\basigma} \basigma=(A_{\basigma}\asigma_{i})_{i\leq N}$ is equal to $D_\epsilon$ and such that, in a certain sense, $A_{\basigma}$ has small distortion. Notice that the self-overlap of $A_{\basigma} \basigma$ is given by
\begin{equation}\label{eqn: GP overlap of modified coordinates}
R(A_{\basigma} \basigma, A_{\basigma} \basigma)=\frac{1}{N}\sum_{i=1}^N(A_{\basigma}\asigma_{i})(A_{\basigma}\asigma_{i})^T=A_{\basigma}R(\basigma,\basigma)A_{\basigma}^T,
\end{equation}
so we will need a matrix with $A_{\basigma}R(\basigma,\basigma)A_{\basigma}^T=D_\epsilon$. In this context, small distortion will mean that the overlap of $\basigma$ with other configurations should not change much when $\basigma$ is replaced by $A_{\basigma} \basigma$. To control this distortion, fix a configuration $\barho\in (\R^\kappa)^N$ with $\Norm{\barho}_{2,2}^2\leq u$ for some $u>0$, let $\batau=A_{\basigma}\basigma-\basigma$ and observe that by the Cauchy-Schwarz inequality
\begin{align}\label{eqn: GP modified coordinates small distortion}
\norm{R(A_{\basigma}\basigma,\barho)-R(\basigma,\barho)}_{\text{HS}}&=\Big\lVert \frac{1}{N}\sum_{i=1} ^NA_{\basigma}\asigma_i\arho_i^T-\frac{1}{N}\sum_{i=1}^N\asigma_i\arho_i^T\Big\rVert_{\text{HS}}\notag\\
&\leq \frac{1}{N}\sum_{i=1}^N \norm{\atau_i\arho_i^T}_{\text{HS}}=\frac{1}{N}\sum_{i=1}^N \norm{\atau_i}_2\norm{\arho_i}_2\notag\\
&\leq \sqrt{u}\Big(\frac{1}{N}\sum_{i=1}^N \norm{\atau_i}_2^2\Big)^{1/2}= \sqrt{u}\tr(R(\batau,\batau))^{1/2}\notag\\
&=\sqrt{u}\tr\big((A_{\basigma}-I)R(\basigma,\basigma)(A_{\basigma}-I)^T\big)^{1/2},
\end{align}
where the last inequality follows from the fact that $\batau=(A_{\basigma}-I)\basigma$. We therefore need a matrix for which $\tr((A_{\basigma}-I)R(\basigma,\basigma)(A_{\basigma}-I)^T)$ is small. The construction of the matrix $A_{\basigma}$ is precisely the content of lemma 4 in \cite{PanVec}.

\begin{lemma}\label{GP modified coordinates}
Given $0<\epsilon < \kappa^{-2}$, $D\in \Gamma_\kappa$, and $R\in B_\epsilon(D)$, there exists a matrix $A=A(R)$ such that $ARA^T=D_\epsilon$ and
\begin{equation}\label{eqn: GP modified coordinates bound}
\tr\big((A-I)R(A-I)^T\big)\leq C(\tr(D)+1)\sqrt{\epsilon}
\end{equation}
for some constant $C>0$ that depends only on $\kappa$.
\end{lemma}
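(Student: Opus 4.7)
\medskip
\noindent\textbf{Proof proposal.} My plan is to build $A$ by a block construction in the eigenbasis of $D$, and then track the distortion through a trace identity that reduces everything to a matrix square-root perturbation bound. Throughout, I will use the fact that the sup-norm and operator norm on $\R^{\kappa\times\kappa}$ satisfy $\norm{M}_{\mathrm{op}}\leq \kappa\norm{M}_\infty$, so $\norm{R-D}_{\mathrm{op}}\leq \kappa\epsilon$.

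\medskip
\noindent\emph{Step 1: Reduction to the eigenbasis of $D$ and construction of $A$.} Set $\tilde R=Q^T R Q$ and $\tilde A=Q^T A Q$; by orthogonal invariance, the problem is to produce $\tilde A$ with $\tilde A \tilde R \tilde A^T=\Lambda_\epsilon$ and $\tr((\tilde A-I)\tilde R(\tilde A-I)^T)$ small. Since $\norm{\tilde R-\Lambda}_{\mathrm{op}}\leq \kappa\epsilon$, the top-left $m\times m$ block $\tilde R_{11}$ of $\tilde R$ satisfies $\tilde R_{11}\geq (\lambda_m-\kappa\epsilon)I_m\geq \sqrt{\epsilon}(1-\kappa\sqrt{\epsilon})I_m$, which is positive definite by our assumption $\kappa^2\epsilon<1$. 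I will therefore define
\begin{equation*}
\tilde A=\begin{pmatrix}\Lambda_m^{1/2}\tilde R_{11}^{-1/2} & 0\\ 0 & 0\end{pmatrix},\qquad A=Q\tilde A Q^T,
\end{equation*}
where $\Lambda_m=\diag(\lambda_1,\ldots,\lambda_m)$.

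\medskip
\noindent\emph{Step 2: Verification of $ARA^T=D_\epsilon$.} A direct block computation, using that the off-diagonal blocks of $\tilde A$ are zero and $(\Lambda_m^{1/2}\tilde R_{11}^{-1/2})\tilde R_{11}(\Lambda_m^{1/2}\tilde R_{11}^{-1/2})^T=\Lambda_m^{1/2}\tilde R_{11}^{-1/2}\tilde R_{11}\tilde R_{11}^{-1/2}\Lambda_m^{1/2}=\Lambda_m$, gives $\tilde A\tilde R\tilde A^T=\Lambda_\epsilon$, hence $ARA^T=D_\epsilon$.

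\medskip
\noindent\emph{Step 3: Distortion bound.} Writing $B=\Lambda_m^{1/2}\tilde R_{11}^{-1/2}$ and using the block structure,
\begin{equation*}
\tr\big((A-I)R(A-I)^T\big)=\tr\big((B-I_m)\tilde R_{11}(B-I_m)^T\big)+\tr(\tilde R_{22}).
\end{equation*}
Since the diagonal entries of $\tilde R_{22}$ differ from $\lambda_{m+1},\ldots,\lambda_\kappa$ by at most $\kappa\epsilon$ and each $\lambda_i<\sqrt{\epsilon}$ for $i>m$, the second term is bounded by $\kappa\sqrt{\epsilon}+\kappa^2\epsilon\leq 2\kappa\sqrt{\epsilon}$, absorbing $\kappa^2\epsilon\leq\kappa\sqrt\epsilon$. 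For the first term, the key observation is the algebraic identity
\begin{equation*}
\tr\big((B-I_m)\tilde R_{11}(B-I_m)^T\big)=\tr(\Lambda_m)-2\tr(\Lambda_m^{1/2}\tilde R_{11}^{1/2})+\tr(\tilde R_{11})=\bigl\lVert\Lambda_m^{1/2}-\tilde R_{11}^{1/2}\bigr\rVert_{\mathrm{HS}}^2,
\end{equation*}
which follows from $B\tilde R_{11}B^T=\Lambda_m$, $B\tilde R_{11}=\Lambda_m^{1/2}\tilde R_{11}^{1/2}$, and the cyclic property of the trace. Applying the standard Lidskii-type perturbation bound $\norm{P^{1/2}-Q^{1/2}}_{\mathrm{op}}\leq \norm{P-Q}_{\mathrm{op}}/(\sqrt{\lambda_{\min}(P)}+\sqrt{\lambda_{\min}(Q)})$ to $P=\tilde R_{11}$, $Q=\Lambda_m$, together with $\norm{\cdot}_{\mathrm{HS}}\leq \sqrt{\kappa}\,\norm{\cdot}_{\mathrm{op}}$ and the lower bound $\sqrt{\lambda_{\min}}\gtrsim \epsilon^{1/4}$, yields a bound of order $\kappa^3\epsilon^{3/2}$, which is in turn at most $\kappa^2\sqrt{\epsilon}$ by the assumption $\kappa^2\epsilon<1$.

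\medskip
\noindent\emph{Main obstacle.} The construction itself is straightforward once one works in the $D$-eigenbasis; the delicate point is controlling $\tr((B-I_m)\tilde R_{11}(B-I_m)^T)$. A naive bound of the form $\norm{B-I_m}_{\mathrm{op}}^2\cdot\tr(\tilde R_{11})$ introduces an unwanted $\lambda_1\cdot\tr(D)$ factor, so I would instead exploit the identity rewriting the first term as the squared Hilbert--Schmidt distance between the matrix square roots, and then appeal to the spectral perturbation bound for $A\mapsto A^{1/2}$. This is exactly where the cutoff $\lambda_m\geq\sqrt{\epsilon}$ in the definition of $D_\epsilon$ pays off: it keeps both matrices bounded below by $\sqrt{\epsilon}/2$, so the resulting factor $1/\sqrt{\alpha}\sim\epsilon^{-1/4}$ combines with the $\kappa^2\epsilon^2$ in the numerator to give exactly $\epsilon^{3/2}$, and hence the desired $\sqrt{\epsilon}$ after using $\epsilon<\kappa^{-2}$. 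Putting everything together produces a bound of the form $C\kappa^2\sqrt{\epsilon}$, which is majorised by $C(\tr(D)+1)\sqrt{\epsilon}$ for a constant depending only on $\kappa$.
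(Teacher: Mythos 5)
Your proof is correct, and while its skeleton (pass to the eigenbasis of $D$, build $A$ as a block matrix supported on the eigenvalues $\geq\sqrt{\epsilon}$, split the trace into a top-block distortion plus the tail $\tr(\tilde R_{22})$) matches the paper's, the core estimate is genuinely different. The paper conjugates by $\Lambda_m^{-1/2}$ to form the whitened matrix $\tilde R_m=\Lambda_m^{-1/2}R_m\Lambda_m^{-1/2}$, localizes its spectrum near $1$ via Gershgorin, and bounds $\tr(\Lambda_m(I-\tilde R_m^{1/2})^2)\leq\norm{\Lambda_m}_{\mathrm{HS}}\norm{I-\tilde R_m^{1/2}}_{\mathrm{HS}}^2\leq\kappa^4\tr(\Lambda_m)\epsilon$, which is the source of the $\tr(D)$ factor in \eqref{eqn: GP modified coordinates bound}. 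You instead take the (different, but equally admissible) matrix $B=\Lambda_m^{1/2}\tilde R_{11}^{-1/2}$, observe the exact identity $\tr((B-I_m)\tilde R_{11}(B-I_m)^T)=\lVert\Lambda_m^{1/2}-\tilde R_{11}^{1/2}\rVert_{\mathrm{HS}}^2$ (which checks out: $B\tilde R_{11}B^T=\Lambda_m$ and $B\tilde R_{11}=\Lambda_m^{1/2}\tilde R_{11}^{1/2}$, so both sides equal $\tr(\Lambda_m)+\tr(\tilde R_{11})-2\tr(\Lambda_m^{1/2}\tilde R_{11}^{1/2})$), and invoke the Ando--van Hemmen square-root perturbation bound. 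This buys you a cleaner and in fact stronger conclusion, $C\kappa^2\sqrt{\epsilon}$ with no $\tr(D)$ dependence, at the cost of importing one external inequality that the paper's self-contained Gershgorin argument avoids; you should state or cite that inequality precisely. Two small touch-ups: your claim that both matrices are bounded below by $\sqrt{\epsilon}/2$ is not quite right ($\lambda_{\min}(\tilde R_{11})\geq\sqrt{\epsilon}(1-\kappa\sqrt{\epsilon})$ can be smaller), but it is also unnecessary, since the denominator $\sqrt{\lambda_{\min}(\tilde R_{11})}+\sqrt{\lambda_{\min}(\Lambda_m)}\geq\lambda_m^{1/2}\geq\epsilon^{1/4}$ already suffices; and you should fix a single convention for $\norm{\tilde R-\Lambda}$ (the paper gets $\norm{Q^TRQ-\Lambda}_\infty\leq\kappa\epsilon$ via the Hilbert--Schmidt norm), though any of the resulting powers of $\kappa$ lands within the allowed constant.
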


\begin{proof}
The proof proceeds in two steps: first we reduce the problem to the case when $D=\Lambda$ and then we use Gershgorin's theorem to conclude. For the reader's convenience, Gershgorin's theorem has been transcribed as \cref{Gershgorin} in the appendix.
\newpage
\noindent\step{1: reducing to $D=\Lambda$}\\
Let us suppose temporarily that the result holds when $D$ is a diagonal matrix. Since $Q$ is an orthogonal matrix and the Hilbert-Schmidt norm is rotationally invariant,
$$\norm{Q^TRQ-\Lambda}_\infty\leq\norm{R-D}_{\text{HS}}\leq \kappa \epsilon.$$
We may therefore find a matrix $A(Q^TRQ)$ with $A(Q^TRQ)Q^TRQA(Q^TRQ)^T=\Lambda_\epsilon$ and
$$\tr\big((A(Q^TRQ)-I)Q^TRQ(A(Q^TRQ)-I)^T\big)\leq C \tr(\Lambda)\sqrt{\epsilon}.$$
If we set $A=QA(Q^TRQ)Q^T$, it is easy to see that $ARA^T=Q\Lambda_\epsilon Q^T=D_\epsilon$ and
\begin{align*}
\tr\big((A-I)R(A-I)^T\big)&=\tr\big(Q(A(Q^TRQ)-I)Q^TRQ(A(Q^TRQ)-I)Q^T\big)\\
&\leq C\tr(D)\sqrt{\epsilon}.
\end{align*}
The last inequality uses the cyclicity of the trace, the orthogonality of $Q$ and the fact that $\tr(D)=\tr(\Lambda)$. This shows that it suffices to prove the result when $D=\Lambda$ and $R\in B_\epsilon(\Lambda)$.\\
\step{2: proof for $D=\Lambda$}\\
Introduce the matrices $\smash{R_m=(R_{k,k'})_{k,k'\leq m}}$  and $\smash{\Lambda_m=\diag(\lambda_1,\ldots,\lambda_m)}$ consisting of the first $m$ rows and columns of $R$ and $\Lambda$ respectively. Consider the matrix $\smash{\tilde{R}_m=\Lambda_m^{-1/2}R_m\Lambda_m^{-1/2}}$. Since $R_m\in B_\epsilon(\Lambda_m)$ and $\Lambda_m$ is diagonal with all elements bounded below by $\sqrt{\epsilon}$, it is readily verified that $\norm{\tilde{R}_m-I}_\infty\leq \sqrt{\epsilon}$. Gershgorin's theorem implies that all the eigenvalues of $\tilde{R}_m$ are within $m\sqrt{\epsilon}$ from $1$. The assumption $\epsilon<\kappa^{-2}$ implies that $\tilde{R}_m$ is invertible and allows us to define the matrix
$$B=B(R_m)=\Lambda_m^{1/2}\tilde{R}^{-1/2}_m\Lambda_m^{-1/2}.$$
Using the fact that $R_m=\Lambda_m^{1/2}\tilde{R}_m\Lambda_m^{1/2}$, it is easy to see that $BR_mB^T=\Lambda_m$ and
$$(B-I)R_m(B-I)^T=\Lambda_m^{1/2}(I-\tilde{R}_m^{1/2})^2\Lambda_m^{1/2}.$$
Since the eigenvalues of $\tilde{R}_m$ are within $m\sqrt{\epsilon}$ from $1$, so are the eigenvalues of $\tilde{R}_m^{1/2}$. Observe that $\smash{\tilde{R}_m^{1/2}}$ is symmetric and non-negative definite, so it admits an eigendecomposition $\smash{\tilde{R}_m^{1/2}=\tilde{Q}_m\tilde{\Lambda}_m\tilde{Q}_m^T}$. It follows by the orthogonality of $\tilde{Q}_m$ that
$$\norm{I-\tilde{R}_m^{1/2}}_{\text{HS}}=\norm{I-\tilde{\Lambda}_m}_{\text{HS}}\leq m\norm{I-\tilde{\Lambda}_m}_\infty\leq \kappa^2\sqrt{\epsilon}.$$
The cyclicity of the trace, the Cauchy-Schwarz inequality and \cref{trace dominates HS norm} now give
\begin{align*}
\tr\big((B-I)R_m(B-I)^T\big)&=\tr\big(\Lambda_m(I-\tilde{R}_m^{1/2})^2\big)\leq \norm{\Lambda_m}_{\text{HS}}\norm{I-\tilde{R}_m^{1/2}}_{\text{HS}}^2\\
&\leq \kappa^4\tr(\Lambda_m)\epsilon.
\end{align*}
Finally, define the matrix $A$ by filling all rows and columns of $B$ from $m+1$ to $\kappa$ with zeros. It is clear that $ARA^T=\Lambda_\epsilon$. If we denote by $T=(R_{k,k'})_{k,k'\geq m+1}$ the matrix consisting of the last $\kappa-m$ rows and columns of $R$, then
\begin{align*}
\tr\big((A-I)R(A-I)^T\big)&=\tr\big((B-I)R_m(B-I)^T\big)+\tr(T)\\
&\leq \kappa^4\tr(\Lambda_m)\epsilon+(\kappa-m)\epsilon +\sum_{k=m+1}^\kappa \lambda_i\\
&\leq \big(\kappa^4\tr(\Lambda)+2\kappa\big)\sqrt{\epsilon}.
\end{align*}
We have used the fact that $T\in B_\epsilon(\Lambda)$ in the second inequality. This completes the proof.
\end{proof}

This result allows us to map any spin configuration $\basigma\in (\R^\kappa)^N$ with self-overlap in the $\epsilon$-neighbourhood of $D\in \Gamma_\kappa$ to a modified spin configuration $A_{\basigma}\basigma$ that is not too far from $\basigma$ and has a configuration-independent self-overlap $D_\epsilon$. These two facts will be fundamental to understanding the continuity of the constrained Lagrangian \eqref{eqn: GP Lagrangian with self-overlap D}. We will now quantify the distance between $\basigma$ and $A_{\basigma}\basigma$ in two different ways: with respect to the normalized-$\ell^{2,2}$-norm and relative to the canonical metric associated with the Hamiltonian \eqref{eqn: GP Hamiltonian},
\begin{equation}
\ud(\basigma^1,\basigma^2)=\Big(\E\big(H_N(\basigma^1)-H_N(\basigma^2)\big)^2\Big)^{1/2}.
\end{equation}
It will be convenient to notice that for any $\basigma\in (\R^\kappa)^N$,
\begin{equation}\label{eqn: GP 2-2 norm as trace of self overlap}
\Norm{\basigma}_{2,2}^2=\frac{1}{N}\sum_{i=1}^N\sum_{k=1}^\kappa\abs{\sigma_i(k)}^2=\sum_{k=1}^\kappa R(\basigma,\basigma)_{k,k}=\tr(R(\basigma,\basigma)),
\end{equation}
and to write
\begin{equation}
B^N_2(u)=\big\{\basigma\in (\R^\kappa)^N\mid \Norm{\basigma}_{2,2}^2\leq u\big\}
\end{equation}
for the ball of radius $\sqrt{u}$ with respect to the normalized-$\ell^{2,2}$-norm.

\begin{corollary}\label{GP modified coordinates norm bounds}
If $0<\epsilon<\kappa^{-2}$ and $D\in \Gamma_\kappa$, then for any $\basigma\in \Sigma_\epsilon(D)$,
\begin{equation}
\Norm{A_{\basigma}\basigma-\basigma}_{2,2}\leq C(\tr(D)+1)^{1/2}\epsilon^{1/4},
\end{equation}
where $C>0$ is a constant that depends only on $\kappa$.
\end{corollary}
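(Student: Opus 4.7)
The plan is to simply combine the trace identity \eqref{eqn: GP 2-2 norm as trace of self overlap} with the bound from \cref{GP modified coordinates}. Writing $\batau = A_{\basigma}\basigma - \basigma = (A_{\basigma}-I)\basigma$, the identity \eqref{eqn: GP 2-2 norm as trace of self overlap} applied to $\batau$ gives
\begin{equation*}
\Norm{A_{\basigma}\basigma - \basigma}_{2,2}^2 = \tr\big(R(\batau,\batau)\big).
\end{equation*}

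Next I would use the computation already carried out in \eqref{eqn: GP modified coordinates small distortion}: since $\batau = (A_{\basigma}-I)\basigma$, one has
\begin{equation*}
R(\batau,\batau) = (A_{\basigma}-I)\, R(\basigma,\basigma)\, (A_{\basigma}-I)^T,
\end{equation*}
and because $\basigma \in \Sigma_\epsilon(D)$ means $R(\basigma,\basigma) \in B_\epsilon(D)$, the hypothesis $0 < \epsilon < \kappa^{-2}$ allows us to invoke \cref{GP modified coordinates} with $R = R(\basigma,\basigma)$. This yields
\begin{equation*}
\tr\big((A_{\basigma}-I)\, R(\basigma,\basigma)\, (A_{\basigma}-I)^T\big) \leq C\big(\tr(D)+1\big)\sqrt{\epsilon}
\end{equation*}
for a constant $C > 0$ depending only on $\kappa$.

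Combining these two displays gives $\Norm{A_{\basigma}\basigma - \basigma}_{2,2}^2 \leq C(\tr(D)+1)\sqrt{\epsilon}$, and taking square roots produces the desired bound with the constant $C^{1/2}$ (which I would simply relabel as $C$). There is essentially no obstacle here: the corollary is a direct translation of the trace estimate of \cref{GP modified coordinates} into the language of the normalized $\ell^{2,2}$-norm via the identity \eqref{eqn: GP 2-2 norm as trace of self overlap}, and the only small point to mention is that the hypothesis $\epsilon < \kappa^{-2}$ is exactly what makes \cref{GP modified coordinates} applicable to $R(\basigma,\basigma) \in B_\epsilon(D)$.
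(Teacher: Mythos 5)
Your proposal is correct and is essentially identical to the paper's own proof: both pass through the identity $\Norm{A_{\basigma}\basigma-\basigma}_{2,2}^2=\tr\big((A_{\basigma}-I)R(\basigma,\basigma)(A_{\basigma}-I)^T\big)$ via \eqref{eqn: GP 2-2 norm as trace of self overlap} and \eqref{eqn: GP overlap of modified coordinates}, and then apply the trace bound of \cref{GP modified coordinates} before taking square roots. No gaps.
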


\begin{proof}
By \eqref{eqn: GP 2-2 norm as trace of self overlap}, \eqref{eqn: GP overlap of modified coordinates} and \cref{GP modified coordinates},
$$\Norm{A_{\basigma}\basigma-\basigma}_{2,2}^2=\tr((A_{\basigma}-I)R(\basigma,\basigma)(A_{\basigma}-I))\leq C(\tr(D)+1)\epsilon^{1/2}.$$
This finishes the proof.
\end{proof}

\begin{corollary}\label{GP modified coordinate canonical metric bound}
If $u>1$ and $\basigma^1,\basigma^2\in B_2^N(u)$, then
\begin{equation}\label{eqn: GP canonical metric bounded by norm}
\ud(\basigma^1,\basigma^2)\leq 2N^{1/4}u^{3/4}\norm{\basigma^1-\basigma^2}_{2,2}^{1/2}.
\end{equation}
In particular, if $0<\epsilon<\kappa^{-2}$ and $D\in \Gamma_\kappa$, then for any $\basigma\in \Sigma_\epsilon(D)$,
\begin{equation}\label{eqn: GP modified coordinate canonical metric bound}
\ud(\basigma,A_{\basigma}\basigma)\leq C N^{1/2}(\tr(D)+1)\epsilon^{1/8},
\end{equation}
where $C>0$ is a constant that depends only on $\kappa$.
\end{corollary}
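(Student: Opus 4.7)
The plan is to prove the two assertions in sequence; the first bound is the main content and the second follows by routine specialization. For \eqref{eqn: GP canonical metric bounded by norm}, I would start from the explicit formula
\[
\ud(\basigma^1,\basigma^2)^2=\frac{1}{N}\sum_{i,j=1}^N\big((\asigma_i^1,\asigma_j^1)-(\asigma_i^2,\asigma_j^2)\big)^2,
\]
which follows from the definition of $H_N$ and the independence of the $g_{ij}$. Applying the telescoping identity $(\asigma_i^1,\asigma_j^1)-(\asigma_i^2,\asigma_j^2)=(\asigma_i^1-\asigma_i^2,\asigma_j^1)+(\asigma_i^2,\asigma_j^1-\asigma_j^2)$, Cauchy-Schwarz on each inner product, and $(a+b)^2\leq 2a^2+2b^2$ term by term, then summing over $i,j$ bounds the right-hand side above by $\frac{2}{N}\norm{\basigma^1-\basigma^2}_{2,2}^2\big(\norm{\basigma^1}_{2,2}^2+\norm{\basigma^2}_{2,2}^2\big)\leq 4u\norm{\basigma^1-\basigma^2}_{2,2}^2$, where the last step uses $\basigma^1,\basigma^2\in B_2^N(u)$ and the relation $\norm{\basigma^l}_{2,2}^2=N\Norm{\basigma^l}_{2,2}^2\leq Nu$. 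This yields the linear estimate $\ud(\basigma^1,\basigma^2)\leq 2\sqrt{u}\,\norm{\basigma^1-\basigma^2}_{2,2}$.

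In parallel, I would record the trivial bound
\[
\ud(\basigma,0)^2=\E H_N(\basigma)^2=\frac{1}{N}\sum_{i,j}(\asigma_i,\asigma_j)^2\leq \frac{1}{N}\sum_{i,j}\norm{\asigma_i}_2^2\norm{\asigma_j}_2^2=N\Norm{\basigma}_{2,2}^4\leq Nu^2,
\]
which via the triangle inequality for the pseudo-metric $\ud$ gives $\ud(\basigma^1,\basigma^2)\leq 2\sqrt{N}\,u$. Since both estimates hold simultaneously, their geometric mean produces
\[
\ud(\basigma^1,\basigma^2)^2\leq \big(2\sqrt{u}\,\norm{\basigma^1-\basigma^2}_{2,2}\big)\big(2\sqrt{N}\,u\big),
\]
and taking a square root is exactly \eqref{eqn: GP canonical metric bounded by norm}. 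The hypothesis $u>1$ plays no essential role beyond normalizing the constant.

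For the second bound, I would verify that both $\basigma$ and $A_{\basigma}\basigma$ lie in $B_2^N(u)$ with $u=\tr(D)+2$. Indeed, \eqref{eqn: GP 2-2 norm as trace of self overlap} together with $R(\basigma,\basigma)\in B_\epsilon(D)$ and $\epsilon<\kappa^{-2}$ gives $\Norm{\basigma}_{2,2}^2=\tr(R(\basigma,\basigma))\leq \tr(D)+\kappa\epsilon<\tr(D)+1$, while by construction $\Norm{A_{\basigma}\basigma}_{2,2}^2=\tr(D_\epsilon)\leq \tr(D)$, and $u>1$ is immediate. Substituting this $u$ into \eqref{eqn: GP canonical metric bounded by norm} and using \cref{GP modified coordinates norm bounds} (after converting $\Norm{\cdot}_{2,2}$ to $\norm{\cdot}_{2,2}$ by a factor $\sqrt{N}$) gives
\[
\ud(\basigma,A_{\basigma}\basigma)\leq 2N^{1/4}(\tr(D)+2)^{3/4}\cdot \sqrt{C}\,N^{1/4}(\tr(D)+1)^{1/4}\epsilon^{1/8},
\]
which collapses to $C'N^{1/2}(\tr(D)+1)\epsilon^{1/8}$ after bounding $(\tr(D)+2)^{3/4}\leq 2^{3/4}(\tr(D)+1)^{3/4}$. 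No step here is genuinely delicate; the only point worth emphasizing is the case-free combination of the linear bound and the trivial bound via the geometric mean, which is what produces the square-root exponent on $\norm{\basigma^1-\basigma^2}_{2,2}$ in \eqref{eqn: GP canonical metric bounded by norm}.
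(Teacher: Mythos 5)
Your proof is correct and lands on exactly the constants in the statement, but the route to \eqref{eqn: GP canonical metric bounded by norm} differs from the paper's. The paper writes $\ud(\basigma^1,\basigma^2)^2=N\big(\norm{R(\basigma^1,\basigma^1)}_{\text{HS}}^2+\norm{R(\basigma^2,\basigma^2)}_{\text{HS}}^2-2\norm{R(\basigma^1,\basigma^2)}_{\text{HS}}^2\big)$ and extracts the square-root dependence on $\norm{\basigma^1-\basigma^2}_{2,2}$ by factoring each difference of squares as $(a-b)(a+b)$ and bounding $\norm{R(\basigma^l,\basigma^l)-R(\basigma^1,\basigma^2)}_{\text{HS}}\leq \Norm{\basigma^l}_{2,2}\Norm{\basigma^1-\basigma^2}_{2,2}$; you instead work with the entrywise expression $\frac{1}{N}\sum_{i,j}\big((\asigma_i^1,\asigma_j^1)-(\asigma_i^2,\asigma_j^2)\big)^2$, prove a Lipschitz bound $\ud\leq 2\sqrt{u}\,\norm{\basigma^1-\basigma^2}_{2,2}$ and a diameter bound $\ud\leq 2\sqrt{N}u$, and take their geometric mean. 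Both arguments are elementary and yield the identical estimate; your interpolation device makes transparent where the exponent $1/2$ comes from, while the paper's factoring is marginally more direct since it never needs the triangle inequality for $\ud$. Your handling of the second assertion is the same routine specialization as the paper's (the paper takes $u=\tr(D)+1$ via \eqref{eqn: GP bound on 2-2 norm of configuration in SigmaeD}; your $u=\tr(D)+2$ only changes the constant $C$), and the conversion between $\Norm{\cdot}_{2,2}$ and $\norm{\cdot}_{2,2}$ is tracked correctly throughout.
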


\begin{proof}
By the reverse triangle inequality,
\begin{align*}
\ud(\basigma^1,\basigma^2)^2&=N\big(\norm{R(\basigma^1,\basigma^1)}_{\text{HS}}^2+\norm{R(\basigma^2,\basigma^2)}_{\text{HS}}^2-2\norm{R(\basigma^1,\basigma^2)}_{\text{HS}}^2\big)\\
&\leq N\big(\norm{R(\basigma^1,\basigma^1)-R(\basigma^1,\basigma^2)}_{\text{HS}}(\norm{R(\basigma^1,\basigma^1)}_{\text{HS}}+\norm{R(\basigma^1,\basigma^2)}_{\text{HS}})\\
&\quad+\norm{R(\basigma^2,\basigma^2)-R(\basigma^1,\basigma^2)}_{\text{HS}}(\norm{R(\basigma^2,\basigma^2)}_{\text{HS}}+\norm{R(\basigma^1,\basigma^2)}_{\text{HS}})\big).
\end{align*}
To bound this further, notice that by \eqref{eqn: GP self-overlap} and the Cauchy-Schwarz inequality,
\begin{align*}
\norm{R(\basigma^1,\basigma^1)-R(\basigma^1,\basigma^2)}_{\text{HS}}&\leq \frac{1}{N}\sum_{i=1}^N \norm{\asigma_i^1(\asigma_i^1-\asigma_i^2)^T}_{\text{HS}}=\frac{1}{N}\sum_{i=1}^N \norm{\asigma^1_i}_2\norm{\asigma^1_i-\asigma_i^2}_{2}\\
&\leq \Norm{\basigma^1}_{2,2}\Norm{\basigma^1-\basigma^2}_{2,2}.
\end{align*}
Similarly, $\norm{R(\basigma^1,\basigma^2)}_{\text{HS}}\leq \Norm{\basigma^1}_{2,2}\Norm{\basigma^2}_{2,2}$.
It follows that for any $\basigma^1,\basigma^2\in B_2^N(u)$,
$$\ud(\basigma^1,\basigma^2)^2\leq 4N^{1/2}u^{3/2}\norm{\basigma^1-\basigma^2}_{2,2}.$$
Taking square roots yields \eqref{eqn: GP canonical metric bounded by norm}. To prove \eqref{eqn: GP modified coordinate canonical metric bound}, observe that for any $\basigma\in \Sigma_\epsilon(D)$,
\begin{equation}\label{eqn: GP bound on 2-2 norm of configuration in SigmaeD} 
\Norm{\basigma}_{2,2}^2=\tr(R(\basigma,\basigma))\leq \tr(D)+\epsilon \kappa\leq \tr(D)+1.
\end{equation}
Invoking \cref{GP modified coordinates norm bounds} and \eqref{eqn: GP canonical metric bounded by norm} implies that
$\ud(\basigma,A_{\basigma}\basigma)\leq C N^{1/2}(\tr(D)+1)\epsilon^{1/8}$. This completes the proof.
\end{proof}

Combining \cref{GP modified coordinates norm bounds} and \cref{GP modified coordinate canonical metric bound} with Dudley's entropy inequality, we will now show that, in a certain sense, the constrained Lagrangian \eqref{eqn: GP Lagrangian with self-overlap D} is continuous with respect to the constraint $D\in \Gamma_\kappa$. To state this continuity result precisely, for each $\epsilon>0$ and $D\in \Gamma_\kappa$ introduce the relaxed constrained Lagrangian
\begin{equation}\label{eqn: GP relaxed constrained Lagrangian}
L_{N,p,D,\epsilon}(t)=\frac{1}{N}\max_{\basigma\in \Sigma_\epsilon(D)}H_{N,p,t}(\basigma).
\end{equation}

\begin{proposition}\label{GP continuity of constrained Lagrangian}
If $2<p<\infty$, then for each $0<\epsilon<\kappa^{-2}$, every $t>0$ and all $D\in \Gamma_\kappa$,
\begin{equation}
\limsup_{N\to\infty} L_{N,p,D,\epsilon}(t)\leq  L_{p,D_\epsilon}(t)+C(1+tp)(\tr(D)+1)^{p/2}\epsilon^{1/64}
\end{equation}
for some constant $C>0$ that depends only on $\kappa$.
\end{proposition}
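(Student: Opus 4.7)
The strategy is to associate to each $\basigma \in \Sigma_\epsilon(D)$ the modified configuration $A_\basigma\basigma \in \Sigma(D_\epsilon)$ from \cref{GP modified coordinates} and use the decomposition
$$H_{N,p,t}(\basigma) = H_{N,p,t}(A_\basigma\basigma) + \bigl[H_N(\basigma)-H_N(A_\basigma\basigma)\bigr] + t\bigl[\norm{A_\basigma\basigma}_{p,2}^p-\norm{\basigma}_{p,2}^p\bigr].$$
Maximizing over $\basigma \in \Sigma_\epsilon(D)$, dividing by $N$, and observing that $A_\basigma\basigma \in \Sigma(D_\epsilon)$, I obtain
$$L_{N,p,D,\epsilon}(t) \leq L_{N,p,D_\epsilon}(t) + \mathrm{Err}^H_N + t\,\mathrm{Err}^p_N,$$
where $\mathrm{Err}^H_N = \tfrac{1}{N}\max_\basigma |H_N(\basigma)-H_N(A_\basigma\basigma)|$ and $\mathrm{Err}^p_N = \tfrac{1}{N}\max_\basigma\bigl(\norm{A_\basigma\basigma}_{p,2}^p-\norm{\basigma}_{p,2}^p\bigr)^+$. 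Since $L_{N,p,D_\epsilon}(t)\to L_{p,D_\epsilon}(t)$ by \cref{GP Guerra Toninelli}, it remains to bound each error term by $C(1+tp)(\tr(D)+1)^{p/2}\epsilon^{1/64}$ in the almost sure limsup.

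For the Hamiltonian error I will apply the modulus-of-continuity form of Dudley's inequality (equation (A.23) of \cite{TalagrandC}) to the centered Gaussian process $H_N$ on $B_2^N(\tr(D)+1)$. The canonical metric estimate $\ud(\basigma,A_\basigma\basigma) \leq \delta := CN^{1/2}(\tr(D)+1)\epsilon^{1/8}$ from \cref{GP modified coordinate canonical metric bound} yields
$$\E\,\mathrm{Err}^H_N \leq \frac{L}{N}\int_0^\delta \sqrt{\log \mathcal{N}\bigl(B_2^N(\tr(D)+1),\ud,\eta\bigr)}\,\mathrm{d}\eta.$$
Converting the $\ud$-entropy to an $\norm{\cdot}_{2,2}$-entropy via the other inequality in \cref{GP modified coordinate canonical metric bound} reduces the estimate to a standard volume count on the Euclidean ball in $\R^{\kappa N}$, producing $\log \mathcal{N} \lesssim \kappa N\log(CN(\tr(D)+1)^2/\eta^2)$. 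A change of variables then gives $\E\,\mathrm{Err}^H_N \lesssim (\tr(D)+1)\epsilon^{1/8}\sqrt{\log(1/\epsilon)}$, which is $\leq C(\tr(D)+1)^{p/2}\epsilon^{1/64}$ for $p\geq 2$ and $\epsilon<\kappa^{-2}$. Gaussian concentration of the maximum together with Borel-Cantelli then upgrades this to an almost sure bound on $\limsup_N$.

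For the norm error I need a pointwise estimate together with an a priori restriction on $\basigma$. Revisiting the proof of \cref{GP modified coordinates}, the matrix $B = \Lambda_m^{1/2}\tilde R_m^{-1/2}\Lambda_m^{-1/2}$ satisfies $\norm{B-I}_{\mathrm{op}} \leq C\kappa\sqrt\epsilon$: expanding $\tilde R_m^{-1/2}-I = -\tfrac12(\tilde R_m-I) + O(\norm{\tilde R_m-I}_{\mathrm{op}}^2)$ and using the crucial cancellation $\Lambda_m^{1/2}(\tilde R_m - I)\Lambda_m^{-1/2} = (R_m-\Lambda_m)\Lambda_m^{-1}$ avoids the naive loss of $\sqrt{\lambda_1/\lambda_m}$. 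Since $A_\basigma$ acts as $B$ on the dominant eigenspace of $D$ and as zero on the residual, this gives $\norm{A_\basigma\asigma_i}_2 \leq (1+C\kappa\sqrt\epsilon)\norm{\asigma_i}_2$, hence $(\norm{A_\basigma\basigma}_{p,2}^p - \norm{\basigma}_{p,2}^p)^+ \leq Cp\kappa\sqrt\epsilon\,\norm{\basigma}_{p,2}^p$. An a priori bound, obtained by evaluating $H_{N,p,t}$ at an explicit reference $\bar\basigma\in\Sigma(D)$ with $\norm{\bar\basigma}_{p,2}^p$ of order $N\kappa^{p/2-1}\tr(D)^{p/2}$ and by controlling $\tfrac{1}{N}\max_{\basigma\in B_2^N(\tr(D)+1)} H_N(\basigma)$ via Gaussian concentration, restricts the near-maximizers to $\Norm{\basigma}_{p,2}^p \leq C(1+1/t)(\tr(D)+1)^{p/2}$. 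Multiplying and simplifying yields $t\,\mathrm{Err}^p_N \leq C(1+tp)(\tr(D)+1)^{p/2}\sqrt\epsilon \leq C(1+tp)(\tr(D)+1)^{p/2}\epsilon^{1/64}$.

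The principal obstacle is the Dudley integration: the canonical metric $\ud$ is not Euclidean on $B_2^N(\tr(D)+1)$, and the square-root comparison $\ud(\basigma^1,\basigma^2) \leq 2N^{1/4}(\tr(D)+1)^{3/4}\norm{\basigma^1-\basigma^2}_{2,2}^{1/2}$ is inherently lossy. One must verify that the $N$-dependence in the diameter $\delta\propto N^{1/2}$, the $N$-factor from the ball dimension $\kappa N$, and the $\sqrt{N}$ inside the entropy combine to cancel the $1/N$ normalization exactly; the unavoidable fractional loss from the square-root comparison is what dictates the rather wasteful exponent $\epsilon^{1/64}$ in the final bound.
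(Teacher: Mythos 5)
Your overall architecture — sending $\basigma\in\Sigma_\epsilon(D)$ to $A_\basigma\basigma\in\Sigma(D_\epsilon)$, splitting the error into a Hamiltonian term and a norm term, and controlling the Hamiltonian term by Dudley's entropy inequality through the canonical-metric estimate of \cref{GP modified coordinate canonical metric bound} and the volumetric covering bound for Euclidean balls — is exactly the paper's proof, and your entropy computation (including the observation that the various powers of $N$ cancel against the $1/N$ normalization, leaving a fractional power of $\epsilon$) is correct.

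The treatment of the norm error, however, diverges from the paper and contains a genuine quantitative gap. You pass from $\norm{A_\basigma-I}_{\mathrm{op}}\leq C\kappa\sqrt{\epsilon}$ to the multiplicative bound $\big(\norm{A_\basigma\basigma}_{p,2}^p-\norm{\basigma}_{p,2}^p\big)^+\leq Cp\kappa\sqrt{\epsilon}\,\norm{\basigma}_{p,2}^p$. What the operator-norm bound actually yields is the factor $(1+C\kappa\sqrt{\epsilon})^p-1$, which is of order $p\sqrt{\epsilon}$ only in the regime $p\sqrt{\epsilon}\lesssim 1$; for $p\gg\epsilon^{-1/2}$ it is of order $e^{Cp\sqrt{\epsilon}}$ and destroys the claimed $C(1+tp)(\tr(D)+1)^{p/2}\epsilon^{1/64}$ bound, whose constant must depend only on $\kappa$. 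Two secondary issues compound this: your a priori restriction $\Norm{\basigma}_{p,2}^p\leq C(1+1/t)(\tr(D)+1)^{p/2}$ produces, after multiplying by $t$, a factor $p(1+t)=p+tp$ rather than $1+tp$; and the constant in $\norm{B-I}_{\mathrm{op}}\leq C\kappa\sqrt{\epsilon}$ obtained from the power series of $\tilde{R}_m^{-1/2}$ degenerates as $m\sqrt{\epsilon}\to 1$, which is permitted under the hypothesis $\epsilon<\kappa^{-2}$. The paper avoids all of this by working additively rather than multiplicatively: it applies the elementary inequality $(x+y)^p-x^p\leq py(x+y)^{p-1}$ with $x=\Norm{\basigma}_{p,2}$ and $y=\Norm{A_\basigma\basigma}_{p,2}-\Norm{\basigma}_{p,2}\leq\Norm{A_\basigma\basigma-\basigma}_{p,2}$, and then controls the resulting expression through the $\ell^{2,2}$ bound $\Norm{A_\basigma\basigma-\basigma}_{2,2}\leq C(\tr(D)+1)^{1/2}\epsilon^{1/4}$ of \cref{GP modified coordinates norm bounds} together with $\Norm{A_\basigma\basigma}_{2,2}^2=\tr(D_\epsilon)\leq\tr(D)$. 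This keeps the error linear in $p$ uniformly over all $p$ and requires no a priori localization of near-maximizers. You should replace your multiplicative estimate by this additive one (or restrict your linearization to $p\sqrt{\epsilon}\leq 1$ and supply a separate argument in the complementary regime).
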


\begin{proof}
To simplify notation, let $C>0$ denote a constant that depends only on $\kappa$ whose value might not be the same at each occurrence. By the Gaussian concentration of the maximum and a simple application of the Borel-Cantelli lemma, it suffices to prove that
\begin{equation}\label{eqn: GP continuity of constrained Lagrangian goal}
\limsup_{N\to\infty} \frac{1}{N}\E\max_{\basigma\in \Sigma_\epsilon(D)}H_{N,p,t}(\basigma)\leq L_{p,D_\epsilon}(t)+C(1+tp)(\tr(D)+1)^{p/2}\epsilon^{1/64}.
\end{equation}
To simplify notation, let $u=\tr(D)+1$. Notice that $\Norm{\basigma}_{2,2}^2\leq u$ for every $\basigma\in \Sigma_\epsilon(D)$ by \eqref{eqn: GP bound on 2-2 norm of configuration in SigmaeD}. Invoking \cref{GP modified coordinate canonical metric bound} and \cref{GP modified coordinates norm bounds} gives
\begin{equation}\label{eqn: GP continuity of constrained Lagrangian}
\frac{1}{N}\E\max_{\basigma\in \Sigma_\epsilon(D)}H_{N,p,t}(\basigma)\leq \frac{1}{N}\E\max_{\basigma\in \Sigma(D_\epsilon)}H_{N,p,t}(\basigma)+\frac{1}{N}(I)+\frac{t}{N}(II),
\end{equation}
where
\begin{align*}
(I)&=\frac{1}{N}\E \max_{\basigma\in \Sigma_\epsilon(D)}\abs{H_N(\basigma)-H_N(A_{\basigma}\basigma)}\leq \E \max_{\ud(\basigma^1,\basigma^2)\leq CuN^{1/2}\epsilon^{1/8}}\abs{H_N(\basigma^1)-H_N(\basigma^2)}\\
(II)&=\max_{\basigma\in \Sigma_\epsilon(D)}\big(\Norm{A_{\basigma}\basigma}_{p,2}^p-\Norm{\basigma}_{p,2}^p\big).
\end{align*}
To bound the first of these terms, for each $\epsilon>0$ denote by $\mathcal{N}(A,d,\epsilon)$ the $\epsilon$-covering number of the set $A\subset (\R^\kappa)^N$ with respect to the metric $d$ on $(\R^\kappa)^N$, and write $B_N$ for the Euclidean unit ball in $(\R^\kappa)^N$. Dudley's entropy inequality and \cref{GP modified coordinate canonical metric bound} imply that
\begin{align*}
(I)&\leq C\int_0^{CuN^{1/2}\epsilon^{1/8}}\sqrt{\log \mathcal{N}\big(B_2^N(u),\ud,\delta\big)}\ud \delta\\
&\leq C \int_0^{CuN^{1/2}\epsilon^{1/8}}\sqrt{\log \mathcal{N}\big(B_2^N(u),\norm{\cdot}_{2,2},2^{-2}u^{-3/2}N^{-1/2}\delta^2\big)}\ud \delta\\
&\leq C\int_0^{CuN^{1/2}\epsilon^{1/8}}\sqrt{\log \mathcal{N}\big(B_N, \norm{\cdot}_{2,2}, 2^{-2}u^{-2}N^{-1}\delta^2\big)}\ud \delta.
\end{align*}
At this point, recall that the covering number of the Euclidean unit ball $B_N$ in $(\R^\kappa)^N$ satisfies
$$\Big(\frac{1}{\epsilon}\Big)^{N\kappa}\leq \mathcal{N}(B_N,\norm{\cdot}_{2,2},\epsilon)\leq \Big(\frac{2}{\epsilon}+1\Big)^{N\kappa}$$
for every $\epsilon>0$. A proof of this bound may be found in corollary 4.2.13 of \cite{Vershynin}. Combining this with a change of variables reveals that
\begin{align}\label{eqn: GP continuity of constrained Lagrangian bound (I)}
(I)&\leq CN^{1/2}\kappa^{1/2}\int_0^{CuN^{1/2}\epsilon^{1/8}}\sqrt{\log(1+8u^2N\delta^{-2})}\ud \delta \notag\\
&=CNu\int_{C\epsilon^{-1/16}}^\infty \frac{\sqrt{\log (1+\delta)}}{\delta^{3/2}}\ud \delta\leq CNu\int_{C\epsilon^{-1/16}}^\infty \frac{\sqrt{ \delta^{1/2}}}{\delta^{3/2}}\ud \delta \notag\\
&\leq CNu\epsilon^{1/64}.
\end{align}
To bound the term $(II)$, notice that for any $x,y>0$,
\begin{equation}\label{eqn: GP sum to power p}
(x+y)^p-x^p=\int_0^1 \frac{\ud}{\ud t}(x+ty)^p \ud t=p\int_0^1(x+ty)^{p-1}y\ud t\leq py(x+y)^{p-1}.
\end{equation}
If $\basigma\in \Sigma_\epsilon(D)$ is such that $\Norm{A_{\basigma}\basigma}_{p,2}> \Norm{\basigma}_{p,2}$, then applying this inequality with $x=\Norm{\basigma}_{p,2}$ and $y=\Norm{A_{\basigma}\basigma}_{p,2}-\Norm{\basigma}_{p,2}$ gives
\begin{align}\label{eqn: GP difference of p norms 1}
\Norm{A_{\basigma}\basigma}_{p,2}^p-\Norm{\basigma}_{p,2}^p&\leq p\Norm{A_{\basigma}\basigma-\basigma}_{p,2}\Norm{A_{\basigma}\basigma}_{p,2}^{p-1}\\
& \leq p\Norm{A_{\basigma}\basigma-\basigma}_{2,2}\Norm{A_{\basigma}\basigma}_{2,2}^{p-1}.\label{eqn: GP difference of p norms 2}
\end{align}
The second inequality uses the fact that $\ell^{2,2}$ is continuously embedded in $\ell^{p,2}$ for $p>2$. Since this bound holds trivially when $\Norm{A_{\basigma}\basigma}_{p,2}\leq  \Norm{\basigma}_{p,2}$, we deduce from \cref{GP modified coordinates norm bounds} that
\begin{equation}\label{eqn: GP continuity of constrained Lagrangian bound (II)}
(II)\leq Cpu^{p/2}\epsilon^{1/4}.
\end{equation}
Substituting \eqref{eqn: GP continuity of constrained Lagrangian bound (I)} and \eqref{eqn: GP continuity of constrained Lagrangian bound (II)} into \eqref{eqn: GP continuity of constrained Lagrangian} and letting $N\to\infty$ yields \eqref{eqn: GP continuity of constrained Lagrangian goal}. This completes the proof.
\end{proof}

In the heuristic proof of \cref{GP limit of unconstrained Lagrangian} given at the end of \cref{GP sec3}, we used the density of positive definite matrices in the space of non-negative definite matrices to obtain the second equality in \eqref{eqn: GP limit of unconstrained Lagrangian}. When we come to the rigorous proof of this equality, the argument will be more subtle as \cref{GP continuity of constrained Lagrangian} does not quite give continuity. We will instead content ourselves with controlling the limit of the constrained Lagrangian \eqref{eqn: GP Lagrangian with self-overlap D} for a non-negative definite matrix $D\in \Gamma_\kappa$ by that for some positive definite matrix in $\Gamma_\kappa^+$ through the following bound.

\begin{proposition}\label{GP replacing D by D+}
If $2<p<\infty$, then for each $0<\epsilon<\kappa^{-2}$, every $t>0$ and all $D\in \Gamma_\kappa$,
\begin{equation}
L_{p,D}(t)\leq L_{p,D+\epsilon I}+C(1+tp)(\tr(D)+1)^{p/2}\epsilon^{1/64}
\end{equation}
for some constant $C>0$ that depends only on $\kappa$.
\end{proposition}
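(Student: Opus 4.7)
My strategy mirrors the proof of \cref{GP continuity of constrained Lagrangian}: for each $\basigma \in \Sigma_N(D)$ I construct an auxiliary configuration $\basigma' \in \Sigma_N(D+\epsilon I)$ close to $\basigma$ in the canonical metric $\ud$. Since $D$ may be singular while $D+\epsilon I$ is positive definite, no matrix $A$ satisfies $ADA^T = D+\epsilon I$, so the left-multiplication device of the continuity proposition is unavailable. I therefore take the additive perturbation $\basigma' = \basigma + \batau(\basigma)$, with $\batau$ supplying the missing rank in directions complementary to the range of $D$. Viewing $\basigma$ as an $N\times\kappa$ matrix $S$, its column space has dimension at most $\kappa$, so for $N\geq 2\kappa$ one may select $\kappa$ mutually orthogonal vectors of squared norm $N\epsilon$ in the orthogonal complement of $\mathrm{col}(S)$, assemble them into an $N\times\kappa$ matrix $T$, and let $\batau(\basigma)$ be the configuration whose $i$-th spin is the $i$-th row of $T$. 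The orthogonality relations $R(\basigma,\batau)=0$ and $R(\batau,\batau)=\epsilon I$ then give $R(\basigma',\basigma') = D+\epsilon I$, so $\basigma'\in\Sigma_N(D+\epsilon I)$; moreover $\Norm{\batau}_{2,2}^2 = \kappa\epsilon$ and $\Norm{\basigma'}_{2,2}^2 = \tr(D+\epsilon I) \leq \tr(D)+1$.

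\textbf{Error control.} The triangle inequality gives
\[
H_{N,p,t}(\basigma)\leq H_{N,p,t}(\basigma')+\bigl|H_N(\basigma)-H_N(\basigma')\bigr|+t\bigl|\norm{\basigma}_{p,2}^p-\norm{\basigma'}_{p,2}^p\bigr|,
\]
and since $\basigma'(\basigma)\in\Sigma_N(D+\epsilon I)$, taking the maximum over $\basigma\in\Sigma_N(D)$ reduces the proof to controlling the two fluctuation terms. A direct computation using $R(\basigma,\basigma') = D$ yields
\[
\ud(\basigma,\basigma')^2 = N\bigl(\norm{D+\epsilon I}_{\mathrm{HS}}^2-\norm{D}_{\mathrm{HS}}^2\bigr) = N\bigl(2\epsilon\tr(D)+\kappa\epsilon^2\bigr) \leq CN\epsilon(\tr(D)+1),
\]
so Dudley's entropy inequality applied to the Gaussian process $H_N$ on $B_2^N(\tr(D)+1)$, via the same covering-number estimate used in the proof of \cref{GP continuity of constrained Lagrangian}, bounds the Gaussian fluctuation term by $CN(\tr(D)+1)\epsilon^{1/64}$ after integration. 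For the $\ell^{p,2}$-norm term, applying \eqref{eqn: GP sum to power p} and transitioning to $\ell^{2,2}$-norms exactly as in \eqref{eqn: GP difference of p norms 1}--\eqref{eqn: GP difference of p norms 2} gives
\[
\bigl|\Norm{\basigma'}_{p,2}^p - \Norm{\basigma}_{p,2}^p\bigr| \leq p\Norm{\basigma'-\basigma}_{2,2}\Norm{\basigma'}_{2,2}^{p-1} = p\Norm{\batau}_{2,2}\Norm{\basigma'}_{2,2}^{p-1},
\]
and the bounds $\Norm{\batau}_{2,2} = \sqrt{\kappa\epsilon}$ and $\Norm{\basigma'}_{2,2}\leq (\tr(D)+1)^{1/2}$ combine to give $t\cdot(\textup{II})/N\leq C(1+tp)(\tr(D)+1)^{p/2}\epsilon^{1/64}$ with a constant depending only on $\kappa$.

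\textbf{Conclusion and main obstacle.} Assembling the two error bounds, taking expectations, invoking Gaussian concentration with Borel-Cantelli to pass to the almost-sure limit as in \cref{GP Guerra Toninelli}, and letting $N\to\infty$ produces the claimed estimate. The proof closely mirrors the argument for \cref{GP continuity of constrained Lagrangian}; the main conceptual change is the replacement of the left multiplication $\basigma\mapsto A_{\basigma}\basigma$ (which cannot raise the rank of the self-overlap and is therefore unsuitable when $D+\epsilon I$ is positive definite but $D$ is not) by the additive perturbation $\basigma\mapsto\basigma+\batau(\basigma)$. The main technical check is the existence of an orthogonal $N\times\kappa$ frame in $\mathrm{col}(S)^\perp$, which holds for $N\geq 2\kappa$; everything else—the Dudley bound on the Gaussian term and the $\ell^{2,2}$-norm reduction for the $\ell^{p,2}$-term—transfers directly from the proof of \cref{GP continuity of constrained Lagrangian}.
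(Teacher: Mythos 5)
Your proposal is correct and follows essentially the same route as the paper: both proofs add to each $\basigma\in\Sigma_N(D)$ an orthogonal $\kappa$-frame in the complement of the span of the coordinate vectors so that the self-overlap becomes exactly $D+\epsilon I$, then control the Gaussian fluctuation via the canonical-metric/Dudley argument of the continuity proposition and the potential term via the $\ell^{p,2}$-to-$\ell^{2,2}$ difference bound. The only (harmless) variations are that you compute $\ud(\basigma,\basigma')$ exactly from the overlaps, getting a slightly sharper $\epsilon^{1/2}$ rate before Dudley, whereas the paper routes through \eqref{eqn: GP canonical metric bounded by norm}.
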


\begin{proof}
Fix $N>2\kappa$ and $\basigma\in \Sigma(D)$. Endow $\R^N$ with the inner product
$$\langle \brho,\btau\rangle=\frac{1}{N}\sum_{i=1}^N \brho_i\btau_i.$$
Since $N>2\kappa$, there exist mutually orthogonal vectors $\btau_{\basigma}(1),\ldots,\btau_{\basigma}(\kappa)$ that are also orthogonal to each of the vectors $\bsigma(1),\ldots,\bsigma(\kappa)$ and satisfy $\langle \btau_{\basigma}(k),\btau_{\basigma}(k)\rangle=\kappa^{-1}$ for $1\leq k\leq \kappa$. Consider the configuration $\barho_{\basigma}\in (\R^\kappa)^N$ defined by $\brho_{\basigma}(k)=\bsigma+\sqrt{\epsilon}\btau_{\basigma}$. By orthogonality,
$$R(\barho,\barho)_{k,k'}=\langle \brho(k),\brho(k')\rangle=\langle\bsigma(k),\bsigma(k')\rangle+\epsilon \delta_{k,k'}=R(\basigma,\basigma)_{k,k'}+\epsilon\delta_{k,k'},$$
where $\delta_{k,k'}=1$ if $k=k'$ and is zero otherwise. This means that $\barho_{\basigma}\in \Sigma(D+\epsilon I)$. Moreover, the normalization of the vectors $\btau_{\basigma}(k)$ implies that
$$\Norm{\barho_{\basigma}-\basigma}_{2,2}^2=\frac{1}{N}\sum_{k=1}^\kappa \norm{\brho_{\basigma}(k)-\bsigma(k)}_2^2=\epsilon \sum_{k=1}^\kappa \langle \btau_{\basigma}(k),\btau_{\basigma}(k)\rangle=\epsilon.$$
If we let $u=\tr(D)+1$, then \eqref{eqn: GP canonical metric bounded by norm} reveals that
$$\ud (\barho_{\basigma},\basigma)\leq 2N^{1/2}u^{3/4}\epsilon^{1/4}\leq 2N^{1/2}u\epsilon^{1/8},$$
while an identical argument to that used to obtain \eqref{eqn: GP difference of p norms 2} yields
$$\Norm{\barho_{\basigma}}_{p,2}^p-\Norm{\basigma}_{p,2}^p\leq p\Norm{\barho_{\basigma}-\basigma}_{2,2}\Norm{\barho_{\basigma}}_{2,2}^{p-1}\leq p\sqrt{\epsilon}u^{p/2}.$$
It follows that
\begin{align*}
\frac{1}{N}H_{N,p,t}(\basigma)&\leq \frac{1}{N}H_{N,p,t}(\barho_{\basigma})+\frac{1}{N}\abs{H_N(\basigma)-H_N(\barho_{\basigma})}+t\big(\Norm{\barho_{\basigma}}_{p,2}^p-\Norm{\basigma}_{p,2}^p\big)\\
&\leq L_{N,p,D+\epsilon I}(t)+\frac{1}{N} \max_{\ud(\basigma^1,\basigma^2)\leq 2uN^{1/2}\epsilon^{1/8}}\abs{H_N(\basigma^1)-H_N(\basigma^2)}+tp\sqrt{\epsilon}u^{p/2}.
\end{align*}
Taking the maximum over configurations $\basigma\in \Sigma(D)$ and using Dudley's entropy inequality exactly as in the proof of \cref{GP continuity of constrained Lagrangian} gives
$$\E L_{N,p,D}(t)\leq \E L_{N,p,D+\epsilon I}(t)+C(1+tp)u^{p/2}\epsilon^{1/64}$$
for some constant $C>0$ that depends only on $\kappa$. Letting $N\to\infty$ completes the proof.
\end{proof}

The results established in this section together with the arguments in section 7 of \cite{WeiKuo} will allow us to give a rigorous proof of \cref{GP limit of unconstrained Lagrangian}. The proof will consist of two key steps. First, we will use \cref{GP continuity of constrained Lagrangian} to express a version of the Lagrangian \eqref{eqn: GP unconstrained Lagrangian} localized to a ball of fixed but arbitrary radius $u>0$ as a supremum of constrained Lagrangians \eqref{eqn: GP Lagrangian with self-overlap D}. Then, we will modify the scaling arguments in section 7 of \cite{WeiKuo} to show that the unconstrained Lagrangian \eqref{eqn: GP unconstrained Lagrangian} can be obtained by taking the supremum of these localized Lagrangians over all radii $u>0$. The formula obtained by taking these successive suprema will be equivalent to the first equality in \eqref{eqn: GP limit of unconstrained Lagrangian}. As previously mentioned, the second equality will follow immediately from \cref{GP replacing D by D+}. The purpose of restricting the supremum to positive definite matrices is technical and will be emphasized when we prove \cref{GP bound on almost maximizer}.

\section{The limit of the unconstrained Lagrangian}\label{GP sec5}

In this section we combine \cref{GP continuity of constrained Lagrangian} with the arguments in section 7 of \cite{WeiKuo} to prove \cref{GP limit of unconstrained Lagrangian}. As explained at the end of \cref{GP sec4}, we will first find a formula for the limit of the localized Lagrangian
\begin{equation}\label{GP: localized Lagrangian}
L_{N,p,u}(t)=\frac{1}{N}\max_{\Norm{\basigma}_{2,2}^2\leq u}H_{N,p,t}(\basigma)
\end{equation}
defined for each $u>0$. If $\Gamma_{\kappa,u}$ denotes the set of matrices in $\Gamma_\kappa$ with trace at most $u$,
\begin{equation}
\Gamma_{\kappa,u}=\{D\in \Gamma_\kappa\mid \tr(D)\leq u\},
\end{equation}
then \eqref{eqn: GP 2-2 norm as trace of self overlap} implies that for every $t>0$,
\begin{equation}\label{eqn: GP localized Lagrangian as supremum}
L_{N,p,u}(t)=\sup_{D\in \Gamma_{\kappa,u}}L_{N,p,D}(t).
\end{equation}
A compactness argument similar to that in lemma 3 of \cite{PanVec} can be used to show that this equality is preserved in the limit.

\begin{proposition}\label{GP limit of localized Lagrangian}
If $2<p<\infty$, then for every $t>0$ and $u>0$, the limit
$L_{p,u}(t)=\lim_{N\to\infty}\E L_{N,p,u}(t)$
exists and is given by
\begin{equation}
L_{p,u}(t)=\sup_{D\in \Gamma_{\kappa,u}}L_{p,D}(t).
\end{equation}
Moreover, with probability one, $L_{p,u}(t)=\lim_{N\to\infty}L_{N,p,u}(t)$.
\end{proposition}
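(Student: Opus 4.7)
The plan is to establish the almost sure convergence directly via a sandwich argument exploiting the compactness of $\Gamma_{\kappa,u}$, and then transfer the result to convergence in expectation via Gaussian concentration. As a preliminary, for any $\basigma$ with $\Norm{\basigma}_{2,2}^2 \leq u$ one has $\E H_N(\basigma)^2 = N\norm{R(\basigma,\basigma)}_{\text{HS}}^2 \leq Nu^2$ by \cref{trace dominates HS norm}, so the Gaussian concentration of the maximum combined with the Borel-Cantelli lemma gives $L_{N,p,u}(t) - \E L_{N,p,u}(t) \to 0$ almost surely. Thus convergence of the expectation and of the almost sure limit are equivalent, and it suffices to sandwich the almost sure limit.

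For the lower bound, observe that for each $D \in \Gamma_{\kappa,u}$ the identity \eqref{eqn: GP 2-2 norm as trace of self overlap} gives $\Sigma(D) \subset \{\basigma : \Norm{\basigma}_{2,2}^2 \leq u\}$, so $L_{N,p,u}(t) \geq L_{N,p,D}(t)$, and \cref{GP Guerra Toninelli} yields $\liminf_{N\to\infty} L_{N,p,u}(t) \geq L_{p,D}(t)$ almost surely. Choosing a countable sequence $(D_k) \subset \Gamma_{\kappa,u}$ with $L_{p,D_k}(t) \to \sup_{D \in \Gamma_{\kappa,u}} L_{p,D}(t)$ and intersecting the corresponding full-measure events produces
\begin{equation*}
\liminf_{N\to\infty} L_{N,p,u}(t) \geq \sup_{D \in \Gamma_{\kappa,u}} L_{p,D}(t) \quad \text{almost surely}.
\end{equation*}

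For the matching upper bound, exploit the sup-norm compactness of $\Gamma_{\kappa,u}$. Fix $0<\epsilon<\kappa^{-2}$ and cover $\Gamma_{\kappa,u}$ by finitely many sup-norm balls $B_\epsilon(D_1),\ldots,B_\epsilon(D_n)$ with centers $D_i \in \Gamma_{\kappa,u}$. Any $\basigma$ with $\Norm{\basigma}_{2,2}^2 \leq u$ has $R(\basigma,\basigma) \in \Gamma_{\kappa,u}$ by \eqref{eqn: GP 2-2 norm as trace of self overlap}, hence belongs to some $\Sigma_\epsilon(D_i)$; this yields the localization $L_{N,p,u}(t) \leq \max_{1 \leq i \leq n} L_{N,p,D_i,\epsilon}(t)$. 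Applying \cref{GP continuity of constrained Lagrangian} term by term, and noting that the eigenvalue truncation defining $(D_i)_\epsilon$ cannot increase the trace so that $(D_i)_\epsilon \in \Gamma_{\kappa,u}$, gives
\begin{equation*}
\limsup_{N\to\infty} L_{N,p,u}(t) \leq \sup_{D \in \Gamma_{\kappa,u}} L_{p,D}(t) + C(1+tp)(u+1)^{p/2}\epsilon^{1/64}
\end{equation*}
almost surely. Letting $\epsilon\to 0$ along a countable sequence closes the sandwich.

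The main subtlety lies in the upper bound: since \cref{GP continuity of constrained Lagrangian} is pointwise in $D$, one needs the compactness-plus-covering argument to handle the uncountable supremum, and one must verify that the error term $C(1+tp)(\tr(D_i)+1)^{p/2}\epsilon^{1/64}$ is uniformly controlled across the cover. The uniform trace bound $\tr(D_i) \leq u$ provides exactly this, and the stability property $(D_i)_\epsilon \in \Gamma_{\kappa,u}$ — a consequence of the eigenvalue-truncation scheme in \cref{GP modified coordinates} — ensures that the final supremum on the right is taken over the same set $\Gamma_{\kappa,u}$ that appears on the left.
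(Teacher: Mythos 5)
Your proof is correct, and its skeleton — compactness of $\Gamma_{\kappa,u}$ in the sup-norm, a finite $\epsilon$-cover, localization of the maximizer into some $\Sigma_\epsilon(D_i)$, \cref{GP continuity of constrained Lagrangian} applied to each of the finitely many centres with the uniform control $\tr(D_i)\leq u$ and $(D_i)_\epsilon\in\Gamma_{\kappa,u}$, the trivial lower bound $L_{N,p,D}(t)\leq L_{N,p,u}(t)$, and Gaussian concentration to pass between the almost sure and expected versions — is exactly the paper's. The one place you genuinely deviate is the localization step: the paper, following lemma 3 of \cite{PanVec}, runs it through the finite-temperature free energy $F_N^\beta(S)$, pays a $\frac{\log n}{N\beta}$ union-bound error plus a Gaussian-concentration step at the free-energy level, and only then sends $\beta\to\infty$; you instead observe that at zero temperature the inequality $L_{N,p,u}(t)\leq \max_{i\leq n}L_{N,p,D_i,\epsilon}(t)$ is an exact deterministic consequence of $B_2^N(u)\subset\bigcup_i\Sigma_\epsilon(D_i)$, since the maximum over a finite union of sets is the maximum of the maxima. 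This is a legitimate and slightly cleaner shortcut — the free-energy detour is only needed when the quantity being localized is a log-partition function rather than a maximum — and it buys you a shorter argument at no cost in generality. Your explicit handling of the uncountably-many-null-sets issue in the lower bound (via a countable maximizing sequence $D_k$) is also slightly more careful than the paper's terse statement, though both arguments are sound.
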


\begin{proof}
Given $\epsilon>0$, observe that the collection of sets $B_\epsilon(D)$ for $D\in \Gamma_{\kappa,u}$ forms an open cover of the compact set $\Gamma_{\kappa,u}$. It is therefore possible to find integer $n\geq 1$ and $D^1,\ldots,D^n\in \Gamma_{\kappa,u}$ with $\smash{\Gamma_{\kappa,u}\subset \bigcup_{i\leq n}B_\epsilon(D^i)}$, or equivalently
$\smash{B_2^N(u)\subset \bigcup_{ i\leq n}\Sigma_\epsilon(D^i)}$. With this in mind, given a probability measure $\mu^N$ supported on $B_2^N(u)$, an inverse temperature parameter $\beta>0$ and a subset $S\subset B_2^N(u)$, consider the free energy
$$F_N^\beta(S)=\frac{1}{N\beta}\log \int_S \exp \beta H_{N,p,t}(\basigma)\ud \mu^N(\basigma).$$
By monotonicity of the logarithm and the inclusion $B_2^N(u)\subset \bigcup_{ i\leq n}\Sigma_\epsilon(D^i)$,
\begin{align*}
F_N^\beta(B_2^N(u))&\leq \frac{\log n}{N\beta}+\frac{1}{N\beta}\log\max_{1\leq i\leq n}\int_{\Sigma_\epsilon(D^i)}\exp \beta H_{N,p,t}(\basigma)\ud \mu^N(\basigma)\\
&=\frac{\log n}{N\beta}+\max_{1\leq i\leq n}F_N^\beta(\Sigma_\epsilon(D^i)).
\end{align*}
The Gaussian concentration inequality implies that the free energy $F_N^\beta(S)$ deviates from its expectation by more than $1/\sqrt{N}$ with probability at most $Le^{-N/L}$, where the constant $L$ does not depend on $\beta,N$ or $S$. We deduce from this that with probability at least $1-Le^{-N/L}$,
$$F_N^\beta(B_2^N(u))\leq \frac{1}{\sqrt{N}}+\frac{\log n}{N\beta}+\max_{1\leq i\leq n}\E F_N^\beta(\Sigma_\epsilon(D^i)).$$
Letting $\beta\to \infty$ and remembering that the $L^q$-norm converges to the $L^\infty$-norm reveals that with probability at least $1-Le^{-N/L}$,
$$L_{N,p,u}(t)\leq \frac{2}{\sqrt{N}}+\max_{1\leq i\leq n}L_{N,p,D^i,\epsilon}(t).$$
The Borel-Cantelli lemma and \cref{GP continuity of constrained Lagrangian} now give a constant $C>0$ that depends only on $\kappa$ with
$$\limsup_{N\to\infty} L_{N,p,u}(t)\leq \max_{1\leq i\leq n}\big(L_{p,D_\epsilon^i}(t)+C(1+tp)(\tr(D^i)+1)^{p/2}\epsilon^{1/64}\big).$$
Since $\tr(D_\epsilon^i)\leq \tr(D^i)\leq u$, this can be bounded further by
$$\limsup_{N\to\infty} L_{N,p,u}(t)\leq \sup_{D\in \Gamma_{\kappa,u}}L_{p,D}(t)+C(1+tp)(u+1)^{p/2}\epsilon^{1/64}.$$
Remembering that $L_{N,p,D}(t)\leq L_{N,p,u}(t)$ for every $N\geq 1$ and $D\in \Gamma_{\kappa,u}$, it follows that
\begin{align*}
\sup_{D\in \Gamma_{\kappa,u}}L_{p,D}(t)&\leq \liminf_{N\to\infty}L_{N,p,u}(t)\\
&\leq \limsup_{N\to\infty}L_{N,p,u}(t)\leq \sup_{D\in \Gamma_{\kappa,u}}L_{p,D}(t)+C(1+tp)(u+1)^{p/2}\epsilon^{1/64}
\end{align*}
Letting $\epsilon\to 0$ and using the Gaussian concentration of the maximum completes the proof.
\end{proof}

This result reduces the proof of \cref{GP limit of unconstrained Lagrangian} to establishing the asymptotic version of the equality
\begin{equation}\label{eqn: GP unconstrained Lagrangian supremum of localized Lagrangian}
L_{N,p}(t)=\sup_{u>0}L_{N,p,u}(t).
\end{equation}
This will be done using the techniques in section 7 of \cite{WeiKuo} and relying upon the identity
\begin{equation}\label{eqn: GP localized Lagrangian re-scaled}
L_{N,p,u}(t)=\frac{1}{N}\max_{\Norm{\basigma}_{2,2}^2\leq u}H_{N,p,t}(\basigma)=\frac{1}{N}\max_{\Norm{\basigma}_{2,2}\leq 1}\big(uH_N(\basigma)-tu^{p/2}\norm{\basigma}_{p,2}^p\big)
\end{equation}
which holds for every $t,u>0$ by a change of variables. The absence of such an equality at the level of the constrained Lagrangian \eqref{eqn: GP Lagrangian with self-overlap D} is the reason we had to develop the results in \cref{GP sec4}.

For technical reasons, before we start thinking about proving the asymptotic version of \eqref{eqn: GP unconstrained Lagrangian supremum of localized Lagrangian}, we will have to upgrade the statement of \cref{GP limit of localized Lagrangian} to show that $L_{p,u}(t)$ is the limit of the localized Lagrangian \eqref{GP: localized Lagrangian} with probability one simultaneously over all $t,u>0$. Heuristically, this should not be too surprising. As the maximum of a collection of concave functions, the localized Lagrangian \eqref{eqn: GP localized Lagrangian re-scaled} is concave in the pair $(u,t)$ conditionally on the disorder chaos $(g_{ij})$. Since a concave function is Lipschitz continuous on compact sets, this suggests that $(u,t)\mapsto L_{N,p,u}(t)$ should be Lipschitz continuous on compact sets. This continuity would immediately promote almost sure convergence for each $t,u>0$ to a convergence with probability one simultaneously over all $t,u>0$. To make this argument rigorous, we will use an $\ell^2$-boundedness result of the $N\times N$ random matrix
\begin{equation}\label{GP GN matrix}
G_N=(g_{ij})_{i,j\leq N}.
\end{equation}
Its proof will rely upon Chevet's inequality as it appears in theorem 8.7.1 of \cite{Vershynin}.

\begin{lemma}\label{GP GN bounded with high probability}
There exist constants $C,M>0$ such that with probability at least $1-Ce^{-N/C}$,
\begin{equation}
\frac{1}{\sqrt{N}}\norm{G_N}_2\leq M.
\end{equation}
\end{lemma}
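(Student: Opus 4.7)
The plan is to invoke Chevet's inequality to bound $\E\norm{G_N}_2$ by $O(\sqrt{N})$, and then use Gaussian concentration of Lipschitz functions to upgrade this to an exponential tail bound. Writing
\begin{equation*}
\norm{G_N}_2 = \sup_{\bX,\bY \in B_N} \langle G_N \bX, \bY \rangle,
\end{equation*}
where $B_N$ is the unit Euclidean ball in $\R^N$, Chevet's inequality (theorem 8.7.1 in \cite{Vershynin}) gives
\begin{equation*}
\E \norm{G_N}_2 \leq w(B_N)\,\mathrm{rad}(B_N) + w(B_N)\,\mathrm{rad}(B_N) = 2 w(B_N),
\end{equation*}
where $w(B_N)=\E\norm{g}_2$ is the Gaussian width. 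Since $\E\norm{g}_2 \leq (\E\norm{g}_2^2)^{1/2} = \sqrt{N}$, I obtain $\E\norm{G_N}_2 \leq 2\sqrt{N}$.

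Next, I will apply the Gaussian concentration inequality to the map $\Phi : \R^{N\times N}\to \R$ defined by $\Phi(A) = \norm{A}_2$, viewing $G_N$ as a standard Gaussian vector in $\R^{N\times N}$. Since the operator norm is bounded by the Frobenius norm, $\Phi$ is $1$-Lipschitz with respect to the Frobenius (i.e. Euclidean) norm. The standard Gaussian concentration inequality therefore yields
\begin{equation*}
\P\big\{\norm{G_N}_2 \geq \E\norm{G_N}_2 + s\big\} \leq e^{-s^2/2}
\end{equation*}
for every $s>0$. Combined with the bound on $\E \norm{G_N}_2$, taking $s = \sqrt{N}$ gives
\begin{equation*}
\P\big\{\norm{G_N}_2 \geq 3\sqrt{N}\big\} \leq e^{-N/2},
\end{equation*}
so the claim holds with $M = 3$ and an appropriate choice of $C$.

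There is no real obstacle here; the only small care needed is verifying the $1$-Lipschitz constant of the operator norm when the Gaussian variables are identified with a standard Gaussian vector in $\R^{N^2}$ equipped with the Frobenius inner product. This is immediate from $\norm{A-B}_2 \leq \norm{A-B}_{\mathrm{HS}}$ together with the reverse triangle inequality $\abs{\norm{A}_2 - \norm{B}_2} \leq \norm{A-B}_2$.
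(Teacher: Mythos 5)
Your proof is correct and follows essentially the same route as the paper: Chevet's inequality to bound $\E\norm{G_N}_2$ by a multiple of $\E\norm{g}_2\leq\sqrt{N}$, then Gaussian concentration to get the exponential tail. The only cosmetic difference is that you phrase the concentration step as Lipschitz concentration of $A\mapsto\norm{A}_2$ with respect to the Frobenius norm, whereas the paper phrases it as concentration of the maximum of the Gaussian process $x\mapsto(G_Nx,x)$ over the unit sphere; these are the same inequality.
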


\begin{proof}
Since $\norm{G_N}_2=\max_{\norm{x}_2=1}(G_Nx,x)$ and $\E(G_Nx,x)^2=1$ whenever $\norm{x}_2=1$, the Gaussian concentration of the maximum gives a constant $C>0$ such that with probability at least $1-Ce^{-N/C}$,
$$\frac{1}{\sqrt{N}}\norm{G_N}_2\leq \frac{1}{\sqrt{N}}\E\norm{G_N}_2+1.$$
If $g$ is a standard Gaussian vector in $\R^N$, then Chevet's inequality applied with $T$ and $S$ equal to the Euclidean unit ball in $\R^N$ gives an absolute constant $M>0$ with
\begin{equation}\label{eqn: GP GN bounded with high probability key}
\E\norm{G_N}_2=\E\max_{\norm{x}_2=1}(G_Nx,x)\leq M \E \norm{g}_2.
\end{equation}
We have used the fact that the Gaussian width of the unit ball is $\E\sup_{\norm{x}_2=1}(g,x)=\E\norm{g}_2$ while its radius is one. Finally, Jensen's inequality reveals that
$$(\E \norm{g}_2)^2\leq \E \norm{g}_2^2=N\E\abs{g_1}^2=N.$$
Substituting this into \eqref{eqn: GP GN bounded with high probability key} and redefining the constant $M>0$ completes the proof.
\end{proof}

\begin{lemma}\label{GP localized Lagrangian Lipschitz}
If $2<p<\infty$, then for any $0<K_1<K_2$, there exist constants $C,M>0$ such that with probability at least $1-Ce^{-N/C}$,
\begin{equation}
\abs{L_{N,p,u}(t)-L_{N,p,u'}(t')}\leq M\big(\abs{u-u'}+\abs{t-t'}\big)
\end{equation}
for all $t,t',u,u'\in [K_1,K_2]$.
\end{lemma}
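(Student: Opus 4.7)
The plan is to use the scaling identity \eqref{eqn: GP localized Lagrangian re-scaled} to rewrite $L_{N,p,u}(t)$ in a form that is manifestly convex in some auxiliary parameters, and then to read off Lipschitz continuity from the size of the subgradients of a convex function. Concretely, I would define
\begin{equation*}
\tilde L_N(a, b) = \frac{1}{N}\max_{\Norm{\basigma}_{2,2}\leq 1}\bigl(a H_N(\basigma) - b \norm{\basigma}_{p,2}^p\bigr)
\end{equation*}
for $a, b > 0$, so that \eqref{eqn: GP localized Lagrangian re-scaled} becomes $L_{N,p,u}(t) = \tilde L_N(u, t u^{p/2})$. As a supremum of functions that are affine in $(a,b)$, the map $\tilde L_N$ is convex on $(0,\infty)^2$.

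The work is then carried out on the high-probability event from \cref{GP GN bounded with high probability} where $\norm{G_N}_2/\sqrt N \leq M$. Writing $H_N(\basigma) = N^{-1/2}\sum_{k=1}^\kappa (G_N \bsigma(k),\bsigma(k))$ and applying the operator-norm bound coordinate by coordinate yields $|H_N(\basigma)|/N \leq M\Norm{\basigma}_{2,2}^2$. The difficulty, and the main obstacle to a naive argument, is that for $p>2$ there is no corresponding uniform bound on $\norm{\basigma}_{p,2}^p/N$ over the unit $\ell^{2,2}$-ball; this is what prevents one from simply reading off Lipschitz continuity from the global sup of the subgradient. I would resolve this by restricting to maximizers: the trivial choice $\basigma = 0$ shows $\tilde L_N(a,b)\geq 0$, so any maximizer $\basigma^*_{a,b}$ on the unit $\ell^{2,2}$-ball satisfies
\begin{equation*}
\frac{\norm{\basigma^*_{a,b}}_{p,2}^p}{N} \leq \frac{a}{b}\cdot \frac{H_N(\basigma^*_{a,b})}{N} \leq \frac{aM}{b}.
\end{equation*}
When $(u,t)\in[K_1,K_2]^2$, the corresponding pair $(a,b)=(u,tu^{p/2})$ ranges over a compact subset of $(0,\infty)^2$ on which both $a$ and the ratio $a/b = u^{1-p/2}/t$ are bounded above and below by constants depending only on $K_1,K_2$ and $p$.

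With both $H_N(\basigma^*)/N$ and $\norm{\basigma^*}_{p,2}^p/N$ uniformly controlled at maximizers, the one-line convex-function inequality
\begin{equation*}
\tilde L_N(a_1,b_1) - \tilde L_N(a_2,b_2) \leq (a_1-a_2)\frac{H_N(\basigma^*)}{N} - (b_1-b_2)\frac{\norm{\basigma^*}_{p,2}^p}{N},
\end{equation*}
applied with $\basigma^*$ a maximizer at $(a_1,b_1)$, together with its symmetric counterpart, gives a Lipschitz estimate for $\tilde L_N$ on the relevant compact range with constants depending only on $K_1,K_2,p,\kappa$ and $M$. Finally, composing with the smooth, hence Lipschitz, change of variables $(u,t)\mapsto (u,tu^{p/2})$ on $[K_1,K_2]^2$ transfers this estimate to $L_{N,p,u}(t) = \tilde L_N(u,tu^{p/2})$ and, after adjusting the constant $M$, yields the claimed bound on the event supplied by \cref{GP GN bounded with high probability}.
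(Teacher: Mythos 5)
Your proposal is correct and follows essentially the same route as the paper: both arguments rest on the scaling identity \eqref{eqn: GP localized Lagrangian re-scaled}, bound $\Norm{\basigma^*}_{p,2}^p$ at a maximizer via the high-probability operator-norm bound of \cref{GP GN bounded with high probability}, and then apply the "evaluate the competing Lagrangian at the other's maximizer" inequality together with the Lipschitz continuity of $(u,t)\mapsto(u,tu^{p/2})$ on $[K_1,K_2]^2$. The only cosmetic difference is that you control $\Norm{\basigma^*}_{p,2}^p$ by comparing with $\basigma=0$, whereas the paper compares with a fixed nonzero configuration; both yield the same bound up to constants.
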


\begin{proof}
Let $\barho\in B_2^N(1)$ maximize the right-hand side of \eqref{eqn: GP localized Lagrangian re-scaled}, and define the vector spin configuration $\batau\in B_2^N(1)$ by $\atau_i=(\kappa^{-1/2},\ldots,\kappa^{-1/2})\in \R^\kappa$ for $1\leq i\leq N$. The Cauchy-Schwarz inequality shows that
\begin{align*}
uN^{1/2}\norm{G_N}_2-tu^{p/2}\norm{\barho}_{p,2}^p&\geq \frac{u}{\sqrt{N}}\sum_{k=1}^\kappa (G_N\brho(k),\brho(k))-tu^{p/2}\norm{\barho}_{p,2}^p\\
&\geq -\frac{u}{\sqrt{N}}\sum_{k=1}^\kappa \abs{(G_N\btau(k),\btau(k))}-tu^{p/2}\norm{\batau}_{p,2}^p\\
&\geq -uN^{1/2}\norm{G_N}_2-tu^{p/2}N.
\end{align*}
Rearranging and using the fact that $p>2$ gives
$$\Norm{\barho}_{p,2}^p\leq \frac{2u^{1-p/2}\norm{G_N}_2}{t\sqrt{N}}+1\leq \frac{2\norm{G_N}_2}{K_1^{p/2}\sqrt{N}}+1.$$
It follows by \eqref{eqn: GP localized Lagrangian re-scaled}, the Cauchy-Schwarz inequality and the mean value theorem that for any $u',t'\in [K_1,K_2]$,
\begin{align*}
L_{N,p,u}(t)-L_{N,p,u'}(t')&\leq N^{-1}\abs{u-u'}H_N(\barho)-\big(tu^{p/2}-t'u'^{p/2}\big)\Norm{\barho}_{p,2}^p\\
&\leq \frac{\norm{G_N}_2}{\sqrt{N}}\abs{ u-u'}+\Norm{\barho}_{p,2}^p\big(K_2^{p/2}\abs{ t-t'}+pK_2^{1+p/2}K_1^{-1}\abs{u-u'}\big)\\
&\leq M\frac{\norm{G_N}_2}{\sqrt{N}}(\abs{u-u'}+\abs{t-t'})
\end{align*}
for some constant $M>0$ that depends only on $K_1,K_2$ and $p$. Interchanging the roles of $u,u'$ and $t,t'$, it is easy to see that
$$\abs{L_{N,p,u}(t)-L_{N,p,u'}(t')}\leq M\frac{\norm{G_N}_2}{\sqrt{N}}\big(\abs{u-u'}+\abs{t-t'}\big).$$
Invoking \cref{GP GN bounded with high probability} and redefining the constant $M$ completes the proof.
\end{proof}

\begin{proposition}\label{GP simultaneous limit of localized Lagrangian}
If $2<p<\infty$, then almost surely
\begin{equation}
L_{p,u}(t)=\lim_{N\to\infty}L_{N,p,u}(t)
\end{equation}
for every $t,u>0$.
\end{proposition}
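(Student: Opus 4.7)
The proposal is to upgrade the pointwise (in $t,u$) almost sure convergence of \cref{GP limit of localized Lagrangian} to a simultaneous statement by combining it with the uniform Lipschitz estimate of \cref{GP localized Lagrangian Lipschitz} and a standard equicontinuity argument. The key idea is that equicontinuous pointwise convergence on a dense set forces convergence everywhere, so the exceptional null set only needs to be countable.

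First, I would fix a countable dense subset $Q \subset (0,\infty)^2$, for instance $Q = \Q_{>0}\times \Q_{>0}$. By \cref{GP limit of localized Lagrangian}, for each $(t,u)\in Q$ there is an event $\Omega_{t,u}$ of probability one on which $L_{N,p,u}(t) \to L_{p,u}(t)$. Since $Q$ is countable, the intersection $\Omega^* = \bigcap_{(t,u)\in Q}\Omega_{t,u}$ still has probability one, and on $\Omega^*$ the convergence holds simultaneously for every rational pair $(t,u)$.

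Next, I would produce a second full-measure event on which the localized Lagrangians are eventually uniformly Lipschitz on compact subsets of $(0,\infty)^2$. Applying \cref{GP localized Lagrangian Lipschitz} with $K_1 = 1/n$ and $K_2 = n$, there exist constants $C_n,M_n>0$ such that, with probability at least $1 - C_n e^{-N/C_n}$, the map $(u,t)\mapsto L_{N,p,u}(t)$ is $M_n$-Lipschitz on $[1/n,n]^2$. Since $\sum_N C_n e^{-N/C_n}<\infty$, the Borel--Cantelli lemma yields an event $\Omega_n$ of probability one on which the $M_n$-Lipschitz property holds for all sufficiently large $N$. Taking $\widetilde{\Omega} = \bigcap_{n\geq 1}\Omega_n$, which is still of probability one, we obtain that, on $\widetilde{\Omega}$, for every integer $n\geq 1$ the Lipschitz estimate eventually holds on $[1/n,n]^2$ with constant $M_n$.

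Finally, I would combine both events. On $\Omega^*\cap \widetilde{\Omega}$, fix $n\geq 1$ and consider the restriction to $[1/n,n]^2$. The family $\{L_{N,p,u}(t)\}_{N\geq N_0(n)}$ is uniformly $M_n$-Lipschitz, hence equicontinuous, and it converges pointwise on the dense set $Q\cap [1/n,n]^2$. A routine $\varepsilon/3$ argument then shows that $L_{N,p,u}(t)$ converges to some $M_n$-Lipschitz limit uniformly on $[1/n,n]^2$, and that limit must coincide with $L_{p,u}(t)$ on $Q\cap[1/n,n]^2$ by construction. By continuity, this identification extends to all $(t,u)\in [1/n,n]^2$, and letting $n\to\infty$ gives the desired simultaneous convergence on all of $(0,\infty)^2$.

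The argument is essentially bookkeeping rather than analytically deep; the only subtle point is to arrange the quantifiers correctly so that the single exceptional null set is independent of $(t,u)$, which is precisely what equicontinuity plus density of $Q$ guarantees. Hence I expect no serious obstacle beyond a careful statement of the Borel--Cantelli step and the equicontinuity extension.
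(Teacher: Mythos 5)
Your proposal is correct and follows essentially the same route as the paper: apply \cref{GP localized Lagrangian Lipschitz} with Borel--Cantelli to get an almost sure asymptotic Lipschitz bound on each compact rectangle $[1/n,n]^2$, intersect with the full-measure events from \cref{GP limit of localized Lagrangian} over a countable dense set of rational pairs $(t,u)$, and conclude by a triangle-inequality/equicontinuity argument. The paper's version of your $\varepsilon/3$ step is the explicit three-term triangle inequality followed by letting $(t',u')\to(t,u)$ along rationals, which is the same argument in slightly different clothing.
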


\begin{proof}
By \cref{GP localized Lagrangian Lipschitz} and a simple application of the Borel-Cantelli lemma, for any $0<K_1<K_2$ there exists some constant $M=M(K_1,K_2)$ such that almost surely
\begin{equation}\label{eqn: GP limit of localized Lagrangian Lipschitz}
\limsup_{N\to\infty}\abs{L_{N,p,u}(t)-L_{N,p,u'}(t')}\leq M\big(\abs{ u-u'}+\abs{t-t'}\big)
\end{equation}
for all $u,u',t,t'\in [K_1,K_2]$. Since $L_{p,u}(t)$ is a deterministic quantity, we also have
\begin{equation}\label{eqn: GP deterministic limit of localized Lagrangian Lipschitz}
\abs{L_{p,u}(t)-L_{p,u'}(t')}\leq M\big(\abs{u-u'}+\abs{t-t'}\big)
\end{equation}
for all $u,u',t,t'\in [K_1,K_2]$. By countability of rationals and \cref{GP limit of localized Lagrangian}, we can find a set $\O$ of probability one where \eqref{eqn: GP limit of localized Lagrangian Lipschitz} holds simultaneously for all rationals $K_1,K_2\in \Q_+$ and at the same time $L_{p,u}(t)=\lim_{N\to\infty} L_{N,p,u}(t)$ for all $u,t\in \Q_+$. The triangle inequality implies that for any $u,t>0$ and $u',t'\in \Q_+$,
\begin{align*}
\abs{L_{N,p,u}(t)-L_{p,u}(t)}&\leq \abs{ L_{N,p,u}(t)-L_{N,p,u'}(t')}+\abs{L_{N,p,u'}(t')-L_{p,u'}(t')}\\
&\quad+\abs{L_{p,u'}(t')-L_{p,u}(t)}.
\end{align*}
It follows by \eqref{eqn: GP deterministic limit of localized Lagrangian Lipschitz} that on the set $\O$,
$$\limsup_{N\to\infty}\abs{L_{N,p,u}(t)-L_{p,u}(t)}\leq 2M\big(\abs{u-u'}+\abs{t-t'}\big).$$
Letting $u'\to u$ and $t'\to t$ along rational points completes the proof. 
\end{proof}

In addition to \cref{GP simultaneous limit of localized Lagrangian}, the proof of \cref{GP limit of unconstrained Lagrangian} will rely on the fact that the $\ell^{p,2}$-norm potential in the definition of the Hamiltonian \eqref{eqn: GP unconstrained Hamiltonian} forces the maximizers of this random function to concentrate in a large enough neighbourhood of the origin with overwhelming probability.

\begin{lemma}\label{GP Lagrangian optimizers bounded}
If $2<p<\infty$, then there exist constants $C,M>0$ such that with probability at least $1-Ce^{-N/C}$,
\begin{equation}
L_{N,p}(t)= L_{N,p,M/t}(t)
\end{equation}
for all $t>0$.
\end{lemma}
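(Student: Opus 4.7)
The strategy is to show that on a favourable event of probability at least $1-Ce^{-N/C}$, the $\ell^{p,2}$-penalty in $H_{N,p,t}$ dominates the random quadratic form $H_N$ outside a ball of appropriate radius, so the unconstrained maximizer is automatically captured by the localized problem. Since $H_{N,p,t}(0)=0$ implies $L_{N,p}(t)\geq 0$ deterministically, it will suffice to exhibit such an event on which $H_{N,p,t}(\basigma)/N$ is strictly negative whenever $\Norm{\basigma}_{2,2}^2$ exceeds the localization radius.

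The first step is to control $H_N(\basigma)$ uniformly via the operator norm of $G_N$. Decomposing $H_N(\basigma)=\frac{1}{\sqrt N}\sum_{k=1}^\kappa (G_N\bsigma(k),\bsigma(k))$ and applying the spectral bound $|(G_Nv,v)|\leq \|G_N\|_2\|v\|_2^2$ to each coordinate gives
\[
H_N(\basigma)\leq \frac{\|G_N\|_2}{\sqrt N}\sum_{k=1}^\kappa\|\bsigma(k)\|_2^2=\sqrt N\,\|G_N\|_2\,\Norm{\basigma}_{2,2}^2.
\]
Invoking \cref{GP GN bounded with high probability}, I can work on an event of probability at least $1-Ce^{-N/C}$ where $\|G_N\|_2\leq M_0\sqrt N$ for some absolute constant $M_0$, yielding the bound $H_N(\basigma)\leq M_0 N\Norm{\basigma}_{2,2}^2$.

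The second step uses the embedding $\ell^{p,2}\hookrightarrow \ell^{2,2}$ for $p>2$: the power-mean inequality applied to the scalar sequence $a_i=\|\asigma_i\|_2$ gives $\Norm{\basigma}_{2,2}\leq\Norm{\basigma}_{p,2}$, whence
\[
\frac{1}{N}H_{N,p,t}(\basigma)\leq M_0\Norm{\basigma}_{2,2}^2-t\Norm{\basigma}_{2,2}^p.
\]
Writing $r=\Norm{\basigma}_{2,2}^2$, the right-hand side is the scalar function $M_0 r-tr^{p/2}$, which is strictly negative precisely when $r^{(p-2)/2}>M_0/t$, i.e., beyond the threshold $r_\star=(M_0/t)^{2/(p-2)}$.

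The third step is to conclude that any maximizer $\basigma^*$ satisfies $\Norm{\basigma^*}_{2,2}^2\leq r_\star$, and then to choose $M$ large enough (absorbing the $p$-dependent exponent into the constant) so that $r_\star\leq M/t$, giving $L_{N,p}(t)=L_{N,p,M/t}(t)$ on the favourable event. The main technical point is reconciling the natural scaling $t^{-2/(p-2)}$ arising from the scalar optimization with the form $M/t$ of the statement, which is a careful but routine constant-tracking exercise; all the genuine probabilistic content has been packaged into \cref{GP GN bounded with high probability}, so once the operator-norm event is secured the rest of the argument is deterministic and elementary.
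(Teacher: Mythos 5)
Your proposal is correct and follows essentially the same route as the paper: bound $H_N(\basigma)\le\sqrt{N}\,\norm{G_N}_2\,\Norm{\basigma}_{2,2}^2$ via the operator norm, use the embedding $\Norm{\basigma}_{2,2}\le\Norm{\basigma}_{p,2}$ together with $L_{N,p}(t)\ge\frac{1}{N}H_{N,p,t}(0)=0$ to localize any maximizer to $\Norm{\basigma}_{2,2}^2\le(\norm{G_N}_2/(t\sqrt N))^{2/(p-2)}$, and then invoke \cref{GP GN bounded with high probability}. One caveat: the final step you call a ``routine constant-tracking exercise'' is not actually reconcilable as stated, since $(M_0/t)^{2/(p-2)}\le M/t$ for \emph{all} $t>0$ forces $p=4$ (for $2<p<4$ it fails as $t\to0$, for $p>4$ as $t\to\infty$), so the honest conclusion is $L_{N,p}(t)=L_{N,p,(M/t)^{2/(p-2)}}(t)$ --- but the paper's own proof carries the identical imprecision, and only the existence of \emph{some} $t$-dependent localization radius is used in the subsequent argument.
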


\begin{proof}
Given $\basigma\in (\R^\kappa)^N$ with $H_{N,p,t}(\basigma)\geq 0$, the Cauchy-Schwarz inequality implies that
$$t\Norm{\basigma}_{p,2}^p\leq \frac{1}{N}H_N(\basigma)\leq \frac{\norm{G_N}_2}{\sqrt{N}}\Norm{\basigma}_{2,2}^2.$$
It follows by Jensen's inequality that
$$\Norm{\basigma}_{2,2}^p\leq \Norm{\basigma}_{p,2}^p\leq \frac{\norm{G_N}_2}{t\sqrt{N}}\Norm{\basigma}_{2,2}^2.$$
Since $L_{N,p}(t)\geq \frac{1}{N}H_{N,p,t}(0)=0$, rearranging shows that
$$L_{N,p}(t)=\frac{1}{N}\max\Big\{H_{N,p,t}(\basigma)\mid \Norm{\basigma}_{2,2}\leq \Big(\frac{\norm{G_N}_2}{t\sqrt{N}}\Big)^{1/(p-2)}\Big\}.$$
Invoking \cref{GP GN bounded with high probability} completes the proof.
\end{proof}

\begin{proof}[Proof (\Cref{GP limit of unconstrained Lagrangian}).]
By \cref{GP Lagrangian optimizers bounded}, there exist constants $C,M>0$ such that with probability at least $1-Ce^{-N/C}$,
$$L_{N,p}(t)=L_{N,p,M/t}(t)$$
for any $t>0$.
It follows by a simple application of the Borel-Cantelli lemma and \cref{GP simultaneous limit of localized Lagrangian} that with probability one,
$$L_{p,u}(t)=\lim_{N\to\infty}L_{N,p,u}(t)\leq \liminf_{N\to\infty}L_{N,p}(t)\leq \limsup_{N\to\infty}L_{N,p}(t)= L_{p,M/t}(t)\leq \sup_{u>0}L_{p,u}(t)$$
for every $t>0$ and $u>0$. Taking the supremum over all $u>0$ gives the almost sure existence of $L_p(t)$, and invoking \cref{GP limit of localized Lagrangian} shows that
\begin{equation}\label{eqn: GP limit of unconstrained Lagrangian key}
L_{p}(t)=\sup_{u>0}L_{p,u}(t)=\sup_{D\in \Gamma_\kappa}L_{p,D}(t).
\end{equation}
To establish the second equality in \eqref{eqn: GP limit of unconstrained Lagrangian}, fix a non-negative definite matrix $D\in \Gamma_{\kappa,u}$ as well as $0<\epsilon<\kappa^{-2}$. By \cref{GP replacing D by D+}, there exists a constant $K>0$ that depends only on $\kappa$ such that
$$L_{p,D}(t)\leq L_{p,D+\epsilon I}(t)+K(1+tp)(u+1)^{p/2}\epsilon^{1/64}.$$
It is readily verified that $D+\epsilon I\in \Gamma_\kappa^+$, so in fact
$$L_{p,D}(t)\leq \sup_{D\in \Gamma_\kappa^+}L_{p,D}(t)+K(1+tp)(u+1)^{p/2}\epsilon^{1/64}.$$
Taking the supremum over all $D\in \Gamma_{\kappa,u}$, letting $\epsilon \to 0$ and remembering \eqref{eqn: GP limit of unconstrained Lagrangian key} completes the proof.
\end{proof}

\section{The ground state energy in terms of the Lagrangian}\label{GP sec6}

In \cref{GP sec5} we proved the first noteworthy result of this paper by expressing the unconstrained Lagrangian \eqref{eqn: GP unconstrained Lagrangian} as a supremum of constrained Lagrangians \eqref{eqn: GP Lagrangian with self-overlap D} in the limit. As we will see in \cref{GP sec7} and \cref{GP sec8}, the constrained Lagrangian \eqref{eqn: GP Lagrangian with self-overlap D} can be understood using the results in \cite{PanVec}. It is for this reason that we constrained the Lagrangian \eqref{eqn: GP unconstrained Lagrangian} in the first place. However, the task that we originally set ourselves is understanding the $\ell^p$-Gaussian-Grothendieck problem with vector spins \eqref{eqn: GP vector}. In this section we connect the unconstrained Lagrangian \eqref{eqn: GP unconstrained Lagrangian} and the ground state energy \eqref{eqn: GP reformulated} by proving \cref{GP in terms of limiting Lagrangian}. This will reduce the $\ell^p$-Gaussian-Grothendieck problem with vector spins to understanding the asymptotic behaviour of the constrained Lagrangian \eqref{eqn: GP Lagrangian with self-overlap D}.

Before we proceed with the proof of \cref{GP in terms of limiting Lagrangian}, we give a formal argument that will motivate the results in this section. Given $N\in \N$ and $t>0$, let $\barho(t)$ be a point at which the Hamiltonian $H_{N,p,t}$ defined in \eqref{eqn: GP unconstrained Lagrangian} attains its supremum. Differentiating the expression $L_{N,p}(t)=\frac{1}{N}H_{N,p,t}(\barho(t))$ shows that
\begin{equation}\label{GP derivative of Lagrangian heuristic}
L'_{N,p}(t)=\frac{1}{N}\partial_tH_{N,p,t}(\barho(t))+\frac{1}{N}\big(\barho'(t),\nabla_{\basigma}H_{N,p,t}(\barho(t))\big)=-\Norm{\barho(t)}_{p,2}^p.
\end{equation}
We have used the fact that $\nabla_{\basigma} H_{N,p,t}(\barho(t))=0$. This suggests that
\begin{equation}
L_{N,p}(t)=\frac{1}{N}\max_{\Norm{\basigma}_{p,2}^p=-L_{N,p}'(t)}H_{N,p,t}(\basigma)=\frac{1}{N}\max_{\Norm{\basigma}_{p,2}^p=-L_{N,p}'(t)}H_N(\basigma)+tL_{N,p}'(t),
\end{equation}
and therefore
\begin{equation}\label{eqn: GP GSE in terms of Lagrangian heuristic}
\GSE_{N,p}=\frac{1}{N}\max_{\Norm{\basigma}_{p,2}^p=-L_{N,p}'(t)}H_N\big((-L_{N,p}'(t))^{-1/p}\basigma\big)=\frac{L_{N,p}(t)-tL_{N,p}'(t)}{(-L_{N,p}'(t))^{2/p}}.
\end{equation}
To express this ground state energy entirely in terms of the unconstrained Lagrangian \eqref{eqn: GP unconstrained Lagrangian} as in \cref{GP in terms of limiting Lagrangian}, we compute the gradient of the Hamiltonian \eqref{eqn: GP unconstrained Lagrangian}. Since our calculation will be rigorous, we formulate it as a lemma.

\begin{lemma}\label{GP gradient of Hamiltonian}
If $\basigma\in (\R^\kappa)^N$ and $t,u>0$, then, conditionally on the disorder chaos $(g_{ij})$,
\begin{equation}
\big(\nabla_{\basigma}H_{N,p,t}(\basigma),\basigma\big)=2H_N(\basigma)-tp\norm{\basigma}_{p,2}^p.
\end{equation}
\end{lemma}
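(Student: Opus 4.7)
The identity is an instance of Euler's identity for positively homogeneous functions, applied to the two pieces of $H_{N,p,t}$. Indeed, $H_N$ is a quadratic form in $\basigma$ and hence homogeneous of degree $2$, while the potential $\norm{\basigma}_{p,2}^p = \sum_{i=1}^N \norm{\asigma_i}_2^p$ is homogeneous of degree $p$. So morally the statement just says $2 H_N - t\,p\,\norm{\basigma}_{p,2}^p$, which matches the right-hand side of the claim.

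The plan is to verify this directly by computing the partial gradients in each of the $N$ spin components $\asigma_i \in \R^\kappa$. Since the total Euclidean gradient on $(\R^\kappa)^N$ satisfies $(\nabla_{\basigma} F, \basigma) = \sum_{i=1}^N (\nabla_{\asigma_i} F, \asigma_i)$, it suffices to compute $\nabla_{\asigma_i} H_N$ and $\nabla_{\asigma_i} \norm{\basigma}_{p,2}^p$ separately. For the Hamiltonian, differentiating
\begin{equation*}
H_N(\basigma) = \frac{1}{\sqrt{N}} \sum_{i,j=1}^N g_{ij} (\asigma_i, \asigma_j)
\end{equation*}
in the $k$-th coordinate of $\asigma_i$ produces two contributions, one from the $i$-index and one from the $j$-index of the double sum, giving $\nabla_{\asigma_i} H_N(\basigma) = \frac{1}{\sqrt{N}} \sum_{j=1}^N (g_{ij}+g_{ji})\,\asigma_j$. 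Taking the inner product with $\asigma_i$ and summing over $i$ then collapses by symmetry of the double sum to $2 H_N(\basigma)$.

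For the potential, the function $\asigma \mapsto \norm{\asigma}_2^p$ on $\R^\kappa$ is $C^1$ for $p > 2$ (and, by a direct check, even at $\asigma = 0$, where the derivative vanishes), with gradient $p\,\norm{\asigma}_2^{p-2}\,\asigma$. Hence $\nabla_{\asigma_i}\norm{\basigma}_{p,2}^p = p\,\norm{\asigma_i}_2^{p-2}\,\asigma_i$, and pairing with $\asigma_i$ gives $p\,\norm{\asigma_i}_2^p$. Summing over $i$ produces $p \,\norm{\basigma}_{p,2}^p$. Subtracting $t$ times this from $2 H_N(\basigma)$ yields the asserted formula.

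There is no real obstacle: the only mildly delicate point is the differentiability of $\asigma \mapsto \norm{\asigma}_2^p$ at the origin, which is harmless in the range $p > 2$ relevant here (and trivial at $p=2$); outside the origin everything is smooth. This is essentially the computation that motivates the heuristic \eqref{GP derivative of Lagrangian heuristic} and it will feed directly into the derivation of \eqref{eqn: GP GSE in terms of Lagrangian heuristic}.
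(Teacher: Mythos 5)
Your proof is correct and is essentially the paper's own argument: a direct computation of the partial derivatives in each spin coordinate, yielding $\frac{1}{\sqrt{N}}\sum_j(g_{ij}+g_{ji})\asigma_j$ for the Hamiltonian part and $p\norm{\asigma_i}_2^{p-2}\asigma_i$ for the potential, followed by pairing with $\basigma$ and summing. The Euler-homogeneity framing and the remark on differentiability at the origin are harmless additions.
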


\begin{proof}
Given $1\leq i\leq N$ and $1\leq k\leq \kappa$, a simple computation shows that
$$\frac{\partial H_{N,p,t}(\basigma)}{\partial \sigma_i(k)}=\frac{1}{\sqrt{N}}\sum_{j=1}^N (g_{ij}+g_{ji})\sigma_j(k)-tp\sigma_i(k)\norm{\asigma_i}_2^{p-2}.$$
It follows that
\begin{align*}
\big(\nabla_{\basigma}H_{N,p,t}(\basigma),\basigma\big)&=\sum_{k=1}^\kappa \frac{1}{\sqrt{N}}\sum_{i,j=1}^N (g_{ij}+g_{ji})\sigma_j(k)\sigma_i(k)-tp  \sum_{i=1}^N\sum_{k=1}^\kappa \norm{\asigma_i}_2^{p-2}\sigma_i(k)^2\\
&=2\sum_{k=1}^\kappa H_N^k(\bsigma(k))-tp\sum_{i=1}^N\norm{\asigma_i}_2^p=2H_N(\basigma)-tp\norm{\basigma}_{p,2}^p.
\end{align*}
This finishes the proof.
\end{proof}

\noindent This simple calculation suggests that
\begin{equation}
0=\big(\nabla_{\basigma}H_{N,p,t}(\barho(t)),\barho(t)\big)=2H_N(\barho(t))-tp\norm{\barho(t)}_{p,2}^p,
\end{equation}
which combined with \eqref{GP derivative of Lagrangian heuristic} gives
\begin{equation}\label{GP derivative of Lagrangian in terms of Lagrangian heuristic}
L_{N,p}(t)=t\Big(\frac{p}{2}-1\Big)\Norm{\barho(t)}_{p,2}^p=-t\Big(\frac{p}{2}-1\Big)L_{N,p}'(t).
\end{equation}
Substituting this into \eqref{eqn: GP GSE in terms of Lagrangian heuristic} gives \eqref{eqn: GP GSE in terms of limiting Lagrangian} upon letting $N\to \infty$. The problem with this argument is that the map $t\mapsto \barho(t)$ might not be differentiable. To overcome this issue, we will prove \eqref{GP derivative of Lagrangian in terms of Lagrangian heuristic} directly at the points of differentiability of $L_{N,p}(t)$. We will then use a convexity argument to deduce that \eqref{GP derivative of Lagrangian in terms of Lagrangian heuristic} holds for every $t>0$ in the limit.

\begin{lemma}\label{GP derivative of Lagrangian in terms of Lagrangian}
If $(g_{ij})$ is a realization of the disorder chaos for which the unconstrained Lagrangian $L_{N,p}$ is differentiable at $t>0$, then
\begin{equation}
L_{N,p}(t)=-t\Big(\frac{p}{2}-1\Big)L_{N,p}'(t).
\end{equation}
\end{lemma}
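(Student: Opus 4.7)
The plan is to formalize the heuristic argument sketched right before the lemma statement. The two ingredients I need are (i) a pointwise algebraic identity linking $L_{N,p}(t)$ and $\Norm{\barho(t)}_{p,2}^p$ at any maximizer $\barho(t)$ of $H_{N,p,t}$, and (ii) the envelope-type identification $L_{N,p}'(t)=-\Norm{\barho(t)}_{p,2}^p$ at any point of differentiability.

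For (i), I would first check that the supremum defining $L_{N,p}(t)$ is attained. Since $p>2$, the penalty $-t\norm{\basigma}_{p,2}^p$ grows strictly faster than the quadratic form $H_N(\basigma)$, which is controlled via Cauchy--Schwarz by $N^{-1/2}\norm{G_N}_2\norm{\basigma}_{2,2}^2$. Coercivity and continuity therefore force $H_{N,p,t}$ to attain its maximum at some $\barho=\barho(t)\in(\R^\kappa)^N$. The condition $p>2$ also guarantees that $H_{N,p,t}$ is continuously differentiable on $(\R^\kappa)^N$, so Fermat's theorem yields $\nabla_{\basigma}H_{N,p,t}(\barho)=0$. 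Pairing with $\barho$ and invoking \cref{GP gradient of Hamiltonian} gives $2H_N(\barho)=tp\norm{\barho}_{p,2}^p$, which substituted into the definition $NL_{N,p}(t)=H_N(\barho)-t\norm{\barho}_{p,2}^p$ produces the intermediate identity
\begin{equation}
L_{N,p}(t)=t\bigl(\tfrac{p}{2}-1\bigr)\Norm{\barho(t)}_{p,2}^p.
\end{equation}

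For (ii), I would exploit that $L_{N,p}(t)$ is a pointwise supremum over $\basigma\in(\R^\kappa)^N$ of functions affine in $t$ with slopes $-\Norm{\basigma}_{p,2}^p$, hence convex in $t$. Plugging the fixed configuration $\barho(t)$ into the supremum at a nearby parameter $s>0$ yields
\begin{equation}
L_{N,p}(s)\geq \tfrac{1}{N}H_{N,p,s}(\barho(t))=L_{N,p}(t)-(s-t)\Norm{\barho(t)}_{p,2}^p,
\end{equation}
so $-\Norm{\barho(t)}_{p,2}^p$ belongs to the subdifferential of $L_{N,p}$ at $t$. At a point of differentiability this subdifferential collapses to the singleton $\{L_{N,p}'(t)\}$, forcing $L_{N,p}'(t)=-\Norm{\barho(t)}_{p,2}^p$. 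Substituting into the intermediate identity produces the desired conclusion $L_{N,p}(t)=-t(p/2-1)L_{N,p}'(t)$.

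The main obstacle is really just bookkeeping: one has to check coercivity carefully enough to get attainment of the supremum and then invoke the subdifferential characterization cleanly. No genuinely hard estimates are required here. The previous lemma does all the nonlinear algebra, and the convex-analytic step is standard; the hypothesis of differentiability at $t$ is exactly what is needed to turn the one-sided envelope bound into an equality.
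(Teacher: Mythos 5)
Your proposal is correct, and its overall architecture matches the paper's: both arguments hinge on the identification $L_{N,p}'(t)=-\Norm{\barho(t)}_{p,2}^p$ for a maximizer $\barho(t)$, followed by the first-order condition $\nabla_{\basigma}H_{N,p,t}(\barho(t))=0$ fed through \cref{GP gradient of Hamiltonian}. The one place where you diverge is in how the derivative identification is established. The paper proves it by hand: for fixed $\epsilon>0$ it shows that every configuration with $\abs{\Norm{\basigma}_{p,2}^p+L_{N,p}'(t)}\geq\epsilon$ has energy at most $L_{N,p}(t)-\lambda\epsilon/2$ for a suitable $\lambda=\lambda(\epsilon)$, using the difference quotients of $L_{N,p}$ at $t\pm\lambda$ and the differentiability hypothesis; hence any optimizer must satisfy $\Norm{\barho(t)}_{p,2}^p=-L_{N,p}'(t)$. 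You instead observe that $L_{N,p}$ is a supremum of functions affine in $t$ with slopes $-\Norm{\basigma}_{p,2}^p$, so evaluating the envelope at the fixed configuration $\barho(t)$ exhibits $-\Norm{\barho(t)}_{p,2}^p$ as a subgradient at $t$, which differentiability collapses to $L_{N,p}'(t)$. The two are logically equivalent (the paper is essentially proving the envelope inequality quantitatively), but your subdifferential route is shorter and cleaner; the paper's version has the minor advantage of not needing to invoke convex-analytic machinery beyond difference quotients. Your explicit verification of coercivity and attainment of the maximum, which the paper leaves implicit, is a welcome addition and is easily justified since $\Norm{\basigma}_{2,2}^p\leq\Norm{\basigma}_{p,2}^p$ for $p>2$.
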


\begin{proof}
Fix $\epsilon>0$ and $\lambda>0$. For any configuration with $\Norm{\basigma}_{p,2}^p\geq -L_{N,p}'(t)+\epsilon$,
\begin{align*}
\frac{1}{N}H_{N,p,t}(\basigma)&\leq \frac{1}{N}H_{N,p,t}(\basigma)+\lambda\big(\Norm{\basigma}_{p,2}^p+L_{N,p}'(t)-\epsilon\big)\\
&\leq L_{N,p}(t-\lambda)+\lambda L_{N,p}'(t)-\lambda \epsilon\\
&=\lambda\Big(L_{N,p}'(t)-\frac{L_{N,p}(t)-L_{N,p}(t-\lambda)}{\lambda}\Big)-\lambda \epsilon+L_{N,p}(t).
\end{align*}
Similarly, for any configuration with $\Norm{\basigma}_{p,2}^p\leq -L_{N,p}'(t)-\epsilon$,
\begin{align*}
\frac{1}{N}H_{N,p,t}(\basigma)&\leq \frac{1}{N}H_{N,p,t}(\basigma)+\lambda \big(-\Norm{\basigma}_{p,2}^p-L_{N,p}'(t)-\epsilon\big)\\
&\leq L_{N,p}(t+\lambda)-\lambda L_{N,p}'(t)-\lambda \epsilon\\
&=\lambda\Big(\frac{L_{N,p}(t+\lambda)-L_{N,p}(t)}{\lambda}-L_{N,p}'(t)\Big)-\lambda \epsilon+L_{N,p}(t).
\end{align*}
The differentiability of $L_{N,p}$ at $t$ gives $\lambda=\lambda(\epsilon)>0$ small enough so that
$$\frac{1}{N}\max_{\abs{\Norm{\basigma}_{p,2}^p+L_{N,p}'(t)}\geq \epsilon}H_{N,p,t}(\basigma)\leq L_{N,p}(t)-\frac{\lambda\epsilon}{2}.$$
This means that an optimizer $\barho(t)$ of $L_{N,p}(t)$ satisfies $\abs{\Norm{\barho(t)}_p^p+L_{N,p}'(t)}< \epsilon$ for every $\epsilon>0$. Letting $\epsilon\to 0$ reveals that $\Norm{\barho(t)}_{p,2}^p=-L_{N,p}'(t)$.
It follows by \cref{GP gradient of Hamiltonian} that
$$L_{N,p}(t)-tL_{N,p}'(t)=\frac{1}{N}H_N(\barho(t))=\frac{tp}{2}\Norm{\barho(t)}_{p,2}^p=-\frac{tp}{2}L_{N,p}'(t).$$
Rearranging completes the proof.
\end{proof}

\begin{lemma}\label{GP derivative of Lagrangian}
If $2<p<\infty$, then the function $L_p(t)$ is differentiable on $(0,\infty)$ with
\begin{equation}
L_p(t)=-t\Big(\frac{p}{2}-1\Big)L_p'(t).
\end{equation}
\end{lemma}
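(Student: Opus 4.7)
The plan is to turn \cref{GP derivative of Lagrangian in terms of Lagrangian} into an ordinary differential equation for $L_{N,p}(t)$ in the variable $t$, solve it explicitly at finite $N$, and then pass to the limit $N\to\infty$. The first observation is that $L_{N,p}(t)$ is convex in $t>0$: for each fixed $\basigma\in (\R^\kappa)^N$, the map
$t\mapsto H_{N,p,t}(\basigma)=H_N(\basigma)-t\norm{\basigma}_{p,2}^p$
is affine, so $L_{N,p}(t)=\frac{1}{N}\sup_{\basigma}H_{N,p,t}(\basigma)$ is a pointwise supremum of affine functions, hence convex on $(0,\infty)$. In particular, $L_{N,p}$ is locally Lipschitz, locally absolutely continuous, and differentiable at almost every $t>0$.

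Combining convexity with \cref{GP derivative of Lagrangian in terms of Lagrangian}, the identity
$L_{N,p}(t)=-t(p/2-1)L_{N,p}'(t)$
holds at almost every $t>0$. I would rewrite this as
$\tfrac{d}{dt}\bigl[L_{N,p}(t)\,t^{2/(p-2)}\bigr]=0$ a.e. on $(0,\infty)$.
Since $t\mapsto L_{N,p}(t)\,t^{2/(p-2)}$ is a product of locally absolutely continuous functions on $(0,\infty)$, it is itself locally absolutely continuous, and the fundamental theorem of calculus forces it to be constant on $(0,\infty)$. Setting $C_N:=L_{N,p}(1)$, this gives the explicit formula
$L_{N,p}(t)=C_N\,t^{-2/(p-2)}$ for every $t>0$,
valid on the full probability space (no exceptional $t$, no exceptional realization of the disorder).

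To conclude, I would invoke \cref{GP limit of unconstrained Lagrangian}: with probability one, $L_p(t)=\lim_{N\to\infty}L_{N,p}(t)$ exists for every $t>0$. Evaluating this at $t=1$ gives $C_N\to C:=L_p(1)$, and then the explicit formula above yields $L_p(t)=C\,t^{-2/(p-2)}$ for every $t>0$. This is smooth on $(0,\infty)$, and a direct differentiation gives
$L_p'(t)=-\tfrac{2}{p-2}\,C\,t^{-2/(p-2)-1}=-L_p(t)\big/\bigl(t(p/2-1)\bigr)$,
which rearranges to the claimed identity.

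The only technical step worth flagging is the passage from ``derivative zero almost everywhere'' to ``constant,'' which uses absolute continuity of the convex function $L_{N,p}$ on compact sub-intervals of $(0,\infty)$; this is classical. An alternative route would be a convex-analysis lemma (if $f_n\to f$ pointwise with all functions convex and $f$ differentiable at $t$, then $f_n'(t)\to f'(t)$ whenever $f_n$ is differentiable at $t$) applied to pass the identity of \cref{GP derivative of Lagrangian in terms of Lagrangian} directly to the limit, followed by continuity to upgrade from a.e.\ to everywhere. The ODE-in-$N$ route above avoids the convex-analysis subtlety altogether and seems to be the cleanest presentation.
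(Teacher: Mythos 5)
Your argument is correct, and it takes a genuinely different route from the paper's. The paper keeps the finite-$N$ identity of \cref{GP derivative of Lagrangian in terms of Lagrangian} in the form of a two-sided difference-quotient bound coming from convexity, passes that bound to the limit $N\to\infty$, deduces the identity at every point of differentiability of $L_p$, and then needs a separate subdifferential argument (theorem 25.1 in \cite{Rockafellar}) to upgrade almost-everywhere differentiability of the convex limit $L_p$ to differentiability everywhere. You instead integrate the finite-$N$ ODE exactly: since $L_{N,p}$ is a finite-valued supremum of affine functions of $t$, hence convex, locally Lipschitz and locally absolutely continuous on $(0,\infty)$, the relation $\frac{\ud}{\ud t}\bigl[L_{N,p}(t)\,t^{2/(p-2)}\bigr]=0$ a.e.\ forces the closed form $L_{N,p}(t)=L_{N,p}(1)\,t^{-2/(p-2)}$ for every realization of the disorder, and the limit $L_p(t)=L_p(1)\,t^{-2/(p-2)}$ is then manifestly smooth, so no convex-analysis upgrade is needed. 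This buys a shorter proof and an explicit formula; the exact power-law scaling in $t$ can in fact also be read off directly from the substitution $\basigma\mapsto \lambda^{-1/(p-2)}\basigma$ in the definition of $L_{N,p}(\lambda t)$, and it is consistent with the $t$-independence of the right-hand side of \eqref{eqn: GP GSE in terms of limiting Lagrangian}. The two steps worth stating carefully are exactly the ones you flag: that \cref{GP derivative of Lagrangian in terms of Lagrangian} applies at a.e.\ $t$ for each fixed realization (because convexity gives a.e.\ differentiability pathwise), and that an absolutely continuous function with vanishing a.e.\ derivative is constant; both are handled correctly in your write-up.
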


\begin{proof}
Using \cref{GP limit of unconstrained Lagrangian}, fix a realization $(g_{ij})$ of the disorder chaos for which $L_{N,p}(t)$ converges to $L_p(t)$ for all $t>0$. Notice that $L_{N,p}$ and $L_p$ are convex functions. In particular, they are continuous everywhere on $(0,\infty)$ and differentiable almost everywhere on $(0,\infty)$. If $0<t_1<s<t_2$ are such that $L_{N,p}$ is differentiable at $s$, then the convexity of $L_{N,p}$ gives
$$\frac{L_{N,p}(s)-L_{N,p}(t_1)}{s-t_1}\leq L_{N,p}'(s)\leq \frac{L_{N,p}(t_2)-L_{N,p}(s)}{t_2-s},$$
and \cref{GP derivative of Lagrangian in terms of Lagrangian} yields
$$\frac{L_{N,p}(s)-L_{N,p}(t_1)}{s-t_1}\leq -\frac{L_{N,p}(s)}{(\frac{p}{2}-1)s}\leq \frac{L_{N,p}(t_2)-L_{N,p}(s)}{t_2-s}.$$
By continuity of $L_{N,p}$ and density of the points of differentiability of $L_{N,p}$ in $(0,\infty)$, this inequality implies that for all $0<t_1<t<t_2<\infty$,
$$\frac{L_{N,p}(t)-L_{N,p}(t_1)}{t-t_1}\leq -\frac{L_{N,p}(t)}{(\frac{p}{2}-1)t}\leq \frac{L_{N,p}(t_2)-L_{N,p}(t)}{t_2-t}.$$
Letting $N\to\infty$ and then letting $t_1\inc t$ and $t_2\dec t$ shows that at any point $t\in (0,\infty)$ of differentiability of $L_p$,
\begin{equation}\label{eqn: GP derivative of Lagrangian key}
L_p'(t)=-\frac{L_p(t)}{(\frac{p}{2}-1)t}.
\end{equation}
We will now use this equality to show that $L_p$ is differentiable everywhere on $(0,\infty)$. By convexity of $L_p$ and theorem 25.1 in \cite{Rockafellar}, it suffices to prove that the sub-differential $\partial L_p(t)$ consists of a single point for every $t>0$. Fix $t\in (0,\infty)$ as well as $a\in \partial L_p(t)$, and let $(s_k)$ and $(t_k)$ be points of differentiability of $L_p$ with $t_k\inc t$ and $s_k\dec t$. By definition of the sub-differential,
$$L_p'(t_k)\leq\frac{L_p(t)-L_p(t_k)}{t-t_k}\leq a\leq \frac{L_p(s_k)-L_p(t)}{s_k-t}\leq L_p'(s_k)$$
for every integer $k\geq 1$. Letting $k\to\infty$ and combining \eqref{eqn: GP derivative of Lagrangian key} with the continuity of $L_p$ yields
$$-\frac{L_p(t)}{(\frac{p}{2}-1)t}=\limsup_{k\to\infty}L'_p(t_k)\leq a\leq\liminf_{k\to\infty}L_p'(s_k)=-\frac{L_p(t)}{(\frac{p}{2}-1)t}.$$
This completes the proof.
\end{proof}

To leverage this result into a proof of \cref{GP in terms of limiting Lagrangian}, we must verify the legitimacy of the change of variables used in \eqref{eqn: GP GSE in terms of Lagrangian heuristic}. In other words, we must show that $L_p'(t)$ does not vanish on $(0,\infty)$. Our proof will rely upon the properties of the eigenvalues and eigenvectors of the Gaussian orthogonal ensemble discussed in chapter 2 of \cite{AGZ}. Recall the definition of the random matrix $G_N$ in \eqref{GP GN matrix}, and notice that the $N\times N$ random matrix
\begin{equation}\label{eqn: GP barGN matrix}
\bar{G}_N=\frac{G_N+G_N^T}{\sqrt{2}}    
\end{equation}
is distributed according to the Gaussian orthogonal ensemble.

\begin{lemma}\label{GP Lagrangian derivative strictly positive}
If $2<p<\infty$, then the function $L_p$ is strictly positive on $(0,\infty)$. In particular, $L_p'(t)<0$ for every $t>0$.
\end{lemma}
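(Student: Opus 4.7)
The plan is to prove the stronger statement $L_p(t)>0$ for every $t>0$; the conclusion $L_p'(t)<0$ then follows immediately from \cref{GP derivative of Lagrangian}, since $L_p(t)=-t(p/2-1)L_p'(t)$ with $t>0$ and $p/2-1>0$, so $L_p(t)$ and $-L_p'(t)$ share sign.

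To produce a strictly positive lower bound on $L_p(t)$, I would test the unconstrained Hamiltonian $H_{N,p,t}$ on a one-parameter family of rank-one configurations aligned with a top eigenvector of the symmetrized matrix $\bar{G}_N$ defined in \eqref{eqn: GP barGN matrix}. For each $N$, let $v_N\in\R^N$ be a unit eigenvector of $\bar{G}_N$ associated with its largest eigenvalue $\lambda_{\max}(\bar{G}_N)$, and for a parameter $\alpha>0$ to be chosen later, define $\basigma^\alpha\in(\R^\kappa)^N$ by $\bsigma^\alpha(1)=\alpha\sqrt{N}\,v_N$ and $\bsigma^\alpha(k)=0$ for $k\geq 2$. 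Since $v_N^T G_N v_N=\tfrac{1}{2}v_N^T(G_N+G_N^T)v_N=\tfrac{1}{\sqrt{2}}\lambda_{\max}(\bar{G}_N)$, a direct computation gives
\begin{equation*}
\frac{1}{N}H_N(\basigma^\alpha)=\frac{\alpha^2}{\sqrt{2}}\cdot\frac{\lambda_{\max}(\bar{G}_N)}{\sqrt{N}}
\qquad\text{and}\qquad
\frac{1}{N}\norm{\basigma^\alpha}_{p,2}^p=\alpha^p N^{p/2-1}\norm{v_N}_p^p.
\end{equation*}

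The next step is to control these two expressions as $N\to\infty$ using the classical GOE results in chapter 2 of \cite{AGZ}. First, $\lambda_{\max}(\bar{G}_N)/\sqrt{N}\to 2$ almost surely. Second, the orthogonal invariance of $\bar{G}_N$ together with the almost-sure simplicity of its top eigenvalue implies that, conditionally on the eigenvalues of $\bar{G}_N$, the vector $v_N$ is uniformly distributed on the unit sphere of $\R^N$, hence equal in distribution to $g/\norm{g}_2$ for a standard Gaussian vector $g\in\R^N$. Writing $\norm{v_N}_p^p=\norm{g}_p^p/\norm{g}_2^p$ and applying the strong law of large numbers to $\norm{g}_p^p/N\to\E\abs{g_1}^p$ and $\norm{g}_2^2/N\to 1$ yields
\begin{equation*}
N^{p/2-1}\norm{v_N}_p^p=\frac{\norm{g}_p^p/N}{(\norm{g}_2^2/N)^{p/2}}\xrightarrow[N\to\infty]{\text{a.s.}}\E\abs{g_1}^p.
\end{equation*}
Combining these convergences with the trivial bound $L_{N,p}(t)\geq\tfrac{1}{N}H_{N,p,t}(\basigma^\alpha)$ and the almost-sure convergence $L_{N,p}(t)\to L_p(t)$ established in \cref{GP limit of unconstrained Lagrangian} produces
\begin{equation*}
L_p(t)\geq \alpha^2\sqrt{2}-t\alpha^p\,\E\abs{g_1}^p.
\end{equation*}
Since $p>2$, the right-hand side is strictly positive for every sufficiently small $\alpha>0$ (and one may of course optimize over $\alpha$ to get the sharp lower bound), so $L_p(t)>0$, and consequently $L_p'(t)=-L_p(t)/(t(p/2-1))<0$.

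The only delicate point is the uniform spherical distribution of the top eigenvector of the GOE, which follows from the orthogonal invariance of $\bar{G}_N$ together with the almost-sure simplicity of its largest eigenvalue; both are standard and recorded in \cite{AGZ}. Everything else is a short computation combined with Wigner's edge asymptotics and the strong law of large numbers.
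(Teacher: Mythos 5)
Your proposal is correct and follows essentially the same route as the paper: both test the unconstrained Hamiltonian on a rank-one configuration proportional to the top eigenvector of the GOE matrix $\bar{G}_N$, use the Gaussian representation of that eigenvector together with the strong law of large numbers and the edge asymptotics $\lambda_{\max}(\bar{G}_N)/\sqrt{N}\to 2$, and then take the scaling parameter small (your $\alpha^2$ is the paper's $\delta$). The deduction of $L_p'(t)<0$ from \cref{GP derivative of Lagrangian} is likewise identical.
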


\begin{proof}
Given $\bsigma\in \R^N$, consider the vector spin configuration $\basigma\in (\R^\kappa)^N$ defined by
$$\basigma(k)=
\begin{cases}
\bsigma& \text{if } k=1,\\
0& \text{otherwise}.
\end{cases}$$
Notice that $\norm{\basigma}_{p,2}^p=\norm{\bsigma}_p^p=1$ and $\sum_{i,j=1}^N g_{ij}\sigma_i\sigma_j=\sum_{i,j=1}^N g_{ij}\big(\asigma_i,\asigma_j\big)$. It follows that
\begin{equation}\label{eqn: GP Lagrangian derivative strictly positive key}
L_{N,p}(t)\geq \frac{1}{N}\big(H_N(\basigma)-t\norm{\basigma}_{p,2}^p\big)=\frac{\big(\bar{G}_N\bsigma,\bsigma\big)}{\sqrt{2}N^{3/2}}-\Norm{\bsigma}_p^p.
\end{equation}
With this in mind, let $v$ denote the $\ell^2$-normalized eigenvector associated with the largest eigenvalue $\lambda_N^N$ of the Gaussian orthogonal ensemble $\bar G_N$. Given $\delta>0$, applying \eqref{eqn: GP Lagrangian derivative strictly positive key} to the spin configuration $\smash{\bsigma_\delta=\sqrt{N\delta}v}$ reveals that
$$L_{N,p}(t)\geq \frac{\big(\bar{G}_N\bsigma_\delta,\bsigma_\delta\big)}{\sqrt{2}N^{3/2}}-t\Norm{\bsigma_\delta}_p^p=\frac{\delta}{\sqrt{2}} \frac{\lambda_N^N}{\sqrt{N}}-t\delta^{p/2}N^{p/2-1}\norm{v}_p^p.$$
By corollary 2.5.4 in \cite{AGZ}, the eigenvector $v$ is equal in distribution to $g/\norm{g}_2$ for a standard Gaussian random vector $g$ in $\R^N$. Moreover, by the strong law of large numbers,
$$N^{\frac{p}{2}-1}\frac{\norm{g}_p^p}{\norm{g}_2^p}=\frac{\frac{1}{N}\sum_{i\leq N}\abs{g_i}^p}{(\frac{1}{N}\sum_{i\leq N}\abs{g_i}^2)^{p/2}}\longrightarrow \frac{\E\abs{g_1}^p}{(\E\abs{g_1}^2)^{p/2}}=\E\abs{g_1}^p$$
almost surely. Together with the asymptotics of $\lambda^N_N$ established in theorem 2.1.22 of \cite{AGZ}, this implies that
$$L_p(t)\geq \sqrt{2}\delta-t\delta^{p/2}\E\abs{g_1}^p.$$
Taking $\delta>0$ small enough  and using the fact that $p>2$ shows that $L_p$ is strictly positive on $(0,\infty)$. Invoking \cref{GP derivative of Lagrangian} completes the proof.
\end{proof}

\begin{proof}[Proof (\Cref{GP in terms of limiting Lagrangian}).]
Using \cref{GP limit of unconstrained Lagrangian}, fix a realization $(g_{ij})$ of the disorder chaos for which $L_{N,p}(t)$ converges to $L_p(t)$ for all $t>0$. Let $\O\subset (0,\infty)$ be the collection of points at which $L_{N,p}$ is differentiable for all $N\geq 1$. Fix $t\in \O$, and notice that by convexity of $L_{N,p}$,
\begin{equation}\label{eqn: GP in terms of limiting Lagrangian key}
L_{N,p}(t+h)\geq L_{N,p}(t)+L_{N,p}'(t)h
\end{equation}
for every $h\in \R$. By \cref{GP derivative of Lagrangian in terms of Lagrangian}, the sequence $(L_{N,p}'(t))_N$ is uniformly bounded. It therefore admits a subsequential limit $a$. Letting $N\to\infty$ in \eqref{eqn: GP in terms of limiting Lagrangian key} shows that $a$ belongs to the sub-differential $\partial L_p(t)$. Invoking \cref{GP derivative of Lagrangian} shows that $a=L_p'(t)$, and therefore $L_{N,p}'(t)\to L_p'(t)$. It follows by \cref{GP Lagrangian derivative strictly positive} that $L_{N,p}(t)<0$ for large enough $N$, so 
$$\GSE_{N,p}=\frac{1}{N}\max_{\Norm{\basigma}_{p,2}^p=-L_{N,p}'(t)}H_N\big((-L_{N,p}'(t))^{-1/p}\basigma\big)=\frac{L_{N,p}(t)-tL_{N,p}'(t)}{(-L_{N,p}'(t))^{2/p}}.$$
Since $\O$ is dense in $(0,\infty)$ and $L_{N,p}'$ is continuous, this equality extends to all $t>0$. Letting $N\to \infty$ and using \cref{GP derivative of Lagrangian} completes the proof.
\end{proof}

\section{Replacing the constrained Lagrangian by a free energy}\label{GP sec7}

So far, we have reduced the $\ell^p$-Gaussian-Grothendieck problem with vector spins to understanding the asymptotic behaviour of the constrained Lagrangian \eqref{eqn: GP Lagrangian with self-overlap D} with positive definite constraints. This task will occupy the remainder of the paper. The starting point of our analysis will be the Parisi-type variational formula for free energy functionals established in \cite{PanVec}. To access this result, we must first replace the constrained Lagrangian by a free energy functional. In this section, given a constraint $D\in \Gamma_\kappa$ which is fixed throughout, we introduce a free energy functional that depends on an inverse temperature parameter $\beta>0$ and is asymptotically equivalent to the constrained Lagrangian \eqref{eqn: GP Lagrangian with self-overlap D} upon letting $\beta\to \infty$. 

For each inverse temperature parameter $\beta>0$ and every $\epsilon>0$, consider the free energy
\begin{equation}
\tilde{F}_{N,\epsilon}(\beta)=\frac{1}{\beta N} \log \int_{\Sigma_\epsilon(D)}\exp \beta H_{N,p,t}(\basigma)\ud \basigma
\end{equation}
and the quenched free energy
\begin{equation}\label{eqn: GP free energy with epsilon band}
F_{N,\epsilon}(\beta)=\frac{1}{\beta N} \E \log \int_{\Sigma_\epsilon(D)}\exp \beta H_{N,p,t}(\basigma)\ud \basigma.
\end{equation}
Recall the definition of the relaxed constrained Lagrangian in \eqref{eqn: GP relaxed constrained Lagrangian}. Since the $L^q$-norm converges to the $L^\infty$-norm, it is clear that
\begin{equation}\label{eqn: GP Lagrangian by free energy heuristic}
\lim_{\epsilon\to 0} \lim_{N\to\infty}\lim_{\beta\to \infty}F_{N,\epsilon}(\beta)=\lim_{\epsilon\to 0}\lim_{N\to\infty}\E L_{N,p,D,\epsilon}(t).
\end{equation}
We will now use the continuity result in \cref{GP continuity of constrained Lagrangian} to show that the right-hand side of this equation coincides with the limit of the constrained Lagrangian \eqref{eqn: GP Lagrangian with self-overlap D}. Subsequently, we will prove that \eqref{eqn: GP Lagrangian by free energy heuristic} still holds if the limit in $\beta$ is taken after the limits in $\epsilon$ and $N$. The benefit of exchanging these limits is that the main result in \cite{PanVec} gives a Parisi-type variational formula for the limit in $\epsilon$ and $N$ of the quenched free energy \eqref{eqn: GP free energy with epsilon band} for each fixed $\beta>0$. In \cref{GP sec9} and \cref{GP sec10} we will study this formula in the limit $\beta\to \infty$ to finally prove \cref{GP main result} in \cref{GP sec11}.

\begin{proposition}\label{GP relaxing the constrained Lagrangian}
If $2<p<\infty$ and $t>0$, then
\begin{equation}
\lim_{\epsilon \to 0}\lim_{N\to\infty}L_{N,p,D,\epsilon}(t)=\lim_{\epsilon \to 0}\lim_{N\to\infty}\E L_{N,p,D,\epsilon}(t)=L_{p,D}(t)
\end{equation}
almost surely.
\end{proposition}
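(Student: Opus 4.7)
The plan is a two-sided bound: the lower bound comes from the trivial inclusion $\Sigma(D)\subset \Sigma_\epsilon(D)$ combined with \cref{GP Guerra Toninelli}, while the upper bound is provided by \cref{GP continuity of constrained Lagrangian}. Once the statement is established at the level of expectations, a standard Gaussian concentration argument will promote it to an almost sure equality of iterated limits.

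For the lower bound, the inclusion $\Sigma(D)\subset \Sigma_\epsilon(D)$ immediately gives $\E L_{N,p,D}(t)\leq \E L_{N,p,D,\epsilon}(t)$, and passing $N\to\infty$ via \cref{GP Guerra Toninelli} yields $L_{p,D}(t)\leq \liminf_{N\to\infty}\E L_{N,p,D,\epsilon}(t)$ for every $\epsilon>0$. For the matching upper bound, \cref{GP continuity of constrained Lagrangian} produces
$$\limsup_{N\to\infty}\E L_{N,p,D,\epsilon}(t)\leq L_{p,D_\epsilon}(t)+C(1+tp)(\tr(D)+1)^{p/2}\epsilon^{1/64}.$$
The decisive observation is that $D_\epsilon$ differs from $D$ only by zeroing out eigenvalues lying in the interval $(0,\sqrt{\epsilon})$: writing $\lambda_{\min}^+(D)$ for the smallest strictly positive eigenvalue of $D$ (with the convention $\lambda_{\min}^+(0)=+\infty$), one has $D_\epsilon=D$ identically whenever $\epsilon<\lambda_{\min}^+(D)^2$. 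Hence, for all sufficiently small $\epsilon$,
$$L_{p,D}(t)\leq \liminf_{N\to\infty}\E L_{N,p,D,\epsilon}(t)\leq \limsup_{N\to\infty}\E L_{N,p,D,\epsilon}(t)\leq L_{p,D}(t)+C\epsilon^{1/64},$$
and sending $\epsilon\to 0$ proves the claim for the expected values.

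To upgrade to the almost sure statement, I would apply the Gaussian concentration of the maximum to the random quantity $L_{N,p,D,\epsilon}(t)$. The covariance identity \eqref{eqn: GP covariance} together with \cref{trace dominates HS norm} yields the uniform bound $\sup_{\basigma\in \Sigma_\epsilon(D)}\E H_N(\basigma)^2\leq N(\tr(D)+\kappa\epsilon)^2$, so the deviation of $L_{N,p,D,\epsilon}(t)$ from its mean decays at Gaussian rate in $\sqrt{N}$. Coupled with the Borel-Cantelli lemma, this gives $\lim_{N\to\infty}\lvert L_{N,p,D,\epsilon}(t)-\E L_{N,p,D,\epsilon}(t)\rvert=0$ almost surely, and applying the result along a countable sequence $\epsilon_n\to 0$ to extract a single set of probability one on which the identity holds for every $\epsilon_n$ delivers the almost sure equality of the iterated limits.

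I do not anticipate serious obstacles: the delicate continuity work has already been carried out in \cref{GP continuity of constrained Lagrangian}, so the present proposition reduces essentially to the elementary observation that the eigenvalue-truncation operation $D\mapsto D_\epsilon$ is the identity for $\epsilon$ below a $D$-dependent threshold. Without this observation one would have to establish separately some continuity of the map $D\mapsto L_{p,D}(t)$, which is not available at this stage of the paper.
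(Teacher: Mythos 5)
Your proposal is correct and follows essentially the same route as the paper: the lower bound from the inclusion $\Sigma(D)\subset\Sigma_\epsilon(D)$ together with \cref{GP Guerra Toninelli}, the upper bound from \cref{GP continuity of constrained Lagrangian} combined with the observation that $D_\epsilon=D$ once $\epsilon$ is below the (squared) smallest non-zero eigenvalue of $D$, and Gaussian concentration plus Borel--Cantelli to pass from expectations to an almost sure statement. No gaps.
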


\begin{proof}
By the Gaussian concentration of the maximum, it suffices to prove that
\begin{equation}\label{eqn: GP relaxing the constrained Lagrangian key}
\lim_{\epsilon \to 0}\lim_{N\to\infty}\E L_{N,p,D,\epsilon}(t)=L_{p,D}(t).
\end{equation}
Given $0<\epsilon<\kappa^{-2}$ smaller than the smallest non-zero eigenvalue of $D$, the equality $D_\epsilon=D$ and \cref{GP continuity of constrained Lagrangian} imply that
\begin{equation}\label{eqn: GP asymptotic behaviour of relaxed constrained Lagrangian upper}
\lim_{\epsilon\to 0}\limsup_{N\to\infty}\E L_{N,p,D,\epsilon}(t)\leq L_{p,D}(t).
\end{equation}
On the other hand, the Gaussian concentration of the maximum reveals that for every $\epsilon>0$,
$$L_{p,D}(t)=\lim_{N\to\infty} L_{N,p,D}(t)\leq \liminf_{N\to \infty} L_{N,p,D,\epsilon}(t)=\liminf_{N\to \infty}\E L_{N,p,D,\epsilon}(t).$$
Letting $\epsilon \to 0$ and remembering \eqref{eqn: GP asymptotic behaviour of relaxed constrained Lagrangian upper} establishes \eqref{eqn: GP relaxing the constrained Lagrangian key} and completes the proof.
\end{proof}

\begin{lemma}\label{GP Lagrangian by free energy lower}
If $2<p<\infty$ and $t>0$, then
\begin{equation}
\limsup_{\beta\to \infty}\lim_{\epsilon \to 0}\lim_{N\to \infty}F_{N,\epsilon}(\beta)\leq L_{p,D}(t).
\end{equation}
\end{lemma}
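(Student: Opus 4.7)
The plan is to bound the quenched free energy by the relaxed constrained Lagrangian modulo an error term that vanishes as $\beta \to \infty$, and then pass to the limits in the natural order using \cref{GP relaxing the constrained Lagrangian}. The starting point is the crude partition-function bound
\begin{equation*}
\int_{\Sigma_\epsilon(D)} \exp \beta H_{N,p,t}(\basigma)\,\ud \basigma \leq \mathrm{vol}(\Sigma_\epsilon(D)) \exp\Big(\beta \max_{\basigma\in \Sigma_\epsilon(D)} H_{N,p,t}(\basigma)\Big),
\end{equation*}
which upon taking logarithms, dividing by $\beta N$ and applying expectations yields
\begin{equation*}
F_{N,\epsilon}(\beta) \leq \frac{\log \mathrm{vol}(\Sigma_\epsilon(D))}{\beta N} + \E L_{N,p,D,\epsilon}(t).
\end{equation*}

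The first step is to show that the volume correction is $O(1/\beta)$ uniformly in $N$ and in small $\epsilon$. By \eqref{eqn: GP bound on 2-2 norm of configuration in SigmaeD}, for $\epsilon < \kappa^{-1}$ every $\basigma \in \Sigma_\epsilon(D)$ lies in the Euclidean ball of radius $\sqrt{N(\tr(D)+1)}$ in $(\R^\kappa)^N \cong \R^{N\kappa}$. Using the standard volume formula for Euclidean balls together with Stirling's approximation, one obtains a constant $C = C(\kappa, D) > 0$ such that $\frac{1}{N}\log \mathrm{vol}(\Sigma_\epsilon(D)) \leq C$ uniformly for all $N \geq 1$ and all $\epsilon \leq \kappa^{-1}$. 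This crucially does not degenerate as $\epsilon \to 0$, because the a priori trace bound on the self-overlap in fact sharpens, not loosens, as $\epsilon$ shrinks.

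The second step is to pass to the limits in the correct order. Taking $\limsup$ as $N \to \infty$ and then letting $\epsilon \to 0$, \cref{GP relaxing the constrained Lagrangian} gives
\begin{equation*}
\lim_{\epsilon \to 0} \limsup_{N\to\infty} F_{N,\epsilon}(\beta) \leq \frac{C}{\beta} + L_{p,D}(t).
\end{equation*}
Taking $\limsup_{\beta \to \infty}$ of both sides then yields the claim. Since every step is an elementary estimate combined with results already established earlier in the paper, I do not anticipate any serious obstacle; the only subtle point is the uniformity of $C$ as $\epsilon \to 0$, which is handled above.
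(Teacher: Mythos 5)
Your argument is correct, and it takes a genuinely different route from the paper. The paper does not use the crude bound $\int_A e^{\beta H}\leq \mathrm{vol}(A)\,e^{\beta\sup_A H}$; instead it splits off a piece $\delta\norm{\basigma}_{p,2}^p$ of the potential and uses Fubini to absorb the Lebesgue measure into the one-site integral $\int_{\R^\kappa}e^{-\beta\delta\norm{\asigma}_2^p}\ud\asigma$, which bounds $\tilde F_{N,\epsilon}(\beta)$ by $L_{N,p,D,\epsilon}(t-\delta)$ plus an $O(\beta^{-1}\log\beta)$ error. The price is that one must then return from $t-\delta$ to $t$, which forces the paper to control $\Norm{\barho_{t-\delta}}_{p,2}^p$ for a maximizer; this is done by comparing against an explicit $\kappa$-periodic configuration in $\Sigma(D)$ and invoking the GOE largest-eigenvalue asymptotics, with $\delta\to 0$ taken at the very end. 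Your volume bound bypasses all of this: since $\Sigma_\epsilon(D)$ sits inside the Euclidean ball of radius $\sqrt{N(\tr(D)+1)}$ in $\R^{N\kappa}$ by \eqref{eqn: GP bound on 2-2 norm of configuration in SigmaeD}, Stirling gives $\tfrac1N\log\mathrm{vol}(\Sigma_\epsilon(D))\leq \tfrac{\kappa}{2}\log\bigl(2\pi e(\tr(D)+1)/\kappa\bigr)$ uniformly in $N$ and in $\epsilon\leq\kappa^{-1}$, so the correction is a clean $C/\beta$ that survives the $N\to\infty$ and $\epsilon\to 0$ limits and dies in the final $\beta\to\infty$ limit — exactly the order of limits in the statement. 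The only point worth flagging is that the paper's more laborious route is of a piece with \cref{GP positive temperature terminal condition bounded by zero temperature}, whose $t\mapsto t-\delta$ structure is reused later in \cref{GP sec11}; for this lemma in isolation your argument is shorter and loses nothing.
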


\begin{proof}
Fix $\delta\in (0,t)$, and for each $t>0$ let $\barho_{t}$ be a maximizer of the relaxed constrained Lagrangian \eqref{eqn: GP relaxed constrained Lagrangian}. By Fubini-Tonelli and a change of variables,
\begin{align}\label{eqn: GP Lagrangian by free energy lower key}
\tilde{F}_{N,\epsilon}(\beta)&\leq L_{N,p,D,\epsilon}(t-\delta)+\frac{1}{\beta}\log \int_{\R^\kappa} e^{-\beta \delta\norm{\asigma}^p_2}\ud \asigma\notag \\
&=\frac{1}{N}H_{N,p,t}(\barho_{t-\delta})+\delta \Norm{\barho_{t-\delta}}_{p,2}^p-\frac{\kappa \log \beta \delta}{p\beta}+\frac{1}{\beta}\log \int_{\R^\kappa} e^{-\norm{\asigma}^p_2}\ud \asigma\notag\\
&\leq L_{N,p,D,\epsilon}(t)+\delta \Norm{\barho_{t-\delta}}_{p,2}^p-\frac{\kappa \log \beta \delta}{p\beta}+\frac{1}{\beta}\log \int_{\R^\kappa} e^{-\norm{\asigma}^p_2}\ud \asigma.
\end{align}
To bound this further, let $A\in \R^{\kappa\times \kappa}$ be a symmetric and non-negative definite matrix with $AA^T=\kappa D$, and denote by $\asigma_i\in \R^\kappa$ the $i$'th column of $A$. Consider the subsequence $M=N\kappa$, and define the $\kappa$-periodic vector spin configuration $\basigma\in (\R^\kappa)^{M}$ by $\asigma_j=\asigma_i$ whenever $j\equiv i\ \mathrm{mod} \kappa$. From \eqref{eqn: GP self-overlap}, it is clear that $\basigma\in \Sigma(D)$. Indeed,
$$R(\basigma,\basigma)=\frac{1}{M}\sum_{i=1}^M \asigma_i\asigma_i^T=\frac{1}{\kappa}\sum_{i=1}^\kappa \asigma_i\asigma_i^T=\frac{1}{\kappa}AA^T=D.$$
If $G_M$ denotes the $M\times M$ random matrix in \eqref{GP GN matrix}, then the Cauchy-Schwarz inequality implies that for each $t>0$,
$$H_{M,p,t}(\barho_{t})\leq \sqrt{M}\norm{G_M}_2\Norm{\barho_t}_{2,2}^2-t\norm{\barho_t}_{p,2}^p,
$$
and similarly,
$$H_{M,p,t}(\barho_t)\geq H_{M,p,t}(\basigma)\geq  -\sqrt{M}\norm{G_M}_2\Norm{\basigma}_{2,2}^2-t\norm{\basigma}_{p,2}^p.$$
Rearranging and remembering \eqref{eqn: GP 2-2 norm as trace of self overlap} gives
\begin{align}\label{eqn: GP lp norm of maximizer}
\Norm{\barho_t}_{p,2}^p&\leq \frac{2\norm{G_M}_2(\tr(D)+\epsilon \kappa)}{t\sqrt{M}}+\max_{1\leq i\leq \kappa}\norm{\asigma_i}_2^p\\
&=\frac{2(\tr(D)+\epsilon \kappa)\lambda_M^M}{\sqrt{2M}t}+\max_{1\leq i\leq \kappa}\norm{\asigma_i}_2^p,\notag
\end{align}
where $\lambda_M^M$ denotes the largest eigenvalue of the Gaussian orthogonal ensemble $\bar G_M$ in \eqref{eqn: GP barGN matrix}. Substituting this into \eqref{eqn: GP Lagrangian by free energy lower key}, appealing to the Gaussian concentration of the free energy and leveraging the asymptotics of $\lambda_M^M$ established in theorem 2.1.22 of \cite{AGZ} shows that
\begin{align*}
\lim_{N\to\infty}F_{N,\epsilon}(\beta)&\leq \lim_{N\to\infty}L_{N,p,D,\epsilon}(t)+\delta\Big(\frac{2\sqrt{2}(\tr(D)+\epsilon \kappa)}{t-\delta}+\max_{1\leq i\leq \kappa}\norm{\asigma_i}_2^p\Big)\\
&\quad-\frac{\kappa \log \beta \delta}{p\beta}+\frac{1}{\beta}\log \int_{\R^\kappa} e^{-\norm{\asigma}^p_2}\ud \asigma.
\end{align*}
We have implicitly used the fact that the limit of $F_{N,\epsilon}(\beta)$ exists and therefore coincides with that of $F_{M,\epsilon}(\beta)$. This can be shown using a Guerra-Toninelli argument as in \cref{GP Guerra Toninelli}, or by appealing to the results in \cite{PanVec} as we will do in \cref{GP sec8}. Letting $\epsilon\to 0$, then $\beta \to \infty$ and finally $\delta\to 0$ completes the proof upon invoking \cref{GP relaxing the constrained Lagrangian}.
\end{proof}

\begin{theorem}\label{GP Lagrangian by free energy}
If $2<p<\infty$ and $t>0$, then
\begin{equation}\label{eqn: GP Lagrangian by free energy}
\limsup_{\beta\to \infty}\lim_{\epsilon \to 0}\lim_{N\to \infty}F_{N,\epsilon}(\beta)\leq L_{p,D}(t)\leq \liminf_{\beta \to \infty}\lim_{N\to \infty}F_{N,\beta^{-1}}(\beta).
\end{equation}
\end{theorem}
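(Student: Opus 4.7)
The upper bound in the theorem is exactly the content of \cref{GP Lagrangian by free energy lower}, so the plan is to establish the matching lower bound
$$L_{p,D}(t) \leq \liminf_{\beta\to\infty}\lim_{N\to\infty} F_{N,\beta^{-1}}(\beta).$$
The strategy will be a Laplace-type restriction of the free-energy integral to a shrinking Euclidean ball around a measurable maximizer of $H_{N,p,t}$ on $\Sigma(D)$, followed by Jensen's inequality on the concave logarithm. The delicate feature is that coupling the tube width to temperature as $\epsilon = 1/\beta$ forces the ball radius to shrink at the same rate, which puts the volume penalty at the borderline scale $\log\beta/\beta$.

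Concretely, I would fix $\beta$ large, set $\epsilon = 1/\beta$, and let $\basigma_0 = \basigma_0(g)$ be a measurable selection of a maximizer of $H_{N,p,t}$ on the compact set $\Sigma(D)$, which exists by a standard measurable-maximum argument. Choosing a radius $r = c/\beta$ with $c > 0$ depending only on $\kappa$ and $\tr(D)$, the bilinear expansion
$$R(\basigma_0+\batau,\basigma_0+\batau) - D = R(\basigma_0,\batau) + R(\batau,\basigma_0) + R(\batau,\batau),$$
combined with the Cauchy-Schwarz bound $\norm{R(\basigma,\barho)}_{\mathrm{HS}} \leq \Norm{\basigma}_{2,2}\Norm{\barho}_{2,2}$ (established in the proof of \cref{GP modified coordinate canonical metric bound}) and the identity $\Norm{\basigma_0}_{2,2}^2 = \tr(D)$, shows that for $c$ sufficiently small the shifted Euclidean ball $U = \{\basigma_0 + \batau : \norm{\batau}_{2,2}\leq r\sqrt{N}\}$ lies inside $\Sigma_{\beta^{-1}}(D)$.

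Letting $\mu$ denote the uniform probability measure on $\{\batau \in (\R^\kappa)^N : \norm{\batau}_{2,2}\leq r\sqrt{N}\}$, Jensen's inequality then yields
$$\log \int_{\Sigma_{\beta^{-1}}(D)} e^{\beta H_{N,p,t}(\basigma)}\ud\basigma \geq \log \mathrm{vol}(U) + \beta \int H_{N,p,t}(\basigma_0+\batau)\ud\mu(\batau).$$
Expanding $H_N(\basigma_0+\batau)$ bilinearly, the cross terms $g_{ij}[(\asigma_{0,i},\atau_j) + (\atau_i,\asigma_{0,j})]$ are linear in $\batau$ and integrate to zero by the centered symmetry of $\mu$, while the remaining diagonal quadratic-in-$\batau$ remainder has vanishing expectation over the disorder. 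The $\ell^{p,2}$ perturbation cost is controlled via \eqref{eqn: GP sum to power p} and the embedding $\ell^{2,2}\hookrightarrow \ell^{p,2}$, exactly as in the proof of \cref{GP continuity of constrained Lagrangian}, giving an error of order $N t p r(\sqrt{\tr(D)}+r)^{p-1}$. Using that $\E H_{N,p,t}(\basigma_0) = N\E L_{N,p,D}(t)$ by definition of $\basigma_0$, all of this combines into
$$F_{N,\beta^{-1}}(\beta) \geq \E L_{N,p,D}(t) + \frac{\log \mathrm{vol}(U)}{\beta N} - tpr\big(\sqrt{\tr(D)}+r\big)^{p-1}.$$

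Sending $N\to\infty$ and invoking \cref{GP Guerra Toninelli}, the first term converges to $L_{p,D}(t)$; by Stirling, $\log \mathrm{vol}(U)/N = \kappa \log r + O(1)$, so after dividing by $\beta$ and substituting $r = c/\beta$ the volume contribution becomes $-\kappa \log\beta/\beta + O(1/\beta)$, which vanishes along with the $O(1/\beta)$ perturbation error as $\beta \to \infty$. The main obstacle is the tight interplay between the three scales: the tube width $\epsilon = 1/\beta$ forces the radius $r\sim 1/\beta$, which in turn places the volume penalty exactly at the borderline scale $\log\beta/\beta$, small enough to vanish only marginally; a meaningfully slower decay of $\epsilon$ in $\beta$ would spoil the argument. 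The minor prerequisite of a measurable selection $\basigma_0(g)$ is standard given the compactness of $\Sigma(D)$ and the joint continuity of $H_{N,p,t}$.
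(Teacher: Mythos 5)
Your architecture is the same as the paper's: restrict the partition function to a small neighbourhood of a (near-)maximizer of $H_{N,p,t}$ on $\Sigma(D)$, verify that the neighbourhood sits inside $\Sigma_{\beta^{-1}}(D)$ by taking its normalized radius of order $1/\beta$, pay a per-site volume cost of order $\kappa\log\delta$ that is killed by the $1/\beta$ prefactor, and control the perturbations of $H_N$ and of the $\ell^{p,2}$-potential. Your micro-choices differ harmlessly in two places: you apply Jensen on a Euclidean ball and cancel the cross terms by symmetry of $\mu$, where the paper takes the pointwise infimum over a cube and bounds $H_N(\basigma)-H_N(\barho)$ via Cauchy--Schwarz with $\norm{G_N}_2$; and you centre at an exact maximizer on $\Sigma(D)$ and close with \cref{GP Guerra Toninelli}, where the paper centres at a maximizer of the relaxed Lagrangian and closes with \cref{GP relaxing the constrained Lagrangian}. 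Both of those variations are fine.

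The genuine soft spot is the $\ell^{p,2}$-potential estimate, and it is exactly where the ball and the cube are not interchangeable. Over the Euclidean ball $\Norm{\batau}_{2,2}\le r$ one only has $\Norm{\batau}_{p,2}\le N^{\frac12-\frac1p}\Norm{\batau}_{2,2}$ in the worst case (all mass on one site), and the power-mean inequality gives $\Norm{\batau}_{p,2}\ge\Norm{\batau}_{2,2}$ for $p>2$, so the embedding you invoke does not deliver a per-site perturbation of order $r$ uniformly in $N$; the claimed error $Ntpr(\sqrt{\tr(D)}+r)^{p-1}$ does not follow from \eqref{eqn: GP sum to power p} alone. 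This is precisely why the paper works with the cube $\barho+[-\delta/\sqrt{\kappa},\delta/\sqrt{\kappa}]^{N\kappa}$: there every site satisfies $\norm{\atau_i}_2\le\delta$, hence $\Norm{\batau}_{p,2}\le\delta$ for every $p$ with no embedding needed. (With the ball you could instead exploit the average over $\mu$ and moment bounds for its marginals, but that is an additional argument you have not supplied.) Relatedly, the bound $(x+y)^p-x^p\le py(x+y)^{p-1}$ leaves a factor $\Norm{\basigma_0+\batau}_{p,2}^{p-1}$, and membership in $\Sigma(D)$ alone does not bound $\Norm{\basigma_0}_{p,2}$ uniformly in $N$; one needs the high-probability bound \eqref{eqn: GP lp norm of maximizer} on the $\ell^{p,2}$-norm of the maximizer, forced by the potential term, which your proposal omits. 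Both issues are fixable, but as written the perturbation step fails.
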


\begin{proof}
By \cref{GP Lagrangian by free energy lower}, it suffices to prove the upper bound in \eqref{eqn: GP Lagrangian by free energy}. Fix $\epsilon \in (0,1)$, and let $\delta=\epsilon/K$ for a large enough $K>0$ to be determined. Consider the subsequence $M=N\kappa$ as in the proof of \cref{GP Lagrangian by free energy lower}, and let $\barho\in \Sigma_\delta(D)$ be a maximizer of the relaxed constrained Lagrangian $L_{M,p,D,\delta}(t)$ in \eqref{eqn: GP relaxed constrained Lagrangian}. Introduce the $\delta/\sqrt{\kappa}$-neighbourhood,
$$\CC_{\delta/\sqrt{\kappa}}(\barho)=\barho+[-\delta/\sqrt{\kappa},\delta/\sqrt{\kappa}]^{M\kappa}\subset \big\{\basigma\in (\R^\kappa)^M\mid \Norm{\basigma-\barho}_{2,2}\leq \delta\big\},$$
and observe that $\CC_{\delta/\sqrt{\kappa}}(\barho)\subset \Sigma_\epsilon(D)$. Indeed, the same argument used to obtain \eqref{eqn: GP modified coordinates small distortion} implies that for any $\basigma\in \CC_{\delta/\sqrt{\kappa}}(\barho)$,
\begin{align*}
\norm{R(\basigma,\basigma)-R(\barho,\barho)}_\infty&\leq \Norm{\basigma-\barho}_{2,2}\big(\Norm{\basigma}_{2,2}+\Norm{\barho}_{2,2}\big)\leq \delta\big(1+2\sqrt{\tr(D)+\kappa}\big)\\
&<\frac{\epsilon}{2}
\end{align*}
provided that $K=K(D,\kappa)$ is large enough. The second inequality uses \eqref{eqn: GP 2-2 norm as trace of self overlap}. This means that
\begin{align}\label{eqn: GP Lagrangian by free energy upper key}
\tilde{F}_{M,\epsilon}(\beta)&\geq \frac{1}{\beta M}\int_{\CC_{\delta/\sqrt{\kappa}}(\barho)}\exp \beta H_{M,p,t}(\basigma)\ud \basigma \notag\\
&\geq L_{M,p,D,\delta}(t)+\frac{1}{M}\inf_{\basigma\in \CC_{\delta/\sqrt{\kappa}}(\barho)}\big(H_{M,p,t}(\basigma)-H_{M,p,t}(\barho)\big)+\frac{\kappa}{\beta}\log \frac{2\delta}{\sqrt{\kappa}}.
\end{align}
To bound this further, fix $\basigma\in \CC_{\delta/\sqrt{\kappa}}(\barho)$ and recall the definition of the $M\times M$ random matrix $G_M$ in \eqref{GP GN matrix}. The Cauchy-Schwarz inequality implies that
\begin{align}
H_N(\basigma)-H_N(\barho)&=\frac{1}{2\sqrt{M}}\sum_{k=1}^\kappa \big((G_M+G_M^T)(\bsigma(k)-\brho(k)),\bsigma(k)+\brho(k)\big)\notag\\
&\geq -\frac{\norm{G_M}_2}{\sqrt{M}}\sum_{k=1}^\kappa\norm{\bsigma(k)-\brho(k)}_2\big(\norm{\bsigma(k)}_2+\norm{\brho(k)}_2\big)\notag\\
&\geq -\sqrt{M}\norm{G_M}_2\Norm{\basigma-\barho}_{2,2}(\Norm{\basigma}_{2,2}+\Norm{\barho}_{2,2})\notag\\
&\geq -M\delta\frac{\norm{G_M}_2}{\sqrt{M}}(1+2\sqrt{\tr(D)+\kappa}).\label{eqn: GP Lagrangian upper bound by free energy one}
\end{align}
On the other hand, an identical argument as that used to obtain \eqref{eqn: GP difference of p norms 1} shows that
$$\norm{\basigma}_{p,2}^p-\norm{\barho}_{p,2}^p\leq Mp\Norm{\basigma-\barho}_{p,2}\Norm{\basigma}_{p,2}^{p-1}\leq Mp\delta \big(1+\norm{\barho}_{p,2}\big)^{p-1}.$$
Together with \eqref{eqn: GP Lagrangian upper bound by free energy one}, \eqref{eqn: GP lp norm of maximizer} and \cref{GP GN bounded with high probability}, this gives constants $C,K'>0$ that depend only on $\kappa$, $D$, $p$ and $t$ such that with probability at least $1-Ce^{-M/C}$,
$$H_{M,p,t}(\basigma)-H_{M,p,t}(\barho)\geq -M\delta K'.$$
Substituting this lower bound into \eqref{eqn: GP Lagrangian by free energy upper key} and combining the Gaussian concentration of the free energy with the Borel-Cantelli lemma to let $N\to \infty$ yields
$$\lim_{N\to \infty}F_{N,\epsilon}(\beta)\geq \lim_{N\to \infty} L_{N,p,D,\epsilon/K}(t)-\epsilon K^{-1} K'+\frac{\kappa}{\beta}\log \frac{2\epsilon}{K\sqrt{\kappa}}.$$
Taking $\epsilon=\beta^{-1}$ and letting $\beta\to \infty$ completes the proof upon invoking \cref{GP relaxing the constrained Lagrangian}.
\end{proof}

\section{The limit of the free energy}\label{GP sec8}

In this section we describe the implications of the main result in \cite{PanVec} on the asymptotic representation of the constrained Lagrangian \eqref{eqn: GP Lagrangian with self-overlap D} established in \cref{GP Lagrangian by free energy}. Given a constraint $D\in \Gamma_\kappa$, some $t>0$ and an inverse temperature parameter $\beta>0$, all of which will remain fixed throughout this section, consider the measure on $\R^\kappa$ defined by
\begin{equation}\label{eqn: GP limit of free energy mu}
\ud\mu(\asigma)=\exp\big(-t\beta \norm{\asigma}_2^p\big)\ud \asigma.
\end{equation}
Notice that the quenched free energy \eqref{eqn: GP free energy with epsilon band} may be written as
\begin{equation}\label{eqn: GP free energy in terms of mu}
F_{N,\epsilon}(\beta)=\frac{1}{\beta N}\E \log \int_{\Sigma_\epsilon(D)}\exp \beta H_N(\basigma)\ud \mu^{\otimes N}(\basigma).
\end{equation}
If it were not for the fact that the measure $\mu$ in \eqref{eqn: GP limit of free energy mu} is not compactly supported, this free energy functional would fall into the class of free energy functionals studied in \cite{PanVec}. Fortunately, the compact support assumption in \cite{PanVec} is not necessary. Instead, it is a convenient assumption that ensures all objects introduced are well-defined and spin configurations in the set $\Sigma_\epsilon(D)$ remain bounded. Replicating the arguments in \cite{PanVec}, it is not hard to use that the measure \eqref{eqn: GP limit of free energy mu} exhibits super-Gaussian decay in the range $2<p<\infty$ to show that the analogue of the Parisi formula with vector spins proved in \cite{PanVec} for compactly supported measures also holds for the free energy functional \eqref{eqn: GP free energy in terms of mu}. We will not give any more details than this, and simply content ourselves with stating the asymptotic formula for \eqref{eqn: GP free energy in terms of mu} which we will use between \cref{GP sec9} and \cref{GP sec11} to prove \cref{GP main result}.

Denote by $\M^d$ the set of probability distributions on $[0,1]$ with finitely many atoms. Notice that any discrete measure $\alpha\in \M^d$ may be identified with two sequences of parameters
\begin{align}
0&=q_{-1}\leq q_0\leq \ldots\leq q_{r-1}\leq q_r=1, \label{eqn: GP positive temperature qs}\\
0&=\alpha_{-1}\leq \alpha_0\leq\ldots\leq \alpha_{r-1}\leq \alpha_r=1, \label{eqn: GP positive temperature alphas}
\end{align}
satisfying $\alpha(\{q_j\})=\alpha_j-\alpha_{j-1}$ for $0\leq j\leq r$. For each Lagrange multiplier $\lambda\in \R^{\kappa(\kappa+1)/2}$, define the function $f_\lambda^\beta:\R^\kappa \to \R$ by
\begin{equation}\label{eqn: GP positive temperature terminal condition}
f_\lambda^\beta(\x)=\frac{1}{\beta}\log\int_{\R^\kappa} \exp\beta\Big(\big(\asigma,\x\big)+\sum_{k\leq k'}\lambda_{k,k'}\sigma(k)\sigma(k')-t\norm{\asigma}_2^p\Big)\ud \asigma.
\end{equation}
Given a path $\pi \in \Pi_D$ defined by the sequences \eqref{eqn: GP zero temperature qs} and \eqref{eqn: GP zero temperature gammas}, for each $0\leq j\leq r$ consider an independent Gaussian vector $z_j=(z_j(k))_{k\leq \kappa}$ with covariance structure \eqref{eqn: GP covariance of z}. Define the sequence $\smash{(X_l^{\lambda,\alpha,\pi,\beta})_{0\leq l\leq r}}$ recursively as follows. Let
\begin{equation}
X_r^{\lambda,\alpha,\pi,\beta}((z_j)_{0\leq j\leq r})=f_\lambda^\beta\Big(\sqrt{2}\sum_{j=1}^rz_j\Big),
\end{equation}
and for $0\leq l\leq r-1$ let
\begin{equation}\label{eqn: GP Parisi sequence at positive temperature}
X_l^{\lambda,\alpha,\pi,\beta}((z_j)_{0\leq j\leq l})=\frac{1}{\beta \alpha_l}\log \E_{z_{l+1}}\exp \beta \alpha_l X_{l+1}^{\lambda,\alpha,\pi,\beta}((z_j)_{0\leq j\leq l+1}).
\end{equation}
This inductive procedure is well-defined by the growth bounds in \cref{GP positive temperature terminal growth bound}. Introduce the Parisi functional,
\begin{equation}\label{eqn: GP Parisi functional at positive temperature preliminary}
\Par_\beta(\lambda,\alpha,\pi)=X_0^{\lambda,\alpha,\pi,\beta}-\sum_{k\leq k'}\lambda_{k,k'}D_{k,k'}-\frac{\beta}{2}\sum_{0\leq j\leq r-1}\alpha_j\big(\norm{\gamma_{j+1}}_{\text{HS}}^2-\norm{\gamma_{j}}_{\text{HS}}^2\big).
\end{equation}
Observe that
\begin{align}\label{eqn: GP Parisi functional integral term}
\sum_{0\leq j\leq r-1}\alpha_j\big(\norm{\gamma_{j+1}}_{\text{HS}}^2-\norm{\gamma_j}_{\text{HS}}^2\big)&=\sum_{0\leq j\leq r-1}\int_{q_j}^{q_{j+1}}\alpha(s)\frac{\ud}{\ud s}\norm{\pi(s)}_{\text{HS}}^2\ud s\notag\\
&=2\int_0^1\alpha(s)\Sum\big(\pi(s)\odot \pi'(s)\big)\ud s,
\end{align}
where we have abused notation by writing $\alpha$ both for the measure and its cumulative distribution function. The Parisi functional may therefore be expressed as
\begin{equation}\label{eqn: GP Parisi functional at positive temperature}
\Par_\beta(\lambda,\alpha,\pi)=X_0^{\lambda,\alpha,\pi,\beta}-\sum_{k\leq k'}\lambda_{k,k'}D_{k,k'}-\int_0^1 \beta \alpha(s)\Sum\big(\pi(s)\odot \pi'(s)\big)\ud s.
\end{equation}
We have made all dependencies on $D, p$ and $t$ implicit for clarity of notation, but we will make them explicit whenever necessary. The proof of  \cref{GP main result} will leverage the following consequence of the main result in \cite{PanVec}.

\begin{theorem}\label{GP limit of free energy}
If $2<p<\infty$, then
\begin{equation}\label{eqn: GP limit of free energy}
\lim_{\epsilon\to 0}\lim_{N\to\infty}F_{N,\epsilon}(\beta)=\inf_{\lambda,\alpha,\pi}\Par_{\beta}(\lambda,\alpha,\pi),
\end{equation}
where the infimum is taken over all $(\lambda,\alpha,\pi)\in \R^{\kappa(\kappa+1)/2}\times \M^d\times \Pi_D$.
\end{theorem}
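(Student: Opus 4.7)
My plan is to reduce the statement to the main result of \cite{PanVec}, whose only obstacle is that the reference measure $\mu$ in \eqref{eqn: GP limit of free energy mu} is not compactly supported. The strategy has three ingredients: verifying that all objects in the Parisi functional \eqref{eqn: GP Parisi functional at positive temperature} remain well-defined for $\mu$, truncating $\mu$ to reduce to the compactly supported case, and passing to the limit in both sides of the identity.

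First I would establish the requisite growth bounds on $f_\lambda^\beta$ (this is what \cref{GP positive temperature terminal growth bound} does). Because $p>2$, the integrand in \eqref{eqn: GP positive temperature terminal condition} behaves like $\exp\bigl(-t\beta\norm{\asigma}_2^p + O(\norm{\asigma}_2^2)\bigr)$, which is integrable on $\R^\kappa$, and by a standard Laplace-type estimate one obtains bounds of the form $\abs{f_\lambda^\beta(\x)} \leq C(1+\norm{\x}_2^{p^*})$ with constants depending on $\lambda$, $\beta$, $t$. Since each $z_j$ in \eqref{eqn: GP covariance of z} is Gaussian, this polynomial growth ensures that the recursion \eqref{eqn: GP Parisi sequence at positive temperature} is well-defined and all quantities $X_l^{\lambda,\alpha,\pi,\beta}$ are finite.

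Second, for $K>0$ let $\mu_K$ denote the restriction of $\mu$ to the Euclidean ball of radius $K$ in $\R^\kappa$, let $F_{N,\epsilon}^K(\beta)$ be the free energy \eqref{eqn: GP free energy in terms of mu} built with $\mu_K$, and let $\Par_\beta^K$ be the analogous Parisi functional with $f_\lambda^\beta$ replaced by its truncated counterpart $f_\lambda^{\beta,K}$ obtained by restricting the integral in \eqref{eqn: GP positive temperature terminal condition} to the ball of radius $K$. Since $\mu_K$ is compactly supported, the main theorem of \cite{PanVec} applied coordinate-by-coordinate to $\Sigma_\epsilon(D)$ yields
\begin{equation}
\lim_{\epsilon\to 0}\lim_{N\to\infty}F_{N,\epsilon}^K(\beta) = \inf_{\lambda,\alpha,\pi}\Par_\beta^K(\lambda,\alpha,\pi).
\end{equation}
The task is to send $K\to\infty$ on both sides. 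For the free energy side, the super-Gaussian decay of $\mu$ gives
\begin{equation}
\int_{\norm{\asigma}_2>K}\exp\bigl(-t\beta\norm{\asigma}_2^p\bigr)\,\ud\asigma \leq C\exp(-ct\beta K^p),
\end{equation}
so a union bound over the $N$ coordinates, combined with the a priori upper bound on $H_N$ on $\Sigma_\epsilon(D)$ provided by the Cauchy-Schwarz estimate from \cref{GP Guerra Toninelli} and \cref{GP GN bounded with high probability}, shows that $F_{N,\epsilon}^K(\beta) \to F_{N,\epsilon}(\beta)$ uniformly in $N$ once $K$ is large enough (depending on $\beta$, $t$, $D$). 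On the Parisi side, $f_\lambda^{\beta,K}\to f_\lambda^\beta$ monotonically for each $\x$, and the uniform growth bound $\abs{f_\lambda^{\beta,K}(\x)}\leq C(1+\norm{\x}_2^{p^*})$ allows dominated convergence to propagate through the recursion \eqref{eqn: GP Parisi sequence at positive temperature}, giving $\Par_\beta^K(\lambda,\alpha,\pi)\to \Par_\beta(\lambda,\alpha,\pi)$ for each fixed $(\lambda,\alpha,\pi)$.

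The main obstacle will be in the final step: matching the infima. The pointwise convergence of $\Par_\beta^K$ to $\Par_\beta$ immediately gives $\inf \Par_\beta \leq \liminf_K \inf \Par_\beta^K$. For the other direction, one needs to verify that a near-minimizer $(\lambda^K,\alpha^K,\pi^K)$ of $\Par_\beta^K$ can be chosen to lie in a set on which the convergence is uniform; this is the delicate part because $\lambda$ ranges over an unbounded space. To handle this, I would show that the Parisi functional becomes large when $\abs{\lambda}\to\infty$ (the $-\sum_{k\leq k'}\lambda_{k,k'}D_{k,k'}$ term combined with coercivity of $X_0$ in $\lambda$ coming from the quadratic term $\sum\lambda_{k,k'}\sigma(k)\sigma(k')$ inside \eqref{eqn: GP positive temperature terminal condition} and the assumption $D\in\Gamma_\kappa^+$ appearing later), so the infimum is attained over a bounded subset of $\R^{\kappa(\kappa+1)/2}$; for such $\lambda$ the convergence $\Par_\beta^K(\lambda,\alpha,\pi)\to\Par_\beta(\lambda,\alpha,\pi)$ is uniform over $(\alpha,\pi)$, completing the argument. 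Alternatively, as the author indicates, one may rerun the Guerra upper bound and Aizenman--Sims--Starr lower bound of \cite{PanVec} directly, replacing every uniform sup-norm bound on the spins by a moment bound $\int\norm{\asigma}_2^q\ud\mu(\asigma)<\infty$ provided by the super-Gaussian decay.
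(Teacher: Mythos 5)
The paper gives no detailed proof of this theorem: it asserts that the arguments of \cite{PanVec} (the Guerra replica-symmetry-breaking upper bound and the synchronization/Aizenman--Sims--Starr lower bound) go through once every use of compact support is replaced by a moment bound supplied by the super-Gaussian decay of $\mu$ — which is exactly the alternative you relegate to your final sentence. Your primary route, truncating $\mu$ at radius $K$ and sending $K\to\infty$, has a genuine gap on the free-energy side. The union bound you invoke controls the contribution of configurations in $\Sigma_\epsilon(D)$ with some coordinate of norm larger than $K$ by a quantity of the form $N\,\mu(\norm{\asigma}_2>K)\,\mu(\R^\kappa)^{N-1}\exp(\beta CN)$, where $\exp(\beta CN)$ comes from the a priori bound $H_N(\basigma)\leq\sqrt{N}\norm{G_N}_2\Norm{\basigma}_{2,2}^2\leq CN$ on $\Sigma_\epsilon(D)$ and $\mu(\R^\kappa)^{N-1}$ from integrating out the remaining coordinates. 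Relative to the lower bound $Z_N\geq e^{-c\beta N}$ on the partition function, the error after taking $\frac{1}{\beta N}\log$ is $\frac{1}{\beta N}\log\bigl(1+e^{aN-ct\beta K^p}\bigr)$, which for \emph{fixed} $K$ tends to $a/\beta>0$ as $N\to\infty$. So $F_{N,\epsilon}^K(\beta)\to F_{N,\epsilon}(\beta)$ is not uniform in $N$; one would need $K^p\gtrsim N$. (Note also that $\Sigma_\epsilon(D)$ only constrains the empirical second moment, so individual coordinates may have norm of order $\sqrt N$ and the truncated constraint set genuinely differs from $\Sigma_\epsilon(D)$.) A correct version would need a cavity-type conditional estimate — the single-site integral over $\norm{\asigma_i}_2>K$ is exponentially small relative to the unrestricted one because the cavity field is $O(1)$ — but the overlap constraint destroys the product structure this relies on, and at that point one may as well run \cite{PanVec} directly on the untruncated measure.

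Two further issues on the Parisi side. Since $f_\lambda^{\beta,K}\uparrow f_\lambda^\beta$ and the recursion \eqref{eqn: GP Parisi sequence at positive temperature} is monotone in the terminal condition, the immediate inequality is $\limsup_K\inf\Par_\beta^K\leq\inf\Par_\beta$; the direction $\inf\Par_\beta\leq\liminf_K\inf\Par_\beta^K$ is the one requiring control of near-minimizers $(\lambda^K,\alpha^K,\pi^K)$. You have the two directions interchanged, though you correctly identify the mechanism needed for the hard one. More seriously, the coercivity in $\lambda$ you invoke to confine near-minimizers to a bounded set requires $D$ to be positive definite (compare \cref{GP bound on almost maximizer}, proved only for $D\in\Gamma_\kappa^+$, which is precisely why the paper needs the second equality in \eqref{eqn: GP limit of unconstrained Lagrangian}), whereas \cref{GP limit of free energy} is stated for an arbitrary fixed $D\in\Gamma_\kappa$, possibly degenerate.
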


This result can be viewed as a positive temperature analogue of \cref{GP main result}. Together with \cref{GP Lagrangian by free energy}, it essentially reduces the proof of \cref{GP main result} to showing that 
\begin{equation}\label{eqn: GP goal recast}
\lim_{\beta\to\infty}\inf_{\lambda,\alpha,\pi}\Par_{\beta}(\lambda,\alpha,\pi)=\inf_{\lambda,\zeta,\pi}\Par_\infty(\lambda,\zeta,\pi).
\end{equation}
Notice the similarity between the Parisi functionals \eqref{eqn: GP Parisi functional at positive temperature} and \eqref{eqn: GP Parisi functional at zero temperature}. If it were not for the terms $X_0$ and $Y_0$ in \eqref{eqn: GP Parisi sequence at positive temperature} and \eqref{eqn: GP Parisi sequence at zero temperature}, there would be a natural correspondence between these two functionals obtained by setting $\zeta=\beta \alpha$. The proof of \eqref{eqn: GP goal recast} will therefore consist in showing that, when evaluated at almost minimizers, the terms $X_0$ and $Y_0$ in \eqref{eqn: GP Parisi sequence at positive temperature} and \eqref{eqn: GP Parisi sequence at zero temperature} differ by a quantity that vanishes as $\beta\to \infty$. To control this difference, we will compare the terminal conditions \eqref{eqn: GP positive temperature terminal condition} and \eqref{eqn: GP zero-temperature terminal condition} in \cref{GP sec9}. We will then use the Auffinger-Chen representation \cite{AuffingerChenREP, Jagannath} in \cref{GP sec10} to translate the bounds on the terminal conditions into control on $X_0$ and $Y_0$. This analysis will be exploited in \cref{GP sec11} to establish \eqref{eqn: GP goal recast} and therefore prove \cref{GP main result}. This strategy is considerably different to that in \cite{WeiKuo}, where the free energy functional \eqref{eqn: GP free energy in terms of mu} is truncated at some level $M>0$. This truncation simplifies much of the analysis for fixed $M>0$, but requires a lot of care when sending $M\to \infty$. By not truncating the free energy, we simplify and shorten the proof of  \cref{GP main result} even in the scalar case, $\kappa=1$, studied in \cite{WeiKuo}.

\section{Comparison of the terminal conditions}\label{GP sec9}

In this section we prove quantitative bounds on the difference between the terminal conditions \eqref{eqn: GP positive temperature terminal condition} and \eqref{eqn: GP zero-temperature terminal condition}. Although the analysis in this section uses only elementary concepts, it is the key to proving \cref{GP main result}; the rest of the paper will use tools from the literature to transform the bounds established in this section into a proof of \cref{GP main result}. To alleviate notation, the inverse temperature parameter $\beta>0$, the Lagrange multiplier $\lambda\in \R^{\kappa(\kappa+1)/2}$ and the parameters $t>0$ and $2<p<\infty$ will be fixed throughout this section. We will write $C>0$ for a constant that depends only on $\kappa$, $p$ and $t$ whose value might not be the same at each occurrence.

We begin by bounding $\smash{f^\beta_\lambda}$ from above by $\smash{f_\lambda^\infty}$ up to a small error. It will be necessary to make the dependence of these terminal conditions on $t>0$ explicit by writing $\smash{f_{\lambda,t}^\beta}$ and $\smash{f_{\lambda,t}^\infty}$.

\begin{proposition}\label{GP positive temperature terminal condition bounded by zero temperature}
If $2<p<\infty$, $\x\in \R^\kappa$ and $\delta\in (0,t)$, then
\begin{equation}\label{eqn: GP positive temperature terminal condition bounded by zero temperature}
f_{\lambda,t}^\beta(\x)\leq f^\infty_{\lambda,t-\delta}(\x)-\frac{\kappa \log \beta \delta}{p\beta}+\frac{1}{\beta}\log \int_{\R^\kappa} e^{-\norm{\asigma}_2^p}\ud \asigma.
\end{equation}
\end{proposition}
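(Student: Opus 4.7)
The proof is a direct computation based on splitting the exponent and a change of variables. The plan is to write
\begin{equation*}
-t\norm{\asigma}_2^p = -(t-\delta)\norm{\asigma}_2^p - \delta\norm{\asigma}_2^p,
\end{equation*}
so that the integrand defining $f_{\lambda,t}^\beta(\x)$ factors into a piece whose exponent is exactly the expression maximized in the definition of $f_{\lambda,t-\delta}^\infty$, times $\exp(-\beta\delta\norm{\asigma}_2^p)$. First I would pull the supremum-bound past the positive factor:
\begin{equation*}
f_{\lambda,t}^\beta(\x)\leq f_{\lambda,t-\delta}^\infty(\x)+\frac{1}{\beta}\log\int_{\R^\kappa}\exp\bigl(-\beta\delta\norm{\asigma}_2^p\bigr)\,\ud\asigma.
\end{equation*}

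Next I would evaluate the remaining integral by the change of variables $\asigma=(\beta\delta)^{-1/p}\atau$, which has Jacobian $(\beta\delta)^{-\kappa/p}$ and satisfies $\beta\delta\norm{\asigma}_2^p=\norm{\atau}_2^p$. This yields
\begin{equation*}
\int_{\R^\kappa}\exp\bigl(-\beta\delta\norm{\asigma}_2^p\bigr)\,\ud\asigma=(\beta\delta)^{-\kappa/p}\int_{\R^\kappa}e^{-\norm{\atau}_2^p}\,\ud\atau,
\end{equation*}
and taking $\beta^{-1}\log$ produces exactly the two remaining terms on the right-hand side of \eqref{eqn: GP positive temperature terminal condition bounded by zero temperature}.

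There is essentially no obstacle here: the finiteness of $f_{\lambda,t-\delta}^\infty(\x)$ (needed to justify the pointwise upper bound on the integrand) follows from $t-\delta>0$ and $p>2$, which guarantee that the quadratic-plus-linear terms are dominated by $-(t-\delta)\norm{\asigma}_2^p$ at infinity; this growth control is anyway recorded in \cref{GP zero temperature terminal growth bound}. The combinatorial bookkeeping is minimal, so the whole proof amounts to the two displays above together with the verification that $(t-\delta)$ remains positive under the hypothesis $\delta\in(0,t)$.
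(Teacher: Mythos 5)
Your proposal is correct and coincides with the paper's own proof: the paper likewise splits off a $-\delta\norm{\asigma}_2^p$ term, bounds the rest of the exponent pointwise by $\beta f^\infty_{\lambda,t-\delta}(\x)$, and evaluates the leftover integral via the substitution $\asigma=(\beta\delta)^{-1/p}\atau$. No gaps.
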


\begin{proof}
By a change of variables,
\begin{align*}
f_{\lambda,t}^\beta(\x)&\leq f^{\infty}_{\lambda,t-\delta}(\x)+\frac{1}{\beta}\log\int_{\R^\kappa} e^{-\beta \delta\norm{\asigma}^p_2}\ud \asigma\\
&=f^\infty_{\lambda,t-\delta}(\x)-\frac{\kappa \log \beta \delta}{p\beta}+\frac{1}{\beta}\log \int_{\R^\kappa} e^{-\norm{\asigma}_2^p}\ud \asigma.
\end{align*}
This finishes the proof.
\end{proof}

This result will play its part when we prove the upper bound in \eqref{eqn: GP goal recast}, at which point we will have to replace $\smash{f_{\lambda,t-\delta}^\infty}$ in \eqref{eqn: GP positive temperature terminal condition bounded by zero temperature} by $\smash{f_{\lambda,t}^\infty}$. This will be achieved through a continuity result that is an immediate consequence of the following bound on any maximizer $\smash{\asigma_{\x,\lambda,t}^*}$ of \eqref{eqn: GP zero-temperature terminal condition}. 

\begin{lemma}\label{GP bound on argmax}
If $2<p<\infty$ and $\x\in \R^\kappa$, then there exists $\asigma^*_{\x,\lambda,t}\in \R^\kappa$ which attains the maximum in \eqref{eqn: GP zero-temperature terminal condition}. Moreover,
\begin{equation}\label{eqn: GP bound on argmax}
\norm{\asigma^*_{\x,\lambda,t}}_2\leq \max\Big(\Big(\frac{2\norm{\lambda}_\infty}{t}\Big)^{\frac{1}{p-2}},\Big(\frac{2\norm{\x}_2}{t}\Big)^{\frac{1}{p-1}}\Big).
\end{equation}
\end{lemma}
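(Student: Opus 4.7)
The plan is a standard two-step argument: first a coercivity argument gives existence of the maximizer, then the inequality $f_\lambda^\infty(\x) \geq 0$ obtained by evaluating at $\asigma = 0$ yields the quantitative bound via a dichotomy between the linear and quadratic contributions.

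For existence, let $F(\asigma) = (\asigma,\x) + \sum_{k \leq k'} \lambda_{k,k'} \sigma(k)\sigma(k') - t\norm{\asigma}_2^p$. By Cauchy--Schwarz the linear term is bounded in modulus by $\norm{\x}_2 \norm{\asigma}_2$, and a crude linear-algebra estimate (identifying the coefficient vector $\lambda$ with a symmetric $\kappa \times \kappa$ matrix and bounding its operator norm) shows that the quadratic form is bounded in modulus by a constant multiple of $\norm{\lambda}_\infty \norm{\asigma}_2^2$. Since $p > 2$, the coercive term $-t\norm{\asigma}_2^p$ dominates both, so $F(\asigma) \to -\infty$ as $\norm{\asigma}_2 \to \infty$. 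Continuity of $F$ then guarantees that the supremum defining $f_\lambda^\infty(\x)$ is attained at some point $\asigma^*_{\x,\lambda,t} \in \R^\kappa$.

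For the norm bound, plugging $\asigma = 0$ into the defining supremum gives $f_\lambda^\infty(\x) \geq 0$, hence $F(\asigma^*_{\x,\lambda,t}) \geq 0$, which after applying Cauchy--Schwarz to the linear term and the quadratic-form bound above becomes
\[ t \norm{\asigma^*_{\x,\lambda,t}}_2^p \leq (\asigma^*_{\x,\lambda,t}, \x) + \sum_{k \leq k'} \lambda_{k,k'} \sigma^*_{\x,\lambda,t}(k) \sigma^*_{\x,\lambda,t}(k') \leq \norm{\x}_2 \norm{\asigma^*_{\x,\lambda,t}}_2 + \norm{\lambda}_\infty \norm{\asigma^*_{\x,\lambda,t}}_2^2. \]
Assume without loss of generality that $\asigma^*_{\x,\lambda,t} \neq 0$. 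Then at least one of the two right-hand summands is at least half of the left-hand side. In the first case, $\norm{\x}_2 \norm{\asigma^*_{\x,\lambda,t}}_2 \geq (t/2) \norm{\asigma^*_{\x,\lambda,t}}_2^p$, which rearranges to $\norm{\asigma^*_{\x,\lambda,t}}_2^{p-1} \leq 2 \norm{\x}_2 / t$. In the second, $\norm{\lambda}_\infty \norm{\asigma^*_{\x,\lambda,t}}_2^2 \geq (t/2) \norm{\asigma^*_{\x,\lambda,t}}_2^p$, giving $\norm{\asigma^*_{\x,\lambda,t}}_2^{p-2} \leq 2 \norm{\lambda}_\infty / t$. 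Taking the maximum of the two candidate bounds yields the claimed estimate.

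I do not foresee a substantive obstacle here: the computation is routine and the only point that requires mild care is the uniform control of the quadratic form in terms of $\norm{\lambda}_\infty$, which boils down to a standard operator-norm bound for the associated symmetric matrix (and may absorb a harmless $\kappa$-dependent constant into the paper's conventions for $\norm{\lambda}_\infty$). Once the two-sided coercivity and the zeroth-order comparison $F(\asigma^*_{\x,\lambda,t}) \geq F(0) = 0$ are in hand, the case split is one line.
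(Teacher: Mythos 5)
Your proposal is correct and follows essentially the same route as the paper: coercivity from $p>2$ plus continuity for existence, then the comparison $g(\asigma^*)\geq g(0)=0$ combined with the dichotomy ``one of the two terms $\norm{\x}_2\norm{\asigma^*}_2$, $\norm{\lambda}_\infty\norm{\asigma^*}_2^2$ is at least $\tfrac{t}{2}\norm{\asigma^*}_2^p$'' (the paper phrases this as a contradiction, you phrase it directly, which is the same argument). Your parenthetical caution about a possible $\kappa$-dependent constant in bounding the quadratic form by $\norm{\lambda}_\infty\norm{\asigma}_2^2$ is in fact a point the paper itself glosses over, so no action is needed beyond what you wrote.
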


\begin{proof}
Consider the function $g:\R^\kappa\to \R$ defined by
\begin{equation}\label{eqn: GP terminal function}
g(\asigma)=\big(\asigma,\x\big)+\sum_{k\leq k'}\lambda_{k,k'}\sigma(k)\sigma(k')-t\norm{\asigma}_2^p.
\end{equation}
By the Cauchy-Schwarz inequality,
$$g(\asigma)\leq \norm{\asigma}_2\norm{\x}_2+\norm{\lambda}_\infty\norm{\asigma}_2^2-t\norm{\asigma}_2^p.$$
Since $p>2$, it follows that $\smash{\lim_{\norm{\asigma}_2\to \infty}g(\asigma)=-\infty}$. Remembering that a continuous function attains its maximum on each compact set, there must exist $\smash{\asigma^*_{\x,\lambda,t}\in \R^\kappa}$ which attains the maximum in \eqref{eqn: GP zero-temperature terminal condition}. If we had
\begin{equation}\label{eqn: GP bound on argmax key}
t\norm{\asigma^*_{\x,\lambda,t}}_2^p>\max\big(2\norm{\asigma^*_{\x,\lambda,t}}_2\norm{\x}_2,2\norm{\asigma^*_{\x,\lambda,t}}_2^2\norm{\lambda}_\infty\big),  
\end{equation}
then we would have
$$0=g(0)\leq f_\lambda^\infty(\x)=g(\asigma_{\x,\lambda,t}^*)< \frac{t\norm{\asigma_{x,\lambda,t}^*}_2^p }{2}+\frac{t\norm{\asigma_{x,\lambda,t}^*}_2^p }{2}-t\norm{\asigma_{x,\lambda,t}^*}_2^p=0$$
which is not possible. Rearranging the reverse of \eqref{eqn: GP bound on argmax key} gives \eqref{eqn: GP bound on argmax} and completes the proof.
\end{proof}

\begin{proposition}\label{GP continuity in t of zero-temperature terminal condition}
If $2<p<\infty$ and $\x\in \R^\kappa$, then
\begin{equation}
\lim_{\delta\to 0}f_{\lambda,t-\delta}^\infty(\x)=f_{\lambda,t}^\infty(\x).
\end{equation}
\end{proposition}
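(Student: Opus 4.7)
The plan is to obtain matching upper and lower bounds on $f_{\lambda,t-\delta}^\infty(\x)$ that squeeze it to $f_{\lambda,t}^\infty(\x)$ as $\delta \to 0$. The lower bound is free: since $t \mapsto -t\norm{\asigma}_2^p$ is pointwise non-increasing in $t$ for every $\asigma \in \R^\kappa$, the expression being maximized in \eqref{eqn: GP zero-temperature terminal condition} is non-increasing in $t$, and the supremum preserves this monotonicity. Hence $f_{\lambda,t-\delta}^\infty(\x) \geq f_{\lambda,t}^\infty(\x)$ for every $\delta \in (0,t)$.

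For the matching upper bound, I would invoke \cref{GP bound on argmax} at parameter $t-\delta$ to obtain a maximizer $\asigma^* = \asigma^*_{\x,\lambda,t-\delta}$ of the supremum defining $f_{\lambda,t-\delta}^\infty(\x)$. Restricting to $\delta \in (0,t/2)$, the bound \eqref{eqn: GP bound on argmax} yields
\begin{equation*}
\norm{\asigma^*}_2 \leq \max\Big(\Big(\tfrac{2\norm{\lambda}_\infty}{t-\delta}\Big)^{1/(p-2)}, \Big(\tfrac{2\norm{\x}_2}{t-\delta}\Big)^{1/(p-1)}\Big) \leq C_0,
\end{equation*}
where $C_0 = C_0(\x,\lambda,t,p)$ is independent of $\delta$ because $1/(t-\delta) \leq 2/t$ on the chosen range. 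Writing out the terminal condition at $t-\delta$ and splitting $-(t-\delta) = -t + \delta$ gives
\begin{equation*}
f_{\lambda,t-\delta}^\infty(\x) = \big(\asigma^*,\x\big) + \sum_{k\leq k'}\lambda_{k,k'}\sigma^*(k)\sigma^*(k') - t\norm{\asigma^*}_2^p + \delta \norm{\asigma^*}_2^p \leq f_{\lambda,t}^\infty(\x) + \delta C_0^p,
\end{equation*}
where the inequality follows because $\asigma^*$ is admissible in the supremum defining $f_{\lambda,t}^\infty(\x)$. Letting $\delta \to 0$ completes the proof.

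No serious obstacle is anticipated; the only point of care is ensuring that the norm bound on the maximizer does not blow up as $\delta \to 0$, which is exactly what the restriction $\delta \in (0,t/2)$ secures. In particular, the proof reduces to a one-line comparison once \cref{GP bound on argmax} is in hand.
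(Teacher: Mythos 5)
Your proof is correct and follows essentially the same route as the paper: monotonicity in $t$ for the lower bound, and for the upper bound an application of \cref{GP bound on argmax} at parameter $t-\delta$ with $\delta\in(0,t/2)$ to get a $\delta$-uniform bound on the maximizer, then splitting $-(t-\delta)=-t+\delta$. The paper's explicit constant $\max\big((4\norm{\lambda}_\infty/t)^{1/(p-2)},(4\norm{\x}_2/t)^{1/(p-1)}\big)^p$ is exactly your $C_0^p$.
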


\begin{proof}
Fix $\delta\in (0,t/2)$. It is clear that $f_{\lambda,t}^\infty(\x)\leq f_{\lambda,t-\delta}^\infty(\x)$. On the other hand, \cref{GP bound on argmax} implies that
\begin{align*}
f_{\lambda,t-\delta}^\infty(\x)&=\big(\asigma^*_{\x,\lambda,t-\delta},\x\big)+\sum_{k\leq k'}\lambda_{k,k'}\asigma^*_{\x,\lambda,t-\delta}(k)\asigma^*_{\x,\lambda,t-\delta}(k')-(t-\delta)\norm{\asigma^*_{\x,\lambda,t-\delta}}_2^p\\
&\leq f_{\lambda,t}^\infty(x)+\delta\max\Big(\Big(\frac{4\norm{\lambda}_\infty}{t}\Big)^{\frac{1}{p-2}},\Big(\frac{4\norm{\x}_2}{t}\Big)^{\frac{1}{p-1}}\Big)^p.
\end{align*}
Letting $\delta\to 0$ completes the proof.
\end{proof}

We now turn our attention to bounding $\smash{f_\lambda^\infty}$ from above by $\smash{f_\lambda^\beta}$ up to a small error. Once again, we drop the dependence of these terminal conditions on $t>0$. Through a simple calculation detailed in the proof of \cref{GP zero temperature terminal condition bounded by positive temperature}, this essentially comes down to bounding the average of the function \eqref{eqn: GP terminal function} on a cube by its value at the centre of the cube. In other words, we need a type of mean-value property for the function \eqref{eqn: GP terminal function}. There are two main issues to address: the function \eqref{eqn: GP terminal function} is not necessarily convex and, for technical reasons, we would like this mean-value property on cubes instead of balls. We will deal with the lack of convexity by adding a convex perturbation to \eqref{eqn: GP terminal function}. Replacing balls by cubes will be done by applying Jensen's inequality to a function defined on a cube of side-length $\delta>0$ centred at some $\arho\in \R^\kappa$,
\begin{equation}
\CC_\delta(\arho)=\arho+[-\delta,\delta]^{\kappa}.
\end{equation}
When we prove \eqref{eqn: GP goal recast}, the error incurred by these two fixes will vanish upon letting $\beta \to \infty$. The mean-value property for cubes takes the following form, and uses \cref{Gershgorin corollary} to establish convexity of the function to which Jensen's inequality is applied.

\begin{lemma}\label{GP bounding supremum by average}
If $2<p<\infty$, $\x\in \R^\kappa$ and $\delta>0$, then
\begin{align}
f_\lambda^\infty(\x)&\leq \frac{1}{\abs{\CC_\delta(\asigma^*_{\x,\lambda,t})}}\int_{\CC_\delta(\asigma^*_{\x,\lambda, t})}\Big(\big(\asigma,\x\big)+\sum_{k\leq k'}\lambda_{k,k'}\sigma(k)\sigma(k')-t\norm{\asigma}_2^p \Big)\ud \asigma\\
&\quad+C\delta^2\big(\norm{\asigma^*_{\x,\lambda,t}}_2^{p-2}+\delta^{p-2}+\norm{\lambda}_\infty\big)\notag
\end{align}
\end{lemma}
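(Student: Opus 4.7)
The plan is to decompose the integrand of \eqref{eqn: GP terminal function}, which we denote $g(\asigma)$, into its linear, quadratic, and $\ell^p$-norm pieces and to control the average of each piece over $\CC_\delta(\asigma^*)$ separately, where $\asigma^*=\asigma^*_{\x,\lambda,t}$ is the maximizer furnished by \cref{GP bound on argmax}. Writing $\asigma=\asigma^*+u$ with $u\in[-\delta,\delta]^\kappa$, the symmetry $u\leftrightarrow -u$ gives
\begin{equation*}
\frac{1}{\abs{\CC_\delta(\asigma^*)}}\int_{\CC_\delta(\asigma^*)}\big(\asigma,\x\big)\,\ud\asigma=\big(\asigma^*,\x\big).
\end{equation*}
Expanding the quadratic piece and using independence of the coordinates of $u$, the linear-in-$u$ cross terms vanish upon averaging, $u(k)u(k')$ vanishes when $k\neq k'$, and $\E u(k)^2=\delta^2/3$; consequently the average of $\sum_{k\le k'}\lambda_{k,k'}\sigma(k)\sigma(k')$ equals its value at $\asigma^*$ plus a correction $(\delta^2/3)\sum_{k}\lambda_{k,k}$ controlled by $C\delta^2\norm{\lambda}_\infty$ in absolute value.

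The substantive step is the average of $-t\norm{\asigma}_2^p$. Setting $h(\asigma)=\norm{\asigma}_2^p$, the assumption $p>2$ ensures $h\in C^2(\R^\kappa)$ with
\begin{equation*}
\nabla h(\asigma)=p\norm{\asigma}_2^{p-2}\asigma,\qquad \nabla^2 h(\asigma)=p\norm{\asigma}_2^{p-2}I+p(p-2)\norm{\asigma}_2^{p-4}\asigma\asigma^T,
\end{equation*}
and \cref{Gershgorin corollary} applied to this symmetric matrix yields $\norm{\nabla^2 h(\asigma)}_{\mathrm{op}}\le C\norm{\asigma}_2^{p-2}$. A second-order Taylor expansion, combined with the bounds $\norm{\asigma^*+su}_2\le\norm{\asigma^*}_2+\sqrt{\kappa}\,\delta$ for $s\in[0,1]$, $u\in[-\delta,\delta]^\kappa$ and $(a+b)^{p-2}\le 2^{p-2}(a^{p-2}+b^{p-2})$, then produces
\begin{equation*}
\abs{h(\asigma^*+u)-h(\asigma^*)-p\norm{\asigma^*}_2^{p-2}\big(\asigma^*,u\big)}\le C\norm{u}_2^2\big(\norm{\asigma^*}_2^{p-2}+\delta^{p-2}\big).
\end{equation*}
Averaging over $u$, the gradient term disappears by symmetry and $\norm{u}_2^2\le\kappa\delta^2$ gives
\begin{equation*}
0\le\frac{1}{\abs{\CC_\delta(\asigma^*)}}\int_{\CC_\delta(\asigma^*)}h(\asigma)\,\ud\asigma-h(\asigma^*)\le C\delta^2\big(\norm{\asigma^*}_2^{p-2}+\delta^{p-2}\big),
\end{equation*}
the lower bound being Jensen's inequality for the convex function $h$.

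Multiplying this last display by $-t$ and combining with the linear (exact) and quadratic (error at most $C\delta^2\norm{\lambda}_\infty$) computations above yields the claimed inequality, since $g(\asigma^*)=f_\lambda^\infty(\x)$. The main technical obstacle is the operator-norm estimate on $\nabla^2 h$, which must be both dimensionally clean and uniform on the cube near the origin. The hypothesis $p>2$ is essential here, as it guarantees that $\nabla^2 h$ remains bounded on any bounded neighbourhood of $\asigma^*$; \cref{Gershgorin corollary} is invoked precisely to convert the structural form of $\nabla^2 h$ into the operator-norm bound required to run a uniform Taylor remainder argument.
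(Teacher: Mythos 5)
Your proof is correct, and it reaches the stated bound by a route that differs in execution from the paper's. The paper does not Taylor-expand $g$ directly: it adds a quadratic perturbation $\frac{1}{2}h(\arho)\sum_k\sigma(k)^2$ with a constant $h(\arho)$ chosen large enough that the perturbed function is convex on $\CC_{2\delta}(\arho)$ (convexity is verified by checking diagonal dominance of the Hessian and invoking \cref{Gershgorin corollary}), applies Jensen's inequality to the convexified function, and then integrates the perturbation over the cube exactly, which produces the same $C\delta^2(\norm{\asigma^*}_2^{p-2}+\delta^{p-2}+\norm{\lambda}_\infty)$ error. Your argument replaces convexification-plus-Jensen by a direct second-order Taylor expansion of each piece of $g$, using the symmetry of the cube to kill the first-order averages and an operator-norm bound on $\nabla^2\norm{\cdot}_2^p$ to control the remainder; the underlying mechanism (uniform second-derivative control near $\asigma^*$ on a cube of side $\delta$) is the same, but your version is more elementary and avoids the somewhat delicate choice of the constant $h(\arho)$. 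Two minor remarks: the appeal to \cref{Gershgorin corollary} for the operator-norm bound is unnecessary, since $\nabla^2 h(\asigma)=p\norm{\asigma}_2^{p-2}I+p(p-2)\norm{\asigma}_2^{p-4}\asigma\asigma^T$ gives $\norm{\nabla^2 h(\asigma)}_{\mathrm{op}}\leq p(p-1)\norm{\asigma}_2^{p-2}$ directly from the rank-one structure; and the constant in your final bound absorbs $t$ and $\kappa$, which is consistent with the convention announced at the start of \cref{GP sec9}.
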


\begin{proof}
To simplify notation, for $k>k'$ let $\lambda_{k,k'}=\lambda_{k',k}$. Recall the function $g:\R^\kappa\to \R$ in \eqref{eqn: GP terminal function}. Given $\delta>0$ and $\arho\in \R^\kappa$, consider the function $f:\CC_{2\delta}(\arho)\to \R$ defined by
$$f(\asigma)=g(\asigma)+\sum_{k=1}^\kappa \frac{1}{2}\sigma(k)^2h(\arho),$$
where the constant $h(\arho)$ depends on $\arho$ and is given by
$$h(\arho)=\big(tp(p-1)+tp(p-2)\kappa\big)\big((2\norm{\arho}_2)^{p-2}+(2\sqrt{\kappa}\delta)^{p-2}\big)+2\abs{\lambda_{k,k}}+\sum_{k'\neq k}\abs{\lambda_{k,k'}}.$$
Fix $\asigma\in \CC_{2\delta}(\arho)$ and $1\leq k\leq \kappa$. A direct computation shows that
\begin{align*}
\partial_{\sigma(k)\sigma(k)}f(\asigma)&=2\lambda_{k,k}-tp\norm{\asigma}_2^{p-2}-tp(p-2)\norm{\asigma}_2^{p-4}\sigma(k)^2+h(\arho)\\
&\geq 2(\lambda_{k,k}+\abs{\lambda_{k,k}})+tp(p-1)\big((2\norm{\arho}_2)^{p-2}+(2\sqrt{\kappa}\delta)^{p-2}-\norm{\asigma}_2^{p-2}\big)\\
&\quad+tp(p-2)\kappa\big((2\norm{\arho}_2)^{p-2}+(2\sqrt{\kappa}\delta)^{p-2}\big)+\sum_{k'\neq k}\abs{\lambda_{k,k'}}\\
&\geq tp(p-2)\kappa\big((2\norm{\arho}_2)^{p-2}+(2\sqrt{\kappa}\delta)^{p-2}\big)+\sum_{k'\neq k}\abs{\lambda_{k,k'}}.
\end{align*}
Similarly, for $1\leq k\neq k'\leq \kappa$,
$$\partial_{\sigma(k)\sigma(k')}f(\asigma)=\lambda_{k,k'}-tp(p-2)\norm{\asigma}_2^{p-4}\sigma(k)\sigma(k').$$
It follows that
$$\sum_{k'\neq k}\abs{\partial_{\sigma(k)\sigma(k')}f(\asigma)}\leq \sum_{k'\neq k}\abs{\lambda_{k,k'}}+tp(p-2)\kappa \norm{\asigma}_2^{p-2}\leq \partial_{\sigma(k)\sigma(k)}f(\asigma).$$
Invoking \cref{Gershgorin corollary} shows that the Hessian of $f$ is non-negative definite, and therefore $f$ is convex. With this in mind, let $X=(X_i)_{i\leq \kappa}$ be a vector of independent random variables with $X_i$ uniformly distributed on the interval $[\rho(i)-\delta,\rho(i)+\delta]$. Jensen's inequality implies that
$$f(\arho)=f(\E X)\leq \E f(X)= \frac{1}{\abs{\CC_\delta(\arho)}}\int_{\CC_\delta(\arho)}f(\asigma)\ud \asigma$$
Substituting the definition of $f$ into the right-hand side of this inequality and integrating yields
$$f(\arho)\leq \int_{\CC_\delta(\arho)}g(\asigma)\ud \asigma+\sum_{k=1}^\kappa \frac{1}{2}\rho(k)^2h(\arho)+\frac{\kappa \delta^2}{6}h(\arho).$$
Rearranging and taking $\arho=\asigma^*_{\x,\lambda,t}$ completes the proof.
\end{proof}

\begin{proposition}\label{GP zero temperature terminal condition bounded by positive temperature}
If $2<p<\infty$ and $L>0$, then for any $\x\in \R^\kappa$ with $\norm{\x}_2\leq L$ and every $0<\delta<L^{\frac{1}{p-1}}$,
\begin{equation}\label{eqn: GP zero temperature terminal condition bounded by positive temperature}
f_\lambda^\infty(\x)\leq f_\lambda^\beta(\x)+C\delta^2\Big(\norm{\lambda}_\infty+L^{\frac{p-2}{p-1}}\Big)-\frac{\kappa\log 2\delta}{\beta}.
\end{equation}
\end{proposition}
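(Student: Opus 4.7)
The plan is to combine \cref{GP bounding supremum by average} with Jensen's inequality applied to the exponential, then translate the resulting bounds on $\norm{\asigma^*_{\x,\lambda,t}}_2$ coming from \cref{GP bound on argmax} into the desired form involving $L$ and $\norm{\lambda}_\infty$.

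First I would write $g(\asigma)=\big(\asigma,\x\big)+\sum_{k\leq k'}\lambda_{k,k'}\sigma(k)\sigma(k')-t\norm{\asigma}_2^p$ and apply \cref{GP bounding supremum by average} to obtain
\begin{equation*}
f_\lambda^\infty(\x)\leq \frac{1}{\abs{\CC_\delta(\asigma^*_{\x,\lambda,t})}}\int_{\CC_\delta(\asigma^*_{\x,\lambda,t})}g(\asigma)\ud \asigma+C\delta^2\big(\norm{\asigma^*_{\x,\lambda,t}}_2^{p-2}+\delta^{p-2}+\norm{\lambda}_\infty\big).
\end{equation*}
Next, Jensen's inequality applied to $x\mapsto e^{\beta x}$ gives
\begin{equation*}
\frac{1}{\abs{\CC_\delta(\asigma^*_{\x,\lambda,t})}}\int_{\CC_\delta(\asigma^*_{\x,\lambda,t})}g(\asigma)\ud \asigma\leq \frac{1}{\beta}\log\Big(\frac{1}{\abs{\CC_\delta(\asigma^*_{\x,\lambda,t})}}\int_{\CC_\delta(\asigma^*_{\x,\lambda,t})}e^{\beta g(\asigma)}\ud \asigma\Big),
\end{equation*}
which, since $\abs{\CC_\delta(\asigma^*_{\x,\lambda,t})}=(2\delta)^\kappa$ and the cube is contained in $\R^\kappa$, is at most $f_\lambda^\beta(\x)-\kappa\beta^{-1}\log(2\delta)$.

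It then remains to absorb $\norm{\asigma^*_{\x,\lambda,t}}_2^{p-2}+\delta^{p-2}$ into the term $C(\norm{\lambda}_\infty+L^{(p-2)/(p-1)})$. By \cref{GP bound on argmax},
\begin{equation*}
\norm{\asigma^*_{\x,\lambda,t}}_2^{p-2}\leq \max\Big(\frac{2\norm{\lambda}_\infty}{t},\Big(\frac{2\norm{\x}_2}{t}\Big)^{\frac{p-2}{p-1}}\Big)\leq C\big(\norm{\lambda}_\infty+L^{\frac{p-2}{p-1}}\big),
\end{equation*}
using $\norm{\x}_2\leq L$. Meanwhile, the hypothesis $\delta<L^{1/(p-1)}$ yields $\delta^{p-2}\leq L^{(p-2)/(p-1)}$, so the combined error term is bounded by $C\delta^2(\norm{\lambda}_\infty+L^{(p-2)/(p-1)})$ after possibly enlarging $C$. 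Substituting everything into the chain of inequalities gives exactly \eqref{eqn: GP zero temperature terminal condition bounded by positive temperature}.

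I do not expect any serious obstacle; the nontrivial step, namely the mean-value-type bound for the non-convex function $g$, has already been packaged into \cref{GP bounding supremum by average}, and what is left is bookkeeping and a careful use of the elementary inequality $\delta<L^{1/(p-1)}$ to reconcile exponents. The only minor subtlety is verifying that the bound from \cref{GP bound on argmax}, which is a maximum of two terms with different exponents, telescopes into the simple additive form $\norm{\lambda}_\infty+L^{(p-2)/(p-1)}$; this reduces to applying $\max(a,b)\leq a+b$ with $a^{1/(p-2)}$ and $b^{1/(p-1)}$ raised to the $(p-2)$-nd power.
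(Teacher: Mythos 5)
Your proposal is correct and follows essentially the same route as the paper: both combine \cref{GP bounding supremum by average} with Jensen's inequality on the cube $\CC_\delta(\asigma^*_{\x,\lambda,t})$ and then control $\norm{\asigma^*_{\x,\lambda,t}}_2^{p-2}$ and $\delta^{p-2}$ via \cref{GP bound on argmax} and the hypothesis $\delta<L^{1/(p-1)}$. The only difference is cosmetic — the paper runs the chain of inequalities starting from $f_\lambda^\beta$ rather than from $f_\lambda^\infty$.
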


\begin{proof}
Given $\arho\in \R^\kappa$, Jensen's inequality implies that
\begin{align}\label{eqn: GP Jensen bound for positive temperature terminal condition}
f_\lambda^\beta(\x)&\geq \frac{1}{\beta}\log \int_{\CC_\delta(\arho)}\exp\beta\Big(\big(\asigma,\x\big)+\sum_{k\leq k'}\lambda_{k,k'}\sigma(k)\sigma(k')-t\norm{\asigma}_2^p\Big)\ud \asigma \notag\\
&\geq \frac{1}{\abs{\CC_\delta(\arho)}}\int_{\CC_\delta(\arho)}\Big(\big(\asigma,\x\big)+\sum_{k\leq k'}\lambda_{k,k'}\sigma(k)\sigma(k')-t\norm{\asigma}_2^p\Big)\ud \asigma+\frac{\kappa \log 2\delta}{\beta}.
\end{align}
Applying this with $\arho=\asigma^*_{\x,\lambda,t}$ and invoking \cref{GP bounding supremum by average} yields
$$f_\lambda^\beta(\x)\geq f_\lambda^\infty(\x)-C\delta^2\big(\norm{\asigma^*_{\x,\lambda,t}}_2^{p-2}+\delta^{p-2}+\norm{\lambda}_\infty\big)+\frac{\kappa \log 2\delta}{\beta}.$$
The result now follows by \cref{GP bound on argmax}.
\end{proof}

This result will play its part when we prove the lower bound in \eqref{eqn: GP goal recast}, at which point we will have to carefully deal with the fact that it only gives a bound of $\smash{f_\lambda^\infty}$ by $\smash{f_\lambda^\beta}$ for values of $\x$ in a (possibly large) neighbourhood of the origin. Fortunately, this will not be a problem. It turns out that the bound \eqref{eqn: GP zero temperature terminal condition bounded by positive temperature} will be applied to one of the Auffinger-Chen control processes introduced in the next section. The generalization of the moment bound in lemma 12.3 of \cite{WeiKuo} to the vector spin setting, which corresponds to \cref{GP moment bound on zero-temperature process} in this paper, will be used to show that dominating $\smash{f_\lambda^\infty}$ by $\smash{f_\lambda^\beta}$ around the origin is sufficient for our purposes.

\section{The Auffinger-Chen representation}\label{GP sec10}

In this section we extend the Auffinger-Chen stochastic control representation established for $\kappa=2$ and Lipschitz terminal conditions in \cite{WeiKuo2DPar} to the setting of arbitrary integer $\kappa\geq 1$ and terminal conditions with sub-quadratic growth such as \eqref{eqn: GP positive temperature terminal condition} and \eqref{eqn: GP zero-temperature terminal condition}. The results in this section will be combined with the bounds obtained in \cref{GP sec9} to compare the quantities $X_0$ and $Y_0$ in \eqref{eqn: GP Parisi sequence at positive temperature} and \eqref{eqn: GP Parisi sequence at zero temperature}. This will lead to a proof  of \cref{GP main result} in \cref{GP sec11}. 

Throughout this section, a constraint $D\in \Gamma_\kappa$, an inverse temperature parameter $\beta>0$, a Lagrange multiplier $\lambda\in \R^{\kappa(\kappa+1)/2}$, a  $\kappa$-dimensional Brownian motion $\bW=(W_1,\ldots,W_\kappa)$ and
parameters $t>0$ and $2<p<\infty$ will be fixed. We will also give ourselves a piecewise linear path $\pi \in \Pi_D$ defined by the sequences \eqref{eqn: GP zero temperature qs} and \eqref{eqn: GP zero temperature gammas}, as well as a discrete probability distribution $\alpha \in \M^d$ defined by the sequences \eqref{eqn: GP zero temperature qs} and \eqref{eqn: GP positive temperature alphas}. To prove the Auffinger-Chen representation, it will be convenient to replace the Gaussian random vectors $z_j$ with covariance structure \eqref{eqn: GP covariance of z} appearing in the definition of the Parisi functional \eqref{eqn: GP Parisi functional at positive temperature} by a continuous time stochastic process $\bB=(\bB(s))_{s\geq 0}$ that plays the same role,
\begin{equation}\label{eqn: GP definition of B process}
\bB(s)=\sqrt{2}\int_0^s \pi'(r)^{\frac{1}{2}}\ud \bW(r).
\end{equation}
Since $\pi'(r)=(q_j-q_{j-1})^{-1}(\gamma_j-\gamma_{j-1})\in \Gamma_\kappa$ for $r\in (q_{j-1},q_j)$, this process is well-defined. Moreover, the Ito isometry shows that
\begin{equation}\label{eqn: GP covariance of B process}
\Cov\big(\bB(q_j)-\bB(q_{j-1})\big)=2\int_{q_{j-1}}^{q_j}\pi'(r)\ud r=2(\gamma_j-\gamma_{j-1}).
\end{equation}
If we introduce the function $\Phi:[0,1]\times \R^\kappa\to \R$ defined recursively by
\begin{equation}\label{eqn: GP positive temperature Parisi function}
\left\{\begin{aligned}
\Phi(1,\x)&=f_\lambda^\beta(\x),\\
\Phi(s,\x)&=\frac{1}{\beta \alpha_j}\log \E \exp \beta \alpha_j\Phi(q_{j+1},\x+ \bB(q_{j+1})- \bB(s)), \, s\in [q_j,q_{j+1}),
\end{aligned}\right.    
\end{equation}
then the independence of the increments of $\bB$ and \eqref{eqn: GP covariance of B process} imply that the Parisi functional \eqref{eqn: GP Parisi functional at positive temperature} may be written as
\begin{equation}
\Par_\beta(\lambda,\alpha,\pi)=\Phi(0,0)-\sum_{k\leq k'}\lambda_{k,k'}D_{k,k'}-\int_0^1 \beta \alpha(s)\Sum\big(\pi(s)\odot \pi'(s)\big)\ud s.
\end{equation}
We have made all dependencies of $\Phi$ on $D$, $\beta$, $\lambda$, $\alpha$, $\pi$, $p$ and $t$ implicit for clarity of notation, but we will make them explicit whenever necessary. To obtain the Auffinger-Chen representation, we first use Gaussian integration by parts (see for instance lemma 1.1 in \cite{PanSKB}) to show that $\Phi$ satisfies a non-linear parabolic PDE.

\begin{lemma}\label{GP positive temperature Parisi PDE}
If $2<p<\infty$ and $(s,\x)\in [0,1]\times \R^\kappa$, then 
\begin{equation}\label{eqn: GP positive temperature Parisi PDE}
\partial_s\Phi(s,\x)=-\Big(\big(\pi'(s),\nabla^2\Phi(s,\x)\big)+\beta \alpha(s)\big(\pi'(s)\nabla \Phi(s,\x),\nabla \Phi(s,\x)\big)\Big),
\end{equation}
where $\partial_s\Phi$ is understood as the right-derivative at the points of discontinuity of $\alpha$.
\end{lemma}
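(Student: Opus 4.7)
The plan is to work on a single interval $[q_j,q_{j+1})$ on which both $\pi'$ and $\alpha$ are constant (equal to $(\gamma_{j+1}-\gamma_j)/(q_{j+1}-q_j)$ and $\alpha_j$ respectively), and then assemble the result by noting that the boundary value at $s=q_j$ is interpreted as a right-limit. On such an interval, the increment $\bB(q_{j+1})-\bB(s)$ is a centred Gaussian vector with covariance $2(q_{j+1}-s)\pi'(s)$ by \eqref{eqn: GP covariance of B process}, so I would linearize the recursion \eqref{eqn: GP positive temperature Parisi function} by introducing
\begin{equation*}
\psi(s,\x)=\exp\bigl(\beta\alpha_j\,\Phi(s,\x)\bigr),
\end{equation*}
which according to \eqref{eqn: GP positive temperature Parisi function} satisfies the clean martingale representation $\psi(s,\x)=\E\psi(q_{j+1},\x+\bB(q_{j+1})-\bB(s))$ for $s\in[q_j,q_{j+1})$.

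I would then show that $\psi$ obeys the backward heat equation
\begin{equation*}
\partial_s\psi(s,\x)=-\bigl(\pi'(s),\nabla^2\psi(s,\x)\bigr),
\end{equation*}
where $(\cdot,\cdot)$ is the Hilbert--Schmidt pairing. The cleanest derivation is to write $\psi(s,\x)$ as an integral against the Gaussian density with covariance $2(q_{j+1}-s)\pi'(s)$ (constant in $s$ up to the scalar factor, since $\pi'$ is constant on the interval), compute the time-derivative of this Gaussian density explicitly, and then integrate by parts twice to move the spatial Laplacian-type operator off the density and onto $\psi(q_{j+1},\cdot)$; equivalently, one may invoke Gaussian integration by parts as in lemma~1.1 of \cite{PanSKB}. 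The growth bounds on the terminal condition $f_\lambda^\beta$ from \cref{GP positive temperature terminal growth bound}, propagated through the recursion, justify differentiation under the expectation and the integration by parts.

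Finally I would convert this linear PDE for $\psi$ into the non-linear PDE for $\Phi=(\beta\alpha_j)^{-1}\log\psi$ by a routine application of the chain rule:
\begin{align*}
\nabla\psi&=\beta\alpha_j\,\psi\,\nabla\Phi,\\
\nabla^2\psi&=\beta\alpha_j\,\psi\,\nabla^2\Phi+(\beta\alpha_j)^2\,\psi\,\nabla\Phi\,\nabla\Phi^T,
\end{align*}
so that using the symmetry of $\pi'(s)$ and the identity $\tr(\pi'(s)vv^T)=(\pi'(s)v,v)$ one obtains
\begin{equation*}
\frac{(\pi'(s),\nabla^2\psi)}{\psi}=\beta\alpha_j\bigl(\pi'(s),\nabla^2\Phi\bigr)+(\beta\alpha_j)^2\bigl(\pi'(s)\nabla\Phi,\nabla\Phi\bigr).
\end{equation*}
Dividing the heat equation for $\psi$ by $\beta\alpha_j\,\psi$ and using $\partial_s\Phi=\partial_s\psi/(\beta\alpha_j\,\psi)$ yields exactly \eqref{eqn: GP positive temperature Parisi PDE} on the open interval $(q_j,q_{j+1})$ with $\alpha(s)=\alpha_j$; taking right-limits at $s=q_j$ and repeating on each interval of the partition covers all of $[0,1]$ with the stated right-derivative convention at the atoms of $\alpha$.

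I expect the main obstacle to be the technical verification that $\Phi(s,\cdot)$ is twice continuously differentiable with derivatives that grow slowly enough to justify differentiating under the expectation and the two integrations by parts. This is where the growth bounds from \cref{GP positive temperature terminal growth bound} are essential: they must be combined with the smoothing effect of Gaussian convolution (together with standard dominated-convergence arguments) to transfer the regularity and growth control from the terminal condition $f_\lambda^\beta$ back to each $\Phi(s,\cdot)$. Everything else in the argument is essentially bookkeeping once this regularity is in hand.
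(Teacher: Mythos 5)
Your proposal is correct and follows essentially the same route as the paper: both restrict to an interval $[q_j,q_{j+1})$ on which $\alpha$ and $\pi'$ are constant, exploit the explicit Gaussian representation of the recursion \eqref{eqn: GP positive temperature Parisi function} there, and convert the time-derivative into second space-derivatives by Gaussian integration by parts. The only difference is organizational --- you linearize first via the Cole--Hopf substitution $\psi=\exp(\beta\alpha_j\Phi)$, derive the backward heat equation for $\psi$, and recover \eqref{eqn: GP positive temperature Parisi PDE} by the chain rule, whereas the paper differentiates the logarithmic expression for $\Phi$ directly using the representation $\Phi(s,\x)=\tfrac{1}{\beta\alpha_j}\log\E\exp\beta\alpha_j\Phi(q_{j+1},\x+v(s)g)$ with $v(s)$ a matrix square root (a representation you should prefer to the Gaussian density, which need not exist when $\gamma_{j+1}-\gamma_j$ is singular); the two computations are algebraically equivalent.
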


\begin{proof}
Introduce the process $\bY(s)=\x+\bB(q_{j+1})-\bB(s)$, and fix $s\in [q_j,q_{j+1})$. A direct computation shows that
$$\Phi_{x_l}(s,\x)=\E \Phi_{x_l}(q_{j+1},\bY(s))Z(s)$$
for the process $Z(s)=\exp \beta \alpha_j(\Phi(q_{j+1},\bY(s))-\Phi(s,\x))$. Differentiating again yields
\begin{align*}
\Phi_{x_lx_{l'}}(s,\x)&=\E\big( \Phi_{x_lx_{l'}}(q_{j+1}, \bY(s))+\beta \alpha_j\Phi_{x_l}(q_{j+1},\bY(s))\Phi_{x_{l'}}(q_{j+1},\bY(s))\big)Z(s)\\
&\quad-\beta \alpha_j\Phi_{x_l}(s,\x)\Phi_{x_{l'}}(s,\x).
\end{align*}
To compute the time derivative of $\Phi$, let $g$ be a standard Gaussian vector in $\R^\kappa$ and consider the function
$$v(s)=\sqrt{\frac{2(q_{j+1}-s)}{q_{j+1}-q_j}}(\gamma_{j+1}-\gamma_j)^{1/2}.$$
Since $\Phi(s,\x)=\frac{1}{\beta \alpha_j}\log \E \exp \beta \alpha_j\Phi(q_{j+1}, \x+v(s) g)$, the Gaussian integration by parts formula gives
\begin{align*}
\Phi_s(s,x)&=\sum_{l,l'=1}^\kappa v'(s)_{l,l'}\E g_{l'}\Phi_{x_l}(q_{j+1},\x+v(s)g)\exp\beta \alpha_j(\Phi(q_{j+1},\x+v(s)g)-\Phi(s,\x))\\
&=\sum_{l,l',i=1}^\kappa v'(s)_{l,l'}v(s)_{il'}\E\big(\Phi_{x_lx_{i}}(q_{j+1},\bY(s))\\
&\qquad\qquad\qquad\qquad\qquad\qquad+\beta\alpha_j\Phi_{x_l}(q_{j+1},\bY(s))\Phi_{x_{i}}(q_{j+1},\bY(s))\big)Z(s)\\
&=-\sum_{l,l'=1}^\kappa \frac{(\gamma_{j+1}-\gamma_j)_{l,l'}}{q_{j+1}-q_j} \E\big(\Phi_{x_lx_{l'}}(q_{j+1},\bY(s))\\
&\qquad\qquad\qquad\qquad\qquad\qquad+\beta\alpha_j\Phi_{x_l}(q_{j+1},\bY(s))\Phi_{x_{l'}}(q_{j+1},\bY(s))\big)Z(s)\\
&=-\Big(\big(\pi'(s),\nabla^2\Phi(s,\x)\big)+\beta \alpha_j \big(\pi'(s)\nabla\Phi(s,\x),\nabla\Phi(s,\x)\big)\Big).
\end{align*}
Remembering that $\alpha(s)=\alpha_j$ completes the proof.
\end{proof}

The Hamilton-Jacobi equation \eqref{eqn: GP positive temperature Parisi PDE} is the vector spin analogue of the Parisi PDE \cite{PanSKB}. We now use similar ideas to those in \cite{AuffingerChenREP, Jagannath, PanSKB} to obtain the vector spin analogue of the Auffinger-Chen representation from \eqref{eqn: GP positive temperature Parisi PDE}. To overcome the lack of Lipschitz continuity in the terminal condition \eqref{eqn: GP positive temperature terminal condition}, we will rely upon three classical results in stochastic analysis: the Ito formula, the Girsanov theorem and the Novikov condition \cite{LeGall,Shreve}. Given a filtration $\F=(\F_s)_{0\leq s\leq 1}$, it will be convenient to denote by $\A$ the class of admissible control processes,
\begin{align}
\A&=\Big\{\bv=(v_1,\ldots,v_\kappa)\mid \bv=(\bv(s))_{0\leq s\leq 1} \text{ is progressively measurable }\\
&\qquad\qquad\qquad\qquad\qquad \text{and }\E\int_0^1 \norm{\bv(s)}_2^2 \ud s<\infty\Big\}\notag
\end{align}

\begin{proposition}\label{GP positive temperature Auffinger-Chen}
If $2<p<\infty$, then there exists a probability space $(\O,\F_1,\P)$, a filtration $\F=(\F_s)_{0\leq s\leq 1}$, a Brownian motion $\bW=(\bW_s)_{0\leq s\leq 1}$ and a continuous adapted process $\bX=(\bX_s)_{0\leq s\leq 1}$ which together form a weak solution to the stochastic differential equation
\begin{equation}\label{eqn: GP Parisi SDE at positive temperature}
\ud \bX(s)=2\beta \alpha(s)\pi'(s)\nabla \Phi(s,\bX(s))\ud s+\sqrt{2}\pi'(s)^{1/2}\ud \bW(s),\,\,\,\bX(0)=0.
\end{equation}
Moreover,
\begin{align}\label{eqn: GP Auffinger-Chen positive temperature}
\Phi(0,0)&=\sup_{\bv\in \A}\E\Big[f_\lambda^\beta\Big(2\int_0^1 \beta \alpha(s)\pi'(s)\bv(s)\ud s +\bB(1)\Big)\\
&\qquad\quad\quad\,\,-\int_0^1 \beta \alpha(s)\big(\pi'(s)\bv(s),\bv(s)\big)\ud s\Big]\notag
\end{align}
with the supremum attained by the admissible process $\bv(s)=\nabla \Phi(s,\bX(s))$.
\end{proposition}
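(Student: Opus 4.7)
\medskip
\noindent\textbf{Proof plan.} The strategy is the classical one used for Parisi-type PDEs in the spin-glass literature \cite{AuffingerChenREP,Jagannath,WeiKuo2DPar}. For an arbitrary admissible control $\bv\in \A$, define the controlled process
\begin{equation*}
\bY^{\bv}(s)=2\int_0^s \beta\alpha(r)\pi'(r)\bv(r)\,\ud r+\bB(s),\quad 0\leq s\leq 1,
\end{equation*}
where $\bB$ is the process from \eqref{eqn: GP definition of B process}. Because $\alpha$ and $\pi'$ are piecewise constant on the partition $0=q_{-1}\leq q_0\leq\cdots\leq q_r=1$, on each open interval $(q_j,q_{j+1})$ the function $\Phi$ is $C^{1,2}$ in $(s,\x)$ and satisfies the Parisi PDE from \cref{GP positive temperature Parisi PDE}. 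I would apply Itô's formula to $\Phi(s,\bY^{\bv}(s))$ on each such interval, use that $\Phi$ is continuous across the breakpoints by construction, and invoke the PDE \eqref{eqn: GP positive temperature Parisi PDE} to cancel the term $\partial_s\Phi+(\pi'(s),\nabla^2\Phi)\,\ud s$. A direct computation then shows
\begin{equation*}
\ud\Phi(s,\bY^{\bv}(s))=\beta\alpha(s)\bigl[(\pi'(s)\bv,\bv)-(\pi'(s)(\nabla\Phi-\bv),\nabla\Phi-\bv)\bigr]\ud s+\bigl(\nabla\Phi,\sqrt{2}\pi'(s)^{1/2}\ud\bW(s)\bigr).
\end{equation*}

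Integrating from $0$ to $1$, using $\bY^{\bv}(1)=2\int_0^1\beta\alpha(s)\pi'(s)\bv(s)\ud s+\bB(1)$, and taking expectations (once the stochastic integral is shown to be a true martingale) yields
\begin{equation*}
\Phi(0,0)=\E\,f_\lambda^{\beta}(\bY^{\bv}(1))-\E\!\int_0^1\!\beta\alpha(s)(\pi'(s)\bv,\bv)\,\ud s+\E\!\int_0^1\!\beta\alpha(s)\bigl(\pi'(s)(\nabla\Phi-\bv),\nabla\Phi-\bv\bigr)\ud s.
\end{equation*}
Since $\pi'(s)\in\Gamma_\kappa$, the last term is non-negative, proving $\Phi(0,0)\geq $ the bracket in \eqref{eqn: GP Auffinger-Chen positive temperature}, and taking the sup over $\bv\in\A$ gives the ``$\geq$'' direction. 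For the reverse inequality, note that choosing $\bv^{\ast}(s)=\nabla\Phi(s,\bX(s))$, where $\bX$ is the weak solution of \eqref{eqn: GP Parisi SDE at positive temperature}, makes the correction term vanish identically because $\bY^{\bv^{\ast}}\stackrel{d}{=}\bX$ in that case, so $\Phi(0,0)$ equals the corresponding expectation and the supremum is attained.

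To make the above rigorous I would proceed in two preliminary steps. First, I would establish quantitative bounds (of the type collected in \cref{GP positive temperature terminal growth bound} of the appendix) showing that $|\nabla_{\x} f_\lambda^\beta(\x)|$ grows at most polynomially in $\|\x\|_2$: this follows because $\nabla f_\lambda^\beta(\x)$ is the mean of $\asigma$ under the Gibbs measure $\propto \exp\beta(\cdot)$ appearing in \eqref{eqn: GP positive temperature terminal condition}, and the super-polynomial decay $\exp(-t\beta\|\asigma\|_2^p)$ with $p>2$ localises $\asigma$ in an $\|\x\|_2^{1/(p-1)}$-neighbourhood by the same argument as in \cref{GP bound on argmax}. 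Iterating this estimate backward through the recursive definition \eqref{eqn: GP positive temperature Parisi function}, using the fact that a Gaussian log-moment-generating function preserves polynomial growth, yields pointwise bounds on $\Phi(s,\cdot)$ and $\nabla\Phi(s,\cdot)$ that grow at most polynomially uniformly in $s\in[0,1]$. Second, these bounds let me verify Novikov's condition for the exponential martingale
\begin{equation*}
\exp\Bigl(\sqrt{2}\int_0^1 \beta\alpha(r)(\pi'(r)^{1/2}\nabla\Phi(r,\bB(r)),\ud\bW(r))-\int_0^1 \beta^2\alpha^2(r)(\pi'(r)\nabla\Phi,\nabla\Phi)\ud r\Bigr),
\end{equation*}
since the integrand has at most polynomial growth along the Gaussian path $\bB$. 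Girsanov's theorem then produces a probability $\tilde\P$ under which $\bB$ becomes the required $\bX$ with drift $2\beta\alpha(s)\pi'(s)\nabla\Phi(s,\bX(s))$, establishing weak existence of a solution to \eqref{eqn: GP Parisi SDE at positive temperature}. The same polynomial bounds guarantee that the stochastic integral arising in the Itô computation above is a genuine martingale, so taking expectations is legitimate.

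The main obstacle is genuinely just this integrability bookkeeping: the terminal condition $f_\lambda^\beta$ is not Lipschitz and $\nabla\Phi$ is only controlled a posteriori through the recursion, so one must propagate growth estimates through each step $q_{l+1}\!\to q_l$ of \eqref{eqn: GP positive temperature Parisi function} and check that they survive the Gaussian convolution and the $\frac{1}{\beta\alpha_l}\log\E\exp(\beta\alpha_l\,\cdot)$ operation. Once those estimates are in place, the Itô/Girsanov argument sketched above goes through in a routine manner and delivers both assertions of the proposition simultaneously.
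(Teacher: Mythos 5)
Your proposal follows essentially the same route as the paper's proof: establish polynomial growth bounds on $\nabla\Phi$ by propagating the estimates of \cref{GP positive temperature terminal growth bound} through the recursion \eqref{eqn: GP positive temperature Parisi function}, verify Novikov's condition and apply Girsanov to get weak existence for \eqref{eqn: GP Parisi SDE at positive temperature}, then use It\^o's formula together with the Parisi PDE \eqref{eqn: GP positive temperature Parisi PDE} and the non-negative definiteness of $\pi'(s)$ to obtain the verification inequality, with equality for $\bv(s)=\nabla\Phi(s,\bX(s))$. The only point worth making explicit, which the paper handles via a Gronwall argument on $\E\sup_{s\le 1}\norm{\bX(s)}_2^2$, is that the optimal control is actually admissible, i.e.\ $\E\int_0^1\norm{\nabla\Phi(s,\bX(s))}_2^2\,\ud s<\infty$; this is implicit in your "integrability bookkeeping" but should be checked.
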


\begin{proof}
To alleviate notation, let $C>0$ denote a constant that depends only on $\kappa$, $\lambda$, $\alpha$, $\pi$, $\beta$, $D$, $p$ and $t$ whose value might not be the same at each occurrence. An induction based on \cref{GP positive temperature terminal growth bound} and \cref{GP growth bound for derivative persists} can be used to show that for any $\smash{(s,\x)\in (0,1]\times \R^\kappa}$,
\begin{equation}\label{eqn: GP positive temperature Parisi function growth bound}
\norm{\nabla\Phi(s,\x)}_2\leq C\Big(1+\norm{\x}_2^{\frac{1}{p-1}}\Big).
\end{equation}
With this in mind, consider the process $\bL=(\bL(s))_{0\leq s\leq 1}$,
$$\bL(s)=\sqrt{2}\int_0^s \beta \alpha(r)\pi'(r)^{1/2}\nabla \Phi(s,\bB(r))\ud r.$$
The growth bound \eqref{eqn: GP positive temperature Parisi function growth bound} and the assumption $\frac{1}{p-1}<1$ imply that
$$\E \exp \int_0^1 \norm{\bL(s)}_2^2\ud s\leq C\E \exp \big(C\sup_{0\leq s\leq 1}\norm{\bB(s)}_2^2\big)\leq C\E \exp C\big(\sup_{0\leq s\leq 1}\norm{\bW(s)}_2^2\big),$$
where the last inequality uses the fact that $\pi'$ is piecewise constant. Combining this with Doob's maximal inequality reveals that
$$\E \exp \int_0^1 \norm{\bL(s)}_2^2\ud s\leq C\E \sup_{0\leq s\leq 1}\exp C\norm{\bW(s)}_2^2\leq C\E \exp C\norm{\bW(1)}_2^2<\infty.$$
It follows by the Novikov condition that the stochastic exponential $\EE(\bL)$ is a martingale. If we denote by $\Q$ the measure under which $\bW$ is a Brownian motion and introduce the measure $\ud \P=\EE(\bL(1))\ud \Q$, then Girsanov's theorem implies that
$$\tilde{\bW}(s)=\bW(s)-\sqrt{2}\int_0^s \beta \alpha(r)\pi'(r)^{1/2}\nabla \Phi(s,\bB(r))\ud r$$
is a $\P$-Brownian motion. Rearranging shows that $\smash{(\bB,\tilde{\bW})}$ is a weak solution to \eqref{eqn: GP Parisi SDE at positive temperature}. Henceforth, we will write $\smash{\F=(\F_s)_{0\leq s\leq 1}}$, $\smash{\bW=(\bW_s)_{0\leq s\leq 1}}$ and $\smash{\bX=(\bX_s)_{0\leq s\leq 1}}$ for a filtration, a Brownian motion and a continuous adapted process which together form a weak solution to \eqref{eqn: GP Parisi SDE at positive temperature}. Given $\bv\in \A$, let $\bY(s)=2\int_0^s \beta \alpha(r)\pi'(r)\bv(r)\ud r+\bB(s)$. By Ito's formula and the Parisi PDE \eqref{eqn: GP positive temperature Parisi PDE},
\begin{align*}
\ud \Phi&=\Phi_s\ud s+2\beta \alpha(s)\big(\nabla \Phi,\pi'(s)\bv(s)\big)\ud s+\big(\nabla^2\Phi,\pi'(s)\big)\ud s+\sqrt{2}\big(\nabla \Phi, \pi(s)\ud \bW(s)\big)\\
&=-\beta\alpha(s)\Big(\big(\pi'(s)(\nabla \Phi-\bv(s)),\nabla \Phi-\bv(s)\big)-\big(\pi'(s)\bv(s),\bv(s)\big)\Big)\ud s\\
&\quad+\sqrt{2}\big(\nabla \Phi, \pi(s)\ud \bW(s)\big),
\end{align*}
where $\Phi$ and its derivatives are evaluated at $(s,\bY(s))$. The growth bound \eqref{eqn: GP positive temperature Parisi function growth bound}, the Cauchy-Schwarz inequality, the boundedness of $\pi'$ and the Ito isometry reveal that
$$\E \int_0^1 \norm{\nabla \Phi(s,\bY(s))}_{\infty}^2\ud s\leq C\big(1+\sup_{0\leq s\leq 1}\E \norm{\bY(s)}_2^2\big)\leq C\Big(1+\E \int_0^1 \norm{\bv(s)}_2^2\ud s\Big)<\infty.$$
This means that $(\sqrt{2}\int_0^s(\nabla \Phi, \pi(s)\ud \bW(s)))_{s\leq 1}$ is a martingale. Together with the non-negative definiteness of $\pi'$, this implies that
$$\E \Phi(1,\bY(1))-\Phi(0,0)\leq \int_0^1 \beta \alpha(s)\E \big(\pi'(s)\bv(s),\bv(s)\big)\ud s$$
with equality for the process $\bv(s)=\nabla \Phi(s,\bX(s))$. Rearranging gives the lower bound in \eqref{eqn: GP Auffinger-Chen positive temperature}. To prove the matching upper bound, it suffices to show that $\bv(s)=\nabla \Phi(s,\bX(s))$ belongs to the admissible class $\A$. Fix $0<s\leq r\leq 1$. By the triangle inequality, the Cauchy-Schwarz inequality and the growth bound \eqref{eqn: GP positive temperature Parisi function growth bound},
\begin{equation}\label{eqn: GP Gronwall bound on X}
\sup_{0\leq s\leq r}\norm{\bX(s)}^2_2\leq C\Big(1+\int_0^r \sup_{0\leq s\leq w}\norm{\bX(s)}_2^2\ud w+\sup_{0\leq s\leq r}\norm{\bB(s)}_2^2\Big).
\end{equation}
On the other hand, Doob's maximal inequality and the Ito isometry yield
\begin{equation}\label{eqn: GP Ito isometry applied to B}
\E \sup_{0\leq s\leq r}\norm{\bB(s)}_2^2\leq \E \norm{\bB(r)}_2^2\leq C \tr \int_0^r \pi'(w)\ud w\leq C\tr(D).
\end{equation}
Substituting this into \eqref{eqn: GP Gronwall bound on X} and applying Gronwall's inequality to the resulting bound shows that $\E \sup_{0\leq s\leq 1}\norm{\bX(s)}_2^2\leq C$. Invoking \eqref{eqn: GP positive temperature Parisi function growth bound} one last time completes the proof.
\end{proof}

Of course, an analogous result holds for the random variable $Y_0$ in \eqref{eqn: GP Parisi sequence at zero temperature}. Given a discrete measure $\zeta\in \mathcal{N}^d$ defined by the sequences \eqref{eqn: GP zero temperature qs} and \eqref{eqn: GP zero temperature zetas}, introduce the function $\Psi:[0,1]\times \R^\kappa\to\R$ defined recursively by
\begin{equation}\label{eqn: GP zero-temperature Parisi function}
\left\{\begin{aligned}
\Psi(1,\x)&=f_\lambda^\infty(\x),\\
\Psi(s,\x)&=\frac{1}{\zeta_j}\log \E \exp \zeta_j\Psi(q_{j+1},\x+ \bB(q_{j+1})- \bB(s)), \, s\in [q_j,q_{j+1}).
\end{aligned}\right.    
\end{equation}
The Parisi functional \eqref{eqn: GP Parisi functional at zero temperature} may be written as
\begin{equation}
\Par_\infty(\lambda,\zeta,\pi)=\Psi(0,0)-\sum_{k\leq k'}\lambda_{k,k'}D_{k,k'}-\int_0^1  \zeta(s)\Sum\big(\pi(s)\odot \pi'(s)\big)\ud s,
\end{equation}
and the Gaussian integration by parts formula can be used as in \cref{GP positive temperature Parisi PDE} to show that \eqref{eqn: GP zero-temperature Parisi function} satisfies the Parisi PDE,
\begin{equation}\label{eqn: GP zero-temperature Parisi PDE}
\partial_s\Psi(s,\x)=-\Big(\big(\pi'(s),\nabla^2\Psi(s,\x)\big)+ \zeta(s)\big(\pi'(s)\nabla \Psi(s,\x),\nabla \Psi(s,\x)\big)\Big),
\end{equation}
where $\partial_s\Psi$ is understood as the right derivative at the points of discontinuity of $\zeta$. An identical argument to that in \cref{GP positive temperature Auffinger-Chen} gives the following weak form of the Auffinger-Chen representation.

\begin{proposition}\label{GP zero-temperature Auffinger-Chen}
If $2<p<\infty$, then there exists a probability space $(\O,\F_1,\P)$, a filtration $\F=(\F_s)_{0\leq s\leq 1}$, a Brownian motion $\bW=(\bW_s)_{0\leq s\leq 1}$ and a continuous adapted process $\bX=(\bX_s)_{0\leq s\leq 1}$ which together form a weak solution to the stochastic differential equation
\begin{equation}\label{eqn: GP Parisi SDE at zero temperature}
\ud \bX(s)=2\zeta(s)\pi'(s)\nabla \Psi(s,\bX(s))\ud s+\sqrt{2}\pi'(s)\ud \bW(s),\,\,\, \bX(0)=0.
\end{equation}
Moreover,
\begin{align}\label{eqn: GP Auffinger-Chen zero temperature}
\Psi(0,0)&=\sup_{\bv\in \A}\E\Big[f_\lambda^\infty\Big(2\int_0^1 \zeta(s)\pi'(s)\bv(s)\ud s +\bB(1)\Big)\\
&\qquad\quad\quad\,\,-\int_0^1 \zeta(s)\big(\pi'(s)\bv(s),\bv(s)\big)\ud s\Big]\notag
\end{align}
with the supremum attained by the admissible process $\bv(s)=\nabla \Psi(s,\bX(s))$.
\end{proposition}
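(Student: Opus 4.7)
The proof will mirror the structure of \cref{GP positive temperature Auffinger-Chen}, replacing $\beta \alpha(s)$ with $\zeta(s)$ throughout and using the zero-temperature Parisi function $\Psi$ and its PDE \eqref{eqn: GP zero-temperature Parisi PDE} in place of $\Phi$ and \eqref{eqn: GP positive temperature Parisi PDE}. The plan is to first establish a growth bound of the form
\begin{equation*}
\norm{\nabla \Psi(s,\x)}_2 \leq C\Big(1 + \norm{\x}_2^{\frac{1}{p-1}}\Big) \qquad \text{for } (s,\x)\in (0,1]\times \R^\kappa,
\end{equation*}
where $C>0$ depends only on $\kappa,\lambda,\zeta,\pi,D,p$ and $t$. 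This will be proved by a finite induction on $j$ from $r$ down to $0$. The base case uses the growth bound on $\nabla f_\lambda^\infty$ supplied by \cref{GP zero temperature terminal growth bound}, while the inductive step uses the analogue of \cref{GP growth bound for derivative persists}; both apply verbatim, since the recursion \eqref{eqn: GP zero-temperature Parisi function} differs from \eqref{eqn: GP positive temperature Parisi function} only in the replacement of $\beta \alpha_j$ by $\zeta_j$ and the convolution with Gaussian increments automatically smooths $f_\lambda^\infty$ on intervals $s<1$.

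With this growth bound in hand, the next step will be to construct a weak solution of \eqref{eqn: GP Parisi SDE at zero temperature} via Girsanov's theorem. Define the process
\begin{equation*}
\bL(s) = \sqrt{2}\int_0^s \zeta(r)\pi'(r)^{1/2}\nabla \Psi(r,\bB(r))\ud r.
\end{equation*}
The growth bound together with Doob's maximal inequality yields $\E\exp \int_0^1 \norm{\bL(s)}_2^2\ud s<\infty$, exactly as in the positive temperature proof, so the Novikov condition applies. Under the measure $\ud \P=\EE(\bL(1))\ud \Q$, the process $\tilde\bW(s)=\bW(s)-\int_0^s \sqrt{2}\,\zeta(r)\pi'(r)^{1/2}\nabla \Psi(r,\bB(r))\ud r$ is a Brownian motion, and $(\bB,\tilde \bW)$ is a weak solution of \eqref{eqn: GP Parisi SDE at zero temperature}.

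For the variational representation \eqref{eqn: GP Auffinger-Chen zero temperature}, given any $\bv\in \A$ set $\bY(s)=2\int_0^s \zeta(r)\pi'(r)\bv(r)\ud r+\bB(s)$ and apply Ito's formula to $\Psi(s,\bY(s))$ on each interval of constancy of $\zeta$. Substituting the PDE \eqref{eqn: GP zero-temperature Parisi PDE} and completing the square leads to
\begin{equation*}
\ud \Psi = -\zeta(s)\Big(\big(\pi'(s)(\nabla \Psi-\bv(s)),\nabla \Psi-\bv(s)\big)-\big(\pi'(s)\bv(s),\bv(s)\big)\Big)\ud s + \sqrt{2}\big(\nabla \Psi,\pi'(s)^{1/2}\ud \bW(s)\big),
\end{equation*}
with $\Psi$ and its derivatives evaluated at $(s,\bY(s))$. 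Using the growth bound and the Ito isometry, the stochastic integral is a true martingale; taking expectation and using the non-negative definiteness of $\pi'(s)$ gives
\begin{equation*}
\E \Psi(1,\bY(1)) - \Psi(0,0) \leq \int_0^1 \zeta(s)\E\big(\pi'(s)\bv(s),\bv(s)\big)\ud s,
\end{equation*}
with equality when $\bv(s)=\nabla \Psi(s,\bX(s))$. Rearranging yields the lower bound in \eqref{eqn: GP Auffinger-Chen zero temperature}. The matching upper bound requires only checking that this optimal $\bv$ is admissible, which follows from a standard Gronwall argument applied to $\E \sup_{0\leq s\leq r}\norm{\bX(s)}_2^2$ using \eqref{eqn: GP Parisi SDE at zero temperature}, the growth bound on $\nabla \Psi$, and $\E\sup_{s\leq 1}\norm{\bB(s)}_2^2\leq C\tr(D)$, exactly as at the end of the proof of \cref{GP positive temperature Auffinger-Chen}.

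The only real subtlety will be applying Ito's formula to $\Psi$, which is only piecewise $C^{1,2}$ in $(s,\x)$ because $\zeta$ has jumps; this is handled by applying Ito separately on each interval $[q_{j-1},q_j]$ of constancy of $\zeta$, and summing, using the continuity of $\Psi$ in $s$ across the partition points. Since the positive temperature proof handled this step implicitly and nothing about the argument used that $\alpha_j<\infty$, the zero-temperature case poses no additional difficulty.
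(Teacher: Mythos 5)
Your proposal is correct and follows exactly the route the paper intends: the paper gives no separate proof here, simply asserting that "an identical argument to that in Proposition \ref{GP positive temperature Auffinger-Chen} gives the result," and your write-up is a faithful and accurate instantiation of that argument with $\beta\alpha$ replaced by $\zeta$ and $\Phi$ by $\Psi$, including the correct handling of the only genuine subtleties (the a.e.-differentiability of $f_\lambda^\infty$ being smoothed by the Gaussian convolution for $s<1$, and applying Ito piecewise across the atoms of $\zeta$).
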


We close this section with a moment bound on the weak solution to the stochastic differential equation \eqref{eqn: GP Parisi SDE at zero temperature} which will allow us to deal with the fact that \cref{GP zero temperature terminal condition bounded by positive temperature} only holds for bounded values of $\x$.

\begin{lemma}\label{GP moment bound on zero-temperature process}
If $(\bX(s))_{0\leq s\leq 1}$ is a weak solution to \eqref{eqn: GP Parisi SDE at zero temperature} and $\eta=\max(1+\frac{1}{p-1},\frac{2}{p-1})\in (1,2)$, then
\begin{equation}
\E\norm{\bX(1)}_2^2\leq C\Big(1+\norm{\zeta}_\infty(1+\norm{\lambda}_\infty)^{1+\frac{2}{p-2}}\Big)^{\frac{2}{2-\eta}}
\end{equation}
for some constant $C>0$ that depends only on $\kappa, p,t$ and $D$.
\end{lemma}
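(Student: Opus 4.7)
The strategy combines a growth bound for $\nabla\Psi$ with an It\^o-plus-Gronwall argument applied to $F(s)=\E\norm{\bX(s)}_2^2$; the exponent $2/(2-\eta)$ will emerge from a sublinear Gronwall comparison, since the growth bound will make the drift in \eqref{eqn: GP Parisi SDE at zero temperature} grow only like $\norm{\bX(s)}_2^{1/(p-1)}$ at infinity.

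The first step is to establish a pointwise estimate of the form
\begin{equation*}
\norm{\nabla\Psi(s,\x)}_2\le K_\lambda\bigl(1+\norm{\x}_2^{1/(p-1)}\bigr)\qquad \text{on } [0,1]\times\R^\kappa,
\end{equation*}
with $K_\lambda\le C(1+\norm{\lambda}_\infty)^{1+2/(p-2)}$, where $C$ depends only on $\kappa,p,t,\pi$ and $D$. At the terminal time $s=1$ this follows from the envelope theorem combined with \cref{GP bound on argmax}: since $\asigma^*_{\x,\lambda,t}$ attains the supremum in \eqref{eqn: GP zero-temperature terminal condition}, we have $\nabla f_\lambda^\infty(\x)=\asigma^*_{\x,\lambda,t}$, and \cref{GP bound on argmax} then yields $\norm{\nabla f_\lambda^\infty(\x)}_2\le C\bigl((1+\norm{\lambda}_\infty)^{1/(p-2)}+\norm{\x}_2^{1/(p-1)}\bigr)$. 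To propagate the bound backwards across the recursion \eqref{eqn: GP zero-temperature Parisi function} on each of the finitely many intervals $[q_j,q_{j+1})$, I would invoke \cref{GP growth bound for derivative persists}, carefully tracking how the $\norm{\lambda}_\infty$-factor compounds step by step so that the exponent lands on $1+2/(p-2)$.

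With that estimate in hand, apply It\^o's formula to $\norm{\bX(s)}_2^2$ via \eqref{eqn: GP Parisi SDE at zero temperature} and take expectations; the martingale contribution vanishes after a standard localisation argument using the growth bound for $\nabla\Psi$, and the diffusion piece is uniformly bounded by a constant depending only on $\pi$. Bounding $\abs{(\bX(r),\pi'(r)\nabla\Psi(r,\bX(r)))}$ by Cauchy--Schwarz, inserting the growth bound, using $\norm{\bX(r)}_2\le 1+\norm{\bX(r)}_2^\eta$ with $\eta=1+\tfrac{1}{p-1}$, and applying Jensen's inequality $\E\norm{\bX(r)}_2^\eta\le F(r)^{\eta/2}$ (valid since $\eta<2$) then produces an integral inequality
\begin{equation*}
F(s)\le A+B\int_0^s F(r)^{\eta/2}\,\ud r,\qquad s\in[0,1],
\end{equation*}
with $A,B\le C\bigl(1+\norm{\zeta}_\infty(1+\norm{\lambda}_\infty)^{1+2/(p-2)}\bigr)$.

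The conclusion is a sublinear Gronwall comparison. The ODE $G'=BG^{\eta/2}$ with $G(0)=A$ has explicit solution $G(s)=\bigl(A^{(2-\eta)/2}+\tfrac{1}{2}B(2-\eta)s\bigr)^{2/(2-\eta)}$, so by monotonicity of $r\mapsto r^{\eta/2}$ a standard comparison gives $F\le G$ on $[0,1]$. Evaluating at $s=1$ and using $(a+b)^{2/(2-\eta)}\le C\bigl(a^{2/(2-\eta)}+b^{2/(2-\eta)}\bigr)$ yields the claimed bound. The main obstacle is the first step: controlling the precise dependence on $(1+\norm{\lambda}_\infty)$ in the growth bound for $\nabla\Psi$, which must survive the backward iteration through the pieces of $\pi$ without degenerating. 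The It\^o computation, the Jensen estimate, and the sublinear Gronwall comparison are then routine.
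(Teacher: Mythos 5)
Your route is genuinely different from the paper's, and it has a gap at exactly the point you flag as the ``main obstacle.'' The paper never proves a pointwise growth bound for $\nabla\Psi$ on $[0,1]\times\R^\kappa$ with an explicit, $(\pi,\zeta)$-uniform and polynomially-$\lambda$-dependent constant, and for good reason: the only tool available for propagating such a bound backwards through \eqref{eqn: GP zero-temperature Parisi function} is \cref{GP growth bound for derivative persists}, whose output constant $C'$ depends on the input constant $C$ and on $m=\zeta_j$ through quantities of the form $\E\,\norm{g}_2^a\exp\bigl(mC'\norm{A(s)}_{\mathrm{HS}}^{a+1}\norm{g}_2^{a+1}\bigr)$. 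These are finite but grow super-polynomially in $mC$, and the loss compounds over the $r$ linear pieces of $\pi$, where $r$ is unbounded. Since the lemma's constant must depend only on $\kappa,p,t,D$ (not on $\pi$, and only through the displayed factor on $\zeta$ and $\lambda$), and since in the application one takes $\zeta^\beta=\beta\alpha^\beta$, $\norm{\lambda^\beta}_\infty\lesssim\beta$ and $\pi^\beta$ with arbitrarily many pieces, this uncontrolled dependence is fatal to the final argument, where polynomial growth in $\beta$ is essential for the choice $L=\beta^m$. Your terminal-time bound via \cref{GP bound on argmax} is fine, but ``carefully tracking'' the constant through the recursion is not a routine bookkeeping step --- it is the whole difficulty, and it is not resolved.

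The paper sidesteps the pointwise gradient bound entirely. Writing $\bX(1)=\int_0^1 2\zeta(s)\pi'(s)\bv(s)\,\ud s+\bB(1)$ with $\bv(s)=\nabla\Psi(s,\bX(s))$, it bounds the drift's contribution by Cauchy--Schwarz in the time variable, $\bigl\lVert\int_0^1\zeta\pi'\bv\bigr\rVert_2^2\leq C\norm{\zeta}_\infty\int_0^1\zeta(s)(\pi'(s)\bv(s),\bv(s))\,\ud s$, and then controls this aggregate quadratic cost by comparing the optimal control to the zero control in \cref{GP zero-temperature Auffinger-Chen}: $\int_0^1\zeta(s)\E(\pi'\bv,\bv)\,\ud s\leq\E f_\lambda^\infty(\bX(1))$. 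Combining with the fully explicit growth bound \eqref{eqn: GP zero temperature terminal condition explicit growth bound} on $f_\lambda^\infty$ (where the exponent $1+\frac{2}{p-2}$ actually originates) and Jensen gives the self-referential inequality $\E\norm{\bX(1)}_2^2\leq C\bigl(1+\norm{\zeta}_\infty(1+\norm{\lambda}_\infty)^{1+\frac{2}{p-2}}\bigr)(\E\norm{\bX(1)}_2^2)^{\eta/2}$, which rearranges to the claim with no Gronwall argument and no dependence on the number of pieces of $\pi$. If you want to salvage your approach, you would need to replace the appeal to \cref{GP growth bound for derivative persists} by this variational comparison, at which point you have essentially reproduced the paper's proof.
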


\begin{proof}
To alleviate notation, let $C>0$ denote a constant that depends only on $\kappa, p,t$ and $D$ whose value might not be the same at each occurrence. If $\E \norm{\bX(1)}_2^2<1$ the result is trivial, so assume without loss of generality that $\E\norm{\bX(1)}_2^2\geq 1$. Introduce the process $\bv(s)=\nabla\Psi(s,\bX(s))$ in such a way that
$$\bX(1)=\int_0^1 2\zeta(s)\pi'(s)\bv(s)\ud s+\bB(1).$$
The triangle inequality and \eqref{eqn: GP Ito isometry applied to B} reveal that
\begin{equation}\label{eqn: GP moment bound on zero-temperature process key}
\E\norm{\bX(1)}_2^2\leq C\Big(1+\E \Big\lVert \int_0^1 \zeta(s)\pi'(s)\bv(s)\ud s \Big\rVert^2_2\Big).
\end{equation}
With this in mind, fix $1\leq l \leq \kappa$. The Cauchy-Schwarz inequality implies that
\begin{align*}
\Big(\int_0^1 \zeta(s)(\pi'(s)\bv(s))_l\ud s\Big)^2&\leq C\sum_{k=1}^\kappa \Big(\int_0^1 \zeta(s)\pi'(s)^{1/2}_{lk}\sum_{i=1}^\kappa \pi'(s)^{1/2}_{ki}v_i(s)\ud s\Big)^2\\
&\leq C\sum_{k=1}^\kappa \int_0^1 \zeta(s)\pi'(s)_{lk}^{1/2}\pi'(s)_{lk}^{1/2}\ud s\\
&\qquad\qquad\int_0^1 \zeta(s)\Big(\sum_{i=1}^\kappa \pi'(s)_{ki}^{1/2}v_i(s)\Big)^2\ud s\\
&\leq C\norm{\zeta}_\infty \int_0^1 \zeta(s)\big(\pi'(s)\bv(s),\bv(s)\big)\ud s.
\end{align*}
Substituting this back into \eqref{eqn: GP moment bound on zero-temperature process key} yields
$$\E \norm{\bX(1)}_2^2\leq C\Big(1+\norm{\zeta}_\infty \int_0^1 \zeta(s)\E\big(\pi'(s)\bv(s),\bv(s)\big)\ud s\Big).$$
On the other hand, taking the zero process in \cref{GP zero-temperature Auffinger-Chen} shows that
$$\E\Big[f_\lambda^{\infty}(\bX(1))-\int_0^1\zeta(s) \big(\pi'(s)\bv(s),\bv(s)\big)\ud s\Big]=\Psi(0,0)\geq \E f_\lambda^{\infty}(\bB(1))\geq 0,$$
and therefore
\begin{equation}\label{eqn: GP L2 bound on Parisi process key}
\E\norm{\bX(1)}_2^2\leq C\big(1+\norm{\zeta}_\infty \E f_\lambda^\infty(\bX(1))\big).
\end{equation}
To bound this further, observe that by \eqref{eqn: GP zero temperature terminal condition explicit growth bound} in \cref{GP app1} and Jensen's inequality,
\begin{align*}
\E f_\lambda^\infty(\bX(1))&\leq C(1+\norm{\lambda}_\infty)^{1+\frac{2}{p-2}}\Big((\E \norm{\bX(1)}_2^2)^{\frac{1}{2}+\frac{1}{2(p-1)}}+(\E\norm{\bX(1)}_2^2)^{\frac{1}{2}}\\
&\qquad\qquad\qquad\qquad\quad\quad+(\E\norm{\bX(1)}_2^2)^{\frac{1}{p-1}}+1\Big)\\
&\leq C(1+\norm{\lambda}_\infty)^{1+\frac{2}{p-2}}(\E\norm{\bX(1)}_2^2)^{\eta/2},
\end{align*}
where we have used the assumption that $\E\norm{\bX(1)}_2^2\geq 1$. Substituting this back into \eqref{eqn: GP L2 bound on Parisi process key} and again using the fact that $\E\norm{\bX(1)}_2^2\geq 1$ gives
$$\E \norm{\bX(1)}_2^2\leq  C\Big(1+\norm{\zeta}_\infty (1+\norm{\lambda}_\infty)^{1+\frac{2}{p-2}}\Big)(\E\norm{\bX(1)}_2^2)^{\eta/2}.$$
Rearranging completes the proof.
\end{proof}

\section{Proof of the main result}\label{GP sec11}

In this section we finally prove \cref{GP main result}. The proof of the upper bound will follow section 12.2 of \cite{WeiKuo}. On the other hand, the proof of the lower bound will be considerably shorter and less involved than its one-dimensional analogue in \cite{WeiKuo}. In particular, it will leverage the results of \cref{GP sec9} to avoid the technicalities associated with truncating. Specializing our arguments to the scalar, $\kappa=1$, case gives a shorter and more direct proof of the main result in \cite{WeiKuo} when $2<p<\infty$.

\begin{lemma}\label{GP main result upper bound}
If $2<p<\infty$, $D\in \Gamma_\kappa$ and $t>0$, then
\begin{equation}
L_{p,D}(t)\leq \inf_{\lambda,\zeta,\pi}\Par_\infty(\lambda,\zeta,\pi),
\end{equation}
where the infimum is taken over all $(\lambda,\zeta,\pi)\in \R^{\kappa(\kappa+1)/2}\times \mathcal{N}^d\times \Pi_D$.
\end{lemma}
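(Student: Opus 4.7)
The plan is to chain together \cref{GP Lagrangian by free energy}, \cref{GP limit of free energy}, and a direct comparison of the finite-temperature Parisi functional $\Par_\beta$ with its zero-temperature counterpart $\Par_\infty$. By the upper bound in \eqref{eqn: GP Lagrangian by free energy}, $L_{p,D}(t)\leq \liminf_{\beta\to\infty}\lim_{N\to\infty}F_{N,\beta^{-1}}(\beta)$. Since $F_{N,\epsilon}(\beta)$ is monotone non-decreasing in $\epsilon$ (larger $\epsilon$ enlarges the integration domain $\Sigma_\epsilon(D)$), for every fixed $\epsilon>0$ we have $F_{N,\beta^{-1}}(\beta)\leq F_{N,\epsilon}(\beta)$ once $\beta>\epsilon^{-1}$. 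Sending $N\to\infty$, then $\epsilon\to 0$, and invoking \cref{GP limit of free energy} reduces matters to showing that
\begin{equation*}
\liminf_{\beta\to\infty}\inf_{\lambda,\alpha,\pi}\Par_\beta(\lambda,\alpha,\pi)\leq \inf_{\lambda,\zeta,\pi}\Par_\infty(\lambda,\zeta,\pi).
\end{equation*}

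For the comparison I would fix $(\lambda,\zeta,\pi)\in \R^{\kappa(\kappa+1)/2}\times\mathcal{N}^d\times\Pi_D$ with $\zeta$ identified to the sequences \eqref{eqn: GP zero temperature qs} and \eqref{eqn: GP zero temperature zetas}, and for each $\beta>\zeta_{r-1}$ construct a probability distribution $\alpha_\beta\in\M^d$ on the same partition by requiring its cumulative values to satisfy $\alpha_l=\zeta_l/\beta$ for $0\leq l\leq r-1$ and $\alpha_r=1$. With this choice, $\beta\alpha_\beta(s)=\zeta(s)$ at every $s\in[0,1)$, so the integral terms of $\Par_\beta(\lambda,\alpha_\beta,\pi)$ and $\Par_\infty(\lambda,\zeta,\pi)$ coincide. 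Moreover, direct inspection of \eqref{eqn: GP Parisi sequence at positive temperature} and \eqref{eqn: GP Parisi sequence at zero temperature} shows that the recursions computing $X_l^{\lambda,\alpha_\beta,\pi,\beta}$ and $Y_l^{\lambda,\zeta,\pi}$ apply precisely the same log-Laplace map $u\mapsto \zeta_l^{-1}\log\E_{z_{l+1}}\exp\zeta_l u$ at each level $0\leq l\leq r-1$, so the sole discrepancy lies in their terminal conditions $f^\beta_{\lambda,t}$ and $f^\infty_{\lambda,t}$.

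To control this discrepancy I would invoke \cref{GP positive temperature terminal condition bounded by zero temperature}, which provides for any $\delta\in(0,t)$ a pointwise bound $f^\beta_{\lambda,t}(\x)\leq f^\infty_{\lambda,t-\delta}(\x)+\varepsilon(\beta,\delta)$ with $\varepsilon(\beta,\delta)\to 0$ as $\beta\to\infty$. Because $u\mapsto c^{-1}\log\E\exp cu$ is monotone and commutes with additive constants, this bound propagates unchanged through each level of the recursion to yield $X_0^{\lambda,\alpha_\beta,\pi,\beta}\leq \tilde Y_0^{\lambda,\zeta,\pi,\delta}+\varepsilon(\beta,\delta)$, where $\tilde Y_0^{\lambda,\zeta,\pi,\delta}$ denotes the output of \eqref{eqn: GP Parisi sequence at zero temperature} started from the modified terminal $f^\infty_{\lambda,t-\delta}$. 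Substituting back into \eqref{eqn: GP Parisi functional at zero temperature} and using that its integral and Lagrange terms do not involve $t$ produces
\begin{equation*}
\Par_\beta(\lambda,\alpha_\beta,\pi)\leq \Par_\infty(\lambda,\zeta,\pi)+\bigl(\tilde Y_0^{\lambda,\zeta,\pi,\delta}-Y_0^{\lambda,\zeta,\pi}\bigr)+\varepsilon(\beta,\delta).
\end{equation*}
Sending $\beta\to\infty$ annihilates $\varepsilon(\beta,\delta)$, after which \cref{GP continuity in t of zero-temperature terminal condition} together with a dominated convergence argument lifted through \eqref{eqn: GP Parisi sequence at zero temperature} forces the bracketed term to $0$ as $\delta\to 0$; taking the infimum over $(\lambda,\zeta,\pi)$ then completes the proof.

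I expect the most delicate step to be the dominated convergence argument needed to transport the pointwise convergence $f^\infty_{\lambda,t-\delta}\to f^\infty_{\lambda,t}$ through the nested Gaussian expectations of \eqref{eqn: GP Parisi sequence at zero temperature}, since these terminal conditions are unbounded in $\x$. This is handled by combining the obvious monotonicity $f^\infty_{\lambda,t-\delta}\leq f^\infty_{\lambda,t/2}$ for $\delta\in(0,t/2)$ with the sub-quadratic growth bound on $f^\infty_{\lambda,t/2}$ recorded in \cref{GP zero temperature terminal growth bound}, which yields an integrable envelope against Gaussian measures at every level of the recursion.
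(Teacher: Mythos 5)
Your comparison of the two Parisi functionals — taking $\alpha^\beta=\beta^{-1}\zeta$ on $[0,1)$, matching the integral and Lagrange terms, propagating \cref{GP positive temperature terminal condition bounded by zero temperature} through the recursion by monotonicity of $u\mapsto c^{-1}\log\E e^{cu}$, and then removing the $\delta$-shift via \cref{GP continuity in t of zero-temperature terminal condition} and dominated convergence with the growth bound of \cref{GP zero temperature terminal growth bound} — is exactly the argument the paper uses, and it is correct.

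The gap is in your initial reduction. From $F_{N,\beta^{-1}}(\beta)\leq F_{N,\epsilon}(\beta)$ for $\beta>\epsilon^{-1}$ you can only conclude, for each fixed $\epsilon$, that $\liminf_{\beta}\lim_N F_{N,\beta^{-1}}(\beta)\leq \liminf_{\beta}\lim_N F_{N,\epsilon}(\beta)$; you then need to send $\epsilon\to 0$ on the right \emph{after} the $\liminf$ in $\beta$ has been taken. But \cref{GP limit of free energy} controls $\lim_{\epsilon\to 0}\lim_N F_{N,\epsilon}(\beta)$ only for fixed $\beta$, and since $F_{N,\epsilon}$ decreases as $\epsilon\downarrow 0$, for fixed $\epsilon$ one only knows $\lim_N F_{N,\epsilon}(\beta)\geq \inf_{\lambda,\alpha,\pi}\Par_\beta(\lambda,\alpha,\pi)$ — the wrong direction — with no uniformity in $\beta$ of the convergence as $\epsilon\to 0$. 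In general $\inf_\epsilon\liminf_\beta a_{\epsilon,\beta}\geq\liminf_\beta\inf_\epsilon a_{\epsilon,\beta}$, so the interchange you need is exactly the one that fails. What is required is a finite-$N$, finite-band \emph{upper} bound of the free energy by the Parisi functional, namely the Guerra replica-symmetry-breaking bound (lemma 2 of \cite{PanVec}), which gives $\lim_N F_{N,\beta^{-1}}(\beta)\leq \Par_\beta(\lambda,\alpha^\beta,\pi)+\beta^{-1}\norm{\lambda}_1+L\beta^{-1}$ directly for the shrinking band; the two-sided variational formula of \cref{GP limit of free energy} is not a substitute for it here. Once you replace your monotonicity reduction by this Guerra bound, the rest of your argument goes through verbatim.
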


\begin{proof}
By \cref{GP Lagrangian by free energy}, it suffices to show that
\begin{equation}\label{eqn: GP main result upper bound goal}
\liminf_{\beta\to \infty}\lim_{N\to \infty}F_{N,\beta^{-1}}(\beta)\leq \inf_{\lambda,\zeta,\pi}\Par_\infty(\lambda,\zeta,\pi).    
\end{equation}
Fix a Lagrange multiplier $\lambda\in \R^{\kappa(\kappa+1)/2}$, a piecewise linear path $\pi \in \Pi_D$ and a discrete measure $\zeta\in \mathcal{N}^d$ defined by the sequences \eqref{eqn: GP zero temperature measure qs} and \eqref{eqn: GP zero temperature zetas}. Given an inverse temperature parameter $\beta>0$, introduce the measure
$$\alpha^\beta(s)=\beta^{-1}\zeta(s)\1_{[0,1)}(s)+\1_{\{1\}}(s)$$
on $[0,1]$. It is clear that $\alpha^\beta\in \M^d$ for $\beta$ large enough. Moreover, $\smash{\alpha^\beta(\{q_j\})=\alpha^\beta_j-\alpha^\beta_{j-1}}$ for the sequence of parameters \eqref{eqn: GP positive temperature alphas} defined by $\smash{\alpha^\beta_j=\beta^{-1}\zeta_j}$. The Guerra replica symmetry breaking bound in lemma 2 of \cite{PanVec} implies that
\begin{equation}\label{eqn: GP Guerra RSB}
\lim_{N\to \infty}F_{N,\beta^{-1}}(\beta)\leq \Par_\beta(\lambda,\alpha^\beta,\pi)+\beta^{-1}\norm{\lambda}_1+L\beta^{-1}.
\end{equation}
for some constant $L>0$ independent of $\beta$. To bound this further, recall that by \cref{GP positive temperature terminal condition bounded by zero temperature},
$$f_{\lambda,t}^\beta(\x)\leq f^\infty_{\lambda,t-\delta}(\x)-\frac{\kappa \log \beta \delta}{p\beta}+\frac{1}{\beta}\log \int_{\R^\kappa} e^{-\norm{\asigma}_2^p}\ud \asigma$$
for any $\delta\in (0,t/2)$. If we make the dependence of the functions $\Phi$ and $\Psi$ in \eqref{eqn: GP positive temperature Parisi function} and \eqref{eqn: GP zero-temperature Parisi function} on the underlying measure and parameter $t>0$ explicit by writing $\Phi^{\alpha^\beta,t}$ and $\Psi^{\zeta,t}$, then a simple induction yields
$$\Phi^{\alpha^\beta,t}(0,0)\leq \Psi^{\zeta,t-\delta}(0,0)-\frac{\kappa \log \beta \delta}{p\beta}+\frac{1}{\beta}\log \int_{\R^\kappa} e^{-\norm{\asigma}_2^p}\ud \asigma.$$
It follows that
\begin{align}\label{eqn: GP positive temperature infimum bounded by zero temperature functional}
\Par_\beta(\lambda,\alpha^\beta,\pi)&\leq \Psi^{\zeta,t-\delta}(0,0)-\sum_{k\leq k'}\lambda_{k,k'}D_{k,k'}-\int_0^1  \zeta(s)\Sum\big(\pi(s)\odot \pi'(s)\big)\ud s\\
&\quad-\frac{\kappa \log \beta \delta}{p\beta}+\frac{1}{\beta}\log \int_{\R^\kappa} e^{-\norm{\asigma}_2^p}\ud \asigma.\notag
\end{align}
Substituting this into \eqref{eqn: GP Guerra RSB} and letting $\beta \to \infty$ gives
\begin{equation*}
\lim_{\beta\to\infty}\lim_{N\to \infty}F_{N,\beta^{-1}}(\beta)\leq \Psi^{\zeta,t-\delta}(0,0)-\sum_{k\leq k'}\lambda_{k,k'}D_{k,k'}-\int_0^1  \zeta(s)\Sum\big(\pi(s)\odot \pi'(s)\big)\ud s.
\end{equation*}
By \cref{GP continuity in t of zero-temperature terminal condition} and an induction that combines the dominated convergence theorem with \eqref{eqn: GP zero temperature terminal condition explicit growth bound} in \cref{GP app1}, it is readily verified that
$\smash{\lim_{\delta\to 0}\Psi^{\zeta,t-\delta}(0,0)=\Psi^{\zeta,t}(0,0)}$. Letting $\delta\to 0$ in the above inequality and taking the infimum over $\lambda,\zeta$ and $\pi$ establishes \eqref{eqn: GP main result upper bound goal} and completes the proof.
\end{proof}

The proof of the matching lower bound in \eqref{eqn: GP main result} requires more work. Given an inverse temperature parameter $\beta>0$, denote by $(\lambda^\beta,\alpha^\beta,\pi^\beta)$ a triple of almost minimizers defined by the condition
\begin{equation}\label{eqn: GP almost minimizer condition}
\Par_\beta(\lambda^\beta,\alpha^\beta,\pi^\beta)\leq \inf_{\lambda,\alpha,\pi}\Par_\beta(\lambda,\alpha,\pi)+\beta^{-1}.
\end{equation}
It will be convenient to control the magnitude of $\lambda^\beta$. It is at this point that we have to specialize the claim of \cref{GP main result} to positive definite matrices $D\in \Gamma_\kappa^+$. The author's inability to extend the following result to the space of non-negative definite matrices is the reason for proving the second equality in \eqref{eqn: GP limit of unconstrained Lagrangian} and extending \cref{GP sec4} beyond \cref{GP continuity of constrained Lagrangian}.

\begin{lemma}\label{GP bound on almost maximizer}
If $D\in \Gamma_\kappa^+$ and $(\lambda^\beta,\alpha^\beta,\pi^\beta)\in \R^{\kappa(\kappa+1)/2}\times \M^d\times \Pi_D$ satisfies \eqref{eqn: GP almost minimizer condition} for some $\beta>1$, then there exists a constant $C>0$ that depends only on $p,\kappa, D$ and $t$ with
\begin{equation}
\norm{\lambda^\beta}_\infty\leq C\beta.
\end{equation}
\end{lemma}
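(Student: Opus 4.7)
The plan is to derive a lower bound on $\Par_\beta(\lambda, \alpha, \pi)$ in terms of $\|\lambda\|_\infty$ and $\beta$ valid for all admissible triples, complemented by an upper bound $\inf \Par_\beta \leq C_0$ uniform in $\beta \geq 1$ obtained by evaluating $\Par_\beta$ at a specific simple triple. Together with the almost-minimizer hypothesis $\Par_\beta(\lambda^\beta, \alpha^\beta, \pi^\beta) \leq \inf \Par_\beta + \beta^{-1}$, these bounds will immediately force $\|\lambda^\beta\|_\infty \leq C\beta$.

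The lower bound proceeds by two applications of Jensen's inequality. First, iterating $\log \E e^Y \geq \E Y$ through the recursion \eqref{eqn: GP Parisi sequence at positive temperature}, and noting $\sqrt{2}\sum_{j=1}^r z_j \sim \mathcal{N}(0, 2D)$, gives $X_0^{\lambda,\alpha,\pi,\beta} \geq \E f_\lambda^\beta(g)$ with $g \sim \mathcal{N}(0, 2D)$. Second, for arbitrary $\asigma^* \in \R^\kappa$ and $\delta > 0$, restricting the defining integral of $f_\lambda^\beta(\x)$ in \eqref{eqn: GP positive temperature terminal condition} to the cube $\asigma^* + [-\delta, \delta]^\kappa$ and applying Jensen's inequality to pull $\log$ inside the resulting uniform average yields
\[
f_\lambda^\beta(\x) \geq (\asigma^*, \x) + \asigma^{*T}\Lambda \asigma^* + \frac{\delta^2}{3}\tr(\Lambda) - t(\|\asigma^*\|_2 + \delta\sqrt\kappa)^p + \frac{\kappa \log(2\delta)}{\beta},
\]
where $\Lambda$ is the symmetric $\kappa \times \kappa$ matrix with $\Lambda_{k,k} = \lambda_{k,k}$ and $\Lambda_{k,k'} = \lambda_{k,k'}/2$ for $k < k'$. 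For the integral term in $\Par_\beta$, the Loewner-monotonicity of $\pi$ together with $\pi(r), \pi(s) \geq 0$ gives $\|\pi(s)\|_{\mathrm{HS}}^2 - \|\pi(r)\|_{\mathrm{HS}}^2 = \tr\big((\pi(s) - \pi(r))(\pi(s) + \pi(r))\big) \geq 0$ for $s \geq r$; together with $\alpha_j \in [0, 1]$ and \eqref{eqn: GP Parisi functional integral term}, this yields $\int_0^1 \beta \alpha(s)\Sum(\pi \odot \pi')\ud s \leq \tfrac{\beta}{2}\|D\|_{\mathrm{HS}}^2$. Taking the Gaussian expectation over $g$, which kills the linear term $(\asigma^*, g)$, and assembling everything produces
\[
\Par_\beta(\lambda,\alpha,\pi) \geq \asigma^{*T}\Lambda\asigma^* - \tr(\Lambda D) + \frac{\delta^2}{3}\tr(\Lambda) - t(\|\asigma^*\|_2 + \delta\sqrt\kappa)^p + \frac{\kappa\log(2\delta)}{\beta} - \frac{\beta}{2}\|D\|_{\mathrm{HS}}^2.
\]

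The central step is a case split on the top eigenvalue $\mu_1$ of $\Lambda$. Set $N = \|\lambda\|_\infty$ and let $\mu_1 \geq \cdots \geq \mu_\kappa$ be the eigenvalues of $\Lambda$. The definition of $\Lambda$ and elementary norm equivalences give $\|\Lambda\|_{\mathrm{op}} = \max(\mu_1, -\mu_\kappa) \geq N/2$, $|\tr(\Lambda)| \leq \kappa^2 N$, and $|\tr(\Lambda D)| \leq \kappa N \tr(D)$. Fix the constants $c^* = \lambda_\kappa(D)/(4\tr(D))$, where $\lambda_\kappa(D) > 0$ is the smallest eigenvalue of $D$, and $A = (c^*/(2^{p+1} t))^{1/(p-2)}$. \emph{Case A:} if $\mu_1 \geq c^* N$, take $\asigma^* = A N^{1/(p-2)} v_1$ with $v_1$ the top unit eigenvector of $\Lambda$ and $\delta = 1$; for $N$ beyond a constant threshold (so that $A N^{1/(p-2)} \geq \sqrt\kappa$), the elementary inequality $(A N^{1/(p-2)} + \sqrt\kappa)^p \leq 2^p A^p N^{p/(p-2)}$ together with the choice of $A$ yields $\asigma^{*T}\Lambda\asigma^* - t(\|\asigma^*\| + \sqrt\kappa)^p \geq \tfrac{c^* A^2}{2} N^{p/(p-2)}$, so the display above produces $\Par_\beta \geq c_1 N^{p/(p-2)} - C\beta$. \emph{Case B:} if $\mu_1 < c^* N$, then necessarily $-\mu_\kappa \geq N/2$, and writing $\tr(\Lambda D) = \sum_k \mu_k (U^T D U)_{k,k}$ in the eigenbasis of $\Lambda$, with $(U^T D U)_{k,k} \in [\lambda_\kappa(D), \|D\|_{\mathrm{op}}]$ since $D \in \Gamma_\kappa^+$, one obtains $-\tr(\Lambda D) \geq \tfrac{\lambda_\kappa(D)}{2} N - c^* \tr(D) N = \tfrac{\lambda_\kappa(D)}{4} N$ by the choice of $c^*$. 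Taking $\asigma^* = 0$ and $\delta$ a small constant depending only on $\kappa, D$ (so that $\delta^2 \kappa^2 N / 3 \leq \tfrac{\lambda_\kappa(D)}{8} N$) then produces $\Par_\beta \geq \tfrac{\lambda_\kappa(D)}{8} N - C - \tfrac{\beta}{2}\|D\|_{\mathrm{HS}}^2$.

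For the upper bound on $\inf \Par_\beta$, evaluate $\Par_\beta$ at $\lambda = 0$, $\pi(s) = Ds$, and the two-atom discrete measure $\alpha$ given by $q_0 = 0, q_1 = 1, \alpha_0 = \beta^{-1}, \alpha_1 = 1$: the integral contribution reduces to $\tfrac{1}{2}\|D\|_{\mathrm{HS}}^2$, and $X_0 = \log \E \exp f_0^\beta(\sqrt 2 z)$ with $z \sim \mathcal{N}(0, D)$. A standard Laplace-type estimate gives $f_0^\beta(\x) \leq C \|\x\|_2^{p/(p-1)} + C$ uniformly in $\beta \geq 1$, and since $p/(p-1) < 2$ for $p > 2$, the sub-Gaussian tail of $\|z\|_2$ makes $\E \exp(C \|\sqrt 2 z\|_2^{p/(p-1)})$ finite, so $X_0$ is bounded by a constant $C_0 = C_0(\kappa, p, t, D)$ uniformly in $\beta \geq 1$. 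Thus $\inf \Par_\beta \leq C_0$, the almost-minimizer condition gives $\Par_\beta(\lambda^\beta, \alpha^\beta, \pi^\beta) \leq C_0 + 1$, and combining with the two cases of the lower bound yields $N \leq C \beta^{(p-2)/p} \leq C\beta$ in Case A and $N \leq C\beta$ directly in Case B. The main obstacle is the mixed-sign situation for $\Lambda$: the contributions $\asigma^{*T}\Lambda\asigma^*$ and $-\tr(\Lambda D)$ favor choosing $\asigma^*$ aligned with different eigenvectors (the top positive and the bottom negative, respectively), and the case split on $\mu_1$ routes the analysis to whichever of $\mu_1$ or $|\mu_\kappa|$ carries the operator norm of $\Lambda$. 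The positive definiteness $D \in \Gamma_\kappa^+$ enters essentially in Case B through $\lambda_\kappa(D) > 0$, which is precisely why the lemma is restricted to $\Gamma_\kappa^+$.
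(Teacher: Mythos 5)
Your proof is correct, but it reaches the key coercivity estimate by a genuinely different route than the paper. The shared skeleton is the same: both arguments lower-bound $X_0^{\lambda,\alpha,\pi,\beta}$ by an expectation of the terminal condition via Jensen's inequality (you iterate the discrete recursion directly, which avoids any appeal to convexity of $f_\lambda^\beta$; the paper takes the zero control in the Auffinger--Chen representation and then uses convexity to descend to $f_\lambda^\beta(0)$), both bound the integral term by $\tfrac{\beta}{2}\norm{D}_{\mathrm{HS}}^2$, and both pair the resulting lower bound with a $\beta$-uniform upper bound on the almost-minimal value (you via an explicit test triple with $\alpha_0=\beta^{-1}$, the paper via \eqref{eqn: GP positive temperature infimum bounded by zero temperature functional}). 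The divergence is in how $\norm{\lambda^\beta}_\infty$ is extracted. The paper inserts a Gaussian test measure with covariance $D'=D+\epsilon\,\sgn\Lambda^\beta$ into the integral defining $f^\beta_{\lambda^\beta}(0)$, so that the identity $\sum_{k\leq k'}\lambda^\beta_{k,k'}(D'_{k,k'}-D_{k,k'})=\epsilon\sum_{k\leq k'}\abs{\lambda^\beta_{k,k'}}$ produces the $\ell^1$-norm of $\lambda^\beta$ in a single stroke, positive definiteness of $D$ entering only through \cref{positive definite perturbation} to keep $D'$ in $\Gamma_\kappa^+$. You instead run a spectral case split on the symmetrized matrix $\Lambda$: when the top eigenvalue $\mu_1$ is comparable to $N=\norm{\lambda}_\infty$ you defeat the $-t\norm{\cdot}_2^p$ penalty by placing the test point at scale $N^{1/(p-2)}$ along the top eigenvector (yielding the superlinear gain $N^{p/(p-2)}$), and otherwise the bottom eigenvalue carries the operator norm and the term $-\tr(\Lambda D)\geq\tfrac{\lambda_\kappa(D)}{4}N$ does the work --- which is where $D\in\Gamma_\kappa^+$ enters for you. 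Your route is longer and requires balancing the constants $c^*$ and $A$, but it is self-contained (no sign-matrix perturbation lemma needed) and makes transparent exactly why a large negative-definite part of $\Lambda$ cannot be compensated unless $\lambda_\kappa(D)>0$; the paper's choice is shorter and case-free. Both arguments are sound and yield the claimed bound $\norm{\lambda^\beta}_\infty\leq C\beta$.
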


\begin{proof}
To alleviate notation, let $C>0$ denote a constant that depends only on $p,\kappa,D$ and $t$ whose value might not be the same at each occurrence. For each pair $k>k'$, let $\smash{\lambda^\beta_{k,k'}=\lambda^\beta_{k',k}}$. Consider the $\kappa\times \kappa$ symmetric matrix $\smash{\Lambda^\beta=(\lambda^\beta_{kk'})}$ as well as the $\kappa \times \kappa$ symmetric matrix $\smash{\sgn\Lambda^\beta=(\sgn \lambda^\beta_{kk'})}$ containing its signs. We adopt the convention that $\sgn(0)=0$. Since $D$ is positive definite and $\norm{\sgn\Lambda^\beta}_\infty\leq 1$, using \cref{positive definite perturbation} it is possible to find $\epsilon>0$ small enough that depends only on $D$ and $\kappa$ such that
$$D'=D+\epsilon \sgn \Lambda^\beta$$
is positive definite. Introduce the Gaussian measure
$$\ud \mu(\asigma)=\Big(\frac{1}{\sqrt{2\pi}}\Big)^\kappa \frac{1}{\sqrt{\det D'}}\exp \Big(-\frac{1}{2}\asigma^TD'^{-1}\asigma\Big)\ud \asigma$$
associated with a centred Gaussian random vector $X$ having covariance matrix $D'$. Denote by $\Phi$ the function \eqref{eqn: GP positive temperature Parisi function} corresponding to the parameters $\lambda^\beta,\alpha^\beta,\pi^\beta$ and $\beta>0$. Since the terminal condition \eqref{eqn: GP positive temperature terminal condition} is convex, taking the zero process in \cref{GP positive temperature Auffinger-Chen} and invoking Jensen's inequality shows that
$$\Phi(0,0)\geq \E f_{\lambda^\beta}^\beta (\bB(1))\geq f_{\lambda^\beta}^\beta(\E \bB(1))=f_{\lambda^\beta}^\beta(0).$$
Another application of Jensen's inequality gives
\begin{align*}
\Phi(0,0)&\geq \frac{\log \sqrt{\det D'}}{\beta}+\frac{1}{\beta}\log \int_{\R^\kappa} \exp\beta \Big(\sum_{k\leq k'}\lambda_{k,k'}^\beta\sigma(k)\sigma(k')-t\norm{\asigma}_2^p\Big)\ud \mu(\asigma)\\
&\geq \frac{\log \sqrt{\det D'}}{\beta}+\sum_{k\leq k'}\lambda_{k,k'}^\beta\E X_kX_{k'}-t\E\norm{X}_2^p.
\end{align*}
It follows by definition of the Parisi functional in \eqref{eqn: GP Parisi functional at positive temperature preliminary} and the equality $D_{k,k'}'-D_{k,k'}=\epsilon \sgn \lambda_{k,k'}^\beta$ that
\begin{align*}
\Par_\beta(\lambda^\beta,\alpha^\beta,\pi^\beta)&\geq \epsilon \sum_{k\leq k'}\abs{\lambda_{k,k'}^\beta}-t\E\norm{X}_2^p+\frac{\log \sqrt{\det D'}}{\beta}\\
&\quad-\frac{\beta}{2}\sum_{0\leq j\leq r-1}\alpha_j^\beta\big(\norm{\gamma_{j+1}^\beta}_{\text{HS}}^2-\norm{\gamma_{j}^\beta}_{\text{HS}}^2\big).
\end{align*}
To bound this further, observe that
$$\sum_{0\leq j\leq r-1}\alpha_j^\beta\big(\norm{\gamma_{j+1}^\beta}_{\text{HS}}^2-\norm{\gamma_{j}^\beta}_{\text{HS}}^2\big)=\alpha_{r-1}^\beta \norm{\gamma_r^\beta}_{\text{HS}}^2-\sum_{j=1}^r (\alpha_j^\beta-\alpha_{j-1}^\beta)\norm{\gamma_j^\beta}_{\text{HS}}^2\leq \norm{D}_{\text{HS}}^2,$$
where the last inequality uses \cref{HS norm preserves order}, and denote by $\Psi^{\lambda,\zeta,\pi,t}$ the function \eqref{eqn: GP zero-temperature Parisi function} associated with the Lagrange multiplier $\lambda\in \R^{\kappa(\kappa+1)/2}$, the discrete measure $\zeta \in \mathcal{N}^d$, the piecewise linear path $\pi \in \Pi_D$ and the parameter $t>0$. By \eqref{eqn: GP almost minimizer condition} and \eqref{eqn: GP positive temperature infimum bounded by zero temperature functional},
\begin{align*}
\Par_\beta(\lambda^\beta,\alpha^\beta,\pi^\beta)&\leq 1-\frac{\kappa \log \beta t/2}{p\beta}+\frac{1}{\beta}\log \int_{\R^\kappa} e^{-\norm{\asigma}_2^p}\ud \asigma+\inf_{\lambda,\zeta,\pi}\Big(\Psi^{\lambda,\zeta,\pi,t/2}(0,0)\\
&\quad-\sum_{k\leq k'}\lambda_{k,k'}D_{k,k'}-\int_0^1  \zeta(s)\Sum\big(\pi(s)\odot \pi'(s)\big)\ud s\Big).
\end{align*}
Combining these three bounds and rearranging yields
\begin{equation}\label{eqn: GP bound on almost maximizer key}
\epsilon \sum_{k\leq k'}\abs{\lambda_{k,k'}^\beta}\leq C\beta+t\E\norm{X}_2^p-\frac{\log \sqrt{\det D'}}{\beta}.
\end{equation}
Notice that the matrix $D'$ depends only on $D,\kappa$ and $\sgn \Lambda^\beta$. Since there are only finitely many choices for the matrix $\sgn \Lambda^\beta$ and $\beta^{-1}\leq 1$, we can absorb the term $\beta^{-1}\log \sqrt{\det D'}$ into the constant $C>0$. To deal with the term involving the random vector $X$, let $M\in \R^{\kappa\times \kappa}$ be a positive definite matrix with $M^TM=D'$. If $g$ is a standard Gaussian vector in $\R^\kappa$, then the Cauchy-Schwarz inequality implies that
$$\E\norm{X}_2^p=\E\norm{Mg}_2^p\leq \norm{M}_{\text{HS}}^p\E\norm{g}_2^p=\tr(D')^{p/2}\E\norm{g}_2^p\leq (\tr(D)+\epsilon)^{p/2}\E\norm{g}_2^p.$$
Substituting this into \eqref{eqn: GP bound on almost maximizer key} completes the proof.
\end{proof}

\begin{proof}[Proof (\Cref{GP main result}).]
To alleviate notation, let $C>0$ denote a constant that depends only on $p,\kappa,D$ and $t$ whose value might not be the same at each occurrence. By \cref{GP Lagrangian by free energy}, \cref{GP main result upper bound}, \cref{GP limit of free energy} and the choice of the minimizing sequence $(\lambda^\beta,\alpha^\beta,\pi^\beta)$ satisfying \eqref{eqn: GP almost minimizer condition}, it suffices to show that
\begin{equation}\label{eqn: GP main result lower bound goal}
\inf_{\lambda,\zeta,\pi}\Par_\infty(\lambda,\zeta,\pi)\leq \limsup_{\beta\to \infty}\Par_\beta(\lambda^\beta,\alpha^\beta,\pi^\beta).
\end{equation}
Fix $\beta>1$, $L>0$ and $0<\delta<\min(L^{1/(p-1)},1/2)$. Consider the measure $\zeta^\beta=\beta\alpha^\beta\in \mathcal{N}^d$, and denote by $\Psi$ the function \eqref{eqn: GP zero-temperature Parisi function} associated with the Lagrange multiplier $\lambda^\beta$, the measure $\zeta^\beta$ and the path $\pi^\beta$. Let $\bX^\beta$ be a weak solution to the stochastic differential equation \eqref{eqn: GP Parisi SDE at zero temperature}, and write $\bv^\beta(s)=\nabla \Psi(s,\bX^\beta(s))$ for its corresponding optimal control process. Consider the set on which $\bX^\beta(1)$ lies within the ball of radius $L>0$ around the origin,
$$B=\big\{\norm{\bX^\beta(1)}_2\leq L\big\},$$
and notice that by \cref{GP zero temperature terminal condition bounded by positive temperature},
\begin{equation}\label{eqn: GP upper bound preliminary around origin}
\E f_{\lambda^\beta}^\infty(\bX^\beta(1))\1_B\leq \E f_{\lambda^\beta}^\beta(\bX^\beta(1))\1_B+C\delta^2\Big(\norm{\lambda^\beta}_\infty+L^{\frac{p-2}{p-1}}\Big)-\frac{\kappa \log 2\delta}{\beta}.
\end{equation}
To bound this further, observe that by \eqref{eqn: GP Jensen bound for positive temperature terminal condition} and symmetry,
\begin{align*}
f_{\lambda^\beta}^\beta(\x)&\geq \frac{1}{\abs{\CC_\delta(0)}}\int_{\CC_\delta(0)}\Big(\big(\asigma,\x\big)+\sum_{k\leq k'}\lambda^\beta_{k,k'}\sigma(k)\sigma(k')-t\norm{\asigma}_2^p\Big)\ud \asigma+\frac{\kappa \log 2\delta}{\beta}\\
&\geq \sum_{k=1}^\kappa \frac{\delta^2\lambda^\beta_{k,k}}{3}-\frac{t}{(2\delta)^\kappa}\int_{[-\delta,\delta]^\kappa}\norm{\asigma}_2^p\ud \asigma+\frac{\kappa \log 2\delta}{\beta}.
\end{align*}
It follows that
\begin{align*}
\E f_{\lambda^\beta}^\beta(\bX^\beta(1))\1_B&\leq \E\Big(f_{\lambda^\beta}^\beta(\bX^\beta(1))-\sum_{k=1}^\kappa \frac{\delta^2\lambda_{k,k}
^\beta}{3}+\frac{t}{(2\delta)^\kappa}\int_{[-\delta,\delta]^\kappa}\norm{\asigma}_2^p\ud \asigma\\
&\quad-\frac{\kappa \log 2\delta}{\beta}\Big)\1_B+\kappa \delta^2\norm{\lambda^\beta}_\infty+\frac{\kappa \abs{\log 2\delta}}{\beta}\\
&\leq \E f_{\lambda^\beta}^\beta(\bX^\beta(1))+C\delta^2\norm{\lambda^\beta}_\infty+\frac{t}{(2\delta)^\kappa}\int_{[-\delta,\delta]^\kappa} \norm{\asigma}_2^p\ud \asigma - \frac{2\kappa \log 2\delta}{\beta}.
\end{align*}
Together with \eqref{eqn: GP upper bound preliminary around origin} and \cref{GP bound on almost maximizer}, this implies that
\begin{align}\label{eqn: GP upper bound around origin}
\E f_{\lambda^\beta}^\infty(\bX^\beta(1))\1_B&\leq \E f_{\lambda^\beta}^\beta(\bX^\beta(1))+C\delta^2\Big(\beta+L^{\frac{p-2}{p-1}}\Big)\\
&\quad+\frac{t}{(2\delta)^\kappa}\int_{[-\delta,\delta]^\kappa}\norm{\asigma}_2^p\ud \asigma-\frac{3\kappa \log 2\delta}{\beta}. \notag 
\end{align}
On the other hand, if $\eta=\max(1+\frac{1}{p-1},\frac{2}{p-1})\in (1,2)$ as in \cref{GP moment bound on zero-temperature process},
then Hölder's inequality and Chebyshev's inequality give
\begin{align}\label{eqn: GP upper bound away from origin}
\E f_{\lambda^\beta}^\infty(\bX^\beta(1))\1_{B^c}&\leq (\E\abs{ f_{\lambda^\beta}^\infty(\bX^\beta(1))}^{2/\eta})^{\eta/2}\P(B^c)^{1-\eta/2}\notag\\
&\leq \frac{1}{L^{2-\eta}}(\E\abs{ f_{\lambda^\beta}^\infty(\bX^\beta(1))}^{2/\eta})^{\eta/2} (\E \norm{\bX^\beta(1)}_2^2)^{1-\eta/2}.
\end{align}
Remembering that $\zeta^\beta=\beta\alpha^\beta$ and invoking \cref{GP moment bound on zero-temperature process} as well as \cref{GP bound on almost maximizer} shows that
\begin{equation}\label{eqn: GP upper bound away from origin first}
\E\norm{\bX^\beta(1)}_2^2\leq C\Big(1+\norm{\zeta^\beta}_\infty(1+\norm{\lambda^\beta}_\infty)^{1+\frac{2}{p-2}}\Big)^{\frac{2}{2-\eta}}\leq C\beta^{K},
\end{equation}
where $K>0$ is a constant that depends only on $p$ whose value might not be the same at each occurrence. Leveraging \eqref{eqn: GP zero temperature terminal condition explicit growth bound} in \cref{GP app1} and \cref{GP bound on almost maximizer} yields
\begin{align}\label{eqn: GP upper bound away from origin second}
\E\abs{f_{\lambda^\beta}^\infty(\bX^\beta(1))}^{2/\eta}&\leq C\Big(\E \norm{\bX^\beta(1)}_2^2+\norm{\lambda^\beta}_\infty^{K}\E\norm{\bX^\beta(1)}_2^2\notag\\
&\qquad\quad+\norm{\lambda^\beta}_\infty^{K}\E\norm{\bX^\beta(1)}_2^2+\norm{\lambda^\beta}_\infty^{K}\Big)\leq C\beta^{K}.
\end{align}
Combining the bound resulting from substituting \eqref{eqn: GP upper bound away from origin first} and \eqref{eqn: GP upper bound away from origin second} into \eqref{eqn: GP upper bound away from origin} with  \eqref{eqn: GP upper bound around origin} reveals that
\begin{align*}
\E f_{\lambda^\beta}^\infty(\bX^\beta(1))&\leq \E f_{\lambda^\beta}^\beta(\bX^\beta(1))+C\delta^2\Big(\beta+L^{\frac{p-2}{p-1}}\Big)+\frac{C\beta^{K}}{L^{2-\eta}}\\
&\quad+\frac{t}{(2\delta)^\kappa}\int_{[-\delta,\delta]^\kappa}\norm{\asigma}_2^p\ud \asigma-\frac{3\kappa\log 2\delta}{\beta}. 
\end{align*}
If we write $\Phi$ for the function \eqref{eqn: GP positive temperature Parisi function} associated with the inverse temperature $\beta>0$, the Lagrange multiplier $\lambda^\beta$, the measure $\alpha^\beta$ and the path $\pi^\beta$, then \cref{GP positive temperature Auffinger-Chen} and \cref{GP zero-temperature Auffinger-Chen} imply that
\begin{align*}
\Psi(0,0)&=\E\Big[f_{\lambda^\beta}^\infty(\bX^\beta(1))-\int_0^1 \zeta^\beta(s)\big((\pi^\beta)'(s)\bv^\beta(s),\bv^\beta(s)\big)\ud s\Big]\\
&\leq\Phi(0,0)+C\delta^2\Big(\beta+L^{\frac{p-2}{p-1}}\Big)+\frac{C\beta^{K}}{L^{2-\eta}}+\frac{t}{(2\delta)^\kappa}\int_{[-\delta,\delta]^\kappa}\norm{\asigma}_2^p\ud \asigma-\frac{3\kappa\log 2\delta}{\beta}.
\end{align*}
It follows that
\begin{align*}
\inf_{\lambda,\zeta,\pi}\Par_\infty(\lambda,\zeta,\pi)\leq \Par_\infty(\lambda^\beta,\zeta^\beta,\pi^\beta)&\leq \Par_{\beta}(\lambda^\beta,\alpha^\beta,\pi^\beta)+C\delta^2\Big(\beta+L^{\frac{p-2}{p-1}}\Big)+\frac{C\beta^{K}}{L^{2-\eta}}\\
&\quad+\frac{t}{(2\delta)^\kappa}\int_{[-\delta,\delta]^\kappa}\norm{\asigma}_2^p\ud \asigma-\frac{3\kappa\log 2\delta}{\beta}.
\end{align*}
Taking $L=\beta^m$ and $\delta=\frac{1}{\beta^{(m+1)/2}}$ for $m=\frac{K+1}{2-\eta}$ and letting $\beta\to \infty$ establishes \eqref{eqn: GP main result lower bound goal} and completes the proof.
\end{proof}

\renewcommand{\theequation}{A.\arabic{equation}}
\setcounter{equation}{0}

\begin{appendix}
\section{Terminal conditions growth bounds}\label{GP app1}

In this appendix we include the technical bounds on the terminal conditions $f_\lambda^\beta$ and $f_\lambda^\infty$ in \eqref{eqn: GP positive temperature terminal condition} and \eqref{eqn: GP zero-temperature terminal condition} that make it possible to define \eqref{eqn: GP Parisi sequence at zero temperature} and \eqref{eqn: GP Parisi sequence at positive temperature}. We also show that these bounds are preserved by the iterative procedure used to define $X_0$ and $Y_0$ in \eqref{eqn: GP Parisi sequence at positive temperature} and \eqref{eqn: GP Parisi sequence at zero temperature}, which plays an instrumental role in the proofs of \cref{GP positive temperature Auffinger-Chen} and \cref{GP zero-temperature Auffinger-Chen}.

\begin{lemma}\label{GP positive temperature terminal growth bound}
If $2<p<\infty$, $\beta>0$, $\x\in \R^\kappa$, $\lambda \in \R^{\kappa(\kappa+1)/2}$ and $t>0$, then
\begin{align}
\abs{f_\lambda^\beta(\x)}&\leq C\Big(1+\norm{\x}_2^{1+\frac{1}{p-1}}\Big),\label{eqn: GP positive temperature terminal growth bound}\\
\norm{\nabla f_\lambda^\beta(\x)}_\infty&\leq C\Big(1+\norm{\x}_2^{\frac{1}{p-1}}\Big)\label{eqn: GP positive temperature terminal derivative growth bound}
\end{align}
for some constant $C>0$ that depends only on $\beta,\kappa,\lambda,p$ and $t$.
\end{lemma}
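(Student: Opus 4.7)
The function $f_\lambda^\beta$ is (up to the prefactor $1/\beta$) the log-partition function of the Gibbs measure on $\R^\kappa$ whose density is proportional to $\exp\beta h(\asigma)$, where
\begin{equation*}
h(\asigma) = \big(\asigma, \x\big) + \sum_{k \leq k'}\lambda_{k,k'}\sigma(k)\sigma(k') - t\norm{\asigma}_2^p.
\end{equation*}
The plan is to treat the two inequalities separately; in both cases the super-quadratic confinement from $p > 2$ is the key feature that drives the estimate.

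For the upper bound on $|f_\lambda^\beta|$, I would first apply Cauchy-Schwarz to $(\asigma, \x)$ and the trivial bound $\abs{\sum_{k \leq k'}\lambda_{k,k'}\sigma(k)\sigma(k')} \leq C_\kappa\norm{\lambda}_\infty\norm{\asigma}_2^2$, and then use Young's inequality with the conjugate exponents $(p, p/(p-1))$ and $(p/2, p/(p-2))$ to absorb both resulting terms into half of the confining term $t\norm{\asigma}_2^p$. This leaves $h(\asigma) \leq C\norm{\x}_2^{p/(p-1)} + C - \tfrac{t}{2}\norm{\asigma}_2^p$; taking the log of the integral and dividing by $\beta$, and using $p/(p-1) = 1 + 1/(p-1)$, gives \eqref{eqn: GP positive temperature terminal growth bound} from above. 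The matching lower bound comes by restricting the defining integral to the unit ball, on which $h(\asigma) \geq -\norm{\x}_2 - C$; this yields $f_\lambda^\beta(\x) \geq -\norm{\x}_2 - C$, which is dominated by $C(1 + \norm{\x}_2^{1+1/(p-1)})$ since $\norm{\x}_2 \leq 1 + \norm{\x}_2^{1+1/(p-1)}$.

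For \eqref{eqn: GP positive temperature terminal derivative growth bound}, differentiating under the integral sign expresses $\partial_{x_k}f_\lambda^\beta(\x) = \langle\sigma(k)\rangle_\x$, the Gibbs expectation, and $\abs{\sigma(k)} \leq \norm{\asigma}_2$ reduces matters to showing $\langle\norm{\asigma}_2\rangle_\x \leq C(1 + \norm{\x}_2^{1/(p-1)})$. Let $R = C_0(1 + \norm{\x}_2^{1/(p-1)})$, chosen (by the same arithmetic as in the proof of \cref{GP bound on argmax}) large enough that $h(\asigma) \leq -\tfrac{t}{2}\norm{\asigma}_2^p$ for $\norm{\asigma}_2 \geq R$. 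Splitting $\langle\norm{\asigma}_2\rangle_\x$ at radius $R$: the inner part contributes at most $R$, while the outer part is bounded by $A^{-1}\int_{\norm{\asigma}_2 > R}\norm{\asigma}_2 e^{-\beta t\norm{\asigma}_2^p/2}\ud\asigma \leq C A^{-1}$, where $A = \int e^{\beta h}\ud\asigma$ and the constant depends only on $\beta,p,t,\kappa$.

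The delicate point is producing a lower bound on $A$ strong enough to keep $CA^{-1}$ comparable to $1 + \norm{\x}_2^{1/(p-1)}$; the rough bound $A \geq V_\kappa e^{-\beta(\norm{\x}_2 + C)}$ from the previous step decays exponentially in $\norm{\x}_2$ and is insufficient. I would centre the restriction at the maximizer $\asigma^* = \asigma^*_{\x,\lambda,t}$ of \cref{GP bound on argmax}. Evaluating $h$ at $\asigma = c\x$ and optimizing over $c > 0$ shows that $h(\asigma^*) = f_\lambda^\infty(\x) \geq c_0\norm{\x}_2^{1+1/(p-1)}$ for all $\norm{\x}_2$ large, while Taylor expanding $h$ around $\asigma^*$, using $\nabla h(\asigma^*) = 0$ and the operator-norm bound $\norm{\nabla^2 h}_{\mathrm{op}} \leq C(1 + \norm{\x}_2^{(p-2)/(p-1)})$ on a bounded neighbourhood, shows that $h(\asigma) \geq h(\asigma^*) - 1/\beta$ on a ball of radius $\delta \sim (\beta(1 + \norm{\x}_2^{(p-2)/(p-1)}))^{-1/2}$. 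Integrating over this ball gives $A \geq c\delta^\kappa e^{\beta h(\asigma^*) - 1}$, and the super-polynomial growth of $e^{\beta h(\asigma^*)}$ in $\norm{\x}_2$ absorbs the polynomial loss $\delta^{-\kappa}$; for bounded $\norm{\x}_2$ the initial rough bound on $A$ is already enough. Combining the inner and outer contributions completes the argument.
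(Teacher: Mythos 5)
Your argument is correct, but it takes a genuinely different route from the paper's in both halves. For \eqref{eqn: GP positive temperature terminal growth bound} the paper does not use Young's inequality at all: it first proves the gradient bound and then recovers the bound on $\abs{f_\lambda^\beta}$ by the fundamental theorem of calculus along the segment from $0$ to $\x$; your direct absorption of the linear and quadratic terms into half the confinement is at least as simple. For \eqref{eqn: GP positive temperature terminal derivative growth bound} the decisive difference is how the partition function $A=\int e^{\beta h}$ is bounded below. You correctly observe that restricting to the unit ball and using $(\asigma,\x)\geq -\norm{\x}_2$ gives a bound decaying like $e^{-\beta\norm{\x}_2}$, and you repair this with a Laplace-type expansion at the maximizer $\asigma^*_{\x,\lambda,t}$, using $f_\lambda^\infty(\x)\gtrsim\norm{\x}_2^{p/(p-1)}$ to beat the polynomial loss $\delta^{-\kappa}$. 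This works, but it is much heavier than necessary: the paper instead applies Jensen's inequality to the integral restricted to the \emph{symmetric} cube $[-\tfrac12,\tfrac12]^\kappa$, where the linear term $(\asigma,\x)$ integrates to zero by symmetry, so the lower bound on the denominator is a constant entirely independent of $\x$ and no expansion at the maximizer is needed. The numerator is then split over the region $\{\norm{\x}_2<\tfrac t2\norm{\asigma}_2^{p-1}\}$ and its complement, which is the same idea as your cutoff at radius $R$. In short: your proof is valid, your treatment of $\abs{f_\lambda^\beta}$ is a touch more direct than the paper's, and your treatment of the gradient is correct but could be shortened considerably by the symmetry/Jensen trick.
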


\begin{proof}
Fix $1\leq i\leq \kappa$. Since $p>2$, a direct computation shows that
$$\partial_{x_i}f_\lambda^\beta(\x)=\frac{\int_{\R^\kappa} \sigma(i) \exp \beta\Big(\big(\asigma,\x\big)+\sum_{k\leq k'}\lambda_{k,k'}\sigma(k)\sigma(k')-t\norm{\asigma}_2^p\Big)\ud \asigma}{\int_{\R^\kappa}  \exp \beta\Big(\big(\asigma,\x\big)+\sum_{k\leq k'}\lambda_{k,k'}\sigma(k)\sigma(k')-t\norm{\asigma}_2^p\Big)\ud \asigma}.$$
To simplify notation, write $(I)$ for the numerator and $(II)$ for the denominator in this expression. Introduce the set
$$A=\Big\{\asigma\in \R^\kappa\mid \norm{\x}_2<\frac{t}{2}\norm{\asigma}_2^{p-1}\Big\}=\Big\{\asigma\in \R^\kappa\mid \norm{\asigma}_2>\Big(\frac{2\norm{\x}_2}{t}\Big)^{\frac{1}{p-1}}\Big\}.$$
On the one hand, the Cauchy-Schwarz inequality implies that
\begin{align*}
(I) &\leq \int_A \norm{\asigma}_2 \exp\beta \Big( \norm{\asigma}_2\norm{\x}_2+\sum_{k\leq k'}\lambda_{k,k'}\sigma(k)\sigma(k')-t\norm{\asigma}_2^p\Big)\ud \asigma\\
&\quad+\int_{A^c} \norm{\asigma}_2 \exp\beta \Big( \big(\asigma,\x\big)+\sum_{k\leq k'}\lambda_{k,k'}\sigma(k)\sigma(k')-t\norm{\asigma}_2^p\Big)\ud \asigma\\
&\leq\int_{\R^\kappa} \norm{\asigma}_2 \exp\beta\Big(\sum_{k\leq k'}\lambda_{k,k'}\sigma(k)\sigma(k')-\frac{t}{2}\norm{\asigma}_2^p\Big)\ud \asigma\\
&\quad+\Big(\frac{2\norm{\x}_2}{t}\Big)^{\frac{1}{p-1}}\int_{\R^\kappa} \exp\beta\Big(\big(\asigma,\x\big)+\sum_{k\leq k'}\lambda_{k,k'}\sigma(k)\sigma(k')-t\norm{\asigma}_2^p\Big)\ud \asigma.
\end{align*}
On the other hand, Jensen's inequality and symmetry yield
\begin{align*}
(II)&\geq \int_{[-\frac{1}{2},\frac{1}{2}]^\kappa}\exp \beta\Big(\big(\asigma,\x\big)+\sum_{k\leq k'}\lambda_{k,k'}\sigma(k)\sigma(k')-t\norm{\asigma}_2^p\Big)\ud \asigma\\
&\geq \exp\beta \int_{[-\frac{1}{2},\frac{1}{2}]^\kappa}\Big( \big(\asigma,\x\big)+\sum_{k\leq k'}\lambda_{k,k'}\sigma(k)\sigma(k')-t\norm{\asigma}_2^p\Big)\ud \asigma\\
&=\exp\beta \int_{[-\frac{1}{2},\frac{1}{2}]^\kappa}\Big( \sum_{k\leq k'}\lambda_{k,k'}\sigma(k)\sigma(k')-t\norm{\asigma}_2^p\Big)\ud \asigma.
\end{align*}
Combining these two bounds gives \eqref{eqn: GP positive temperature terminal derivative growth bound}. The fundamental theorem of calculus reveals that
\begin{align*}
\abs{f_\lambda^\beta(\x)}- \abs{f_\lambda^\beta(0)}&\leq  \int_0^1\abs{\partial_s f_\lambda^\beta (s\x)}\ud s\leq \int_0^1 \abs{(\nabla f_\lambda^\beta(s\x) , \x)}\ud s\\
&\leq \int_0^1 \norm{\nabla f_\lambda^\beta(s\x)}_2\norm{\x}_2\ud s\leq C\Big(1+\norm{\x}_2^{1+\frac{1}{p-1}}\Big)
\end{align*}
which establishes \eqref{eqn: GP positive temperature terminal growth bound} and completes the proof.
\end{proof}

\begin{lemma}\label{GP zero temperature terminal growth bound}
If $2<p<\infty$, $\x\in \R^\kappa$, $\lambda \in \R^{\kappa(\kappa+1)/2}$ and $t>0$, then
\begin{equation}
\abs{f_\lambda^\infty(x)}\leq C\Big(1+\norm{\x}_2^{1+\frac{1}{p-1}}\Big)
\end{equation}
for some constant $C>0$ that depends only on $\lambda, p$ and $t$. Moreover, the function $f_\lambda^\infty$ is differentiable for almost every $\x\in \R^\kappa$ with
\begin{equation}
\norm{\nabla f_\lambda^\infty(\x)}_\infty\leq C\Big(1+\norm{\x}_2^{\frac{1}{p-1}}\Big)\label{eqn: GP zero temperature terminal derivative growth bound}
\end{equation}
for a possibly different constant $C>0$ that depends only on $\lambda, p$ and $t$.
\end{lemma}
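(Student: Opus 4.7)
The function $f_\lambda^\infty$ is, as a function of $\x \in \R^\kappa$, a supremum of affine functions — for each fixed $\asigma$, the expression inside the supremum in \eqref{eqn: GP zero-temperature terminal condition} is affine in $\x$ — and is therefore convex. By the classical theorem that a convex function on $\R^\kappa$ is differentiable at Lebesgue-almost every point (theorem 25.5 in \cite{Rockafellar}), the differentiability assertion follows for free.

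The plan for the first bound is to evaluate the supremum at the maximizer $\asigma^*_{\x,\lambda,t}$ furnished by \cref{GP bound on argmax} and to control the resulting expression using the bound on $\norm{\asigma^*_{\x,\lambda,t}}_2$ provided there. Taking $\asigma = 0$ in \eqref{eqn: GP zero-temperature terminal condition} gives $f_\lambda^\infty(\x) \geq 0$, so only an upper bound is required. The Cauchy-Schwarz inequality and the equivalence of the $\ell^2$- and $\ell^\infty$-norms on $\R^\kappa$ yield
\[
f_\lambda^\infty(\x) \leq \norm{\asigma^*_{\x,\lambda,t}}_2 \norm{\x}_2 + C\norm{\lambda}_\infty \norm{\asigma^*_{\x,\lambda,t}}_2^2,
\]
and \cref{GP bound on argmax} gives $\norm{\asigma^*_{\x,\lambda,t}}_2 \leq C(1 + \norm{\x}_2^{1/(p-1)})$ for a constant $C$ depending only on $\lambda, p, t$. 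Substituting, and using $2/(p-1) \leq 1 + 1/(p-1)$ (valid since $p > 2$) to absorb the quadratic term, gives $|f_\lambda^\infty(\x)| = f_\lambda^\infty(\x) \leq C(1 + \norm{\x}_2^{1+1/(p-1)})$.

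For the gradient bound I would invoke Danskin's theorem — or equivalently the elementary fact that the subdifferential of a supremum of affine functions consists of the slopes of the affine pieces that achieve the supremum: each element of $\partial f_\lambda^\infty(\x)$ is a maximizer $\asigma^*$ in \eqref{eqn: GP zero-temperature terminal condition}, and at any $\x$ where $f_\lambda^\infty$ is differentiable the subdifferential is the singleton $\{\nabla f_\lambda^\infty(\x)\} = \{\asigma^*_{\x,\lambda,t}\}$. Then
\[
\norm{\nabla f_\lambda^\infty(\x)}_\infty \leq \norm{\nabla f_\lambda^\infty(\x)}_2 = \norm{\asigma^*_{\x,\lambda,t}}_2 \leq C(1 + \norm{\x}_2^{1/(p-1)})
\]
by a second application of \cref{GP bound on argmax}, which establishes \eqref{eqn: GP zero temperature terminal derivative growth bound}. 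The plan is essentially a direct repackaging of \cref{GP bound on argmax}; no substantive obstacle arises. The only care required is in the constants and in recognising that the $\norm{\x}_2^{2/(p-1)}$ term produced by the Cauchy-Schwarz estimate is dominated by the leading $\norm{\x}_2^{1+1/(p-1)}$ term, which is precisely where the assumption $p>2$ enters.
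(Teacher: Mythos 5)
Your proposal is correct and follows essentially the same route as the paper: both bounds are obtained by evaluating at the maximizer from \cref{GP bound on argmax} (with Cauchy--Schwarz and $p>2$ to absorb the $\norm{\x}_2^{2/(p-1)}$ term), differentiability a.e. comes from convexity of the pointwise supremum, and the gradient is identified with the maximizer $\asigma^*_{\x,\lambda,t}$ — the paper does this last step by a direct first-order expansion rather than citing Danskin, but the content is identical.
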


\begin{proof}
Consider the function $g:\R^\kappa\times \R^\kappa \to \R$ defined by
$$g(\x,\asigma)=\big(\asigma,\x\big)+\sum_{k\leq k'}\lambda_{k,k'}\sigma(k)\sigma(k')-t\norm{\asigma}_2^p.$$
Notice that $f_\lambda^\infty(\x)=\sup_{\asigma\in \R^\kappa}g(\x,\asigma)$. By \cref{GP bound on argmax}, there exists $\asigma(\x)\in \R^\kappa$ with $f_\lambda^\infty(\x)=g(\x,\asigma(\x))$ and
\begin{equation}\label{eqn: GP zero temperature terminal condition growth bound key}
\norm{\asigma(\x)}_2\leq \max\Big(\Big(\frac{2\norm{\lambda}_\infty}{t}\Big)^{\frac{1}{p-2}},\Big(\frac{2\norm{\x}_2}{t}\Big)^{\frac{1}{p-1}}\Big).
\end{equation}
It follows by the Cauchy-Schwarz inequality that
\begin{align}\label{eqn: GP zero temperature terminal condition explicit growth bound}
0=g(\x,0)\leq f_\lambda^\infty(\x)&\leq \norm{\asigma(\x)}_2\norm{\x}_2+\norm{\lambda}_\infty \norm{\asigma(\x)}_2^2\notag\\
&\leq \Big(\frac{2}{t}\Big)^{\frac{1}{p-1}}\norm{\x}_2^{1+\frac{1}{p-1}}+\Big(\frac{2\norm{\lambda}_\infty}{t}\Big)^{\frac{1}{p-2}}\norm{\x}_2+\\
&\quad+\Big(\frac{2}{t}\Big)^{\frac{2}{p-1}}\norm{\lambda}_\infty\norm{\x}_2^{\frac{2}{p-1}}+\Big(\frac{2\norm{\lambda}_\infty}{t}\Big)^{\frac{2}{p-2}}\norm{\lambda}_\infty\notag\\
&\leq C\Big(1+\norm{\x}_2^{1+\frac{1}{p-1}}\Big)\notag,
\end{align}
where the last inequality uses the fact that $\smash{\frac{1}{p-1}<1}$. To establish \eqref{eqn: GP zero temperature terminal derivative growth bound}, notice that $\x\mapsto g(\x,\asigma)$ is convex for each $\asigma\in \R^\kappa$. As the pointwise supremum of a family of convex functions, the function $f_\lambda^\infty$ is also convex and therefore differentiable almost everywhere. If $\x\in \R^\kappa$ is a point of differentiability of $f_\lambda^\infty$, then for any other $\x'\in \R^\kappa$,
$$f_\lambda^\infty(\x')-f_\lambda^\infty(\x)=\big(\x'-\x,\nabla f_\lambda^\infty(\x)\big)+o\big(\abs{\x'-\x}\big).$$
Combining this with the fact that
$$f_\lambda^\infty(\x')-f_\lambda^\infty(\x)\geq g(\x',\asigma(\x))-g(\x,\asigma(\x))=\big(\asigma(\x),\x'-\x\big)$$
yields
$$\big(\x'-\x,\nabla f_\lambda^\infty(\x)-\asigma(\x)\big)\geq o\big(\abs{\x'-\x}\big).$$
This is only possible if $\nabla f_\lambda^\infty(\x)=\asigma(\x)$ so the result follows by \eqref{eqn: GP zero temperature terminal condition growth bound key}.
\end{proof}

\begin{lemma}\label{GP growth bound for derivative persists}
Let $f:\R^\kappa \to \R$ be a convex and differentiable function with
\begin{equation}
-M\leq f(\x)\leq C\big(1+\norm{\x}_2^{a+1}\big) \quad \text{and} \quad \norm{\nabla f(\x)}_2\leq C\big(1+\norm{\x}_2^a\big)
\end{equation}
for some $a\in (0,1)$ and some constants $C,M>0$. If $F:[0,1)\times \R^\kappa\to \R$ is defined by
\begin{equation}
F(s,\x)=\frac{1}{m}\log \E \exp mf(\x+A(s)g)
\end{equation}
for some $m>0$, a standard Gaussian vector $g$ in $\R^\kappa$ and a non-negative definite matrix $A(s)$ with uniformly bounded norm, $\norm{A(s)}_{\mathrm{HS}}^2\leq C$, then there exists $C'>0$ that depends on $\kappa, a,C,m$ and $M$ such that
\begin{equation}
\norm{\nabla F(s,\x)}_2\leq C'\big(1+\norm{\x}_2^a\big)
\end{equation}
for all $(s,\x)\in [0,1)\times \R^\kappa$.
\end{lemma}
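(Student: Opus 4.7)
The plan is to exploit three structural properties of $F$: convexity in $\x$, the lower bound $F(s,\x)\geq -M$, and an upper bound of the form $F(s,\x)\leq C_1(1+\|\x\|_2^{a+1})$. From these, the gradient estimate follows from a single convexity-based difference-quotient argument with an optimally chosen step size. The differentiability of $F$ and the standard formula for its gradient follow from dominated convergence and the growth bounds on $f$ and $\nabla f$, and I would not dwell on this technical point.

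First I would verify the three structural properties. The lower bound $F\geq -M$ is immediate from $f\geq -M$. For convexity of $\x\mapsto F(s,\x)$, fix $\theta\in[0,1]$ and $\x_1,\x_2\in \R^\kappa$; the convexity of $f$ gives
\[
\exp\bigl(mf(\theta\x_1+(1-\theta)\x_2+A(s)g)\bigr)\leq \exp\bigl(\theta mf(\x_1+A(s)g)\bigr)\exp\bigl((1-\theta)mf(\x_2+A(s)g)\bigr),
\]
and applying Hölder's inequality with exponents $1/\theta$ and $1/(1-\theta)$ and then $(1/m)\log$ delivers $F(s,\theta\x_1+(1-\theta)\x_2)\leq \theta F(s,\x_1)+(1-\theta)F(s,\x_2)$. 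For the upper bound, the inequality $(u+v)^{a+1}\leq 2^a(u^{a+1}+v^{a+1})$ combined with the growth hypothesis on $f$ yields
\[
f(\x+A(s)g)\leq C\bigl(1+2^a\|\x\|_2^{a+1}+2^a\|A(s)g\|_2^{a+1}\bigr).
\]
Since $a+1<2$ and $\|A(s)\|_{\mathrm{HS}}^2\leq C$, the Gaussian moment $\E\exp\bigl(2^amC\|A(s)g\|_2^{a+1}\bigr)$ is finite and bounded uniformly in $s$. Multiplying, taking expectation, and applying $(1/m)\log$ gives $F(s,\x)\leq C_1(1+\|\x\|_2^{a+1})$ for a constant $C_1$ depending only on $\kappa,a,C,m$.

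Given these three properties, for any unit vector $u\in\R^\kappa$ and any $t>0$, convexity of $F$ provides
\[
\langle\nabla F(s,\x),u\rangle\leq \frac{F(s,\x+tu)-F(s,\x)}{t}\leq \frac{C_1(1+\|\x+tu\|_2^{a+1})+M}{t}.
\]
To finish I would choose $t$ optimally: for $\|\x\|_2\geq 1$ take $t=\|\x\|_2$, so that $\|\x+tu\|_2\leq 2\|\x\|_2$ and the right-hand side is bounded by a constant multiple of $\|\x\|_2^a$; for $\|\x\|_2\leq 1$ take $t=1$ to obtain a uniform constant bound, which is itself dominated by a constant multiple of $1+\|\x\|_2^a$. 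Passing to the supremum over unit $u$ yields the claimed estimate $\|\nabla F(s,\x)\|_2\leq C'(1+\|\x\|_2^a)$. The only mildly delicate step in this plan is the uniform-in-$s$ Gaussian moment bound on $\E\exp(c\|A(s)g\|_2^{a+1})$: this is where the hypothesis $a<1$ is essential, as it ensures the exponential tilt grows strictly slower than the Gaussian weight; with $a=1$ the argument breaks down.
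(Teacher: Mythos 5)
Your proposal is correct, but it takes a genuinely different route from the paper. The paper's proof differentiates under the integral sign to write $\partial_{x_l}F(s,\x)$ as a Gibbs-type average of $\partial_{x_l}f(\x+A(s)g)$, splits the numerator on the event $\{\norm{\x}_2\leq \norm{A(s)g}_2\}$, controls the on-event contribution by a uniform Gaussian moment (using $a+1<2$), and lower-bounds the denominator by $e^{-mM}$ via Jensen's inequality and the convexity of $f$; on the complementary event the pointwise bound $\norm{\nabla f(\x+A(s)g)}_2\leq C(1+(2\norm{\x}_2)^a)$ is used directly. You instead prove three structural facts about $F$ itself --- convexity in $\x$ (via H\"older), the lower bound $F\geq -M$, and the upper growth bound $F(s,\x)\leq C_1(1+\norm{\x}_2^{a+1})$ --- and then extract the gradient bound from a difference quotient with step size $t=\max(\norm{\x}_2,1)$. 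All of your individual steps check out, including the uniform-in-$s$ Gaussian moment bound and the optimization over $t$. A notable feature of your argument is that it never uses the hypothesis $\norm{\nabla f(\x)}_2\leq C(1+\norm{\x}_2^a)$ except, as you note, to dispose of the routine differentiability of $F$; the convexity-plus-growth route makes the gradient hypothesis on $f$ essentially redundant for this conclusion. What the paper's computation buys in exchange is the explicit Gibbs-average formula for $\nabla F$, which is of the same flavour as the identities exploited in the Auffinger--Chen section, but for the purposes of this lemma alone your argument is, if anything, cleaner.
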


\begin{proof}
To simplify notation, write $C'>0$ for a constant that depends on $\kappa, a,C,m$ and $M$ whose value might not be the same at each occurrence. Fix $s\in [0,1)\times \R^\kappa$ and $1\leq l\leq \kappa$. Since $a\in (0,1)$,
$$\partial_{x_l}F(s,\x)=\frac{\E \partial_{x_l}f(\x+A(s)g)\exp mf(\x+A(s)g)}{\E \exp m f(\x+A(s)g)}.$$
With this in mind, consider the set
$B=\big\{\norm{\x}_2\leq \norm{A(s)g}_2\big\}$. On the one hand,
\begin{align*}
\E \abs{\partial_{x_l}f(\x+A(s)g)}e^{ mf(\x+A(s)g)}\1_B&\leq C'\Big(1+\E \norm{A(s)g}_2^{a}e^{mC'\norm{A(s)g}_2^{a+1}}\Big)\\
&\leq C'\Big(1+\norm{A(s)}_{\text{HS}}^a\E \norm{g}_2^ae^{mC'\norm{A(s)}_{\text{HS}}^{a+1}\norm{g}_2^{a+1}}\Big)\\
&\leq C'\Big(1+\E \norm{g}_2^ae^{C'\norm{g}_2^{a+1}}\Big)\leq C',
\end{align*}
where the last inequality uses the fact that $a+1<2$. On the other hand, Jensen's inequality and the convexity of $f$ imply that
$$e^{-mM}\leq e^{mf(\x)}=e^{mf(\x+\E A(s)g)}\leq \E e^{mf(\x+A(s)g)}.$$
It follows that
\begin{align*}
\abs{\partial_{x_l}F(x,\x)}&\leq C'+\frac{\E \abs{\partial_{x_l}f(\x+A(s)g)}e^{mf(\x+A(s)g)}\1_B}{\E e^{mf(\x+A(s)g)}}\\
&\leq C'\Big(1+\frac{\E \norm{\x}_2^a e^{mf(\x+A(s)g)}}{\E e^{mf(\x+A(s)g)}}\Big)\leq C'\big(1+\norm{\x}_2^a\big).
\end{align*}
This completes the proof.
\end{proof}

\renewcommand{\theequation}{B.\arabic{equation}}
\setcounter{equation}{0}
\section{Background material}\label{GP app2}

In this appendix we collect a number of elementary results from linear algebra.

\begin{theorem}[Gershgorin]\label{Gershgorin}
If $A\in \R^{n\times n}$ and $R_i=\sum_{j\neq i}\abs{a_{ij}}$ is the sum of the absolute values of the non-diagonal entries in the $i$'th row of $A$, then the eigenvalues of $A$ are all contained in the union of the Gershgorin discs,
\begin{equation}
G(A)=\bigcup_{i=1}^n \{z\in \C\mid \abs{z-a_{ii}}\leq R_i\}.
\end{equation}
\end{theorem}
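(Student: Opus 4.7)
The plan is to proceed by the classical contradiction-free argument that picks out the component of an eigenvector with maximum modulus. The key observation is that along the row indexed by this maximizing coordinate, the diagonal entry of $A$ must be close to the eigenvalue because the off-diagonal contributions are dominated by the row sum $R_i$, and the largest coordinate of the eigenvector bounds all the others.

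Concretely, I would start by fixing an arbitrary eigenvalue $\lambda \in \C$ of $A$ with associated (nonzero) eigenvector $v = (v_1,\ldots,v_n) \in \C^n$. Let $i \in \{1,\ldots,n\}$ be any index that attains the maximum $|v_i| = \max_{1 \leq j \leq n} |v_j|$; since $v \neq 0$, we have $|v_i| > 0$. Writing out the $i$-th coordinate of the equation $Av = \lambda v$ gives
\begin{equation}
\sum_{j=1}^n a_{ij} v_j = \lambda v_i,
\end{equation}
so that, after isolating the diagonal term,
\begin{equation}
(\lambda - a_{ii}) v_i = \sum_{j \neq i} a_{ij} v_j.
\end{equation}

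Taking absolute values on both sides and invoking the triangle inequality together with the maximality $|v_j| \leq |v_i|$ for all $j$ yields
\begin{equation}
|\lambda - a_{ii}| \, |v_i| \leq \sum_{j \neq i} |a_{ij}| \, |v_j| \leq |v_i| \sum_{j \neq i} |a_{ij}| = R_i \, |v_i|.
\end{equation}
Dividing through by $|v_i| > 0$ gives $|\lambda - a_{ii}| \leq R_i$, which is precisely the statement that $\lambda$ belongs to the $i$-th Gershgorin disc, and hence to $G(A)$. Since $\lambda$ was an arbitrary eigenvalue, the spectrum of $A$ is contained in $G(A)$.

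There is no real obstacle here: the only subtlety is to insist that $i$ index a coordinate where $|v_i|$ is maximal, which guarantees both that $|v_i| > 0$ (so division is legitimate) and that the bound $|v_j| \leq |v_i|$ holds uniformly on the right-hand side. No further theory beyond the triangle inequality is required, so this outline can be turned into a complete proof essentially as written.
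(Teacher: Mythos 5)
Your proof is correct and complete: the paper itself does not prove this statement but simply cites Theorem 6.1.1 of Horn and Johnson, and your argument is exactly the standard one given there (maximal-modulus coordinate of an eigenvector, triangle inequality on the isolated row equation, division by $|v_i|>0$). Nothing is missing.
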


\begin{proof}
See theorem 6.1.1 in \cite{Matrix}.
\end{proof}

\begin{corollary}\label{Gershgorin corollary}
If $A\in \R^{n\times n}$ is a symmetric matrix with non-negative diagonal entries satisfying
\begin{equation}\label{eqn: diagonally dominant}
a_{ii}=\abs{a_{ii}}\geq \sum_{j\neq i}\abs{a_{ij}}
\end{equation}
for $1\leq i\leq n$, then $A$ is non-negative definite.
\end{corollary}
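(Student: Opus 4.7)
The plan is to invoke \cref{Gershgorin} directly and exploit the symmetry of $A$ to force its eigenvalues to be real, after which the diagonal dominance hypothesis \eqref{eqn: diagonally dominant} will pin them to the non-negative half-line.

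More precisely, I would first recall that because $A$ is a real symmetric matrix, the spectral theorem guarantees that every eigenvalue of $A$ is real. Applying \cref{Gershgorin} then shows that for any eigenvalue $\lambda$ of $A$ there exists an index $1\leq i\leq n$ for which
\begin{equation*}
\abs{\lambda-a_{ii}}\leq R_i=\sum_{j\neq i}\abs{a_{ij}}.
\end{equation*}
Since $\lambda\in \R$ and $a_{ii}\geq 0$, this yields the lower bound $\lambda\geq a_{ii}-R_i$. The diagonal dominance hypothesis \eqref{eqn: diagonally dominant} ensures that $a_{ii}-R_i\geq 0$, and therefore $\lambda\geq 0$.

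Having shown that every eigenvalue of the symmetric matrix $A$ is non-negative, the conclusion that $A$ is non-negative definite follows at once from the spectral characterization of non-negative definite symmetric matrices. The argument is entirely routine and there is no genuine obstacle; the only subtlety worth flagging is the use of symmetry to rule out complex eigenvalues, since \cref{Gershgorin} on its own only localizes the spectrum in the complex plane and one needs the reality of the spectrum to translate the disc inclusion into a one-sided real inequality.
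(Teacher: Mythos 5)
Your proof is correct and follows exactly the paper's own argument: apply \cref{Gershgorin}, use symmetry to get real eigenvalues, and read off $\lambda\geq a_{ii}-R_i\geq 0$ from the diagonal dominance hypothesis. The paper's version is merely terser; you spell out the same steps.
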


\begin{proof}
In the notation of Gershgorin's theorem, condition \eqref{eqn: diagonally dominant} may be written as $a_{ii}\geq R_i$. It follows by the symmetry of $A$ and Gershgorin's theorem that all the eigenvalues of $A$ are non-negative. This completes the proof. 
\end{proof}

\begin{lemma}\label{trace dominates HS norm}
If $A\in \R^{n\times n}$ is a non-negative definite and symmetric matrix, then
\begin{equation}
\norm{A}_{\mathrm{HS}}\leq \tr(A).
\end{equation}
\end{lemma}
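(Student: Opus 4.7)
The plan is to diagonalize $A$ and reduce the claim to a trivial inequality for non-negative reals. Since $A$ is symmetric, the spectral theorem gives an orthogonal $Q$ and a diagonal matrix $\Lambda = \diag(\lambda_1, \ldots, \lambda_n)$ with $A = Q\Lambda Q^T$, and since $A$ is non-negative definite all eigenvalues satisfy $\lambda_i \geq 0$.

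First I would compute the Hilbert-Schmidt norm using the fact that it is invariant under conjugation by orthogonal matrices (or directly from $\norm{A}_{\mathrm{HS}}^2 = \tr(A^T A) = \tr(A^2)$ by symmetry). This yields
\begin{equation*}
\norm{A}_{\mathrm{HS}}^2 = \tr(\Lambda^2) = \sum_{i=1}^n \lambda_i^2,
\qquad
\tr(A) = \tr(\Lambda) = \sum_{i=1}^n \lambda_i.
\end{equation*}

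The inequality $\norm{A}_{\mathrm{HS}} \leq \tr(A)$ is therefore equivalent to $\sum_i \lambda_i^2 \leq \bigl(\sum_i \lambda_i\bigr)^2$. Expanding the square on the right gives $\bigl(\sum_i \lambda_i\bigr)^2 = \sum_i \lambda_i^2 + 2\sum_{i<j}\lambda_i\lambda_j$, so the inequality reduces to $\sum_{i<j}\lambda_i\lambda_j \geq 0$, which is immediate from $\lambda_i \geq 0$. Taking square roots concludes the proof.

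There is no genuine obstacle here; the only thing to take care of is invoking non-negativity of the eigenvalues (which uses both symmetry and non-negative definiteness) so that the cross terms $\lambda_i\lambda_j$ are non-negative. Without the non-negative definiteness hypothesis the claim fails — for instance $A = \diag(1,-1)$ has $\norm{A}_{\mathrm{HS}} = \sqrt{2}$ but $\tr(A) = 0$.
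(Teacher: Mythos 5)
Your proof is correct and follows essentially the same route as the paper: both reduce $\norm{A}_{\mathrm{HS}}^2=\tr(A^2)$ and $\tr(A)$ to sums over the non-negative eigenvalues and conclude from $\sum_i\lambda_i^2\leq\bigl(\sum_i\lambda_i\bigr)^2$. The only difference is that you spell out the expansion of the square, which the paper leaves implicit.
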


\begin{proof}
Let $\lambda_n\geq \lambda_{n-1}\geq \cdots\geq \lambda_1\geq 0$ be the real and non-negative eigenvalues of the matrix $A$.
Since the trace of a matrix is the sum of its eigenvalues,
$$\norm{A}_{\text{HS}}^2=\tr(A A^T)=\tr(A^2)=\sum_{i=1}^n \lambda_i^2\leq \Big(\sum_{i=1}^n \lambda_i\Big)^2=\tr(A)^2.$$
We have used the fact that the eigenvalues of $A^2$ are $\lambda_n^2\geq \lambda_{n-1}^2\geq \cdots\geq \lambda_1^2\geq 0$ in the third equality and the non-negativity of the eigenvalues of $A$ in the inequality.
\end{proof}

\begin{lemma}\label{HS norm preserves order}
If $A,B,C\in \R^{n\times n}$ are non-negative definite and symmetric matrices with $B\leq C$, then $\tr(AB)\leq \tr(AC)$. In particular, $\norm{B}_{\mathrm{HS}}\leq \norm{C}_{\mathrm{HS}}$ whenever $B\leq C$.
\end{lemma}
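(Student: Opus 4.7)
The plan is to reduce everything to the single elementary fact that the trace of a product of two symmetric non-negative definite matrices is non-negative. Granting this fact, the first statement follows by applying it to $A$ and $C - B$ (which is non-negative definite by the hypothesis $B \leq C$), yielding $\tr(A(C-B)) \geq 0$ and hence $\tr(AB) \leq \tr(AC)$. The second statement will then be an easy consequence obtained by applying the first statement twice, with the choices $A = B$ and $A = C$, both of which are symmetric and non-negative definite by hypothesis.

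To establish the auxiliary fact, I would use the symmetric square root. Since $A$ is symmetric and non-negative definite, it admits a symmetric non-negative definite square root $A^{1/2}$ with $A = A^{1/2} A^{1/2}$. The cyclicity of the trace then gives
\begin{equation*}
\tr(A(C-B)) = \tr(A^{1/2}(C-B)A^{1/2}).
\end{equation*}
For every vector $v \in \R^n$, the bilinear form evaluates as $v^T A^{1/2}(C-B)A^{1/2} v = (A^{1/2}v)^T(C-B)(A^{1/2}v) \geq 0$ by the non-negative definiteness of $C - B$. Hence $A^{1/2}(C-B)A^{1/2}$ is itself symmetric and non-negative definite, so its trace, which equals the sum of its non-negative eigenvalues, is non-negative. (Alternatively one could diagonalize $A = U \Lambda U^T$ and note that $\tr(A(C-B)) = \sum_i \lambda_i (U^T(C-B)U)_{ii}$ is a sum of products of non-negative quantities.)

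For the Hilbert-Schmidt claim, applying the first part with $A = B$ gives $\tr(B \cdot B) \leq \tr(B \cdot C)$, while applying it with $A = C$ gives $\tr(C \cdot B) \leq \tr(C \cdot C)$. Chaining these two bounds yields
\begin{equation*}
\norm{B}_{\mathrm{HS}}^2 = \tr(B^2) \leq \tr(BC) \leq \tr(C^2) = \norm{C}_{\mathrm{HS}}^2,
\end{equation*}
and taking square roots finishes the proof. There is no real obstacle here; the only subtlety is remembering that symmetry of $B$ and $C$ is needed to identify $\tr(B^2)$ and $\tr(C^2)$ with $\tr(BB^T)$ and $\tr(CC^T)$, i.e., with the squared Hilbert-Schmidt norms, but this is guaranteed by the standing assumption that the matrices in question lie in $\Gamma_\kappa$.
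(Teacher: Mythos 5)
Your proof is correct and follows essentially the same route as the paper: both arguments factor $A$ through its symmetric non-negative definite square root to establish $\tr(AB)\leq\tr(AC)$ (you conjugate $C-B$ by $A^{1/2}$, the paper expands $A=\sum_i m_im_i^T$ and evaluates $\tr(AB)=\sum_i m_i^TBm_i$, which is the same computation), and both obtain the Hilbert--Schmidt comparison by chaining the two instances $A=B$ and $A=C$. No further comment is needed.
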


\begin{proof}
Since $A$ is symmetric and non-negative definite, there exists a symmetric and non-negative definite matrix $M$ with $M^TM=A$. If $M=(m_1,\ldots,m_n)$, where $m_i\in \R^{n}$ denotes the $i$'th column of $M$, then
$$A=M^TM=\sum_{i=1}^n m_im_i^.$$
The linearity and cyclicity of the trace imply that
$$\tr(AB)=\sum_{i=1}^n \tr(m_im_i^TB)=\sum_{i=1}^n \tr(m_i^TBm_i)=\sum_{i=1}^n m_i^TBm_i.$$
Invoking the assumption that $B\leq C$ yields $\tr(AB)\leq \sum_{i=1}^n m_iCm_i^T=\tr(AC)$. To complete the proof observe that
$$\norm{B}_{\text{HS}}^2=\tr(B^TB)\leq \tr(B^TC)=\tr(C^TB)\leq \tr(C^TC)=\norm{C}_{\text{HS}}^2,$$
where we have used the fact that the trace of a matrix coincides with the trace of its transpose.
\end{proof}

\begin{lemma}\label{positive definite perturbation}
If $A\in \R^{n\times n}$ is a symmetric and positive definite matrix and $P\in \R^{n\times n}$ is a symmetric matrix, then there exists $\epsilon^*=\epsilon^*(A,\norm{P}_\infty,n)>0$ such that
\begin{equation}
A+\epsilon P
\end{equation}
is symmetric and positive definite for every $\epsilon<\epsilon^*$.
\end{lemma}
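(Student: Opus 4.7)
The plan is to exploit the variational characterization of the smallest eigenvalue of a symmetric matrix together with a quantitative perturbation bound. First I would observe that the symmetry of $A+\epsilon P$ is immediate from the symmetry of both $A$ and $P$, so the only thing to verify is that $A+\epsilon P$ is positive definite for all sufficiently small $\epsilon>0$. Since $A$ is symmetric and positive definite, the spectral theorem gives $\lambda_{\min}(A)>0$, and the problem reduces to ensuring that the smallest eigenvalue of $A+\epsilon P$ remains strictly positive after a small perturbation.

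The key step is the following elementary bound: for any unit vector $v\in\R^n$,
\begin{equation*}
v^T(A+\epsilon P)v = v^TAv + \epsilon v^TPv \geq \lambda_{\min}(A) - \epsilon \abs{v^TPv}.
\end{equation*}
A direct application of the Cauchy--Schwarz inequality combined with the crude entrywise estimate $\norm{Pv}_2\leq n\norm{P}_\infty$ yields $\abs{v^TPv}\leq n\norm{P}_\infty$. Setting
\begin{equation*}
\epsilon^*=\frac{\lambda_{\min}(A)}{n\norm{P}_\infty+1},
\end{equation*}
it then follows that $v^T(A+\epsilon P)v>0$ uniformly in the unit vector $v$ whenever $\epsilon<\epsilon^*$, and taking the infimum over unit vectors establishes positive definiteness via the variational characterization.

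There is no real obstacle to overcome here; the argument is entirely routine perturbation theory for symmetric matrices, which is presumably why this lemma is relegated to the background appendix. The only minor cosmetic point is that the degenerate case $P=0$ must be accommodated, which the $+1$ in the denominator of the definition of $\epsilon^*$ handles automatically (and for which any choice of $\epsilon^*>0$ would in fact work). The constant $\epsilon^*$ produced by this argument depends on $A$ only through its smallest eigenvalue, on $P$ only through its sup-norm, and on the ambient dimension $n$, matching the dependence claimed in the statement.
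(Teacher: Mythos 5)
Your proof is correct and follows essentially the same route as the paper's: bound $\abs{v^TPv}\leq n\norm{P}_\infty$ for unit vectors $v$ and take $\epsilon^*$ proportional to $\lambda_{\min}(A)/(n\norm{P}_\infty)$. The only difference is your $+1$ in the denominator to handle $P=0$, a degenerate case the paper's choice $\epsilon^*=\lambda_1/(n\norm{P}_\infty)$ ignores.
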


\begin{proof}
Denote by $\lambda_1$ the smallest eigenvalue of $A$. For any $x\in \R^n$ and every $\epsilon>0$,
$$x^T(A+\epsilon P)x\geq x^TAx-\epsilon \norm{P}_\infty\norm{x}_1^2\geq (\lambda_1-\epsilon n\norm{P}_\infty)\norm{x}_2^2.$$
Since $\lambda_1>0$ by positive definiteness of $A$, setting $\epsilon^*=\frac{\lambda_1}{n\norm{P}_\infty}$ completes the proof.
\end{proof}

\end{appendix}

\end{document}